\newtheorem{thm}{Theorem}[section]
\newtheorem{mainthm}{Theorem}
\newtheorem{lemma}[thm]{Lemma}
\newtheorem{cor}[thm]{Corollary}
\newtheorem{claim}{Claim}[thm]
\newtheorem{prop}[thm]{Proposition}
\newtheorem{fact}[thm]{Fact}
\newtheorem{subclaim}{Subclaim}[claim]
\theoremstyle{definition}
\newtheorem{defn}[thm]{Definition}
\newtheorem{question}[thm]{Question}
\newtheorem*{DEF}{Definition}
\theoremstyle{remark}
\newtheorem{remark}[thm]{Remark}
\newtheorem{conv}[thm]{Convention}
\newcommand\symdiff{\mathbin{\bigtriangleup}}
\renewcommand{\mid}{\mathrel{|}\allowbreak}
\renewcommand{\restriction}{\mathbin\upharpoonright}
\DeclareMathOperator{\acc}{acc}
\DeclareMathOperator{\ch}{CH}
\DeclareMathOperator{\vspec}{Vspec}
\DeclareMathOperator{\ad}{AD}
\DeclareMathOperator{\ssup}{ssup}
\DeclareMathOperator{\reg}{Reg}
\DeclareMathOperator{\nacc}{nacc}
\DeclareMathOperator{\cf}{cf}
\DeclareMathOperator{\dom}{dom}
\DeclareMathOperator{\im}{Im}
\DeclareMathOperator{\otp}{otp}
\DeclareMathOperator{\h}{ht}
\DeclareMathOperator{\tr}{Tr}
\DeclareMathOperator{\p}{P}
\newcommand\ind{\textup{ind}}
\newcommand*\cvec[1]{\vec{\mathcal#1}}
\newcommand\s{\subseteq}
\newcommand\sq{\sqsubseteq}
\newcommand\sqleft[1]{\mathrel{_{#1}{\sqsubseteq}}}
\newcommand\sqleftup[1]{\mathrel{^{#1}{\sqsubseteq}}}
\def\s{\subseteq}
\def\br{\blacktriangleright}
\newcommand*\axiomfont[1]{\textsf{\textup{#1}}}
\newcommand\gch{\axiomfont{GCH}}
\newcommand\pfa{\axiomfont{PFA}}
\newcommand\stp{\axiomfont{STP}}
\def\sd{\framebox[3.0mm][l]{$\diamondsuit$}\hspace{0.5mm}{}}
\providecommand{\myceil}[1]{{}^\ulcorner #1{}^\urcorner }
\title{The vanishing levels of a tree}
\author{Assaf Rinot}
\address{Department of Mathematics, Bar-Ilan University, Ramat-Gan 52900, Israel.}
\urladdr{http://www.assafrinot.com}
\author{Shira Yadai}
\address{Department of Mathematics, Bar-Ilan University, Ramat-Gan 52900, Israel.}
\email{greenss@biu.ac.il}
\author{Zhixing You}
\address{Department of Mathematics, Bar-Ilan University, Ramat-Gan 52900, Israel.}
\email{zhixingy121@gmail.com}
\keywords{Vanishing levels, subtle tree property, coherent tree, regressive tree, almost-disjoint ladder system, proxy principle, C-sequence number}
\subjclass[2010]{Primary 03E05. Secondary 03E35, 03E55}
\begin{document}
\begin{abstract}
We initiate the study of the spectrum $\vspec(\kappa)$ of sets that can be realized as
the vanishing levels $V(\mathbf T)$ of a normal $\kappa$-tree $\mathbf T$.
The latter is an invariant in the sense that if $\mathbf T$ and $\mathbf T'$ are club-isomorphic,
then $V(\mathbf T)\symdiff V(\mathbf T')$ is nonstationary.
Additional features of this invariant imply that $\vspec(\kappa)$ is closed under finite unions and intersections.

The set $V(\mathbf T)$ must be stationary for an homogeneous normal $\kappa$-Aronszajn tree $\mathbf T$,
and if there exists a special $\kappa$-Aronszajn tree, then there exists one $\mathbf T$ that is homogeneous and satisfies $V(\mathbf T)=\kappa$ (modulo clubs).
It is consistent (from large cardinals) that there is an $\aleph_2$-Souslin tree, and yet $V(\mathbf T)$ is co-stationary for every $\aleph_2$-tree $\mathbf T$.
Both $V(\mathbf T)=\emptyset$ and $V(\mathbf T)=\kappa$ (modulo clubs) are shown to be feasible using $\kappa$-Souslin trees, even at some large cardinal close to a weakly compact.
It is also possible to have a family of $2^\kappa$ many $\kappa$-Souslin trees
for which the corresponding family of vanishing levels forms an antichain modulo clubs.

\end{abstract}
\date{Preprint as of September 26, 2023. For updates, visit \textsf{http://p.assafrinot.com/58}.}
\maketitle

\section{Introduction}

Throughout this paper, $\kappa$ denotes a regular uncountable cardinal.
Recall that a poset $\mathbf T=(T,{<_T})$ is a \emph{$\kappa$-tree} iff all of the following hold:
\begin{enumerate}
\item For every $x\in T$, the set $x_\downarrow:=\{ y\in T\mid y<_T x\}$ is well-ordered by $<_T$.
Hereafter, write $\h(x):=\otp(x_\downarrow,<_T)$;
\item For every ordinal $\alpha<\kappa$, the set $T_\alpha:=\{ x\in T\mid \h(x)=\alpha\}$ is nonempty and has size less than $\kappa$,
and the set $T_\kappa$ is empty.
\end{enumerate}

A subset $B\s T$ is an \emph{$\alpha$-branch} iff $(B,<_T)$ is linearly ordered and $\{\h(x)\mid x\in B\}=\alpha$;
it is said to be \emph{vanishing} iff it has no upper bound in $\mathbf T$.

\begin{DEF}[Vanishing levels]
For a $\kappa$-tree $\mathbf T=(T,<_T)$, let $ V(\mathbf T)$ denote the set of all $\alpha\in\acc(\kappa)$ such that for any $x\in T$ with $\h(x)<\alpha$ there exists a vanishing $\alpha$-branch containing $x$.\footnote{The definition of $\acc(\kappa)$ may be found in Subsection~\ref{nocon} below.}
\end{DEF}

The above is an invariant of trees in the sense that if two $\kappa$-trees $\mathbf T,\mathbf T'$ are isomorphic on a club,
then $V(\mathbf T)$ is equal to $V(\mathbf T')$ modulo a club.
It also satisfies that $V(\mathbf T\otimes \mathbf T')=V(\mathbf T)\cup V(\mathbf T')$
and $V(\mathbf T+\mathbf T')=V(\mathbf T)\cap V(\mathbf T')$ for any two normal $\kappa$-trees $\mathbf T,\mathbf T'$.

The importance of this invariant became apparent in \cite{paper48}, where it was shown that if $\mathbf T$ is a $\kappa$-Souslin tree, i.e., a $\kappa$-tree with no $\kappa$-branches and no $\kappa$-sized antichains,
then the combinatorial principle $\clubsuit_{\ad}(S)$ holds for some subset $S\s\kappa$ that is equal to $V(\mathbf T)$ modulo a club.\footnote{The definition of $\clubsuit_{\ad}$ may be found in the paper's Appendix.}
In particular, if $V(\mathbf T)$ is stationary, then a nontrivial instance of $\clubsuit_{\ad}$ holds true,
and this has important applications in set-theoretic topology.

Surprisingly enough, the first main result of this paper shows that $V(\mathbf T)$ need not be stationary.
This is demonstrated in G\"odel's constructible universe, $\mathsf{L}$, where we obtain the following characterization:
\begin{mainthm}\label{thma} In $\mathsf{L}$, for every (regular uncountable cardinal) $\kappa$ that is not weakly compact, the following are equivalent:
\begin{itemize}
\item there exists a $\kappa$-Souslin tree $\mathbf T$ such that $V(\mathbf T)=\emptyset$;
\item there exists a normal and splitting $\kappa$-tree $\mathbf T$ such that $V(\mathbf T)=\emptyset$;
\item $\kappa$ is not the successor of a cardinal of countable cofinality.
\end{itemize}
\end{mainthm}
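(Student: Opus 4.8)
The plan is to prove the cycle of implications ``first $\Rightarrow$ second $\Rightarrow$ third $\Rightarrow$ first''. The implication ``first $\Rightarrow$ second'' is routine: a $\kappa$-Souslin tree admits a normal, splitting $\kappa$-Souslin subtree, and one checks that this passage does not create vanishing levels, so the property $V(\mathbf T)=\emptyset$ is retained. Thus the work lies in ``second $\Rightarrow$ third'', a \zfc\ obstruction, and ``third $\Rightarrow$ first'', a construction in $\mathsf L$. Recall that for regular uncountable $\kappa$ the third clause says exactly that $\kappa$ is inaccessible or $\kappa=\mu^+$ with $\cf(\mu)>\omega$.

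For ``second $\Rightarrow$ third'' I argue the contrapositive: assuming $\kappa=\mu^+$ with $\cf(\mu)=\omega$, I show that \emph{every} normal, splitting $\kappa$-tree $\mathbf T$ satisfies $\mu\in V(\mathbf T)$. The engine is a \zfc\ width bound: if $\mathbf T$ is normal and splitting, $x\in T$, and $\h(x)<\beta<\kappa$ with $\beta$ a limit ordinal, then the number of $y>_T x$ with $\h(y)=\beta$ is at least $\otp([\h(x),\beta))$. Indeed, by normality (extendibility) every node lying above $x$ and below level $\beta$ sits on a cofinal branch of $T{\restriction}\beta$ through $x$ that is realized at level $\beta$; fix any such realized branch $b$, note $\otp(b)=\otp([\h(x),\beta))$, and at each $c\in b$ use splitting to pick an immediate successor $d_c$ of $c$ off $b$; extending $d_c$ to level $\beta$ gives a realized cofinal branch $b_c$, and distinct $c$'s give distinct $b_c$'s since $b_c\cap b$ is the initial segment of $b$ up to and including $c$. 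Granting this, fix $x\in T$ with $\h(x)<\mu$, pick a strictly increasing sequence $\langle\mu_n:n<\omega\rangle$ of infinite cardinals cofinal in $\mu$ with $\h(x)<\mu_0$, and note that since $\cf(\mu)=\omega$ there are no intervening limit stages, so a $\mu$-branch through $x$ is precisely a coherent $\omega$-sequence $\langle z_n:n<\omega\rangle$ with $z_n$ above $x$ at level $\mu_n$. By the width bound each $z_n$ has at least $\mu_{n+1}$ extensions above $x$ to level $\mu_{n+1}$, so the number of $\mu$-branches through $x$ is at least $\prod_{n<\omega}\mu_n$, which by standard cardinal arithmetic equals $\mu^{\aleph_0}$ and hence, by K\"onig's theorem, exceeds $\mu$ and is $\ge\kappa$. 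Since $|T_\mu|<\kappa$, these branches cannot all be realized at level $\mu$, so there is a vanishing $\mu$-branch through $x$; as $x$ was arbitrary, $\mu\in V(\mathbf T)$.

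For ``third $\Rightarrow$ first'' the plan is to run a microscopic construction of a $\kappa$-Souslin tree $\mathbf T$ in $\mathsf L$ with the additional feature $V(\mathbf T)=\emptyset$. Since $\kappa$ is regular, uncountable and not weakly compact, in $\mathsf L$ one has $\diamondsuit(\kappa)$ together with a nonreflecting coherent $C$-sequence $\langle C_\alpha:\alpha<\kappa\rangle$, and more conveniently a suitable instance of the proxy principle; I build $\mathbf T$ level by level, splitting at successor levels, using $\diamondsuit$ in the standard way to seal maximal antichains so the end product is $\kappa$-Souslin, and making the construction coherent along the $C_\alpha$'s. Coherence lets me designate, for each limit $\alpha<\kappa$, a node $x_\alpha$ of small height whose cofinal branches inside $T{\restriction}\alpha$ are faithfully parametrized by a prescribed family of functions on $C_\alpha$; I then realize \emph{all} of these branches at level $\alpha$, which forces $\alpha\notin V(\mathbf T)$. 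That this can be done while keeping $|T_\alpha|<\kappa$ is exactly where clause (third) enters: when $\kappa$ is inaccessible one simply has $2^{|\alpha|}<\kappa$; when $\kappa=\mu^+$ with $\cf(\mu)>\omega$ the relevant count is, by \gch\ and the uncountability of $\cf(\mu)$, bounded by some $\mu^{\theta}=\mu$ with $\theta<\cf(\mu)$, or by $\mu^{\aleph_0}=\mu$, hence still below $\kappa$; and whenever a limit ordinal has uncountable cofinality, its intervening limit levels provide stages at which superfluous branches above $x_\alpha$ can be killed, so that the forced growth (at least $|\beta|$ nodes above $x_\alpha$ at each level $\beta$) coming from the width bound is pruned back to a manageable width. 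The excluded case $\kappa=\mu^+$, $\cf(\mu)=\omega$, is precisely the one in which this pruning is unavailable at the cofinality-$\omega$ levels of size $\mu$ — in harmony with the obstruction above.

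The hard part will be the construction for ``third $\Rightarrow$ first'': one must simultaneously keep every level of size $<\kappa$, seal all maximal antichains to obtain Souslin-ness, and at \emph{every} limit level seal some node, and these demands interact, because fixing the branches above $x_\alpha$ dictates a block of the tree that then propagates upward and must stay compatible with the coherence and with later sealing requirements. Carrying this out requires choosing the designated nodes and the branch-parametrizations uniformly in $\alpha$ via the $C$-sequence, and treating the inaccessible case and the successor case $\kappa=\mu^+$, $\cf(\mu)>\omega$, separately; by contrast, ``second $\Rightarrow$ third'' is short once the width bound is isolated.
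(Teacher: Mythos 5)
Your $(1)\Rightarrow(2)$ is the paper's routine restriction-to-a-club argument, and your $(2)\Rightarrow(3)$ is a correct, self-contained rendering of the mechanism behind the paper's Corollary~\ref{cor27}: your width bound is the $\varsigma$-splitting count implicit in Lemma~\ref{cor53}, and the K\"onig's-theorem step makes $\lambda^{\aleph_0}\geq\kappa$ concrete. That part is sound. (The paper actually concludes more — that $E^\kappa_\omega\setminus V(\mathbf T)$ is nonstationary — but you only need $V(\mathbf T)\neq\emptyset$, and producing a single vanishing level is enough.)

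The serious divergence is $(3)\Rightarrow(1)$, and this is where your proposal has a gap. You sketch a single microscopic construction that, at \emph{every} limit level $\alpha$, seals some node $x_\alpha$ by realizing all $\alpha$-branches through it. But this collides with the very width bound you just proved: in any normal splitting $\kappa$-tree, once $\h(x_\alpha)<\alpha$ there are at least $2^{\cf(\alpha)}$ many $\alpha$-branches above $x_\alpha$, so when $\kappa=\mu^+$ with $\mu$ regular and $\cf(\alpha)=\mu$, realizing all of them would make $|T_\alpha|\geq 2^\mu=\kappa$ under \gch. Your suggested remedy — ``intervening limit levels prune the superfluous branches'' — is not the actual mechanism, and you do not substantiate it; in fact no amount of pruning at lower levels changes the fact that a normal splitting tree has $2^{\cf(\alpha)}$ branches above any node. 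What actually keeps level sizes down in the paper (Theorem~\ref{thm58}, Lemma~\ref{lemma64}) is \emph{not} pruning but $\chi$-coherence together with uniform homogeneity: at levels of cofinality $>\chi$ one places only nodes of the form $x*\mathbf b^\alpha$ (so $|T_\alpha|\leq|T\restriction\alpha|<\kappa$), and then one \emph{proves} via a pressing-down argument that every $\alpha$-branch is already bounded by some such node. That is a qualitatively different device from what you describe. More importantly, the paper does not build a single Souslin tree with $V(\mathbf T)=\emptyset$ at all; it builds two trees $\mathbf T^0,\mathbf T^1$ (via Corollary~\ref{cor61}) with $V(\mathbf T^0)=E^\kappa_\omega$ and $V(\mathbf T^1)=E^\kappa_{\omega_1}$ — possible because clause (3) guarantees $\omega_1\leq\cf(\sup(\reg(\kappa)))$ — and then takes the disjoint sum, invoking $V(\mathbf T^0+\mathbf T^1)=V(\mathbf T^0)\cap V(\mathbf T^1)=\emptyset$ (Proposition~\ref{sums}). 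This sum trick sidesteps entirely the simultaneous-sealing difficulty you flagged as ``the hard part,'' and is the key idea your proposal is missing.
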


On the other extreme, it is possible to have a $\kappa$-Souslin tree $\mathbf T$ with $V(\mathbf T)$ as large as possible.
Again, we obtain a complete characterization:
\begin{mainthm}\label{thmb}
In $\mathsf{L}$, for every (regular uncountable cardinal) $\kappa$ that is not weakly compact, the following are equivalent:
\begin{itemize}
\item there exists a $\kappa$-Souslin tree $\mathbf T$ such that $V(\mathbf T)=\acc(\kappa)$;
\item there exists a $\kappa$-tree $\mathbf T$ such that $V(\mathbf T)=\acc(\kappa)$;
\item $\kappa$ is not subtle.
\end{itemize}
\end{mainthm}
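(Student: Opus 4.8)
The implication from the first bullet to the second is trivial, since a $\kappa$-Souslin tree is in particular a $\kappa$-tree; thus the content of the theorem is carried by the implications $(\text{second})\Rightarrow(\text{third})$ and $(\text{third})\Rightarrow(\text{first})$.

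For $(\text{second})\Rightarrow(\text{third})$, I would argue contrapositively: assuming $\kappa$ is subtle, I claim that no $\kappa$-tree $\mathbf T$ can satisfy $V(\mathbf T)=\acc(\kappa)$ modulo a club. Suppose otherwise, fix such a $\mathbf T=(T,<_T)$, and enumerate $T$ level by level via an injection $e\colon T\to\kappa$; since $\kappa$ is regular and $|T_\xi|<\kappa$ for all $\xi$, the map $\alpha\mapsto\otp(e[T\restriction\alpha])$ is continuous with values below $\kappa$, so $C_0:=\{\alpha<\kappa\mid e[T\restriction\alpha]=\alpha\}$ is a club. Fix a club $C_1$ with $\acc(\kappa)\cap C_1\subseteq V(\mathbf T)$ and set $C:=\acc(\kappa)\cap C_0\cap C_1$. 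For each $\alpha\in C$, the fact that $\alpha\in V(\mathbf T)$ (applied at any node of $T_0$) yields a vanishing $\alpha$-branch $B_\alpha$; put $A_\alpha:=e[B_\alpha]\subseteq\alpha$, and $A_\alpha:=\emptyset$ for $\alpha\notin C$. Applying subtlety to $\langle A_\alpha\mid\alpha<\kappa\rangle$ along the club $C$ produces $\alpha<\beta$ in $C$ with $A_\alpha=A_\beta\cap\alpha$. Since $e[T\restriction\alpha]=\alpha$ and $e$ is injective, decoding gives $B_\alpha=\{y\in B_\beta\mid\h(y)<\alpha\}$. But $B_\beta$, being a $\beta$-branch with $\alpha<\beta$, contains a node $z$ of height exactly $\alpha$, and this $z$ is then an upper bound in $\mathbf T$ for $B_\alpha$ --- contradicting that $B_\alpha$ vanishes.

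For $(\text{third})\Rightarrow(\text{first})$ --- the substantial direction --- I would work in $\mathsf L$ with $\kappa$ regular uncountable, not weakly compact, and not subtle, and build the desired $\kappa$-Souslin tree. The non-weak-compactness of $\kappa$ supplies the standard $\mathsf L$-combinatorics, namely $\square(\kappa)$ together with $\diamondsuit(\kappa)$ (equivalently, a sufficiently strong instance of the proxy principle of Brodsky and Rinot), while the non-subtlety of $\kappa$ hands us a club $D\subseteq\kappa$ and a sequence $\langle A_\alpha\mid\alpha<\kappa\rangle$ with $A_\alpha\subseteq\alpha$ such that $A_\alpha\neq A_\beta\cap\alpha$ for all $\alpha<\beta$ in $D$. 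The plan is then to run a microscopic (Brodsky--Rinot-style) construction of a $\kappa$-Souslin tree $\mathbf T$ in which the $\diamondsuit(\kappa)$-sequence seals off maximal antichains in the usual way, the $\square(\kappa)$-sequence governs coherent extensions through limit levels so that $\mathbf T$ remains a $\kappa$-tree with no cofinal branch, and, at each limit level $\alpha$, the set $A_\alpha$ dictates which cofinal branches of $T\restriction\alpha$ to realize at level $\alpha$ and which to omit, arranged so that through every node of height below $\alpha$ at least one cofinal branch of $T\restriction\alpha$ is left unrealized. Such an omission scheme forces $\alpha\in V(\mathbf T)$ for every $\alpha\in\acc(\kappa)$, hence $V(\mathbf T)=\acc(\kappa)$.

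I expect the crux --- and the precise point where the non-subtlety hypothesis is indispensable --- to lie at the limit levels $\alpha$ of \emph{uncountable} cofinality. There the cofinal branches of $T\restriction\alpha$ that survive the cullings performed at lower limit levels are already scarce, so to keep a vanishing branch through \emph{every} node one must introduce additional splitting at those levels, steered by $A_\alpha$; the failure of coherence of $\langle A_\alpha\mid\alpha<\kappa\rangle$ along $D$ is exactly what prevents the omitted branches --- or the realized ones --- from assembling into a cofinal branch of $\mathbf T$, and, via the argument of $(\text{second})\Rightarrow(\text{third})$, from collapsing into a situation where some ``vanishing'' branch acquires an upper bound. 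This is the mirror image of the obstruction in the subtle case: when $\kappa$ is subtle, no coherent-enough-yet-non-threading scheme of omissions can exist, which is precisely why $V(\mathbf T)=\acc(\kappa)$ is then unattainable.
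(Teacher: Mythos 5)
Your implication $(2)\Rightarrow(3)$ is correct and is essentially the paper's own argument, just inlined: you take an ordinal-based copy of $\mathbf T$, shrink to the club on which $e[T\restriction\alpha]=\alpha$, encode vanishing $\alpha$-branches as subsets of $\alpha$, and let subtlety produce coherent $A_\alpha=A_\beta\cap\alpha$, which decodes to a node of $T_\alpha$ on $B_\beta$ bounding $B_\alpha$. This matches the content of Lemma~\ref{lemma311}($(4)\Rightarrow(1)$), and indeed it shows more: already $V^-(\mathbf T)$ covering a club is enough to contradict subtlety.

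The direction $(3)\Rightarrow(1)$, however, is where your proposal has a real gap; it is a plan rather than a proof, and it elides two points that the paper handles deliberately. First, the paper does not try to feed the non-subtlety witness $\langle A_\alpha\mid\alpha<\kappa\rangle$ \emph{directly} into a microscopic Souslin construction; instead it first converts it (via the $(2)\Rightarrow(3)$ arrow of Lemma~\ref{lemma311}) into an auxiliary $\kappa$-tree $\mathbf K$ whose $V^-(\mathbf K)$ covers a club, and then invokes a separate, self-contained pump-up theorem (Theorem~\ref{thm41}) that builds a $\kappa$-Souslin tree $T$ by ``consulting'' $\mathbf K$ at limit levels: roughly, for each $C$ in the $\p$-sequence, each $x$, and each \emph{branch} $\eta$ of $K\restriction\alpha$, one defines a candidate node $\mathbf b_x^{C,\eta}$, places into $T_\alpha$ only those $\mathbf b_x^{C,\eta}$ with $\eta\in K_\alpha$, and then a vanishing $\eta\in\mathcal B(K\restriction\alpha)\setminus K_\alpha$ certifies $\alpha\in V(T)$ through every $x$. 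Your sketch ``$A_\alpha$ dictates which branches to omit'' is pointing in this direction, but it leaves open the precise bookkeeping that makes the omissions cohere with the $\square$-coherence (otherwise you risk either building a $\kappa$-branch at the end or losing Souslinness); the auxiliary-tree factorization is exactly what makes this manageable. Second, for $\kappa$ a successor cardinal non-subtlety is automatic and carries no information, so your proposed source of the sequence $\langle A_\alpha\rangle$ (a ``thin ladder system satisfying non-coherence on a club'') is not available for free there; the paper instead uses $\square_\lambda+\diamondsuit(\kappa)$ (and, ultimately, the special Aronszajn pump-up of Theorem~\ref{lemma39}) to produce the needed input tree $\mathbf K$ with $V^-(\mathbf K)$ covering a club when $\kappa=\lambda^+$, reserving the $\stp$-failure argument for the (strongly) inaccessible case. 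Your sketch would need to make this case split explicit and supply the construction at limit levels in full; as written, the passage from ``we have a non-coherent thin list'' to ``we have a $\kappa$-Souslin tree with $V(\mathbf T)=\acc(\kappa)$'' is not yet a proof.
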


An interesting feature of the proof of Theorem~\ref{thmb} is that it goes through a pump-up theorem
generating $\kappa$-Souslin trees from other input trees with weaker properties.	For a $\kappa$-tree $\mathbf T$,
let $V^-(\mathbf T)$ denote the set of all $\alpha\in\acc(\kappa)$ such that there exists a vanishing $\alpha$-branch.
If $\mathbf T$ is homogeneous, then $V^-(\mathbf T)$ coincides with $V(\mathbf T)$, but in contrast with Theorem~\ref{thma},
for every normal $\kappa$-Aronszajn tree $\mathbf T$, the set $V^-(\mathbf T)$ is necessarily stationary.\footnote{Note that any $\kappa$-Souslin must be normal on a tail end.}

Our first pump-up theorem asserts that the existence of a special $\kappa$-Aronszajn tree $\mathbf T$
is equivalent to the existence of one with $V(\mathbf T)=\acc(\kappa)$.
Our second pump-up theorem asserts that for every $\kappa$-tree $\mathbf K$ there exists a $\kappa$-tree $\mathbf T$ such that
$V^-(\mathbf K)\setminus V(\mathbf T)$ is nonstationary.
Our third pump-up theorem asserts that assuming an instance of the proxy principle $\p(\ldots)$ from \cite{paper22},\footnote{See Definitions \ref{proxydef} and \ref{proxydef2} below.} the corresponding tree $\mathbf T$ may moreover be made to be $\kappa$-Souslin:

\begin{mainthm}\label{thmc} Suppose that $\p(\kappa,2,{\sq^*},1)$ holds. Then:
\begin{enumerate}
\item For every $\kappa$-tree $\mathbf K$,
there exists a $\kappa$-Sousin tree $\mathbf T$ such that $V^-(\mathbf K)\setminus V(\mathbf T)$ is nonstationary. In particular:
\item There exists a $\kappa$-Sousin tree $\mathbf T$ such that $V(\mathbf T)$ is stationary.
\end{enumerate}
\end{mainthm}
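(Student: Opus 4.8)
Let me think about how to prove Theorem C.\textbf{The plan.} The proof of Theorem~\ref{thmc} will go through the standard machinery for constructing $\kappa$-Souslin trees from instances of the proxy principle $\p(\kappa,2,{\sq^*},1)$, as developed in \cite{paper22}, but with the microscopic construction steered so as to ``inherit'' vanishing branches from the input tree $\mathbf K$. Fix a witnessing $\sq^*$-coherent $C$-sequence $\langle \mathcal C_\delta \mid \delta<\kappa\rangle$ for $\p(\kappa,2,{\sq^*},1)$, together with the sealing function it provides. Since $\p(\kappa,2,\ldots,1)$ delivers (among other things) a nonreflecting stationary set together with a coherent sequence defined on it, we may assume that there is a stationary $S\s\acc(\kappa)$ on which the construction has room to create vanishing levels. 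Without loss of generality, by passing to a restriction, assume $\mathbf K$ is normal; and by the footnote's observation together with the second pump-up theorem, we may also assume $V^-(\mathbf K)$ is stationary (if it is nonstationary there is nothing to prove for part~(1) beyond producing any $\kappa$-Souslin tree, which $\p(\kappa,2,{\sq^*},1)$ already yields, and part~(2) then follows from the second pump-up theorem applied after we have a $\kappa$-Souslin tree in hand).

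\textbf{Key steps, in order.}
\emph{Step 1.} Carry out the level-by-level recursive construction of a normal, splitting, binary $\kappa$-tree $\mathbf T=(T,{<_T})$ with $T\s{}^{<\kappa}2$, where at each limit level $\alpha$ we decide which $\alpha$-branches through $T\restriction\alpha$ get an upper bound. At levels outside the designated stationary set, seal the tree against potential antichains using the $\p$-witness in the usual way (the ``$\sq^*$'' and the parameter $\mu=2$ give us just enough coherence and freedom to do this while keeping $|T_\alpha|<\kappa$).
\emph{Step 2.} At each $\alpha$ in the designated stationary set where $\mathbf K$ has a vanishing $\alpha$-branch, fix such a branch $b_\alpha\s K\restriction\alpha$ and use a fixed (ground-model) level-preserving injection of $K\restriction\alpha$ into $T\restriction\alpha$ — or rather, thread the two constructions together from the start so that $T\restriction\alpha$ contains an isomorphic copy of $K\restriction\alpha$ — and arrange that the image of $b_\alpha$ is \emph{not} extended to level $\alpha$, i.e., it becomes a vanishing $\alpha$-branch of $\mathbf T$; simultaneously extend enough \emph{other} branches so that every node of $T\restriction\alpha$ still lies below a vanishing $\alpha$-branch, which is needed for $\alpha\in V(\mathbf T)$ rather than merely $\alpha\in V^-(\mathbf T)$. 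Homogeneity of the intended $\mathbf T$ (which we build in) makes $V(\mathbf T)=V^-(\mathbf T)$, so it suffices to ensure each such $\alpha$ carries at least one vanishing $\alpha$-branch.
\emph{Step 3.} Verify that $V^-(\mathbf K)\setminus V(\mathbf T)$ is nonstationary: by construction every $\alpha$ in (the stationary set)$\,\cap\,V^-(\mathbf K)$ lands in $V(\mathbf T)$, and the complement of that stationary set meets $V^-(\mathbf K)$ in a nonstationary set by the nonreflection built into the $\p$-witness. Finally, the standard sealing argument shows $\mathbf T$ has no $\kappa$-sized antichain and no $\kappa$-branch, hence is $\kappa$-Souslin, giving~(1); and~(2) is the special case $\mathbf K=$ any normal $\kappa$-Aronszajn tree, whose $V^-$ is stationary by the result quoted before the statement.

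\textbf{Main obstacle.} The delicate point is reconciling two competing demands at limit levels in the designated stationary set: the \emph{sealing} requirement (kill every antichain that has appeared, which forces us to extend certain branches) versus the \emph{vanishing} requirement (leave the image of $b_\alpha$, and indeed a cofinal-in-$T\restriction\alpha$ family of branches, without upper bounds). One must check that the antichain-sealing can always be completed using branches disjoint from — or at least not threatening — the reserved vanishing branches, and that this remains possible while keeping each new level of size $<\kappa$; this is exactly where the coherence supplied by $\sq^*$ (as opposed to the stricter $\sq$) and the value of the second parameter being $2$ are used, and it is the step that requires the most care in matching the bookkeeping of \cite{paper22} to the present purpose. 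A secondary subtlety is ensuring the copy of $\mathbf K$ inside $\mathbf T$ is arranged coherently across all levels at once, which is handled by interleaving the two recursions rather than defining $\mathbf T$ first and embedding $\mathbf K$ afterward.
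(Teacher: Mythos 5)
Your plan correctly identifies the general strategy — modify the microscopic Souslin-tree construction from $\p(\kappa,2,{\sq^*},1)$ so as to ``inherit'' vanishing branches from the input tree — and you correctly identify the central tension (sealing antichains versus leaving prescribed branches without upper bounds). But the proposal stops short of the actual mechanism, and the mechanism is the whole ballgame here; the parts you wave at are exactly where the proof lives.

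Concretely, what the paper does (Theorem~\ref{thm41}, then Corollary~\ref{cor310}) is \emph{not} to embed an isomorphic copy of $K\restriction\alpha$ into $T\restriction\alpha$ and decline to extend the image of a chosen vanishing branch. Instead, the successor levels are widened: $T_{\alpha+1}:=\{t{}^\smallfrown\langle 0\rangle,\ t{}^\smallfrown\langle 1\rangle,\ t{}^\smallfrown\langle \eta\rangle\mid t\in T_\alpha,\ \eta\in K_\alpha\}$, so that the nodes of $T$ literally record nodes of $K$ in their last coordinate. At a limit level $\alpha$, one builds, for each $C\in\mathcal C_\alpha$, $x\in T\restriction C$, and \emph{each} $\eta\in\mathcal B(K\restriction\alpha)$, a branch $b^{C,\eta}_x$ whose successor steps stamp in $\eta\restriction\beta^-$; and then $T_\alpha$ consists exactly of the limits $\mathbf b^{C,\eta}_x$ with $\eta\in K_\alpha$ (i.e.\ those $\eta$ that have an upper bound in $K$). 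The punchline (Claim~\ref{c372}) is that if $\alpha\in V^-(K)$ one picks a vanishing $\eta\in\mathcal B(K\restriction\alpha)\setminus K_\alpha$, and then for \emph{every} $x\in T\restriction\alpha$ the branch through $\mathbf b^{C,\eta}_x$ has no upper bound in $T_\alpha$: any candidate upper bound is $\mathbf b^{D,\xi}_y$ for some $\xi\in K_\alpha$, and since $\eta\neq\xi$ and $\sup(C\cap D)=\alpha$, the stamps $\eta\restriction\beta$ and $\xi\restriction\beta$ eventually disagree. Note two consequences that your plan either gets wrong or leaves unresolved: (a)~the tree $T$ is \emph{not} binary (its nodes take values in $H_\kappa$), and the coding simply cannot be carried out in a level-preserving way inside ${}^{<\kappa}2$; and (b)~homogeneity is not needed and is not established in this construction — the parametrization by $(C,\eta,x)$ already puts a vanishing branch through every node of $T\restriction\alpha$ directly, which is what $\alpha\in V(T)$ (not merely $V^-(T)$) requires.

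Finally, for part~(2) the paper does not reduce to ``take $\mathbf K$ to be a normal $\kappa$-Aronszajn tree''; it first enlarges $\mathbf K$ to $\mathbf K+\mathbf H$ where $\mathbf H$ has $V^-(\mathbf H)\supseteq\acc(\kappa)\cap E^\kappa_{\le\chi}$ (Corollary~\ref{cor224}), and then applies Theorem~\ref{thm41} with $\chi=\omega$. Your alternative route (apply (1) to any $\kappa$-Aronszajn tree, whose $V^-$ is stationary by Proposition~\ref{prop22}(1)) is fine as stated, but your alternative fallback ``if $V^-(\mathbf K)$ is nonstationary, part~(2) follows from the second pump-up theorem'' is not: that pump-up theorem only produces a $\kappa$-tree, not a $\kappa$-Souslin one, and obtaining the Souslin version is exactly the content of the present theorem. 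In short: the skeleton of your proposal points in the right direction, but the $K_\alpha$-stamping at successor steps and the resulting parametrization of $T_\alpha$ — which is the actual idea — is absent, and the binary/homogeneity framing you propose would need to be abandoned to make it work.
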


The preceding addresses the problem of ensuring $V(\mathbf T)$ to cover some stationary set $S$.
The next theorem addresses the dual problem.
Along the way, it provides a cheap way to obtain a family of $2^\kappa$-many $\kappa$-Souslin trees that are not pairwise club-isomorphic.

\begin{mainthm}\label{thmd}  If $\diamondsuit(S)$ holds for some nonreflecting stationary subset $S$ of a strongly inaccessible cardinal $\kappa$,
then there is an almost disjoint family $\mathcal S$ of $2^\kappa$ many stationary subsets of $S$ such that,
for each $S'\in\mathcal S$, there is a $\kappa$-Souslin tree $\mathbf T$ with $V(\mathbf T)=S'$.
\end{mainthm}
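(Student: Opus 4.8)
The plan is to let one sufficiently generic $\diamondsuit(S)$-sequence do two jobs. Since $\kappa$ is strongly inaccessible, $2^{<\kappa}=\kappa$, so a routine coding upgrade of $\diamondsuit(S)$ produces a sequence $\langle(f_\alpha,D_\alpha)\mid\alpha\in S\rangle$, with $f_\alpha\colon\alpha\to 2$ and $D_\alpha\s\alpha$, such that for every $f\colon\kappa\to 2$ and every $D\s\kappa$ the set $\{\alpha\in S\mid f_\alpha=f\restriction\alpha,\ D_\alpha=D\cap\alpha\}$ is stationary. For $b\colon\kappa\to 2$, put $S_b:=\{\alpha\in S\mid f_\alpha=b\restriction\alpha\}$. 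Then each $S_b$ is stationary (apply the guessing to the pair $(b,\emptyset)$), while for $b\ne b'$ every $\alpha\in S_b\cap S_{b'}$ satisfies $b\restriction\alpha=f_\alpha=b'\restriction\alpha$, so $S_b\cap S_{b'}$ is bounded in $\kappa$, hence nonstationary; in particular $b\mapsto S_b$ is injective, and $\mathcal S:=\{S_b\mid b\colon\kappa\to 2\}$ is an almost disjoint family of $2^\kappa$ many stationary subsets of $S$. Moreover, for each fixed $b$, the restriction $\langle D_\alpha\mid\alpha\in S_b\rangle$ is a $\diamondsuit(S_b)$-sequence: given $D\s\kappa$ and a club $C$, apply the guessing to $(b,D)$ to obtain $\alpha\in C$ with $f_\alpha=b\restriction\alpha$ (so $\alpha\in S_b$) and $D_\alpha=D\cap\alpha$. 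Finally, since $V(\mathbf T)\s\acc(\kappa)$ always, replace $S$ by $S\cap\acc(\kappa)$ at the outset---preserving stationarity, nonreflection and the guessing---so that every member of $\mathcal S$ is a subset of $\acc(\kappa)$.

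Now fix $S':=S_b\in\mathcal S$; the remaining task is to build a $\kappa$-Souslin tree $\mathbf T=(T,{<_T})$ with $V(\mathbf T)=S'$, by recursion on the levels, following the template of the proxy-based Souslin-tree construction behind Theorem~\ref{thmc}: one uses $\langle D_\alpha\mid\alpha\in S_b\rangle$ to seal antichains, and a sequence $\langle c_\alpha\mid\alpha\in\acc(\kappa)\rangle$ with each $c_\alpha$ a club in $\alpha$ disjoint from $S$ (furnished by the nonreflection of $S$) to navigate limit levels, maintaining that each $T_{<\alpha}:=\bigcup_{\beta<\alpha}T_\beta$ is a normal splitting $\alpha$-tree with levels of size $<\kappa$ (preserved at limits because $\kappa$ is a regular strong limit) through every node of which there pass $\alpha$-branches navigating the $c_\beta$'s. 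One splits every node at successor stages. At a limit stage $\alpha\notin S'$ one lets $T_\alpha$ be the set of all $\alpha$-branches of $T_{<\alpha}$; then $|T_\alpha|\le 2^{|\alpha|}<\kappa$, normality is preserved, and no $\alpha$-branch is vanishing, so $\alpha\notin V(\mathbf T)$. At a limit stage $\alpha\in S'$ one reads $D_\alpha$ as a subset $A\s T_{<\alpha}$ and chooses $T_\alpha$ among the $\alpha$-branches of $T_{<\alpha}$ so that: (i) if $A$ is a maximal antichain then every member of $T_\alpha$ meets $A$; (ii) every node of $T_{<\alpha}$ is extended by a member of $T_\alpha$; and (iii) every node of $T_{<\alpha}$ lies on an $\alpha$-branch that is \emph{not} in $T_\alpha$, which is then a vanishing $\alpha$-branch through it, so $\alpha\in V(\mathbf T)$. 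By (iii) and the clause for $\alpha\notin S'$ we get $V(\mathbf T)=S'$; there is no $\kappa$-sized antichain, since any maximal antichain $A$ of $\mathbf T$ has $A\cap T_{<\alpha}$ maximal in $T_{<\alpha}$ for club-many $\alpha$, hence by the $\diamondsuit(S_b)$-property equals $D_\alpha$ and is sealed at some $\alpha\in S'$, forcing $A=A\cap T_{<\alpha}$, which has size $<\kappa$; and there is no $\kappa$-branch, since a $\kappa$-branch in a splitting tree yields a $\kappa$-sized antichain (take, at each level, an immediate successor off the branch). Hence $\mathbf T$ is $\kappa$-Souslin with $V(\mathbf T)=S'$.

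The main obstacle is meeting clauses (i)--(iii) simultaneously with normality and splitting at the limit stages $\alpha\in S'$. The delicate point is that a node lying above some $a\in A$ has \emph{all} of its $\alpha$-branches meeting $A$, so the vanishing branch promised by (iii) for such a node is forced to meet $A$; this is harmless, because sealing requires only that the branches we \emph{extend} meet $A$, not that all $A$-meeting branches be extended. Accordingly, one extends just enough $A$-meeting branches to cover every node and keep the tree splitting, and---by a sub-recursion along an enumeration of the fewer than $\kappa$ nodes of $T_{<\alpha}$---designates one further branch through each node as vanishing, using that in a splitting tree every node lies on many $\alpha$-branches. The only genuinely new ingredient relative to the Souslin constructions of the earlier sections is the clause for $\alpha\notin S'$, which is what pins $V(\mathbf T)$ down to $S'$ rather than some superset; and the sole role of the nonreflection of $S$ is to make the limit levels navigable when $\cf(\alpha)$ is uncountable: the navigating branches follow the clubs $c_\beta\s\beta\setminus S$, hence never depend on the branches that were thinned out---those occur only at levels in $S'\s S$---so normality propagates through limits. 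The remaining matters---level-size bookkeeping, normality at the $S'$-stages, and the coding letting a single $\diamondsuit(S)$-sequence feed both $\mathcal S$ and the trees---are routine.
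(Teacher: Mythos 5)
Your first step---splitting $\diamondsuit(S)$ into an almost-disjoint family $\{S_b\}_{b\in{}^\kappa 2}$ of stationary sets each still carrying $\diamondsuit$---is correct and essentially identical to the paper's own reduction, after which the paper invokes Corollary~\ref{cor55} (via Theorem~\ref{thm54}). Your direct construction of the tree (full levels off $S'$, thinned-and-sealed levels on $S'$, navigating via clubs $c_\alpha\subseteq\alpha\setminus S$ furnished by nonreflection) is in spirit the paper's Theorem~\ref{thm54} with the proxy-principle scaffolding stripped away, and the navigation itself is sound: $c_\alpha$ steps only through levels where the tree is full, so every node has an $\alpha$-branch and normality propagates.

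There is, however, a genuine gap in the step that designates vanishing branches at a limit level $\alpha\in S'$. You run a sub-recursion over the $\lambda:=|T_{<\alpha}|$ nodes, choosing one branch to put into $T_\alpha$ and one to exclude per node, supported by the assertion that "in a splitting tree every node lies on many $\alpha$-branches." For that recursion to avoid collisions you need strictly more than $\lambda$-many $\alpha$-branches through each node, and the only general lower bound available is $2^{\cf(\alpha)}$, which need not exceed $\lambda$ (consider $\alpha\in S'\cap E^\kappa_\omega$ once $|T_{<\alpha}|>2^{\aleph_0}$; since the tree need not be balanced, finer counting is not automatic either). The paper's construction sidesteps counting entirely: for each $z$ on the navigation club $C$ it builds two canonical branches $\mathbf b_z^C$ and $\mathbf d_z^C$ which provably never coincide, because at every $\beta\in\nacc(C)$ with $\sup(C\cap\beta)$ above $\max\{\h(z),\h(z')\}$, $\mathbf b_z^C$ steps through a $0$-child while $\mathbf d_{z'}^C$ steps through a $1$-child (and the sealing nodes are sought above that $0$- or $1$-child, so the mark is preserved). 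Hence $T_\alpha:=\{\mathbf b_z^C\mid z\in T\restriction C\}$ automatically leaves $\mathbf d_z^C$ as a vanishing branch through every node, with no cardinality estimate required. Your proof should incorporate this $0/1$-marking (or an equivalent device) in place of the counting heuristic, which as stated is not justified.
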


Let us now come back to the motivating problem of getting instances of $\clubsuit_{\ad}$.
By \cite[Theorem~2.30]{paper48},
if $\kappa$ is weakly compact, then $\clubsuit_{\ad}(S)$ fails for every $S$ with $\reg(\kappa)\s S\s\kappa$.
This raises the question as to whether $\clubsuit_{\ad}(S)$ may hold over a large subset $S$ of a cardinal $\kappa$ that is close to being weakly compact.
We answer this question in the affirmative:
\begin{mainthm}\label{thme} Assuming the consistency of a weakly compact cardinal, it is consistent that for some strongly inaccessible cardinal $\kappa$
satisfying $\chi(\kappa)=\omega$,\footnote{$\chi(\kappa)$ can be understood as measuring how far $\kappa$ is from being weakly compact; see Definition~\ref{defcnm} below.}
there is a $\kappa$-Souslin tree $\mathbf T$ such that $V(\mathbf T)=\acc(\kappa)$.
\end{mainthm}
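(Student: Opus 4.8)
The plan is to manufacture the desired model by a mild forcing over a ground model with a weakly compact cardinal, and then to run the pump-up theorems inside it. One may assume that the ground model is $\mathsf L$ (to which weak compactness is downward absolute), so that $\gch$ and $\diamondsuit(\kappa)$ hold for a weakly compact cardinal $\kappa$. First I would force with the $<\kappa$-strategically-closed poset $\mathbb Q$ of Lambie-Hanson and Rinot that, from a weakly compact $\kappa$, adds a C-sequence over $\kappa$ witnessing $\chi(\kappa)=\omega$ while keeping $\kappa$ strongly inaccessible; its conditions are partial coherent C-sequences on a bounded part of $\kappa$. As $\mathbb Q$ adds no new bounded subsets of $\kappa$, it preserves $\gch$ below $\kappa$ and (by standard arguments) $\diamondsuit(\kappa)$; being of size $\kappa$ it is $\kappa^+$-cc and preserves all cardinals, so $\kappa$ stays strongly inaccessible in the extension $W$. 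The weak compactness of $\kappa$ in $\mathsf L$ is what makes $\chi(\kappa)\ge\omega$ survive to $W$, ruling out any C-sequence over $\kappa$ of smaller, finite, complexity; the reverse inequality $\chi(\kappa)\le\omega$ is read off from the generic sequence.

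Next I would work in $W$. The generic object, together with $\diamondsuit(\kappa)$, yields an instance of the proxy principle $\p(\kappa,2,{\sq^*},1)$ by the standard derivation of proxy principles from coherent sequences (as in \cite{paper22}), so Theorem~\ref{thmc}(1) becomes applicable. It then remains to feed that theorem an input $\kappa$-tree $\mathbf K$ with $V^-(\mathbf K)=\acc(\kappa)$. I would build such a $\mathbf K$ directly from the generic C-sequence and $\diamondsuit(\kappa)$, by essentially the construction underlying the forward implication of Theorem~\ref{thmb}, with the generic sequence playing the role there played by the failure of subtlety: at every $\alpha\in\acc(\kappa)$ and through every node of height less than $\alpha$ one may thin to a branch of height $\alpha$ that the tree fails to bound, i.e.\ to a vanishing $\alpha$-branch. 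Granting such $\mathbf K$, Theorem~\ref{thmc}(1) returns a $\kappa$-Souslin tree $\mathbf T$ with $V^-(\mathbf K)\setminus V(\mathbf T)$ --- equivalently, $\acc(\kappa)\setminus V(\mathbf T)$ --- nonstationary. Since always $V(\mathbf T)\s\acc(\kappa)$, the sets $V(\mathbf T)$ and $\acc(\kappa)$ agree modulo a club, and a routine restriction-and-reindexing of $\mathbf T$ to that club (as in the first pump-up theorem) yields $V(\mathbf T)=\acc(\kappa)$ on the nose. As $\kappa$ is strongly inaccessible with $\chi(\kappa)=\omega$ in $W$, this completes the argument.

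The hardest part will be the simultaneous bookkeeping demanded of the single extension $W$. On the one hand, $\chi(\kappa)=\omega$ forces $\kappa$ to stay extremely close to weakly compact: enough reflection must survive both to keep $\kappa$ strongly inaccessible and to keep $\chi(\kappa)$ from collapsing to a finite value. On the other hand, producing a $\kappa$-Souslin tree $\mathbf T$ with $V(\mathbf T)=\acc(\kappa)$ requires reflection to have been destroyed to a degree that, by Theorem~\ref{thmb}, is impossible in $\mathsf L$ once $\kappa$ is subtle. Reconciling the two is the real work: one must verify that the very C-sequence that keeps $\chi(\kappa)=\omega$ (and $\kappa$ inaccessible) in $W$ is nonetheless rich enough to witness $\p(\kappa,2,{\sq^*},1)$ and to support the everywhere-vanishing input tree $\mathbf K$. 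The $\chi(\kappa)\ge\omega$ half of that computation, carried out in the presence of the generic C-sequence, is the single most demanding step.
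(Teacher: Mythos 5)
Your overall blueprint --- force over a ground model with a weakly compact $\kappa$ to get $\chi(\kappa)=\omega$, check that a proxy principle and $\diamondsuit(\kappa)$ survive, feed an everywhere-vanishing input tree $\mathbf K$ into a pump-up theorem, and restrict to a club at the end --- matches the paper in spirit, and you correctly identify the forcing as the Lambie-Hanson--Rinot poset that produces $\chi(\kappa)=\omega$ while keeping $\kappa$ strongly inaccessible. But there are two substantive gaps, one in the proxy principle and one in the source of $\mathbf K$, and they are not minor.

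First, you claim that the generic object yields $\p(\kappa,2,{\sq^*},1)$, so that Theorem~\ref{thmc}(1) is applicable. This is not what the forcing provides. The relevant poset adds a $\square^{\ind}(\kappa,\omega)$-sequence, in which each limit ordinal $\alpha$ carries $\omega$-many clubs $\langle C_{\alpha,i}\rangle$ that are nested only at the level of accumulation points and coherent via the $\sq$-relation indexed across $i$. There is no obvious way to collapse this into a single $\sq^*$-coherent $C$-sequence $\langle C_\alpha\rangle$ in which $C_{\bar\alpha}$ and $C_\alpha\cap\bar\alpha$ differ only on a bounded initial segment; the sections at different indices disagree in an essential way. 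What one does get is the indexed proxy $\p(\kappa,\omega^{\ind},{\sq},1)$, and the paper develops a separate pump-up result, Lemma~\ref{l63}, tailored to exactly this hypothesis (which has the added benefit of producing an $\omega$-ascent path). Theorem~\ref{thmc} is the wrong tool here, and you would need an argument that $\p(\kappa,2,{\sq^*},1)$ actually holds in the extension --- such an argument is missing and I doubt one exists for this forcing.

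Second, and more importantly, you never secure the input tree $\mathbf K$ with $V^-(\mathbf K)\supseteq\acc(\kappa)$. You gesture at ``essentially the construction underlying the forward implication of Theorem~\ref{thmb}, with the generic sequence playing the role there played by the failure of subtlety,'' but that implication \emph{is} the failure of subtlety (via Corollary~\ref{stp} and Lemma~\ref{lemma311}); there is no alternative construction of $\mathbf K$ from a generic $C$-sequence alone. The paper's proof requires one to start with a weakly compact cardinal that is \emph{non-subtle} (the least weakly compact will do), and moreover indestructible under $\mathrm{Add}(\kappa,1)$, so that after forcing $\kappa$ is still a strongly inaccessible non-subtle cardinal, and Corollary~\ref{stp} then yields a streamlined $\kappa$-tree $K$ with $V^-(K)$ covering a club. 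Omitting the non-subtlety hypothesis leaves a genuine hole: if $\kappa$ were subtle in the extension, $\kappa$-$\stp$ would hold (since $\kappa$ is strongly inaccessible, all lists are thin), and no such $\mathbf K$ could exist. Relatedly, working over $\mathsf L$ is not in itself a problem, but the weakly compact you pick must be non-subtle, and this should be said.

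Finally, you locate the ``single most demanding step'' in the verification $\chi(\kappa)\ge\omega$; in fact this is cited directly from the Lambie-Hanson--Rinot analysis of the forcing. The genuinely nontrivial work lying ahead in the paper's proof is a density argument inside the forcing (Claim~6.71 of the text) showing that the generically added indexed sequence already satisfies the hitting clause of $\p^-(\kappa,\omega^{\ind},{\sq},1)$, and a related argument extracting $\diamondsuit(E^\kappa_\omega)$ from the generic. Neither of those is addressed in your proposal.
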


In the appendix to this paper, we improve a result from \cite{paper48} concerning the connection between
Ostaszewski's principle $\clubsuit$ and the principle $\clubsuit_{\ad}$.
As a byproduct, we obtain the following unexpected result:

\begin{mainthm}\label{thmf} If $\clubsuit(S)$ holds over a nonreflecting stationary $S\s\kappa$, then there exists a Dowker space of size $\kappa$.
\end{mainthm}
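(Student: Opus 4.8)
The plan is to route the argument through the principle $\clubsuit_{\ad}$. Recall that $\clubsuit_{\ad}(S)$, as defined in the appendix, asks for a guessing ladder system $\langle A_\alpha \mid \alpha \in S\rangle$ whose ladders are pairwise almost disjoint, and that, when $S$ is nonreflecting, one can manufacture from such a system a Dowker space of size $\kappa$ --- that is, a normal topological space $X$ with $|X| = \kappa$ for which $X \times [0,1]$ is not normal, equivalently, a normal space that is not countably paracompact --- by adapting de~Caux's construction of a small Dowker space from $\clubsuit$, as was essentially done in \cite{paper48}. Granting this, Theorem~\ref{thmf} reduces to the implication ``$\clubsuit(S) \Rightarrow \clubsuit_{\ad}(S)$ for nonreflecting stationary $S \subseteq \kappa$'', which is the promised improvement over the analysis of the connection between $\clubsuit$ and $\clubsuit_{\ad}$ carried out in \cite{paper48}; and it suffices, in fact, to produce $\clubsuit_{\ad}(S')$ for \emph{some} stationary $S' \subseteq S$, since the Dowker space built from it still has cardinality $\kappa$.

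To prove that implication, I would fix a $\clubsuit(S)$-sequence $\langle A_\alpha \mid \alpha \in S\rangle$, each $A_\alpha$ a cofinal subset of $\alpha$ of order-type $\cf(\alpha)$, and, using that $S$ is nonreflecting, fix for every limit $\delta < \kappa$ a club $e_\delta \subseteq \delta$ disjoint from $S$. The device $e_\delta$ supplies the structure that is missing at points of uncountable cofinality: since $e_\delta$ is closed and disjoint from $S$, no $\beta \in S \cap \delta$ is an accumulation point of $e_\delta$, so each such $\beta$ is pinned inside a single bounded ``block'' of $e_\delta$, and the ladder attached to any $S$-point lying in a block below a given point of $e_\delta$ is automatically bounded by that point. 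With this in hand I would carry out a transfinite recursion along $S$: having thinned $A_\beta$ to a cofinal $A'_\beta \subseteq A_\beta$ for all $\beta \in S \cap \alpha$, I would choose a cofinal $A'_\alpha \subseteq A_\alpha$ almost disjoint from each $A'_\beta$ ($\beta \in S \cap \alpha$) by selecting its elements block-by-block along $e_\alpha$, exploiting that an element picked from a given block is automatically disjoint from the ladders of all $S$-points lying in strictly lower blocks; the genuine constraints at each stage then involve only the $S$-points located in the block currently being populated, where one dodges their (boundedly many) tails while keeping $A'_\alpha$ cofinal, shrinking $S$ if a block proves too crowded. Since $A'_\alpha \subseteq A_\alpha$ at every step, the guessing is inherited verbatim: for each unbounded $X \subseteq \kappa$, every one of the stationarily many $\alpha$ with $A_\alpha \subseteq X$ also satisfies $A'_\alpha \subseteq X$. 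Thus $\langle A'_\alpha \mid \alpha \in S'\rangle$, for a suitable stationary $S' \subseteq S$, witnesses $\clubsuit_{\ad}(S')$.

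For completeness I would then recall the Dowker space itself: its underlying set is essentially $\kappa \times \omega$, with every point isolated save for a column of ``limit'' points $(\alpha, 0)$ whose basic neighborhoods are determined, in de~Caux fashion, by the guessing ladders $A'_\alpha$; a decreasing $\omega$-sequence of closed sets with empty intersection is read off from the $\omega$-coordinate, the $\clubsuit_{\ad}$-guessing is precisely what forces the intersection of any sequence of open enlargements to be nonempty, and normality is verified using the almost-disjointness of the ladders together with the nonreflection of $S$. I expect the main obstacle to be the recursion of the second paragraph at points $\alpha \in S$ of \emph{uncountable} cofinality, where a naive greedy thinning can fail because the ladders of cofinally many earlier $S$-points may pile up; the real work is in organizing the selection along the blocks of $e_\alpha$ --- and accepting the loss of a nonstationary slice of $S$ --- so that a cofinal, pairwise almost-disjoint refinement still exists. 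Verifying normality of the resulting space, though lengthy, is a routine adaptation of the argument in \cite{paper48}.
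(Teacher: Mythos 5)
Your high-level plan matches the paper's: derive an almost-disjoint guessing principle from $\clubsuit(S)$ together with nonreflection, and then cite a de~Caux/Good-style Dowker construction. You have also correctly identified the crucial use of nonreflection: for each $\delta\in S$ fix a club $C_\delta\subseteq\delta$ disjoint from $S$, and thin the ladder at $\delta$ so that its accumulation points miss $S$, after which almost disjointness with the ladders at lower $S$-points is automatic. This is precisely the engine of the paper's Lemma~\ref{lem7.1}. However, your recursion is unnecessary and creates a phantom difficulty. The almost-disjointness of the thinned $A'_\delta$ from $A'_\gamma$ (for $\gamma\in S\cap\delta$) follows solely from $\acc^+(A'_\delta)\cap S=\emptyset$, regardless of what $A'_\gamma$ is; so one may thin each ladder independently (interleaving a point of $C_\delta$ between consecutive elements, as in \cite[Lemma~3.3]{paper23}), and the ``crowded block / shrink $S$'' worry in your second paragraph evaporates.

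The more substantial gap is the strength of the guessing principle you end up with. What you produce is, in the paper's notation, $\clubsuit_{\ad}(\{S'\},1,1)$: one stationary set, one ladder per point, guessing a single cofinal set. The Dowker constructions being invoked require more: Good's space needs simultaneous guessing of two sets, and the cited \cite[Theorem~A.1]{paper54} consumes $\clubsuit_{\ad}(\mathcal S,1,2)$ for a \emph{partition} $\mathcal S$ of a nonreflecting stationary set into $\kappa$ many stationary pieces. Accordingly, the paper's Corollary~\ref{cor84} delivers $\clubsuit_{\ad}(\mathcal S,\omega,{<}\omega)$: before performing the $C_\delta$-interleaving you describe, Lemma~\ref{lem7.1} first splits $S$ into $\kappa$ many stationary sets each carrying $\clubsuit$ (via \cite[Theorem~3.7]{paper23}) and upgrades the guessing to finitely many sets at once (via \cite[Lemma~3.5]{paper48}). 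Without these two preparatory steps, your argument stops short of what the Dowker space construction needs, so the final topological step is not actually available to you as stated.
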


\subsection{Organization of this paper}

In Section~\ref{sect2}, we develop the basic theory of vanishing levels of trees.
It is proved that if $\kappa$ is not a strong limit, then $V^-(\mathbf T)$ is stationary for every normal and splitting $\kappa$-tree $\mathbf T$.
It is proved that for every $\kappa$-tree $\mathbf K$, there exists a $\kappa$-tree $\mathbf T$ such that $V^-(\mathbf K)\setminus V(\mathbf T)$ is nonstationary,
and that the existence of a special $\kappa$-Aronszajn tree $\mathbf T$
is equivalent to the existence of an homogeneous one with $V(\mathbf T)=\acc(\kappa)$.

In Section~\ref{sec4}, we prove Theorem~\ref{thmc} and some variations of it. As a corollary, we get Theorem~\ref{thmb}
and infer that if $\square_\lambda\mathrel{+}\diamondsuit(\lambda^+)$ holds
for an infinite cardinal $\lambda$,
or if $\square(\lambda^+)\mathrel{+}\gch$ holds for a regular uncountable $\lambda$, then there exists a $\lambda^+$-Souslin tree $\mathbf T$ with $V(\mathbf T)=\acc(\lambda^+)$.

In Section~\ref{sect5}, we address the problem of realizing a given nonreflecting stationary subset of $\kappa$ as $V(\mathbf T)$ for some $\kappa$-Souslin tree $\mathbf T$.
The proof of Theorem~\ref{thmd} will be found there.

In Section~\ref{sect6}, we address the problem of
constructing an homogeneous $\kappa$-Souslin tree $\mathbf T$ such that $V(\mathbf T)=\{ \alpha<\kappa\mid \cf(\alpha)\in x\}$ for a prescribed nonempty finite set $x\s\reg(\kappa)$.
In particular, this is shown to be feasible in $\mathsf L$ whenever $\kappa$ is ${<}\max(x)$-inaccessible.
The proof of Theorem~\ref{thma} will be found there.

In Section~\ref{sect7}, we deal with Souslin trees admitting an ascent path. It is proved that for every uncountable cardinal $\lambda$, $\square_\lambda+\gch$ entails
that for every $\mu\in\reg(\cf(\lambda))$
there exists a $\lambda^+$-Souslin tree $\mathbf T$ with a $\mu$-ascent path such that $V(\mathbf T)=\acc(\lambda^+)$.
The proof of Theorem~\ref{thme} will be found there.

Section~\ref{secA} is a short appendix where we improve \cite[Lemma~2.10]{paper48},
from which we obtain the proof of Theorem~\ref{thmf}.

\subsection{Notation and conventions}\label{nocon}
$H_\kappa$ denotes  the collection of all sets of hereditary cardinality less than $\kappa$.
$\reg(\kappa)$ denotes the set of all infinite regular cardinals $<\kappa$.
For $\chi\in\reg(\kappa)$, $E^\kappa_\chi$
denotes the set $\{\alpha < \kappa \mid \cf(\alpha) = \chi\}$, and
$E^\kappa_{\geq \chi}$, $E^\kappa_{<\chi}$, $E^\kappa_{\neq\chi}$, are defined analogously.

For a set of ordinals $C$, we write $\ssup(C) := \sup\{\alpha + 1 \mid\alpha \in C\}$, $\acc^+(C) := \{\alpha < \ssup(C) \mid \sup(C \cap \alpha) = \alpha > 0\}$,
$\acc(C) := C \cap \acc^+(C)$, and $\nacc(C) := C \setminus \acc(C)$.
For a set $S$, we write $[S]^{\chi}$ for $\{A\s S\mid |A|=\chi\}$, and $[S]^{<\chi}$ is defined analogously.
For a set of ordinals $S$, we  identify $[S]^2$ with $\{ (\alpha,\beta)\mid \alpha,\beta\in S, \alpha<\beta\}$,
and we let $\tr(S):=\{ \beta<\ssup(S)\mid \cf(\beta)>\omega\ \&\ S \cap \beta \text{ is stationary in } \beta\}$.

We define four binary relations over sets of ordinals, as follows:
\begin{itemize}
\item $D\sq C$ iff there exists some ordinal $\beta$ such that $D = C \cap \beta$;
\item $D\sq^* C$ iff $D \setminus \varepsilon\sqsubseteq C \setminus \varepsilon$
for some $\varepsilon < \sup(D)$;
\item $D\sqleftup{S} C$ iff $D\sq C$ and $\sup(D)\notin S$;
\item $D \sqleft{\chi} C$ iff $D \sqsubseteq C$ or $\cf(\sup(D))<\chi$.
\end{itemize}

A \emph{list} over a set of ordinals $S$ is a sequence $\vec A=\langle A_\alpha\mid\alpha\in S\rangle$
such that, for each $\alpha\in S$, $A_\alpha$ is a subset of $\alpha$.
It is said to be \emph{thin} if $|\{ A_\alpha\cap\varepsilon\mid \alpha\in S\}|<\ssup(S)$ for every $\varepsilon<\ssup(S)$.
It is said to be \emph{$\xi$-bounded} if $\otp(A_\alpha)\le\xi$ for all $\alpha\in S$.
A \emph{ladder system} over $S$ is a list $\vec A=\langle A_\alpha\mid\alpha\in S\rangle$
such that $\sup(A_\alpha)=\sup(\alpha)$ for every $\alpha\in S$.
It is said to be \emph{almost disjoint}
if $\sup(A_\alpha\cap A_{\alpha'})<\alpha$ for all $\alpha\neq\alpha'$ in $S$.
A \emph{$C$-sequence} over $S$ is a ladder system $\vec C=\langle C_\alpha\mid\alpha\in S\rangle$ such that
each $C_\alpha$ is a closed subset of $\alpha$.
Finally, a (resp.~thin/$\xi$-bounded/almost-disjoint) \emph{$\mathcal C$-sequence} over $S$ is a sequence $\cvec{C}=\langle\mathcal C_\alpha\mid\alpha\in S\rangle$ of nonempty sets
such that every element of $\prod_{\alpha\in S}\mathcal C_\alpha$ is a (resp.~thin/$\xi$-bounded/almost-disjoint) $C$-sequence.

\section{The basic theory of vanishing levels}\label{sect2}
\begin{defn}\label{defn21}
A tree $\mathbf T=(T,<_T)$ is said to be:
\begin{itemize}
\item \emph{Hausdorff} iff for every limit ordinal $\alpha$ and all $x,y\in T_\alpha$,
if $x_\downarrow=y_\downarrow$, then $x=y$;
\item \emph{normal} iff for every pair $\alpha<\beta$ of ordinals, if $T_\beta\neq\emptyset$, then for every $x\in T_\alpha$ there exists $y\in T_\beta$ with $x<_T y$;
\item \emph{$\chi$-complete} iff any $<_T$-increasing sequence of elements of $\mathbf T$, and of length $<\chi$, has an upper bound in $\mathbf T$;
\item \emph{$\varsigma$-splitting} iff every node of $\mathbf T$ admits at least $\varsigma$-many immediate successors,
that is, for every $x\in T$, $|\{ y\in T\mid x<_T y, \h(y)=\h(x)+1\}|\ge\varsigma$.
By \emph{splitting}, we mean $2$-splitting;
\item \emph{$\kappa$-Aronszajn} iff $\mathbf T$ is a $\kappa$-tree with no $\kappa$-branches;
\item \emph{special $\kappa$-Aronszajn tree} iff it is a $\kappa$-Aronszajn and there exists a map $\rho:T\rightarrow T$ satisfying the following:
\begin{itemize}
\item for every non-minimal $x\in T$, $\rho(x)<_T x$;
\item for every $y\in T$, $\rho^{-1}\{y\}$ is covered by less than $\kappa$ many antichains.
\end{itemize}
\end{itemize}
\end{defn}
\begin{remark} All the $\kappa$-Souslin trees constructed in this paper will be Hausdorff, normal and splitting.
\end{remark}

\begin{defn}\label{defn23}	For a $\kappa$-tree $\mathbf T=(T,<_T)$:
\begin{enumerate}
\item $ V^-(\mathbf T)$ denotes the set of all $\alpha\in\acc(\kappa)$ such that there exists a vanishing $\alpha$-branch;
\item $ V(\mathbf T)$ denotes the set of all $\alpha\in\acc(\kappa)$ such that for every $x\in T$ with $\h(x)<\alpha$ there exists a vanishing $\alpha$-branch containing $x$.
\item $\vspec(\kappa):=\{ V(\mathbf T)\mid \mathbf T\text{ is a normal }\kappa\text{-tree}\}$;
\item For $A\s\kappa$, we write $T\restriction A:=\{ x\in T\mid \h(x)\in A\}$.
\end{enumerate}
\end{defn}

Note that if $\mathbf T$ is a $\kappa$-tree such that $V(\mathbf T)$ is cofinal in $\kappa$, then $\mathbf T$ is normal.

\begin{lemma}\label{lemma33} Suppose that $\mathbf T$ is a $\kappa$-tree such that $V^-(\mathbf T)$ (resp.~$V(\mathbf T)$)
covers a club in $\kappa$. Then there exists a subtree $\mathbf T'$ of $\mathbf T$ such that $V^-(\mathbf T)$ (resp.~$V(\mathbf T)$) is equal to $\acc(\kappa)$.
\end{lemma}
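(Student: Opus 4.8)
I read the conclusion as asserting that $V^-(\mathbf T')=\acc(\kappa)$ (resp.\ $V(\mathbf T')=\acc(\kappa)$), the symbol $\mathbf T$ in the displayed conclusion being a typo for $\mathbf T'$. The plan is to let $\mathbf T'$ be $\mathbf T$ restricted to a club set of levels: restricting to a club neither creates nor destroys vanishing branches over the relevant levels, while the reindexing of heights that this induces promotes those levels to all of $\acc(\kappa)$.

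First I would fix a club $C\subseteq\kappa$ with $C\subseteq V^-(\mathbf T)$ (resp.\ $C\subseteq V(\mathbf T)$) and let $\langle\gamma_i\mid i<\kappa\rangle$ be its increasing (hence continuous) enumeration, so that $\gamma_\alpha\in C$ and $\gamma_\alpha=\sup_{i<\alpha}\gamma_i$ for every limit $\alpha<\kappa$. I set $\mathbf T':=(T\restriction C,{<_T})$ in the notation of Definition~\ref{defn23}. A short computation shows that for $x\in T\restriction C$ one has $\otp(\{y\in T\restriction C\mid y<_T x\},{<_T})=\otp(C\cap\h(x))$, so the intrinsic $\mathbf T'$-height of $x$ is the index $i$ with $\h(x)=\gamma_i$; since $\otp(C)=\kappa$, this verifies that $\mathbf T'$ is a $\kappa$-tree whose $i$-th level is exactly $T_{\gamma_i}$ for all $i<\kappa$.

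The main step is a branch-transfer argument. Given a limit ordinal $\alpha<\kappa$ (and, in the $V$-case, a node $y\in T\restriction C$ of $\mathbf T'$-height $<\alpha$, i.e.\ with $\h(y)<\gamma_\alpha$), I use $\gamma_\alpha\in C\subseteq V^-(\mathbf T)$ (resp.\ $V(\mathbf T)$) to obtain a vanishing $\gamma_\alpha$-branch $B$ of $\mathbf T$ (through $y$, in the $V$-case), and I claim that $B':=B\cap(T\restriction C)$ is a vanishing $\alpha$-branch of $\mathbf T'$ (through $y$). That $B'$ is an $\alpha$-branch follows from $\{\h(x)\mid x\in B\}=\gamma_\alpha$, since then $\{\h(x)\mid x\in B\}\cap C=\{\gamma_i\mid i<\alpha\}$ has order type $\alpha$ and is cofinal in $\gamma_\alpha$. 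The point I expect to be the main obstacle is showing $B'$ is still vanishing in $\mathbf T'$: from a putative upper bound $z\in T\restriction C$ of $B'$, the cofinality of the heights of $B'$ in $\gamma_\alpha$ forces $\h(z)\ge\gamma_\alpha$, and since $B$ is linearly ordered with height set $\gamma_\alpha$, every node of $B$ lies strictly below some node of $B'$ and hence below $z$; thus $z$ would be an upper bound of all of $B$ in $\mathbf T$, contradicting the choice of $B$. (The borderline case $\h(z)=\gamma_\alpha$ is dispatched by the same reasoning.)

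Putting the pieces together, every limit $\alpha<\kappa$ belongs to $V^-(\mathbf T')$ (resp.\ $V(\mathbf T')$), and since the reverse inclusion $V^-(\mathbf T')\subseteq\acc(\kappa)$ (resp.\ $V(\mathbf T')\subseteq\acc(\kappa)$) holds by definition, we conclude $V^-(\mathbf T')=\acc(\kappa)$ (resp.\ $V(\mathbf T')=\acc(\kappa)$), as desired. I would not expect any difficulty beyond the vanishing-preservation step sketched above; the remainder is bookkeeping with order types of clubs.
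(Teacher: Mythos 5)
Your proposal is correct and follows exactly the same route as the paper: take $\mathbf T'$ to be the restriction of $\mathbf T$ to a club contained in $V^-(\mathbf T)$ (resp.\ $V(\mathbf T)$). The paper states this in one line and leaves the reindexing and branch-transfer verification implicit; your write-up simply supplies those details (and you are right that the $\mathbf T$ in the conclusion is a typo for $\mathbf T'$).
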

\begin{proof} Let $D\s\kappa$ be a club as in the hypothesis.
Then $\mathbf T':=(T\restriction D,<_T)$ is a subtree as sought.
\end{proof}

\begin{prop}\label{prop22} For a $\kappa$-tree $\mathbf T=(T,<_T)$:
\begin{enumerate}
\item If $\mathbf T$ is a normal $\kappa$-Aronszajn tree, then $V^-(\mathbf T)$ is stationary;
\item If $\mathbf T$ is homogeneous,\footnote{That is, for all $\alpha<\kappa$ and $s,t\in T_\alpha$, there is an automorphism of $\mathbf T$ sending $s$ to $t$.} then $ V^-(\mathbf T)=V(\mathbf T)$.
\end{enumerate}
\end{prop}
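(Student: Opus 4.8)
The plan is to handle the two clauses separately, both being essentially unwindings of the definitions plus a standard reflection-free pigeonhole.

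For clause~(1), I would argue by contradiction: suppose $\mathbf T$ is a normal $\kappa$-Aronszajn tree and $V^-(\mathbf T)$ is nonstationary, so fix a club $C\s\acc(\kappa)$ disjoint from $V^-(\mathbf T)$. The idea is to build a $\kappa$-branch through $\mathbf T$, contradicting Aronszajnness. Start with any $x_0\in T_0$. Recursively, given $x_i$ for $i<\mathrm{something}$, I want to keep climbing. At limit stages $\alpha\in C$: the nodes chosen below cohere into an $\alpha$-branch $B_\alpha$; since $\alpha\notin V^-(\mathbf T)$, $B_\alpha$ is \emph{not} vanishing, so it has an upper bound, and by normality (or just by Hausdorffness together with normality to keep things canonical) I can pick $x_\alpha\in T_\alpha$ lying above all of $B_\alpha$. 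At limit stages $\alpha\notin C$, or at successor stages, I use normality of $\mathbf T$ to extend past $\alpha$ at all (recall $T_\alpha\neq\emptyset$ for every $\alpha<\kappa$, so normality hands me an extension). The subtle point is that between consecutive points of $C$ I must make sure the partial branch I am building really has an upper bound at the next point of $C$; but normality guarantees every node has an extension to every nonempty higher level, and at a limit $\alpha\notin C$ I do not need the branch to be bounded in order to continue — I simply need $T$ to be normal above, which lets me keep a \emph{chosen cofinal branch} rather than an actual node. Cleanest is to run the construction along $C$ itself, letting $C=\{\gamma_i\mid i<\kappa\}$ be the increasing enumeration, maintaining a coherent sequence $\langle x_{\gamma_i}\mid i<\kappa\rangle$ with $x_{\gamma_i}<_T x_{\gamma_j}$ for $i<j$: at successor $i+1$ use normality to extend $x_{\gamma_i}$ into $T_{\gamma_{i+1}}$; at limit $i$, $\gamma_i=\sup_{j<i}\gamma_{j}\in\acc(C)\s C$, the branch $\bigcup_{j<i}(x_{\gamma_j})_\downarrow$ is a $\gamma_i$-branch, which is non-vanishing since $\gamma_i\notin V^-(\mathbf T)$, so it has an upper bound and normality lets me choose $x_{\gamma_i}\in T_{\gamma_i}$ above it. The union $\bigcup_{i<\kappa}(x_{\gamma_i})_\downarrow$ is then a $\kappa$-branch of $\mathbf T$, contradicting that $\mathbf T$ is $\kappa$-Aronszajn. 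I expect this gluing-along-the-club argument to be the main (though still routine) obstacle, chiefly in getting the bookkeeping at limit stages exactly right.

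For clause~(2), the inclusion $V(\mathbf T)\s V^-(\mathbf T)$ is immediate from the definitions, for any $\kappa$-tree and using only that $T_0\neq\emptyset$: given $\alpha\in V(\mathbf T)$, picking any $x$ with $\h(x)<\alpha$ yields a vanishing $\alpha$-branch, so $\alpha\in V^-(\mathbf T)$. For the reverse inclusion, fix $\alpha\in V^-(\mathbf T)$ and a vanishing $\alpha$-branch $B$; let $x\in T$ be arbitrary with $\h(x)<\alpha$. I want a vanishing $\alpha$-branch containing $x$. Let $y$ be the unique element of $B$ with $\h(y)=\h(x)$ (which exists because $B$ is an $\alpha$-branch and $\h(x)<\alpha$). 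By homogeneity there is an automorphism $\sigma$ of $\mathbf T$ with $\sigma(y)=x$. Then $\sigma[B]$ is an $\alpha$-branch (automorphisms are order-isomorphisms, so they send $\alpha$-branches to $\alpha$-branches and preserve the height set), it contains $\sigma(y)=x$, and it is vanishing: if it had an upper bound $z$, then $\sigma^{-1}(z)$ would be an upper bound of $B$, contradicting that $B$ is vanishing. Hence $\alpha\in V(\mathbf T)$. This finishes clause~(2); it is entirely formal once one notes that automorphisms of a tree preserve heights and hence carry $\alpha$-branches to $\alpha$-branches and upper bounds to upper bounds.

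Overall, neither clause requires machinery beyond the definitions: clause~(2) is a one-line symmetry argument, and clause~(1) is the standard ``no club can avoid the vanishing levels of an Aronszajn tree, else recover a cofinal branch'' device. The only place demanding care is the transfinite recursion in clause~(1), specifically ensuring the partial branch is non-vanishing exactly at the limit points of $C$ where I need to land, which is precisely what disjointness from $C$ buys me.
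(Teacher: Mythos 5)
Your proof is correct and takes essentially the same approach as the paper: for clause~(1), both run a recursion along the increasing enumeration of a club disjoint from $V^-(\mathbf T)$, extending via normality at successors and via non-vanishingness at limits, and for clause~(2), both push a vanishing branch around by an automorphism. The minor observations you add (e.g.\ invoking Hausdorffness for canonicity, spelling out the trivial inclusion $V(\mathbf T)\s V^-(\mathbf T)$) are harmless and not needed.
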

\begin{proof}
(1) Suppose not, and fix a club $D\s\kappa$ disjoint from $V^{-}(\mathbf T)$.
We shall construct a $<_T$-increasing sequence $\langle t_\alpha\mid\alpha\in D\rangle$ in such a way that $t_\alpha\in T_\alpha$ for all $\alpha\in D$,
contradicting the fact that $\mathbf T$ is $\kappa$-Aronszajn.
We start by letting $t_{\min(D)}$ be an arbitrary element of $T_{\min(D)}$. Next, for every $\alpha\in D$ such that $t_\alpha$ has already been successfully defined,
we set $\beta:=\min(D\setminus(\alpha+1))$,
and use the normality of $\mathbf T$ to pick $t_\beta$ in $T_\beta$ extending $t_\alpha$.
For every $\alpha\in\acc(D)$ such that $\langle t_\epsilon\mid\epsilon\in D\cap\alpha\rangle$ has already been defined,
the latter clearly induces an $\alpha$-branch,
so the fact that $\alpha\notin V^-(\mathbf T)$ implies that there exists some $t_\alpha\in T_\alpha$
such that $t_\epsilon<_T t_\alpha$ for all $\epsilon\in D\cap\alpha$. This completes the description of the recursion.

(2) Suppose that $\mathbf T$ is homogeneous. Let $\alpha\in V^-(\mathbf T)$,
and fix a vanishing $\alpha$-branch $b$.
Now, given a node $x$ of $\mathbf T$ of height less than $\alpha$,
let $y$ be the unique element of $b$ to have the same height as $x$.
Since $\mathbf T$ is homogeneous, there exists an automorphism $\pi$ of $\mathbf T$ sending $y$ to $x$,
and it is clearly the case that $\pi[b]$ is a vanishing $\alpha$-branch through $x$.
\end{proof}

\begin{prop} If $\square(\kappa)$ holds, then there exists a $\kappa$-Aronszajn tree $\mathbf T$ such that $V(\mathbf T)=E^\kappa_\omega$.
\end{prop}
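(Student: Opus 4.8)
The plan is to build a coherent $\kappa$-Aronszajn tree out of a $\square(\kappa)$-sequence in the familiar way, and then to verify that its vanishing levels are exactly $E^\kappa_\omega$. So let $\langle C_\alpha \mid \alpha \in \acc(\kappa)\rangle$ be a $\square(\kappa)$-sequence, meaning each $C_\alpha$ is a club in $\alpha$ of order type at most some bound less than $\kappa$ wait—actually for $\square(\kappa)$ we only demand coherence ($\bar\beta \in \acc(C_\alpha)$ implies $C_{\bar\beta}=C_\alpha\cap\bar\beta$) and non-triviality (no club $D$ with $C_\alpha = D\cap\alpha$ for all $\alpha\in\acc(D)$). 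For each $\alpha<\kappa$ let $e_\alpha:\alpha\to\otp(C_\alpha)$ wait; instead use the standard walks-based or "coherent sequence of functions" construction. Concretely, I would define by recursion on $\alpha<\kappa$ a coherent sequence $\langle f_\alpha \mid \alpha<\kappa\rangle$ with $f_\alpha:\alpha\to\omega$ finite-to-one, coherent in the sense that $f_\alpha =^* f_\beta\restriction\alpha$ whenever $\alpha<\beta$, and arrange at limit stages $\alpha\in\acc(\kappa)$ to use $C_\alpha$ so that coherence of the $\square$-sequence transfers to coherence of the $f_\alpha$'s. Then let $T := \{ f_\alpha\restriction\beta + g \mid \alpha<\kappa,\ \beta\le\alpha,\ g:\beta\to\mathbb Z,\ |\operatorname{supp}(g)|<\omega,\ \text{range finite}\}$ ordered by end-extension; this is the classical coherent Aronszajn tree, and non-triviality of the $\square(\kappa)$-sequence is exactly what rules out a $\kappa$-branch, so $\mathbf T$ is $\kappa$-Aronszajn.

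Next I would compute $V(\mathbf T)$. First, $\mathbf T$ is homogeneous — the map $h\mapsto h+g$ for a fixed finitely-supported $g$ is an automorphism, and these act transitively on each level — so by Proposition~\ref{prop22}(2) it suffices to compute $V^-(\mathbf T)$. For the inclusion $E^\kappa_\omega\subseteq V^-(\mathbf T)$: if $\cf(\alpha)=\omega$, pick an increasing $\omega$-sequence $\langle\alpha_n\mid n<\omega\rangle$ cofinal in $\alpha$ and build a branch by choosing finitely-supported perturbations $g_n$ of $f_{\alpha_n}$ with $g_{n+1}\restriction\alpha_n = g_n$ but arranging that the accumulated perturbation has infinite support below $\alpha$; then the union $b$ is an $\alpha$-branch whose "limit" would have to be a function $\alpha\to\mathbb Z$ of infinite support, hence is not in $T$, so $b$ vanishes. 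For the reverse inclusion $V^-(\mathbf T)\subseteq E^\kappa_\omega$: if $\cf(\alpha)>\omega$ and $b$ is any $\alpha$-branch, then $b$ determines a function $h:\alpha\to\mathbb Z$; I must show $h\in T$, i.e. that $h =^* f_\alpha\restriction\alpha = f_\alpha$ wait—$h$ differs from some $f_\gamma\restriction$-pieces only finitely on each initial segment, and a pressing-down/coherence argument using $\cf(\alpha)>\omega$ shows the perturbation $h - f_\alpha$ has bounded support below $\alpha$, hence is finitely supported, hence $h\in T_\alpha$ and $b$ is not vanishing. This is the step I'd expect to require the most care: stitching together the "local" descriptions of $b$ on the intervals given by $C_\alpha$ into a single global function that agrees with $f_\alpha$ modulo a finite set uses both the coherence of $\langle f_\alpha\rangle$ and uncountable cofinality essentially.

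Finally, normality of $\mathbf T$ is automatic from the construction (every $f_\alpha\restriction\beta + g$ extends to level $\beta'$ for any $\beta'\le\alpha$, and levels $T_\alpha$ for all $\alpha<\kappa$ are nonempty and of size $<\kappa$ since there are $<\kappa$ many $\alpha$'s and for each a countable-range finite-support perturbation space of size $<\kappa$ — here we may need $\kappa$ to be such that $|[\alpha]^{<\omega}\times\omega^{<\omega}|<\kappa$ for $\alpha<\kappa$, which holds for any regular uncountable $\kappa$). The hard part, as noted, is the coherence bookkeeping in showing an $\alpha$-branch for $\cf(\alpha)>\omega$ has a bound in $\mathbf T$; everything else is routine assembly of standard facts about coherent Aronszajn trees from $\square(\kappa)$.
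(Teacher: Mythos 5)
Your proposal follows essentially the same route as the paper: both construct a coherent $\kappa$-Aronszajn tree from a $\square(\kappa)$-sequence and then argue that for such a tree $V(\mathbf T)=E^\kappa_\omega$. The paper simply cites \cite[Theorem~3.9]{MR2013395} to extract a coherent nontrivial sequence of functions (there $\beta$-valued, $f_\beta:\beta\to\beta$, not $\omega$-valued) and cites \cite[Remark~2.20]{paper48} for the fact that uniformly coherent $\kappa$-Aronszajn trees have $V(\mathbf T)=E^\kappa_\omega$, whereas you unfold those references directly.

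One genuine slip worth flagging: you ask for the coherent sequence to consist of \emph{finite-to-one} maps $f_\alpha:\alpha\to\omega$, but this is impossible once $\alpha\ge\omega_1$ --- there is no finite-to-one map from an uncountable ordinal into $\omega$ --- and $\square(\kappa)$ gives coherence and nontriviality, not finite-to-one-ness (which is tied to $\square_\lambda$ and specialness). Fortunately, finite-to-one-ness plays no role in any part of your argument: nontriviality alone rules out a $\kappa$-branch, the homogeneity-by-translation argument only needs the finitely-supported perturbations, and the two key observations for the vanishing-levels calculation --- that for $\cf(\alpha)=\omega$ one can stitch together finitely-supported perturbations so as to accumulate infinite disagreement below $\alpha$, and that for $\cf(\alpha)>\omega$ any $D\subseteq\alpha$ with $D\cap\beta$ finite for all $\beta<\alpha$ must be bounded, hence finite --- are exactly the content of the cited Remark. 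Drop the finite-to-one requirement and your proof is correct and matches the paper's in substance.
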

\begin{proof} By \cite[Theorem~3.9]{MR2013395}, $\square(\kappa)$ yields a sequence of functions $\langle f_\beta:\beta\rightarrow\beta\mid\beta\in\acc(\kappa)\rangle$
such that:
\begin{itemize}
\item for every $(\beta,\gamma)\in[\acc(\kappa)]^2$, $\{\alpha<\beta\mid f_\beta(\alpha)\neq f_\gamma(\alpha)\}$ is finite;
\item there is no cofinal $B\s\acc(\kappa)$ such that $\{ f_\beta\mid \beta\in B\}$ is linearly ordered by $\s$.
\end{itemize}
Set $T:=\{ f\in{}^\alpha\alpha\mid \alpha\le\beta<\kappa, f\text{ disagrees with }f_\beta\text{ on a finite set}\}$.
Then $\mathbf T=(T,{\s})$ is a uniformly coherent $\kappa$-Aronszajn tree.
By \cite[Remark~2.20]{paper48}, then, $V(\mathbf T)=E^\kappa_\omega$.
\end{proof}

\begin{defn}\label{regmap} For a $\kappa$-tree $\mathbf T=(T,<_T)$ and a subset $S\s\kappa$,
we say that $\mathbf T$ is \emph{$S$-regressive} iff there exists a map $\rho:T\restriction S\rightarrow T$ satisfying the following:
\begin{itemize}
\item for every $x\in T\restriction S$, $\rho(x)<_T x$;
\item for all $\alpha\in S$ and $x,y\in T_\alpha$,
if $\rho(x)<_T y$ and $\rho(y)<_T x$, then $x=y$.
\end{itemize}
\end{defn}
\begin{remark}\label{rmk25} If $\rho$ is as above, then every map $\varrho:T\restriction S\rightarrow T$
satisfying $\rho(x)\le_T \varrho(x)<_T x$ for all $x\in T\restriction S$ is as well a witness
to $\mathbf T$ being $S$-regressive.
\end{remark}

The next lemma generalizes \cite[Lemmas 2.19 and 2.21]{paper48}.

\begin{lemma}\label{cor53}  Suppose that:
\begin{itemize}
\item $\mathbf T$ is a normal, $\varsigma$-splitting $\kappa$-tree, for some fixed cardinal $\varsigma<\kappa$;
\item $S\s E^\kappa_\chi$ is stationary for some fixed regular cardinal $\chi<\kappa$;
\item Either of the following:
\begin{enumerate}
\item $\varsigma^\chi\ge\kappa$;
\item $T$ is $S$-regressive and $\varsigma^{<\chi}<\varsigma^\chi$;
\item $T$ is $S$-regressive, $\chi=\varsigma$ and there exists a weak $\chi$-Kurepa tree.\footnote{That is, a tree of height and size $\chi$ admitting at least $\chi^+$-many branches.}
\end{enumerate}
\end{itemize}

Then, for every $\alpha\in S$, either $\alpha\in V(\mathbf T)$
or ($\cf(\alpha)>\omega$ and) $V^-(\mathbf T)\cap\alpha$ is stationary in $\alpha$.
In particular, $V^-(\mathbf T)\cap E^\kappa_{\le\chi}$ is stationary.
\end{lemma}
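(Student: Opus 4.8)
The plan is to fix $\alpha\in S$, so that $\cf(\alpha)=\chi$, and prove the stated dichotomy by contradiction. Suppose $\alpha\notin V(\mathbf T)$, fix $x_0\in T\restriction\alpha$ admitting no vanishing $\alpha$-branch through it, and — in the case $\chi>\omega$ — suppose moreover that $V^-(\mathbf T)\cap\alpha$ is non-stationary in $\alpha$, fixing a club $c\subseteq\alpha$ with $V^-(\mathbf T)\cap c=\emptyset$. The second assumption makes $\mathbf T$ ``complete along $c$'': every $\beta$-branch for $\beta\in\acc(c)$ has an upper bound. The first makes every $\alpha$-branch through $x_0$ have an upper bound, i.e., be of the form $y_\downarrow$ for some $y\in T_\alpha$ with $x_0<_T y$. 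The first step is to manufacture many $\alpha$-branches through $x_0$: fix a continuous strictly increasing $\langle c_\xi\mid\xi<\chi\rangle$ enumerating a club subset of $c\cap(\h(x_0),\alpha)$ (when $\chi=\omega$, just take any cofinal $\omega$-sequence above $\h(x_0)$ — no clubness or completeness is needed, since the construction then has no limit levels; this is precisely why the ``$\cf(\alpha)=\omega$'' horn of the dichotomy never occurs), and, using normality and $\varsigma$-splitting at successor levels and completeness along $c$ at limit levels $<\chi$, embed a branching template into $\mathbf T$ above $x_0$ with its level-$\xi$ nodes placed at height $c_\xi$. In cases (1) and (2) the template is the full tree ${}^{<\chi}\varsigma$; in case (3), where $\varsigma=\chi$, it is a weak $\chi$-Kurepa tree $\mathbf K$, which we may take to be Hausdorff of size $\chi$, and which embeds level-by-level since each of its nodes has at most $\chi$ immediate successors while each node of $\mathbf T$ has at least $\chi$. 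Each maximal path $f$ through the template then yields a distinct $\alpha$-branch $b_f\ni x_0$; there are at least $\varsigma^\chi$ many in cases (1),(2), and at least $\chi^+$ many in case (3).

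Next I would bound the number of $b_f$ that admit an upper bound. In case (1) it is at most $|T_\alpha|<\kappa\le\varsigma^\chi$, so some $b_f$ is vanishing through $x_0$, contradicting the choice of $x_0$ and hence forcing $\alpha\in V(\mathbf T)$. In cases (2) and (3) I would use $S$-regressiveness: fix a witness $\rho\colon T\restriction S\to T$. If $b_f$ has an upper bound $y_f\in T_\alpha$, then $\rho(y_f)<_T y_f$ lies strictly below the template-node $e(k_f)$ of $b_f$ of least template-level exceeding $\h(\rho(y_f))$; setting $\varrho(y_f):=e(k_f)$ (and $\varrho:=\rho$ elsewhere) still witnesses $S$-regressiveness by Remark~\ref{rmk25}, as $\rho(y_f)\le_T\varrho(y_f)<_T y_f$. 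The crux is that $\varrho\restriction T_\alpha$ is injective: if $\varrho(x)=\varrho(y)$ for $x,y\in T_\alpha$, then $\varrho(x)<_T y$ and $\varrho(y)<_T x$, hence $x=y$. Since also $f\mapsto y_f$ is injective (as $y_f$ determines $b_f=(y_f)_\downarrow$, hence $f$) and lands in the domain of $\varrho$, the composite $f\mapsto\varrho(y_f)=e(k_f)$ injects the set of such $f$ into the template's node-set, which has size $\varsigma^{<\chi}$ in case (2) and $\chi$ in case (3). As $\varsigma^{<\chi}<\varsigma^\chi$ and $\chi<\chi^+$, some $b_f$ is again vanishing through $x_0$, a contradiction. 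This establishes the dichotomy: for every $\alpha\in S$, either $\alpha\in V(\mathbf T)$, or else $\cf(\alpha)=\chi>\omega$ and $V^-(\mathbf T)\cap\alpha$ is stationary in $\alpha$.

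For the final assertion, let $E\subseteq\kappa$ be an arbitrary club and pick $\alpha\in S\cap\acc(E)$, so $\cf(\alpha)=\chi$ and $E\cap\alpha$ is club in $\alpha$. If $\alpha\in V(\mathbf T)$, then $\alpha\in V^-(\mathbf T)\cap E^\kappa_\chi\cap E$. Otherwise $\chi>\omega$ and $V^-(\mathbf T)\cap\alpha$ is stationary in $\alpha$; fix a continuous strictly increasing $g\colon\chi\to\acc(E)\cap\alpha$ cofinal in $\alpha$, so that $\{i<\chi\mid g(i)\in V^-(\mathbf T)\}$ is stationary in $\chi$ and thus contains a limit ordinal $i$; then $\gamma:=g(i)\in V^-(\mathbf T)\cap E$ and $\cf(\gamma)=\cf(i)<\chi$. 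Either way $E$ meets $V^-(\mathbf T)\cap E^\kappa_{\le\chi}$; as $E$ was arbitrary, $V^-(\mathbf T)\cap E^\kappa_{\le\chi}$ is stationary.

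I expect case (3) to be the main obstacle: there the arithmetic may already have $\varsigma^{<\chi}=\chi^{<\chi}$ as large as $\varsigma^\chi=2^\chi$, so the naive counting that works in case (2) collapses, and one must instead use the weak $\chi$-Kurepa tree as the template precisely so that its small size $\chi$ — not $\chi^{<\chi}$ — caps the branches with upper bounds while it still supplies $\chi^+$ of them. Engineering the level-by-level embedding of $\mathbf K$ into $\mathbf T$ so that it coheres through limit levels — which is exactly where the completeness along $c$ is spent, and where Hausdorffness of $\mathbf K$ is used so that distinct nodes receive distinct images — is the fussiest point, together with checking the routine cardinal identities $|{}^{<\chi}\varsigma|=\varsigma^{<\chi}$ and $|{}^{\chi}\varsigma|=\varsigma^\chi$.
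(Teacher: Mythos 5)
Your proposal is correct and tracks the paper's proof step for step: fix $\alpha\in S$ as a counterexample, choose a club $C\subseteq\alpha$ of order type $\chi$ missing $V^-(\mathbf T)$ above $\h(x_0)$, build an array of nodes indexed by a template placed along $C$ (the full tree ${}^{<\chi}\varsigma$ in cases (1)--(2), a Hausdorff copy of the weak Kurepa tree in case (3)), and derive the contradiction by counting upper bounds either against $|T_\alpha|<\kappa$ (case (1)) or, after adjusting $\rho$ via Remark~\ref{rmk25} to land on template nodes, by pigeonhole against the template's size (cases (2)--(3)). The only cosmetic departure is that the paper always builds the full ${}^{<\chi}\varsigma$-array and then, in case (3), takes $K$ to be a downward-closed subtree of ${}^{<\chi}\varsigma$ and restricts attention to its $\chi^+$ branches, whereas you embed $K$ directly as the template; both are equivalent, and your treatment of the ``in particular'' clause, which the paper leaves implicit, is also correct.
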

\begin{proof} Write $\mathbf T=(T,{<_T})$. Towards a contradiction, suppose that $\alpha\in S$ is a counterexample.
As $\alpha\notin V(\mathbf T)$,
we may fix $x\in T$ with $\h(x)<\alpha$ such that every $\alpha$-branch $B$ with $x\in B$ has an upper bound in $\mathbf T$.
Since either $\cf(\alpha)\le\omega$ or $V^-(\mathbf T)\cap\alpha$ is nonstationary in $\alpha$,
we may fix a club $C$ in $\alpha$ of order-type $\chi$ such that $\min(C)=\h(x)$ and such that $\acc(C)\cap V^-(\mathbf T)=\emptyset$.

Let $\langle \alpha_i\mid i<\chi\rangle$ denote the increasing enumeration of $C$.
We shall recursively construct an array of nodes $\langle t_s\mid s\in{}^{<\chi}\varsigma\rangle$ in such a way that $t_s\in T_{\alpha_{\dom(s)}}$.
Set $t_\emptyset:=x$. For every $i<\chi$ and every $s:i\rightarrow\varsigma$ such that $t_s$ has already been defined, since $T$ is normal and $\varsigma$-splitting,
we may find an injective sequence $\langle t_{s{}^\smallfrown\langle j\rangle}\mid j<\varsigma\rangle$ of nodes of $T_{\alpha_{i+1}}$ all extending $t_s$.
For every $i\in\acc(\chi)$ such that $\langle t_s\mid s\in{}^{<i}\varsigma\rangle$ has already been defined,
for every $s:i\rightarrow\varsigma$, since $\{ t_{s\restriction\iota}\mid \iota<i\}$ induces an $\alpha_i$-branch,
the fact that $\alpha_i\notin V^-(\mathbf T)$ implies that we may find $t_s\in T_{\alpha_i}$ that is a limit of that $\alpha_i$-branch. This completes the recursive construction of our array.

For every $s\in{}^{\chi}\varsigma$, $B_s:=\{ t\in T\mid \exists i<\chi\,(t<_T t_{s\restriction i})\}$ is an $\alpha$-branch containing $x$,
and hence there must be some $b_s\in T_\alpha$ extending all elements of $B_s$.
Our construction also ensures that $B_s\neq B_{s'}$ whenever $s\neq s'$. We now consider a few options:
\begin{enumerate}
\item Suppose that $\varsigma^\chi\ge\kappa$. Then $|T_\alpha|\ge|\{ b_s\mid s\in{}^\chi\varsigma\}|=\varsigma^\chi\ge\kappa$. This is a contradiction.
\item Suppose that $\mathbf T$ is $S$-regressive, as witnessed by $\rho:T\restriction S\rightarrow T$.
For every $s\in{}^{\chi}\varsigma$, $\rho(b_s)$ belongs to $B_s$, but by Remark~\ref{rmk25},
we may assume that $\rho(b_s)= t_{s\restriction i}$ for some $i<\chi$.
\begin{itemize}
\item[$\br$] If $\varsigma^{<\chi}<\varsigma^\chi$, then we may now find $s\neq s'$ in ${}^\chi\varsigma$ such that $\rho(b_s)=\rho(b_{s'})$.
Then, $\rho(b_{s'})<_T t_s$ and $\rho(b_s)<_T t_{s'}$,
contradicting the fact that $b_s\neq b_{s'}$.
\item[$\br$] If $\chi=\varsigma$ and there exists a weak $\chi$-Kurepa tree,
then this may be witnessed by a tree of the form $(K,{\s})$ for some $K\s {}^{<\chi}\varsigma$.
Let $\langle s_\beta\mid\beta<\chi^+\rangle$ be an injective enumeration of branches through $(K,{\s})$.
Since $|K|\le\chi$, there must exist $\beta\neq\beta'$ such that $\rho(b_{s_\beta})=\rho(b_{s_{\beta'}})$,
which yields a contradiction as in the previous case.
\qedhere
\end{itemize}

\end{enumerate}
\end{proof}

\begin{cor}\label{cor26} If $\kappa$ is not a strong limit, then for every normal and splitting $\kappa$-tree $\mathbf T$,
$V^-(\mathbf T)$ is stationary.
\end{cor}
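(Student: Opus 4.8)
The plan is to argue by contradiction: I assume $V^-(\mathbf T)$ is nonstationary, fix a club $D\s\acc(\kappa)$ disjoint from $V^-(\mathbf T)$, and produce a level of $\mathbf T$ of size at least $\kappa$. The only feature of $D$ I will exploit is \emph{$D$-completeness}: for every $\gamma\in D$, every $\gamma$-branch of $\mathbf T$ has an upper bound at level $\gamma$ (and if the branch passes through a node $y$, so does the upper bound). Since $\kappa$ is not a strong limit, fix the least cardinal $\lambda<\kappa$ with $2^\lambda\ge\kappa$. Now, if there is \emph{some} regular cardinal $\chi<\kappa$ with $2^\chi\ge\kappa$ — in particular if $\lambda$ is regular — then Lemma~\ref{cor53}, applied with $\varsigma:=2$, with this $\chi$, and with $S:=E^\kappa_\chi$, already gives that $V^-(\mathbf T)\cap E^\kappa_{\le\chi}$ is stationary, and no contradiction hypothesis is needed. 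So I may assume $2^\chi<\kappa$ for every regular $\chi<\kappa$. Then $\lambda$ is singular, and moreover $\kappa=\lambda^+$, for otherwise $\lambda^+$ would be a regular cardinal below $\kappa$ with $2^{\lambda^+}\ge 2^\lambda\ge\kappa$. Put $\theta:=\cf(\lambda)$ and fix an increasing sequence $\langle\lambda_i\mid i<\theta\rangle$ of regular cardinals with supremum $\lambda$.

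The heart of the proof is a local claim, established just as Lemma~\ref{cor53} is, but carried out inside the subtree of $\mathbf T$ above a node: \emph{for every $y\in T$ and every $\gamma'\in\acc(D)$ with $\h(y)<\gamma'$, $y$ has at least $2^{\cf(\gamma')}$ extensions at level $\gamma'$.} To see this, note that $D\cap\gamma'$ is club in $\gamma'$, so I may fix a closed cofinal subset $\langle\eta^k\mid k<\cf(\gamma')\rangle$ of it with $\h(y)<\eta^0$; then I recursively build a binary array $\langle u_s\mid s\in{}^{<\cf(\gamma')}2\rangle$ of nodes $\ge_T y$ with $u_\emptyset=y$ and $u_s\in T_{\eta^{\dom(s)}}$, splitting $u_s$ and extending its successors up to the next level $\eta^{\dom(s)+1}$ at successor steps (splitting and normality), and taking upper bounds at limit steps (available since each $\eta^k\in D$). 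Distinct $f\in{}^{\cf(\gamma')}2$ determine distinct $\gamma'$-branches through $y$, and $D$-completeness supplies an upper bound at level $\gamma'$ for each; since each $\gamma'$-branch equals the downward closure of any of its upper bounds, these $2^{\cf(\gamma')}$ upper bounds are distinct extensions of $y$.

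Granting the local claim, I build a continuous increasing sequence $\langle\gamma_i\mid i\le\theta\rangle$ below $\kappa$ with $\gamma_0:=0$, choosing $\gamma_{i+1}$ in the stationary set $\acc(D)\cap E^\kappa_{\lambda_i}$ above $\gamma_i$ at successor stages and taking suprema at limits; then every $\gamma_i$ with $i>0$ lies in $D$ (as $D$ is closed), and $\delta:=\gamma_\theta$ lies in $D$ with $\cf(\delta)=\theta$. Using the local claim I then build an array $\langle t_s\mid s\in\bigcup_{i<\theta}\prod_{j<i}2^{\lambda_j}\rangle$ with $t_s\in T_{\gamma_{\dom(s)}}$: let $t_\emptyset\in T_0$; at a successor stage pick, via the local claim applied to $t_s$ and $\gamma_{\dom(s)+1}$ (which has cofinality $\lambda_{\dom(s)}$), distinct extensions $\langle t_{s{}^\smallfrown\langle\xi\rangle}\mid\xi<2^{\lambda_{\dom(s)}}\rangle$ of $t_s$ at level $\gamma_{\dom(s)+1}$; at a limit stage let $t_s$ be an upper bound of the branch determined by $\langle t_{s\restriction l}\rangle$, available by $D$-completeness. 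For each $f\in\prod_{i<\theta}2^{\lambda_i}$, the chain $\{t_{f\restriction i}\mid i<\theta\}$ determines a $\delta$-branch $B_f$; distinct $f$'s give distinct $B_f$'s, and $D$-completeness provides for each an upper bound $b_f\in T_\delta$ with $(b_f)_\downarrow=B_f$. Hence $f\mapsto b_f$ is injective, so $|T_\delta|\ge\prod_{i<\theta}2^{\lambda_i}=2^\lambda\ge\lambda^+=\kappa$, contradicting that $\mathbf T$ is a $\kappa$-tree.

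The main obstacle is this last case, $\kappa=\lambda^+$ with $\lambda$ singular: Lemma~\ref{cor53} simply does not reach it, since (for instance under \gch) no regular $\chi<\kappa$ satisfies $2^\chi\ge\kappa$, so one cannot force a single level to be wide enough in one blow. The fix is the telescoping above, compounding $\cf(\lambda)$ many branchings of widths $2^{\lambda_i}$ into one branching of width $\prod_i 2^{\lambda_i}=2^\lambda\ge\kappa$; the one delicate point is the bookkeeping that keeps all the relevant intermediate levels — the $\gamma_i$, and the auxiliary levels $\eta^k$ used inside the local claim — inside $D$, so that $D$-completeness is available at every limit stage of the nested construction.
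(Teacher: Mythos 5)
Your proof is correct, but it takes a genuinely different route from the paper's. The paper's proof is a one-liner: from ``$\kappa$ not a strong limit'' it extracts an infinite cardinal $\varsigma<\kappa$ and a regular $\chi<\kappa$ with $\varsigma^\chi\ge\kappa$ (with $\varsigma:=\lambda$ and $\chi:=\cf(\lambda)$ in the critical case $\kappa=\lambda^+$, $\lambda$ singular), then cites \cite[Proposition~2.16]{paper48} for the fact that restricting $\mathbf T$ to the club $\{\alpha<\kappa\mid\alpha=\varsigma^\alpha\}$ yields a normal $\varsigma$-splitting tree, and invokes Lemma~\ref{cor53} on the restriction. You instead avoid the club-restriction trick entirely: in the case some regular $\chi<\kappa$ already has $2^\chi\ge\kappa$, you invoke Lemma~\ref{cor53} directly on $\mathbf T$ with $\varsigma:=2$ (correctly — the lemma does not require $\varsigma$ infinite, and $\mathbf T$ is given splitting and normal); in the remaining case, which you correctly pin down as $\kappa=\lambda^+$ with $\lambda$ singular and $2^{<\lambda}<\kappa$, you give a self-contained telescoping construction that replays the $\varsigma^\chi\ge\kappa$ branching of Lemma~\ref{cor53} inside $\cf(\lambda)$ nested blocks of widths $2^{\lambda_i}$, compounding to $2^\lambda\ge\kappa$ at a single level. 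Both approaches are sound; the paper's is shorter but leans on the external splitting-boost lemma, while yours is longer but fully elementary and makes the arithmetic visible. One small presentational quibble: since Lemma~\ref{cor53} is used only in your first case and its proof is essentially re-run by hand in the second, the two cases could be unified by observing that your local claim, proved once, handles both at once (with a single-step ``telescope'' of length one when a suitable regular $\chi$ exists), but this is a matter of taste, not correctness.
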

\begin{proof} Suppose that $\kappa$ is not a strong limit. It is not hard to see that there exists some infinite cardinal $\varsigma<\kappa$ for which there exists a regular cardinal $\chi<\kappa$ such that $\varsigma^\chi\ge\kappa$.
Now, given a normal and splitting $\kappa$-tree $\mathbf T=(T,<_T)$,
as shown in the proof of \cite[Proposition~2.16]{paper48},
the club $D:=\{\alpha<\kappa\mid \alpha=\varsigma^\alpha\}$ satisfies that $\mathbf T'=(T\restriction D,{<_T})$ is normal and $\varsigma$-splitting.
By Lemma~\ref{cor53}, $V^-(\mathbf T')$ is stationary. As $D$ is a club in $\kappa$, this means that $V^-(\mathbf T)$ is stationary, as well.
\end{proof}
\begin{cor}\label{cor27} If $\kappa=\lambda^+$ is a successor cardinal and $\lambda^{\aleph_0}\ge\kappa$, then for every normal and splitting $\kappa$-tree $\mathbf T$,
$E^\kappa_\omega\setminus V(\mathbf T)$ is nonstationary.
\end{cor}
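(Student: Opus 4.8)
The plan is to derive Corollary~\ref{cor27} from Lemma~\ref{cor53} by taking $\chi:=\omega$ and $\varsigma:=\lambda$, reducing the claim to an instance of clause~(1) of that lemma. Since $\kappa=\lambda^+$, we have $\lambda<\kappa$, and the hypothesis $\lambda^{\aleph_0}\ge\kappa$ is precisely $\varsigma^\chi\ge\kappa$. The only subtlety is that $\lambda$ need not be regular, so $\mathbf T$ is only $\lambda$-splitting in a loose sense; but $\varsigma$ in Lemma~\ref{cor53} is merely required to be a cardinal $<\kappa$, not regular, so this causes no problem. If $\lambda$ is finite or $\lambda=\aleph_0$, then $\lambda^{\aleph_0}\ge\kappa=\lambda^+$ forces $\lambda=\aleph_0$ and $\kappa=\aleph_1$, and the argument below still goes through with $\varsigma=\aleph_0$.

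First I would fix a normal and splitting $\kappa$-tree $\mathbf T=(T,{<_T})$ and argue that we may pass to a club-restriction that is $\lambda$-splitting. As in the proof of Corollary~\ref{cor26}, invoking the argument from \cite[Proposition~2.16]{paper48}, the set $D:=\{\alpha<\kappa\mid \alpha=\lambda^\alpha\}$ — or more precisely a suitable club on which the levels have grown enough to support $\lambda$-many immediate successors — is a club in $\kappa$, and $\mathbf T':=(T\restriction D,{<_T})$ is a normal, $\lambda$-splitting $\kappa$-tree. One should note that since $\lambda^{\aleph_0}\ge\kappa>\lambda$, the function $\alpha\mapsto\lambda^\alpha$ has a club of fixed points below $\kappa$, so $D$ is genuinely a club.

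Next I would apply Lemma~\ref{cor53} with $S:=E^\kappa_\omega\cap D$, which is stationary as the intersection of a stationary set with a club, with $\chi:=\omega$ and $\varsigma:=\lambda$, using clause~(1): $\varsigma^\chi=\lambda^{\aleph_0}\ge\kappa$. The lemma then yields that for every $\alpha\in S$, either $\alpha\in V(\mathbf T')$ or $\cf(\alpha)>\omega$; but every $\alpha\in S\s E^\kappa_\omega$ has $\cf(\alpha)=\omega$, so in fact $\alpha\in V(\mathbf T')$ for all $\alpha\in S$. Thus $E^\kappa_\omega\cap D\s V(\mathbf T')$. Since $V(\mathbf T')=V(\mathbf T\restriction D)$ and $V(\mathbf T)\supseteq V(\mathbf T')\setminus(\kappa\setminus D)$ — restricting a tree to a club can only change its vanishing levels off that club — we conclude $E^\kappa_\omega\cap D\s V(\mathbf T)$, whence $E^\kappa_\omega\setminus V(\mathbf T)\s\kappa\setminus D$ is nonstationary, as desired.

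The main obstacle I anticipate is the bookkeeping around the club-restriction: one must verify both that $\mathbf T'$ inherits $\lambda$-splitting from the growth of level sizes along $D$ (this is exactly the content borrowed from \cite[Proposition~2.16]{paper48} and cited already in the proof of Corollary~\ref{cor26}), and that vanishing $\alpha$-branches of $\mathbf T'$ for $\alpha\in\acc(D)$ correspond to vanishing $\alpha$-branches of $\mathbf T$ — i.e., that passing to $T\restriction D$ does not accidentally supply upper bounds. The latter is immediate since an upper bound in $\mathbf T$ of height $\alpha\in D$ lies in $T\restriction D$, so no branches that vanish in $\mathbf T'$ could have been bounded in $\mathbf T$; and conversely any branch through $x$ at a level $\alpha\in\acc(D)$ that is bounded in $\mathbf T$ is bounded in $\mathbf T'$. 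Modulo this routine verification, the corollary is a direct instance of Lemma~\ref{cor53}(1).
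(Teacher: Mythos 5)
Your proposal is correct and follows the same route as the paper: pass to the club $D=\{\alpha<\kappa\mid\alpha=\lambda^\alpha\}$, observe that $T\restriction D$ is normal and $\lambda$-splitting, and apply Lemma~\ref{cor53}(1) with $\chi=\omega$, $\varsigma=\lambda$ to conclude $E^\kappa_\omega\s V(\mathbf T')$, hence $E^\kappa_\omega\setminus V(\mathbf T)$ is nonstationary. One small slip: $D$ is a club because ordinal exponentiation with fixed base $\lambda\ge2$ is a continuous normal function, not because $\lambda^{\aleph_0}\ge\kappa$; this does not affect the argument.
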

\begin{proof} Suppose that $\kappa$ and $\lambda$ are as above.
Now, given a normal and splitting $\kappa$-tree $\mathbf T=(T,<_T)$,
the club $D:=\{\alpha<\kappa\mid \alpha=\lambda^\alpha\}$ satisfies that $\mathbf T'=(T\restriction D,{<_T})$ is normal and $\lambda$-splitting.
By Lemma~\ref{cor53}, $V(\mathbf T')\supseteq E^\kappa_\omega$. As $D$ is a club in $\kappa$, this means that $E^\kappa_\omega\setminus V(\mathbf T)$ is nonstationary.
\end{proof}

\begin{defn}[\cite{paper23}]\label{Streamlined}
A \emph{streamlined $\kappa$-tree} is a subset $T\s{}^{<\kappa}H_\kappa$
such that the following two conditions are satisfied:
\begin{enumerate}
\item $T$ is downward-closed, i.e, for every $t\in T$, $\{ t\restriction \alpha\mid \alpha<\kappa\}\s T$;
\item for every $\alpha<\kappa$, the set
$T_\alpha:=T\cap{}^\alpha\kappa$ is nonempty and has size $<\kappa$.
\end{enumerate}
For every $\alpha\le\kappa$, we denote
$\mathcal B(T\restriction\alpha):=\{f\in{}^\alpha H_\kappa\mid\forall\beta<\alpha\,(f\restriction\beta\in T)\}$.
\end{defn}

Note that every streamlined tree is Hausdorff.
\begin{conv} We identify a streamlined tree $T$ with the poset $\mathbf T=(T,{\s})$.
\end{conv}

\begin{defn}\label{regdef}
For two elements $s,t$ of $H_\kappa$, we define $s*t$ to be the emptyset,
unless $s,t\in{}^{<\kappa}H_\kappa$ with $\dom(s)\le\dom(t)$, in which case $s*t:\dom(t)\rightarrow H_\kappa$ is defined by stipulating:
$$(s*t)(\beta):=\begin{cases}s(\beta),&\text{if }\beta\in\dom(s);\\
t(\beta),&\text{otherwise.}\end{cases}$$
\end{defn}
\begin{defn} A streamlined $\kappa$-tree $T$  is
\emph{uniformly homogeneous} iff for all $\alpha<\beta<\kappa$,
$s\in T_\alpha$ and $t\in T_\beta$, $s*t$ is in $T$.
\end{defn}

The next proposition should be clear, but we include a proof sketch.

\begin{prop}\label{prop53} Suppose that $T$ is a streamlined $\kappa$-tree that is uniformly homogeneous.
Then $T$ is indeed homogeneous.
\end{prop}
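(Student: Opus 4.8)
The plan is to construct, for each $\alpha<\kappa$ and each pair $s,t\in T_\alpha$, an explicit automorphism $\pi=\pi_{s,t}$ of $\mathbf T$ carrying $s$ to $t$; this is exactly what homogeneity demands. The map I have in mind \emph{reroutes} a node along the branches of $s$ and $t$: on the portion of a node's branch that coincides with an initial segment of $s$ it should instead coincide with the matching initial segment of $t$, and symmetrically. To make this precise, for $r\in T$ let $\gamma^*(r)$ be the largest $\gamma\le\min(\dom(r),\alpha)$ with $r\restriction\gamma\in\{s\restriction\gamma,t\restriction\gamma\}$ (such a largest $\gamma$ exists, the set of qualifying ordinals being nonempty, downward closed, and closed under suprema, hence an interval $[0,\gamma^*(r)]$), and set
\[
\pi(r):=\begin{cases}(t\restriction\gamma^*(r))*r,&\text{if }r\restriction\gamma^*(r)=s\restriction\gamma^*(r);\\(s\restriction\gamma^*(r))*r,&\text{otherwise.}\end{cases}
\]
In the second case one necessarily has $r\restriction\gamma^*(r)=t\restriction\gamma^*(r)$ and $\gamma^*(r)>\delta_0:=\min\{\gamma<\alpha\mid s(\gamma)\ne t(\gamma)\}$; when $s=t$ the map $\pi$ is the identity.

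I would then verify, in this order, the following. First, \emph{$\pi$ maps $T$ into $T$}, level by level: this is the one place where uniform homogeneity is used. Each value $\pi(r)$ has the form $(w\restriction\gamma)*r$ with $w\in\{s,t\}$ and $\gamma=\gamma^*(r)\le\dom(r)$; since $w\restriction\gamma\in T_\gamma$ by downward closure and $r\in T_{\dom(r)}$, uniform homogeneity gives $(w\restriction\gamma)*r\in T$ whenever $\gamma<\dom(r)$, while trivially $(w\restriction\gamma)*r=w\restriction\gamma\in T$ when $\gamma=\dom(r)$; moreover $\dom(\pi(r))=\dom(r)$, so $\pi$ preserves levels. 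Second, \emph{$\pi(s)=t$}: indeed $\gamma^*(s)=\alpha$ and $s\restriction\gamma^*(s)=s$, so $\pi(s)=(t\restriction\alpha)*s=t$, and likewise $\pi(s\restriction\beta)=t\restriction\beta$ for all $\beta\le\alpha$. Third, \emph{$\pi$ is $\s$-preserving}: $\pi$ alters only an initial segment of its argument, and if $r\s r'$ then either $r$ has already branched off both $s$ and $t$ inside its own domain, in which case $\gamma^*(r')=\gamma^*(r)$ and the same prefix is overwritten in the same way, or $r$ is still following one of $s,t$ at its top, in which case a short computation with the $*$-operation gives $\pi(r)\s\pi(r')$. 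Fourth, \emph{$\pi$ is an involution}: feeding $(t\restriction\gamma^*(r))*r$ back into $\pi$ yields a node with the same value $\gamma^*(r)$, now following $t$ there, so the second clause applies and the two prefix-overwrites cancel, recovering $r$ (and symmetrically); being an involution, $\pi$ is a bijection that is $\s$-preserving in both directions, hence an automorphism.

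The substantive step is the first one. The naive candidate — swap the values $s(\gamma)\leftrightarrow t(\gamma)$ coordinatewise at all $\gamma<\alpha$ — is \emph{not} an automorphism in general: a uniformly homogeneous streamlined tree need only be closed under overwriting a \emph{bounded} initial segment of a node, and at a limit level such a coordinatewise swap can push a node out of $T$. The rerouting definition is engineered precisely so that each $\pi(r)$ is literally of the shape $u*r$ with $u$ an initial segment of $s$ or of $t$ — the exact configuration that uniform homogeneity controls. Once that is secured, the remaining points are routine bookkeeping about how $\gamma^*(r)$ behaves along branches and interacts with $*$; the only case split is according to whether a node branches off the common stem of $s$ and $t$ at or below $\delta_0$, or follows one of $s,t$ past $\delta_0$.
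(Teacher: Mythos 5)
Your rerouting plan is the right kind of map — each image is literally of the shape $u*r$ with $u$ an initial segment of $s$ or $t$, which is the configuration uniform homogeneity controls — and you are right that a naive coordinatewise swap could leave $T$. But the specific $\pi$ you define is not injective, and the fourth step is where the gap hides: it is \emph{not} true that $\gamma^*(\pi(r))=\gamma^*(r)$. Overwriting the $s$-matching prefix of $r$ with $t$'s prefix can produce a node that matches $t$ strictly further, namely whenever $r$, just after leaving the $s$-track at $\gamma^*(r)$, happens to run along the $t$-track for a stretch. Concretely, fix $s,t\in T_\alpha$ with $s(0)\ne t(0)$ and $s(1)\ne t(1)$, and an ordinal $\beta$ with $2\le\beta<\alpha$. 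By uniform homogeneity $r:=(s\restriction 1)*(t\restriction\beta)\in T_\beta$. Then $\gamma^*(r)=1$, since $r\restriction 2=\langle s(0),t(1)\rangle$ agrees with neither $s\restriction 2$ nor $t\restriction 2$; so $\pi(r)=(t\restriction 1)*r=t\restriction\beta$. But also $\pi(s\restriction\beta)=(t\restriction\beta)*(s\restriction\beta)=t\restriction\beta$, while $r\ne s\restriction\beta$ (they differ at coordinate $1$). Thus $\pi$ identifies two distinct nodes of $T_\beta$, and $\gamma^*(\pi(r))=\beta\ne 1=\gamma^*(r)$, which refutes the involution argument.

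For what it is worth, the paper's own sketch (which fixes every node incomparable to both $s$ and $t$) fails on the very same witness, since there $\pi(r\restriction 1)=t\restriction 1\not\subseteq r=\pi(r)$; so the proposition really does need a more careful map than a single prefix overwrite. One construction that does work is coordinatewise rather than by one $*$-operation: define $\pi(r)$ by recursion on $\epsilon<\dom(r)$, setting $\pi(r)(\epsilon):=r(\epsilon)$ unless $\pi(r)\restriction\epsilon\in\{s\restriction\epsilon,t\restriction\epsilon\}$, in which case $\pi(r)(\epsilon)$ is the image of $r(\epsilon)$ under the transposition $s(\epsilon)\leftrightarrow t(\epsilon)$. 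The set of $\epsilon$ where the transposition is in effect is an initial segment of $\dom(r)$, so at each limit level $\pi(r)$ either agrees with $r$ on a tail — and then $\pi(r)=(\pi(r)\restriction\epsilon_0)*r\in T$ by uniform homogeneity — or equals one of $s\restriction\dom(r)$, $t\restriction\dom(r)$. This yields a level-preserving, $\subseteq$-preserving bijection of $T$ sending $s$ to $t$, which is the automorphism you were after.
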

\begin{proof} Let $\alpha<\kappa$ and $s,s'\in T_\alpha$. Define $\pi:T\rightarrow T$ via:
$$\pi(t):=\begin{cases}
s'\restriction \dom(t),&\text{if }t\s s;\\
s\restriction \dom(t),&\text{if }t\s s';\\
s'*t,&\text{if }t\supseteq s;\\
s*t,&\text{if }t\supseteq s';\\
t,&\text{otherwise}.
\end{cases}$$
Then $\pi$ is a well-defined automorphism of $T$,
sending $s$ to $s'$.
\end{proof}

\begin{lemma}\label{lemma311} For a stationary $S\s\kappa$, the following are equivalent:
\begin{enumerate}
\item There exist a club $D\s\kappa$ and a thin ladder system $\langle A_\alpha\mid\alpha\in S\cap D\rangle$
such that, for every $(\alpha,\beta)\in [S\cap D]^2$, $\sup(A_\alpha\cap A_\beta)<\alpha$;
\item There exist a club $D\s\kappa$ and a thin ladder system $\langle A_\alpha\mid\alpha\in S\cap D\rangle$
such that, for every $(\alpha,\beta)\in [S\cap D]^2$, $A_\alpha\neq A_\beta\cap\alpha$;
\item There exist a club $D\s\kappa$ and a uniformly homogeneous streamlined $\kappa$-tree
$T$ such that $V(T)\supseteq S\cap D$;
\item There exist a club $D\s\kappa$ and a $\kappa$-tree
$\mathbf T$ such that $V^-(\mathbf T)\supseteq S\cap D$.
\end{enumerate}
\end{lemma}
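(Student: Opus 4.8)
The plan is to prove the four-way equivalence by establishing the cycle $(1)\Rightarrow(3)\Rightarrow(4)\Rightarrow(1)$ together with the easy equivalence $(1)\Leftrightarrow(2)$. The direction $(1)\Rightarrow(2)$ is immediate, since $\sup(A_\alpha\cap A_\beta)<\alpha$ forces $A_\alpha\neq A_\beta\cap\alpha$ (the ladder system condition gives $\sup(A_\alpha)=\sup(\alpha)=\alpha$ when $\alpha\in\acc(\kappa)$, and $S\cap D\s\acc(\kappa)$ after intersecting $D$ with $\acc(\kappa)$). For $(2)\Rightarrow(1)$ one uses the standard trick of replacing each $A_\alpha$ by a "walk-stabilized" or minimal-disagreement variant: given a thin ladder system with $A_\alpha\neq A_\beta\cap\alpha$, one can thin out $D$ and pass to $A'_\alpha:=\{\xi\in A_\alpha\mid \otp(A_\alpha\cap\xi)\text{ is a successor}\}$ or, more robustly, fix the sequence so that distinct restrictions are realized boundedly often and extract almost-disjointness; this is the kind of bookkeeping argument that should be isolated as the technical core.

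The key construction is $(1)\Rightarrow(3)$: from an almost-disjoint thin ladder system $\langle A_\alpha\mid\alpha\in S\cap D\rangle$, build a streamlined $\kappa$-tree $T\s{}^{<\kappa}H_\kappa$ that is uniformly homogeneous and has $V(T)\supseteq S\cap D$. The natural choice is to let $T$ consist of all $t\in{}^{<\kappa}2$ (or ${}^{<\kappa}H_\kappa$, identifying $2\s H_\kappa$) of bounded support, i.e. with $\{\xi\in\dom(t)\mid t(\xi)\neq 0\}$ bounded below $\dom(t)$; this $T$ is trivially downward closed, has levels of size $<\kappa$ (using the thinness hypothesis together with $2^{<\kappa}$-type counting along $D$, or more carefully using thinness of $\langle A_\alpha\rangle$ to control the relevant level sizes), and is uniformly homogeneous because $s*t$ again has bounded support. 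For the vanishing branches: for $\alpha\in S\cap D$ and any node $t$ with $\dom(t)<\alpha$, let $b$ be the branch determined by the characteristic function of $A_\alpha\cup\dom(t)$ above $t$ — more precisely the function $f_{t,\alpha}\in{}^\alpha 2$ extending $t$ whose support above $\dom(t)$ is exactly $A_\alpha\setminus(\dom(t)+1)$. Since $\sup(A_\alpha)=\alpha$, every proper initial segment of $f_{t,\alpha}$ has bounded support and lies in $T$, so this is a genuine $\alpha$-branch through $t$; and almost-disjointness of the ladder system guarantees it is vanishing: any node of $T$ of height $\ge\alpha$ extending $f_{t,\alpha}$ would have support cofinal in $\alpha$, contradicting the bounded-support requirement. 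Hence $\alpha\in V(T)$.

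The implication $(3)\Rightarrow(4)$ is trivial, since a uniformly homogeneous streamlined $\kappa$-tree is homogeneous by Proposition~\ref{prop53}, hence $V^-(T)=V(T)\supseteq S\cap D$ by Proposition~\ref{prop22}(2) — and in any case $V(\mathbf T)\s V^-(\mathbf T)$ always. The remaining and most delicate step is $(4)\Rightarrow(1)$: given a $\kappa$-tree $\mathbf T$ with $V^-(\mathbf T)\supseteq S\cap D$, for each $\alpha\in S\cap D$ fix a vanishing $\alpha$-branch $b_\alpha$, choose a node $x_\alpha$ on it, and define $A_\alpha$ from the \emph{ordinals of change} along $b_\alpha$ relative to some fixed well-order, or from a levels-injection $e_\alpha:\alpha\to T\restriction\alpha$ reading off $b_\alpha$; the almost-disjointness will come from the fact that if $b_\alpha$ and $b_\beta$ agreed on a tail below $\alpha$ they would be the same branch and hence share an upper bound, contradicting vanishing. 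I expect the main obstacle to lie exactly here — extracting a \emph{thin} almost-disjoint ladder system rather than merely an almost-disjoint one, which forces a careful choice of the coding (one wants the initial segments $A_\alpha\cap\varepsilon$ to be determined by a small amount of data about $b_\alpha\restriction\varepsilon$, and $T\restriction\varepsilon$ itself has size $<\kappa$, so reading $A_\alpha$ off a canonical enumeration of $T\restriction\varepsilon$ should give thinness) and possibly a further shrinking of the club $D$ via a Fodor/pressing-down argument to arrange coherence of these codes.
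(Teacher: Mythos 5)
Your proof of $(1)\Rightarrow(3)$ does not work, and the gap is fundamental rather than cosmetic. You propose taking $T$ to be the ``bounded-support'' subtree of ${}^{<\kappa}2$. There are two independent problems. First, this $T$ is a $\kappa$-tree only when $\kappa$ is strongly inaccessible: the level $T_\alpha$ is essentially in bijection with $\bigcup_{\beta<\alpha}{}^\beta 2$, so $|T_\alpha|\ge 2^{|\beta|}$ for every $\beta<\alpha$, and if $\kappa=\lambda^+$ with $2^\lambda>\kappa$ (or more generally $\kappa$ is not a strong limit) the levels overflow. The thinness of $\langle A_\alpha\rangle$ gives no help here, since your tree is defined independently of the ladder system; the fact that your construction never actually uses the hypothesis (1) is a red flag. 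Second, and more decisively: even when $\kappa$ is strongly inaccessible, your ``branch'' $f_{t,\alpha}$ determined by the characteristic function of $A_\alpha$ is not contained in $T$ unless $A_\alpha$ has no accumulation points below $\alpha$. For any $\beta\in\acc^+(A_\alpha)\cap\alpha$, the restriction $f_{t,\alpha}\restriction\beta$ has unbounded support below $\beta$ and hence lies outside $T$. Since every cofinal subset of an ordinal $\alpha$ with $\cf(\alpha)>\omega$ has such accumulation points, your construction at best yields $V(T)\supseteq S\cap D\cap E^\kappa_\omega$, which is strictly weaker than what is claimed when $S$ meets $E^\kappa_{>\omega}$. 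The paper avoids both problems by building a bespoke tree from the ladder system itself: one injectively enumerates $\{A_\alpha\cap\varepsilon\mid\varepsilon<\alpha,\ \alpha\in S\cap D\}$ as $\langle x_i\mid i<\kappa\rangle$, defines $k_\alpha:\alpha\to\kappa$ by $A_\alpha\cap\varepsilon=x_{k_\alpha(\varepsilon)}$, lets $K$ be the tree of proper initial segments of the $k_\alpha$'s (thinness of $\langle A_\alpha\rangle$ is exactly what gives $|K_\varepsilon|<\kappa$), and then closes $K$ level-by-level under the operation $*$ to get a uniformly homogeneous $T$ whose levels stay small by induction. Crucially, the paper works from hypothesis (2), not (1), so that the vanishing of the branch $\{k_\alpha\restriction\varepsilon\mid\varepsilon<\alpha\}$ follows directly from $A_\alpha\ne A_\beta\cap\alpha$.

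Your treatment of $(2)\Rightarrow(1)$ is also a gap: you label it the technical core but do not prove it, and the proposed modification $A'_\alpha:=\{\xi\in A_\alpha\mid\otp(A_\alpha\cap\xi)\text{ is a successor}\}$ does nothing to convert the weak distinctness hypothesis into genuine almost-disjointness. In fact, the direct implication $(2)\Rightarrow(1)$ is not obviously easier than the whole lemma; the paper sidesteps it entirely by proving $(1)\Rightarrow(2)\Rightarrow(3)\Rightarrow(4)\Rightarrow(1)$. Your decomposition forces you to establish $(2)\Rightarrow(1)$ separately, which you have not done. Finally, for $(4)\Rightarrow(1)$ your intuition is sound but you anticipate more difficulty than there is: the paper passes to an ordinal-based tree $(\kappa,<_T)$ and shrinks $D$ inside $\acc\{\beta<\kappa\mid T\restriction\beta=\beta\}$, so that each vanishing $\alpha$-branch is literally a cofinal subset $A_\alpha$ of $\alpha$. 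Thinness is then automatic because $T\restriction\varepsilon$ has size $<\kappa$, and almost-disjointness follows from vanishing: if $\sup(A_\beta\cap A_\alpha)=\alpha$ then $\min(A_\beta\setminus A_\alpha)$ would be an upper bound for the branch $A_\alpha$. No ``coding via a canonical enumeration'' or pressing-down is needed.
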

\begin{proof} $(1)\implies(2)$: This is immediate.

$(2)\implies(3)$: Suppose that $D$ and $\langle A_\alpha\mid\alpha\in S\cap D\rangle$ are as in (2).
Let $\langle x_i\mid i<\kappa\rangle$ be an injective enumeration of $\langle A_\alpha\cap\varepsilon\mid \varepsilon<\alpha, \alpha\in S\cap D\rangle$.
For each $\alpha\in S\cap D$, let $k_\alpha:\alpha\rightarrow\kappa$ be the unique function to satisfy for all $\varepsilon<\alpha$:
$$A_\alpha\cap\varepsilon=x_{k_\alpha(\varepsilon)}.$$

Define first an auxiliary collection $K$ by letting $$K:=\{ k_\beta\restriction\alpha\mid \alpha<\beta, \beta\in S\cap D\}.$$
Note that $\{ \dom(y)\mid y\in K\}=\kappa$ and that $K$ is closed under taking initial segments.
So $K$ is a streamlined $\kappa$-tree because otherwise there must exist some $\varepsilon<\kappa$ such that
$\{ k_\beta\restriction\varepsilon\mid \beta\in S\cap D\}$ has size $\kappa$,
contradicting the fact that $\langle A_\beta\mid\beta\in S\cap D\rangle$ is thin.
We shall use $K$ to construct a uniformly homogeneous streamlined $\kappa$-tree $T$ by defining its levels $T_\alpha$ by recursion on $\alpha<\kappa$.

Start by letting $T_0:=K_0$. Clearly, $T_0=\{\emptyset\}$, so that $|T_0|<\kappa$. Next, for every nonzero $\alpha<\kappa$ such that $T\restriction\alpha$ has already been defined and have size less than $\kappa$, let
$$T_{\alpha}:=\{ x*y\mid x\in T\restriction\alpha,\, y\in K_\alpha\}$$
and note that $|T_\alpha|<\kappa$.
Altogether, $T$ is a streamlined $\kappa$-tree.
\begin{claim}\label{c2171} $T$ is uniformly homogeneous.
\end{claim}
\begin{proof} We prove that $x*y\in T$
for all $x,y\in T$ with $\dom(x)<\dom(y)$.
The proof is by induction on $\dom(y)$.
So suppose that $\alpha<\kappa$ is such that
for all $x,y\in T$ with $\dom(x)<\dom(y)<\alpha$, it is the case that $x*y\in T$, and let $x,y\in T$ with $\dom(x)<\dom(y)=\alpha$.
Recalling the definition of $T_\alpha$, pick $x'\in T\restriction\alpha$ and $y'\in K_\alpha$ such that $y=x'*y'$.

$\br$ If $\dom(x)<\dom(x')$, then $x*y=x*(x'*y')=(x*x')*y'$. As $\dom(x)<\dom(x')<\alpha$, the induction hypothesis implies that $x*x'\in T\restriction\alpha$,
and then the definition of $T_\alpha$ implies that $(x*x')*y'$ is in $T$.

$\br$ If $\dom(x)\ge\dom(x')$, then $x*y=x*(x'*y')=x*y'$,
and then the definition of $T_\alpha$ implies that $x*y'$ is in $T$.
\end{proof}

By the preceding claim together with Proposition~\ref{prop22}, it now suffices to prove that $V^{-}(T)\supseteq S\cap D\cap\acc(\kappa)$.
To this end, let $\alpha\in S\cap D\cap\acc(\kappa)$.
Clearly, $b:=\{ k_\alpha\restriction\varepsilon\mid\varepsilon<\alpha\}$ is an $\alpha$-branch in $K$ and hence in $T$.
If $b$ is not vanishing in $T$, then we may find $x\in T\restriction\alpha$ and $y\in K_\alpha$ such that $x*y=k_\alpha$.
Recalling the definition of $K_\alpha$,
we may pick $\beta\in S\cap D$ above $\alpha$ such that $y=k_\beta\restriction\alpha$.
As $\alpha<\beta$, it is the case that $A_\alpha\neq A_{\beta}\cap\alpha$,
so we may pick $\delta\in A_\alpha\Delta(A_{\beta}\cap\alpha)$.
Then $\varepsilon:=\max\{\delta,\dom(x)\}+1$ is smaller than $\alpha$ and satisfies $k_\alpha(\varepsilon)\neq k_{\beta}(\varepsilon)$,
contradicting the fact that $k_\alpha(\varepsilon)=(x*y)(\varepsilon)=y(\varepsilon)=k_\beta(\varepsilon)$.

$(3)\implies(4)$: This is immediate.

$(4)\implies(1)$ Every $\kappa$-tree is order-isomorphic to an ordinal-based tree (see, e.g., \cite[Proposition~2.16]{paper48}),
so we may assume that we are given a tree $\mathbf T$ of the form $(\kappa,<_T)$
and a club $D\s\kappa$ such that $V^-(\mathbf T)\supseteq S\cap D$.
By possibly shrinking $D$, we may also assume that $D\s\acc\{\beta<\kappa\mid T\restriction\beta=\beta\}$.
It follows that for every $\alpha\in D$, every $\alpha$-branch is a cofinal subset of $\alpha$.
For every $\alpha\in S\cap D$, let $A_\alpha$ be a vanishing $\alpha$-branch. As $\mathbf T$ is a $\kappa$-tree,
the ladder system $\langle A_\alpha\mid\alpha\in S\cap D\rangle$ is thin.
In addition, for every $(\alpha,\beta)\in [S\cap D]^2$, if it were the case that $\sup(A_\beta\cap A_\alpha)=\alpha$,
then  $\min(A_\beta\setminus A_\alpha)$ is a node extending all elements of $A_\alpha$,
contradicting the fact that $A_\alpha$ is vanishing. So, $\sup(A_\beta\cap A_\alpha)<\alpha$.
\end{proof}

When $S$ is a club, the preceding is related to the subtle tree property:
\begin{defn}[Weiß, \cite{Chris10}]
$\kappa$ has the \emph{subtle tree property} ($\kappa$-$\stp$ for short) iff
for every thin list $\langle A_\alpha \mid \alpha\in D\rangle$
over a club $D \subseteq \kappa$,
there exists a pair $(\alpha,\beta)\in [D]^2$ such that $A_\alpha=A_\beta \cap \alpha$.
\end{defn}

\begin{cor}\label{stp} All of the following are equivalent:
\begin{itemize}
\item $\kappa$-$\stp$ fails;
\item there is a $\kappa$-tree $\mathbf T$ with $V^-(\mathbf T)=\acc(\kappa)$;
\item there is an homogeneous $\kappa$-tree $\mathbf T$ with $V(\mathbf T)=\acc(\kappa)$;
\item there is a uniformly homogeneous streamlined $\kappa$-tree $T$ such that $V(T)$ covers a club in $\kappa$.
\end{itemize}
\end{cor}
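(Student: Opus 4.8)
The plan is to reduce the corollary to Lemma~\ref{lemma311}, invoked with $S:=\kappa$ (which is stationary in itself, so that $S\cap D=D$ for every club $D\s\kappa$). Spelled out for this $S$, the four clauses of Lemma~\ref{lemma311} read: $(1)$ a club $D$ carrying a thin ladder system with $\sup(A_\alpha\cap A_\beta)<\alpha$ throughout $[D]^2$; $(2)$ the same, with ``$A_\alpha\neq A_\beta\cap\alpha$'' in place of the $\sup$-clause; $(3)$ a club $D$ together with a uniformly homogeneous streamlined $\kappa$-tree $T$ with $V(T)\supseteq D$; $(4)$ a club $D$ together with a $\kappa$-tree $\mathbf T$ with $V^-(\mathbf T)\supseteq D$. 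Now, clause $(3)$ is verbatim the fourth item of the corollary, and clause $(4)$ is equivalent to the second item: if a $\kappa$-tree has $V^-=\acc(\kappa)$, then $V^-$ covers the club $\acc(\kappa)$; conversely, if $V^-(\mathbf T)$ covers a club, then by Lemma~\ref{lemma33} a suitable subtree $\mathbf T'$ has $V^-(\mathbf T')=\acc(\kappa)$. Thus Lemma~\ref{lemma311} already delivers that the second and fourth items of the corollary are equivalent (and both equivalent to clauses $(1)$ and $(2)$).

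Next I would fit the ``homogeneous'' item into this equivalence. It implies the second item by Proposition~\ref{prop22}(2), since homogeneity forces $V^-=V$. For the converse I would start from clause $(3)$ of Lemma~\ref{lemma311} and pass to the subtree $\mathbf T':=(T\restriction D,{\s})$; by Lemma~\ref{lemma33} it satisfies $V(\mathbf T')=\acc(\kappa)$, and it is homogeneous: $T$ is homogeneous by Proposition~\ref{prop53}, every automorphism of a tree preserves heights and hence restricts to an automorphism of $\mathbf T'$, and any two nodes lying on a common level of $\mathbf T'$ lie on a common level of $T$, so some automorphism of $T$ maps one to the other. As for the first item: a thin ladder system over a club is a fortiori a thin list over a club, so clause $(2)$ of Lemma~\ref{lemma311} trivially entails that $\kappa$-$\stp$ fails. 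The only substantive point — and the one I expect to be the main obstacle — is the reverse implication, namely upgrading a thin list witnessing the failure of $\kappa$-$\stp$ to a thin ladder system witnessing clause $(2)$.

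For that upgrade I would use the following device. Let $\mathrm{Ev}$ be the class of ordinals of the form $\lambda+2n$ with $\lambda$ a limit ordinal (or $0$) and $n<\omega$, let $\mathrm{Od}$ be its complement (the ordinals $\lambda+2n+1$), and let $g\colon\kappa\to\mathrm{Ev}$ be the increasing enumeration, so that explicitly $g(\lambda+n)=\lambda+2n$; the points to be used are that $g$ is injective with range $\mathrm{Ev}$, that $\mathrm{Od}\cap\alpha$ is cofinal in every limit ordinal $\alpha$, and that $g(\xi)<\alpha\iff\xi<\alpha$ whenever $\alpha$ is a limit ordinal (in particular $g$ fixes limit ordinals). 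Given a thin list $\langle A_\alpha\mid\alpha\in D\rangle$ over a club $D$ with $A_\alpha\neq A_\beta\cap\alpha$ throughout $[D]^2$, I would replace $D$ by $\acc(D)$ — still a club, now consisting of limit ordinals — and set $A'_\alpha:=g[A_\alpha]\cup(\mathrm{Od}\cap\alpha)$ for $\alpha\in\acc(D)$. Then $A'_\alpha\s\alpha$ with $\sup(A'_\alpha)=\alpha$, so $\langle A'_\alpha\mid\alpha\in\acc(D)\rangle$ is a ladder system; it is thin because $g[A_\alpha]\cap\varepsilon$ is a function of $A_\alpha\cap\bar\varepsilon$ for a suitable $\bar\varepsilon\le\varepsilon$ while $\mathrm{Od}\cap\varepsilon$ does not depend on $\alpha$; and it admits no ``subtle'' pair, since $A'_\alpha\cap\mathrm{Ev}=g[A_\alpha]$ and $A'_\beta\cap\alpha\cap\mathrm{Ev}=g[A_\beta\cap\alpha]$, so $A'_\alpha=A'_\beta\cap\alpha$ would force $g[A_\alpha]=g[A_\beta\cap\alpha]$, hence $A_\alpha=A_\beta\cap\alpha$, contrary to hypothesis. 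This produces the thin ladder system required by clause $(2)$ of Lemma~\ref{lemma311}, closing the chain of equivalences.
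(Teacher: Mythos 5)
Your proof is correct and reduces to Lemma~\ref{lemma311} with $S:=\kappa$ together with Lemma~\ref{lemma33}, which is exactly what the paper does (its proof is just ``By Lemmas \ref{lemma311} and \ref{lemma33}''). The one substantive issue you single out — that the failure of $\kappa$-$\stp$ hands you a thin \emph{list} over a club whereas clause~$(2)$ of Lemma~\ref{lemma311} asks for a thin \emph{ladder system} — is a real mismatch that the paper's one-line proof leaves implicit, and your interleaving construction $A'_\alpha:=g[A_\alpha]\cup(\mathrm{Od}\cap\alpha)$ closes it correctly: thinness is preserved because $g[A_\alpha]\cap\varepsilon$ depends only on $A_\alpha\cap\varepsilon$ while $\mathrm{Od}\cap\varepsilon$ is constant, cofinality is supplied by the odd ordinals, the sets stay inside $\alpha$ because $g$ fixes limit ordinals from below, and injectivity of $g$ (together with $g^{-1}[\alpha]=\alpha$ for limit $\alpha$) transfers the no-subtle-pair property. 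An alternative, equally legitimate way to close the same gap is to observe that the proof of the implication $(2)\implies(3)$ in Lemma~\ref{lemma311} never actually uses $\sup(A_\alpha)=\alpha$ (it only uses thinness, the $A_\alpha\neq A_\beta\cap\alpha$ property, and that $\alpha$ is a limit ordinal), so a thin list already suffices; but your explicit upgrade is cleaner as it lets you cite the lemma as stated. Your handling of the homogeneous item — restricting a uniformly homogeneous streamlined tree to a club and noting that automorphisms preserve height and hence restrict — is also fine.
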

\begin{proof}  By Lemmas \ref{lemma311} and \ref{lemma33}.
\end{proof}
\begin{remark} By \cite[Theorem~3.2.5]{Chris10}, $\pfa$ implies that $\aleph_2$-$\stp$ holds.
By \cite[Theorem~1.2]{HS20},
if $\lambda$ is the singular limit of supercompact cardinals
then $\lambda^+$-$\stp$ fails.\footnote{The statement of the theorem in \cite{HS20} is limited to countable cofinality, but the proof works unconditionally.}
\end{remark}

\begin{cor} Assuming the consistency of a subtle cardinal, it is consistent that the conjunction of the following holds true:
\begin{itemize}
\item there exists an $\aleph_2$-Souslin tree;
\item for every normal and splitting $\aleph_2$-tree $\mathbf T$, $E^{\aleph_2}_{\aleph_1}\setminus V(\mathbf T)$ is stationary.
\end{itemize}
\end{cor}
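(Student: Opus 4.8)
The plan is to start from a subtle cardinal, collapse it to become $\aleph_2$ by a forcing mild enough to leave an $\aleph_2$-Souslin tree intact, and argue that in the resulting model every thin ladder system concentrating on $E^{\aleph_2}_{\aleph_1}$ admits a ``subtle pair''; via Lemma~\ref{lemma311}, this last feature is exactly what produces the second bullet.

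First, the reduction. Call a model $\mathbf M$ \emph{good} if, in $\mathbf M$, for every club $D\s\aleph_2$ and every thin ladder system $\langle A_\alpha\mid\alpha\in E^{\aleph_2}_{\aleph_1}\cap D\rangle$ there exist $(\alpha,\beta)\in[E^{\aleph_2}_{\aleph_1}\cap D]^2$ with $A_\alpha=A_\beta\cap\alpha$. Working in a good $\mathbf M$, let $\mathbf T$ be any $\aleph_2$-tree and suppose, toward a contradiction, that $E^{\aleph_2}_{\aleph_1}\setminus V(\mathbf T)$ -- hence also $E^{\aleph_2}_{\aleph_1}\setminus V^-(\mathbf T)$ -- is nonstationary; then $V^-(\mathbf T)\supseteq E^{\aleph_2}_{\aleph_1}\cap D$ for some club $D$, and Lemma~\ref{lemma311}, applied with the stationary set $S:=E^{\aleph_2}_{\aleph_1}$ and using $(4)\Rightarrow(1)\Rightarrow(2)$, provides a club $D'$ and a thin ladder system $\langle A_\alpha\mid\alpha\in E^{\aleph_2}_{\aleph_1}\cap D'\rangle$ with $A_\alpha\ne A_\beta\cap\alpha$ for all $(\alpha,\beta)\in[E^{\aleph_2}_{\aleph_1}\cap D']^2$, contradicting goodness. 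So in any good $\mathbf M$, $E^{\aleph_2}_{\aleph_1}\setminus V(\mathbf T)$ is stationary for \emph{every} $\aleph_2$-tree $\mathbf T$, a fortiori for every normal and splitting one. It therefore suffices to produce a good model that additionally carries an $\aleph_2$-Souslin tree.

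For the construction, fix a subtle cardinal $\kappa$; we may also assume $\gch$ (either by passing to $L$, where subtlety is available, or by first forcing $\gch$ with a $\kappa$-strategically-closed, hence subtlety-preserving, poset). Collapsing $\kappa$ to $\aleph_2$ by the full Mitchell-style forcing for the subtle tree property would destroy all $\aleph_2$-Aronszajn trees -- this is how one obtains $\aleph_2$-$\stp$ -- and hence all Souslin ones, so that route is too strong. Instead, I would force with a gentler variant $\mathbb P$ that still makes $\kappa=\aleph_2$ but drives home only the \emph{localized} reflection figuring in the definition of goodness. This localized statement is obtained from the subtlety of $\kappa$ by the standard name-capturing argument: on a club of coordinates a thin ladder system over $E^{\aleph_2}_{\aleph_1}\cap D$ is read off a proper initial segment of $\mathbb P$, and subtlety of $\kappa$ -- in the form reflected to coordinates of ground-model cofinality $\aleph_1$, which $\mathbb P$ leaves of cofinality $\aleph_1$ -- then returns a matching pair within $E^{\aleph_2}_{\aleph_1}\cap D$. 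At the same time, one interleaves into $\mathbb P$ the usual ${<}\aleph_2$-strategically-closed poset for adding a normal, Hausdorff, splitting $\aleph_2$-Souslin tree, in the manner of the known constructions that merge Mitchell collapses with Souslin-tree forcings, so that such a tree is present in the extension. Since goodness does \emph{not} imply the tree property at $\aleph_2$, it does not conflict with the presence of a Souslin tree, and the extension is a good model carrying an $\aleph_2$-Souslin tree, as required.

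The main obstacle is making $\mathbb P$ do both jobs simultaneously: one must verify that the interleaved Souslin-tree component neither destroys the localized reflection nor introduces a fresh ``bad'' thin ladder system over $E^{\aleph_2}_{\aleph_1}$, and one must confirm that the subtlety of $\kappa$ survives the collapse in precisely the localized form needed -- the delicate point being that the produced pair must sit \emph{inside} $E^{\aleph_2}_{\aleph_1}\cap D$ rather than merely inside some ground-model club, which is where the precise design of $\mathbb P$, its chain condition, and its effect on cofinality $\aleph_1$ all come into play.
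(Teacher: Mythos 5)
Your proposal goes wrong at the outset because of a misconception about what the subtle tree property actually asserts. You write that the full Mitchell forcing from a subtle $\kappa$ ``would destroy all $\aleph_2$-Aronszajn trees --- this is how one obtains $\aleph_2$-$\stp$,'' but $\aleph_2$-$\stp$ (Definition~\ref{stp} in the paper) is a statement about thin lists over clubs, not a tree property for trees, and it does \emph{not} imply the nonexistence of $\aleph_2$-Aronszajn trees. The Mitchell forcing gives the tree property at $\aleph_2$ precisely when $\kappa$ is weakly compact; for a $\kappa$ that is merely subtle and not weakly compact (the paper deliberately chooses $\kappa$ to be subtle but non-weakly-compact in $\mathsf L$), the same forcing yields $\aleph_2$-$\stp$ while at the same time $\square(\aleph_2)$ holds in the extension, so $\aleph_2$-Aronszajn (and indeed Souslin) trees survive. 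Thus the whole motivation for abandoning the full Mitchell forcing in favor of an unspecified ``gentler variant'' interleaved with a Souslin-tree poset is unfounded, and that part of your sketch never becomes a proof.

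There is also a subtler gap in the reduction itself. Your notion of \emph{goodness} asks for a subtle pair inside every thin ladder system supported on $E^{\aleph_2}_{\aleph_1}\cap D$. This is strictly stronger than $\aleph_2$-$\stp$, which only concerns thin lists over an entire club: given a thin ladder system over the merely stationary set $E^{\aleph_2}_{\aleph_1}\cap D$, $\stp$ gives no guarantee that the produced pair lies inside $E^{\aleph_2}_{\aleph_1}$. You acknowledge this tension (``the produced pair must sit inside $E^{\aleph_2}_{\aleph_1}\cap D$'') but leave it as a design problem for $\mathbb P$. The paper circumvents it entirely: it uses Corollary~\ref{stp} to get that $V(\mathbf T)$ cannot cover a club (this is all that $\aleph_2$-$\stp$ readily gives), and then uses the cardinal arithmetic of the Mitchell model ($2^{\aleph_0}=\aleph_2$, so $\aleph_1^{\aleph_0}\ge\aleph_2$) together with Corollary~\ref{cor27} to get that $E^{\aleph_2}_{\aleph_0}\setminus V(\mathbf T)$ is nonstationary for every normal splitting $\aleph_2$-tree. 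Combining these two facts forces $E^{\aleph_2}_{\aleph_1}\setminus V(\mathbf T)$ to be stationary --- no localized version of subtlety is needed. Finally, the $\aleph_2$-Souslin tree is obtained not by carefully interleaving a tree forcing, but by quoting \cite[Theorem~A]{paper51}: in the Mitchell extension from a non-weakly-compact $\kappa$ one has $\mathfrak b=\aleph_1$, $2^{\aleph_1}=\aleph_2$ and $\square(\aleph_2)$, which together yield an $\aleph_2$-Souslin tree. In short: the ingredient you thought was too strong is exactly right, and the obstacle you tried to engineer around can be sidestepped by looking at the $\omega$-cofinality points instead.
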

\begin{proof} Fix a subtle cardinal $\kappa$ that is not weakly compact in $\mathsf L$,
and work in the forcing extension by Mitchell's forcing of length $\kappa$.
By \cite[Theorem~2.3.1]{Chris10}, $\aleph_2$-$\stp$ holds,
and hence $V(\mathbf T)$ cannot contain a club for every $\aleph_2$-tree $\mathbf T$.
In addition, this is a model in which $2^{\aleph_0}=\aleph_2$
and hence Corollary~\ref{cor27} implies that $E^{\aleph_2}_{\aleph_0}\setminus V(\mathbf T)$ is nonstationary
for every normal and splitting $\aleph_2$-tree $\mathbf T$.
Therefore, $E^{\aleph_2}_{\aleph_1}\setminus V(\mathbf T)$ is stationary
for every normal and splitting $\aleph_2$-tree $\mathbf T$.
In addition, this is a model in which $\mathfrak b=\aleph_1$, $2^{\aleph_1}=\aleph_2$, and (since $\kappa$ is not weakly compact in $\mathsf{L}$) $\square(\aleph_2)$ holds.
So, by \cite[Theorem~A]{paper51}, there exists an $\aleph_2$-Souslin tree.
\end{proof}

\begin{cor}\label{cor213} Suppose that $S$ is a stationary subset of a strongly inaccessible $\kappa$.
Then there exists a $\kappa$-tree $\mathbf T$ such that $V(\mathbf T)\cap S$ is stationary.
\end{cor}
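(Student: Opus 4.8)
The plan is to invoke Lemma~\ref{lemma311} and thereby reduce the problem to producing a \emph{stationary} $S^*\subseteq S$ carrying a thin almost-disjoint ladder system $\langle A_\alpha\mid\alpha\in S^*\rangle$: once we have this, clause~(1)$\Rightarrow$(3) of that lemma (applied to $S^*$, our ladder system witnessing clause~(1) with club $\kappa$) hands us a club $D$ and a $\kappa$-tree $\mathbf T$ with $V(\mathbf T)\supseteq S^*\cap D$, so that $V(\mathbf T)\cap S\supseteq S^*\cap D$ is stationary. Note that, since $\kappa$ is a strong limit, every ladder system over a subset of $\kappa$ is automatically thin, as $|\mathcal P(\varepsilon)|=2^{|\varepsilon|}<\kappa$ for all $\varepsilon<\kappa$; so only almost-disjointness will need to be arranged.

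Intersecting $S$ with the club of limit ordinals, we may assume $S\subseteq\acc(\kappa)$. As $\alpha\mapsto\cf(\alpha)$ is regressive on $\{\alpha\in S\mid\cf(\alpha)<\alpha\}$, there are two cases. \emph{If $\{\alpha\in S\mid\cf(\alpha)<\alpha\}$ is stationary,} then Fodor's lemma provides a regular $\chi<\kappa$ with $S^*:=S\cap E^\kappa_\chi$ stationary, and we let $\langle A_\alpha\mid\alpha\in S^*\rangle$ be any ladder system with $\otp(A_\alpha)=\chi$ for every $\alpha$. For $\alpha<\beta$ in $S^*$: every member of $\acc(A_\beta)$ has cofinality $<\chi$, whereas $\cf(\alpha)=\chi$, so $\alpha\notin\acc(A_\beta)$; as $\alpha$ is a limit ordinal, $\sup(A_\beta\cap\alpha)<\alpha$, and hence $\sup(A_\alpha\cap A_\beta)\le\sup(A_\beta\cap\alpha)<\alpha$. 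So $\langle A_\alpha\rangle$ is almost disjoint, as needed.

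\emph{Otherwise} $\{\alpha\in S\mid\cf(\alpha)<\alpha\}$ is nonstationary, so $S^*:=\{\alpha\in S\mid\cf(\alpha)=\alpha\}$ is stationary and is contained in $\reg(\kappa)$. Define $\langle A_\alpha\mid\alpha\in S^*\rangle$ by recursion on $\alpha\in S^*$: if $\sup(S^*\cap\alpha)<\alpha$, put $A_\alpha:=\alpha\setminus(\sup(S^*\cap\alpha)+1)$; otherwise $S^*\cap\alpha$ is cofinal in $\alpha$, its increasing enumeration $\langle\gamma_i\mid i<\alpha\rangle$ has order type $\alpha$ (since $\cf(\alpha)=\alpha$), and we recursively choose $b_i\in(\sup(\{b_j\mid j<i\}\cup\{\gamma_i\}),\alpha)$ — a nonempty interval as $i<\alpha=\cf(\alpha)$ — and set $A_\alpha:=\{b_i\mid i<\alpha\}$. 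In either case $A_\alpha$ is a cofinal subset of $\alpha$. For almost-disjointness, fix $\gamma<\alpha$ in $S^*$. If $A_\alpha$ came from the first clause, then every element of $A_\gamma$ lies below $\gamma\le\sup(S^*\cap\alpha)<\min(A_\alpha)$, so $A_\gamma\cap A_\alpha=\emptyset$. If $A_\alpha=\{b_i\mid i<\alpha\}$, write $\gamma=\gamma_i$; since $b_j>\gamma$ for all $j\ge i$, we get $A_\gamma\cap A_\alpha\subseteq\{b_j\mid j<i\}\cap\gamma$, a subset of $\gamma$ of size at most $|i|\le i<\gamma$ (strict because $S^*$ has no finite members, so its $i$-th element $\gamma_i$ exceeds $i$), hence bounded in the regular cardinal $\gamma$; thus $\sup(A_\gamma\cap A_\alpha)<\gamma$. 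Lemma~\ref{lemma311} now delivers the tree.

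The main obstacle is the second case. The tempting strategy of choosing each $A_\alpha$ inside a club of $\alpha$ disjoint from $S^*$ collapses exactly when $S^*$ reflects, which can genuinely occur here (for instance when $\kappa$ is Mahlo and $S^*$ concentrates on Mahlo cardinals); one must instead fold the diagonalization into the recursion, the crucial points being that at stage $\alpha$ there are only $|\alpha|=\cf(\alpha)$ earlier ladders to neutralize while building a sequence of length $\cf(\alpha)$, and that regularity of each $\gamma\in S^*$ upgrades ``few elements below $\gamma$'' to ``bounded below $\gamma$''. The first case, by comparison, is soft, resting only on the fact that the accumulation points of a ladder of order type $\chi$ all have cofinality below $\chi$.
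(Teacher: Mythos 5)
Your proof is correct, and it follows the same overall strategy as the paper -- produce a stationary $S^*\subseteq S$ carrying a thin almost-disjoint ladder system (thinness being free from strong inaccessibility) and feed it into Lemma~\ref{lemma311} -- but the case analysis and the treatment of the hard case are genuinely different. The paper splits on whether $S\cap E^\kappa_\omega$ is stationary: if so, it uses $\omega$-bounded ladders (your first case with $\chi=\omega$); if not, it passes to $S^-:=S\setminus(E^\kappa_\omega\cup\tr(S))$, which is stationary by the standard fact that every stationary set has a stationary subset not reflecting at its own points of uncountable cofinality, and then for each $\alpha\in S^-$ simply takes a club $C_\alpha\subseteq\alpha$ disjoint from $S$; closedness plus disjointness from $S$ gives almost-disjointness in one line. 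You instead split on whether $S$ concentrates on a single cofinality $\chi$ (handled as in the paper's first case, but for arbitrary $\chi$, via the observation that points of $\acc^+(A_\beta)$ below $\sup(A_\beta)$ have cofinality $<\chi$) or on regular cardinals, and in the latter case you replace the non-reflection trick -- which, as you rightly note, is unavailable pointwise when $S^*$ reflects at its own members -- by a direct diagonalization, exploiting that $\gamma\in S^*$ is the $i$-th member of $S^*\cap\alpha$ for some $i<\gamma$ and that regularity of $\gamma$ upgrades ``at most $|i|$ intersection points'' to ``bounded below $\gamma$''. Both arguments are sound; the paper's is shorter because deleting $\tr(S)$ dispels reflection globally, while yours is more self-contained in that it never invokes the stationarity of $S\setminus\tr(S)$ and handles the reflecting configuration head-on.
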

\begin{proof} By Lemma~\ref{lemma311}, it suffices to find a stationary $S^-\s S$ that carries a thin almost disjoint $C$-sequence. We consider two cases:

$\br$ If $S\cap E^\kappa_\omega$ is stationary, then set $S^-:=S\cap E^\kappa_\omega$,
and let $\langle C_\alpha\mid\alpha\in S^-\rangle$ be some $\omega$-bounded $C$-sequence over $S^-$.

$\br$ Otherwise, let $S^-:=S\setminus(E^\kappa_\omega\cup\tr(S))$. Then $S^-$ is stationary,
and for every $\alpha\in S^-$, we may pick a club $C_\alpha$ in $\alpha$ that is disjoint from $S$.
Evidently, $\sup(C_{\alpha'}\cap C_\alpha)<\alpha'$ for every $(\alpha,\alpha')\in[S^-]^2$.
\end{proof}

\begin{lemma}\label{l22} If $\theta\in\reg(\kappa)$ is such that $\lambda^{<\theta}<\kappa$ for all $\lambda<\kappa$,
then there exists an almost disjoint thin $C$-sequence over $E^\kappa_\theta$.
\end{lemma}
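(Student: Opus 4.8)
The plan is to make the construction completely explicit and let the verification do the work. For each $\alpha\in E^\kappa_\theta$, I would use the regularity of $\theta=\cf(\alpha)$ to fix a \emph{closed} cofinal subset $C_\alpha\s\alpha$ of order type exactly $\theta$; concretely, take a strictly increasing continuous cofinal map $f_\alpha\colon\theta\to\alpha$ (which exists since every limit $\eta<\theta$ has $\cf(\eta)<\theta=\cf(\alpha)$, so the partial suprema stay below $\alpha$) and set $C_\alpha:=\im(f_\alpha)$. I then claim that $\vec C:=\langle C_\alpha\mid\alpha\in E^\kappa_\theta\rangle$ is the desired thin, almost-disjoint $C$-sequence, and the remaining content is just checking the two properties.

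Almost-disjointness comes for free from the order-type requirement. Fix $(\beta,\gamma)\in[E^\kappa_\theta]^2$. Since $\otp(C_\gamma)=\theta$ with $\theta$ regular, any ordinal $\mu<\gamma$ with $\sup(C_\gamma\cap\mu)=\mu>0$ is the supremum of a proper initial segment of $C_\gamma$, whose order type is a limit ordinal $<\theta$; hence $\cf(\mu)<\theta$. As $\cf(\beta)=\theta$, it follows that $\sup(C_\gamma\cap\beta)<\beta$, and therefore, using $C_\beta\s\beta$, $\sup(C_\beta\cap C_\gamma)\le\sup(C_\gamma\cap\beta)<\beta$. The reverse inequality $\sup(C_\gamma\cap C_\beta)<\gamma$ is trivial, again because $C_\beta\s\beta<\gamma$.

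Thinness is the only place the cardinal-arithmetic hypothesis is used. Fix $\varepsilon<\kappa$. There are at most $|\varepsilon|+1<\kappa$ ordinals $\alpha\in E^\kappa_\theta$ with $\alpha\le\varepsilon$, hence at most that many values of $C_\alpha\cap\varepsilon\ (=C_\alpha)$ contributed by such $\alpha$. For $\alpha\in E^\kappa_\theta$ with $\alpha>\varepsilon$, the set $C_\alpha\cap\varepsilon$ is a proper initial segment of $C_\alpha$, so $\otp(C_\alpha\cap\varepsilon)<\theta$, i.e.\ $C_\alpha\cap\varepsilon\in[\varepsilon]^{<\theta}$; and $|[\varepsilon]^{<\theta}|\le|\varepsilon|^{<\theta}<\kappa$ by the hypothesis applied with $\lambda=|\varepsilon|$. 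Altogether $|\{C_\alpha\cap\varepsilon\mid\alpha\in E^\kappa_\theta\}|<\kappa$, which is exactly thinness (with $\ssup(E^\kappa_\theta)=\kappa$).

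I do not anticipate a real obstacle. The two points one must not overlook are: (i) insist on order type exactly $\theta$ rather than taking an arbitrary club in $\alpha$ — an arbitrary club could have order type $>\theta$, which would wreck both the almost-disjointness argument and thinness; and (ii) recognize that the hypothesis $\lambda^{<\theta}<\kappa$ is needed only for thinness (and is vacuous when $\theta=\omega$, recovering the $\omega$-bounded case used in Corollary~\ref{cor213}). If one wanted the stronger conclusion of a (thin, almost-disjoint) $\mathcal C$-sequence rather than a single $C$-sequence, a bit more care would be required, but the statement as written asks only for the latter.
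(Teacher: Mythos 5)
Your proof is correct and follows exactly the paper's one-line proof ("Just take a $\theta$-bounded $C$-sequence over $E^\kappa_\theta$"): you build the clubs of order type $\theta$ and then spell out the verification of almost-disjointness and thinness that the paper leaves to the reader. No discrepancy.
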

\begin{proof} Just take a $\theta$-bounded $C$-sequence over $E^\kappa_\theta$.
\end{proof}

Building on the work of Todor\v{c}evi\'c \cite{MR2355670} and Krueger \cite{MR3078820}, we obtain the following pump-up theorem for special $\kappa$-Aronszajn trees.

\begin{thm}\label{lemma39} The following are equivalent:
\begin{itemize}
\item[(i)] There exists a special $\kappa$-Aronszajn tree;
\item[(ii)] There exists a streamlined $\kappa$-Aronszajn tree $K$, a club $D\s\acc(\kappa)$ and a function $f:K\restriction D\rightarrow\kappa$ such that all of the following hold:
\begin{itemize}
\item $V^-(K)\supseteq D$;
\item  $f(x)<\dom(x)$ for all $x\in K\restriction D$;
\item $f(x)\neq f(y)$ for every pair $x\subsetneq y$ of nodes from $K\restriction D$;
\item for all $x,y\in K$ and $\varepsilon\in\dom(x)\cap\dom(y)$, if $x(\varepsilon)=y(\varepsilon)$, then $x\restriction\varepsilon=y\restriction\varepsilon$.
\end{itemize}
\item[(iii)] There exists a streamlined uniformly homogeneous special $\kappa$-Aronszajn tree $T$ for which $V(T)$ covers a club in $\kappa$;
\item[(iv)] There exists an homogeneous special $\kappa$-Aronszajn tree $\mathbf T$ with $V(\mathbf T)=\acc(\kappa)$.
\end{itemize}
\end{thm}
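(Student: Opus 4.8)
\emph{Overview of the plan.} I would prove the cycle of implications $(i)\Rightarrow(ii)\Rightarrow(iii)\Rightarrow(iv)\Rightarrow(i)$, with most of the work in $(i)\Rightarrow(ii)$ and a fair amount in $(ii)\Rightarrow(iii)$.

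\emph{The easy end of the cycle.} The implication $(iv)\Rightarrow(i)$ is immediate. For $(iii)\Rightarrow(iv)$, given a streamlined uniformly homogeneous special $\kappa$-Aronszajn tree $T$ together with a club $D\s\kappa$ with $V(T)\supseteq D$, I would pass to $\mathbf T':=T\restriction D$; by Lemma~\ref{lemma33}, $V(\mathbf T')=\acc(\kappa)$. It is routine that $\mathbf T'$ is still $\kappa$-Aronszajn and homogeneous (every automorphism of $T$ preserves each level, hence restricts to an automorphism of $T\restriction D$, and $T$ is homogeneous by Proposition~\ref{prop53}). To keep $\mathbf T'$ special, I would replace a witness $\rho$ for $T$ by the map sending a node $x$ of $\mathbf T'$ to the largest $<_T$-predecessor of $x$ that lies in $T\restriction D$ and is $\le_T\rho(x)$; since $\kappa$ is regular, each fiber of the new map is contained in a union, over a $<\kappa$-sized family of nodes $w$, of the fibers $\rho^{-1}\{w\}$, and so remains covered by fewer than $\kappa$ antichains.

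\emph{From the skeleton to a homogeneous special tree: $(ii)\Rightarrow(iii)$.} I would feed the tree $K$ of clause~$(ii)$ into the construction used to prove $(2)\Rightarrow(3)$ in Lemma~\ref{lemma311}: build a streamlined $\kappa$-tree $T$ recursively by $T_\alpha:=\{x*y\mid x\in T\restriction\alpha,\ y\in K_\alpha\}$, arranging in addition that each node of $T$ publicly records the node of $K$ it sits over, so that from $t\in T$ one canonically reads off a node $y_t\in K$ of the same height. As in Lemma~\ref{lemma311}, $T$ is Hausdorff, normal and uniformly homogeneous (hence homogeneous, by Proposition~\ref{prop53}), and the fourth (``coherence'') clause of $(ii)$ guarantees, just as the condition $A_\alpha\neq A_\beta\cap\alpha$ does there, that the $K$-branch witnessing $\alpha\in V^-(K)$ for $\alpha\in D$ stays vanishing in $T$; thus $V^-(T)\supseteq D\cap\acc(\kappa)$, and $V(T)\supseteq D\cap\acc(\kappa)$ by Proposition~\ref{prop22}(2). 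Shrinking $D$ to a club $D'$, set $f_T(t):=f(y_t)$ for $t\in T\restriction D'$; the coherence clause forces $y_t\subsetneq y_{t'}$ whenever $t\subsetneq t'$, so $f_T$ inherits $f_T(t)<\dom(t)$ and injectivity on chains from $f$. Define $\rho_T$ for $t\in T\restriction D'$ to be the largest $<_T$-predecessor of $t$ in $T\restriction D'$ of height $\le f_T(t)$ (and the largest $<_T$-predecessor in $T\restriction D'$ otherwise): since $f_T$ is injective on chains and $f_T$ ranges over only boundedly many values inside each fiber $\rho_T^{-1}\{z\}$, each such fiber is a union of fewer than $\kappa$ antichains, witnessing that $T$ is special. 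Injectivity of $f_T$ on chains also precludes a $\kappa$-branch of $T$ (a cofinal branch would give a regressive function on a club of levels that is injective there, contradicting Fodor), so $T$ is a streamlined uniformly homogeneous special $\kappa$-Aronszajn tree with $V(T)$ covering a club.

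\emph{The crux: $(i)\Rightarrow(ii)$.} This is the step I expect to be the main obstacle. The standard reductions let me assume the given special tree is a streamlined, normal, Hausdorff, splitting $\kappa$-Aronszajn tree $S$ carrying a regressive, chain-injective specializing function into $\kappa$, and a per-level relabeling of the nodes (injectively naming each node at level $\varepsilon+1$ by its $\varepsilon$-th coordinate) installs the coherence clause of~$(ii)$ without disturbing anything else. The real content is to pass from $S$ to a tree $K$ whose vanishing levels cover a \emph{club} $D$, while retaining the coherence clause and a chain-injective regressive $f\colon K\restriction D\to\kappa$. Because a normal $\kappa$-Aronszajn tree need only have $V^-$ \emph{stationary} (Proposition~\ref{prop22}(1)), one genuinely has to thicken $S$, adding at each limit level in $D$ a cofinal chain left without an upper bound, and the difficulty is to do so without either destroying the $<\kappa$-antichain decomposition or violating coherence; a plain walk-tree, for instance, is coherent and special-like but has $V^-$ concentrated on $E^\kappa_\omega$ rather than on a club. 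Here I would use the analysis of special $\kappa$-Aronszajn trees via walks on ordinals, following Todor\v cevi\'c~\cite{MR2355670} and Krueger~\cite{MR3078820}: represent $S$ through a suitable coherent sequence, then build $K$ as a tree of \emph{modified} characteristic functions along that sequence, the modification being engineered precisely so that $K$ fails to be $\alpha$-complete for every $\alpha$ in a club, while the walk characteristics provide the function $f$. Checking that this $K$ simultaneously meets all four requirements of~$(ii)$ is the technical heart of the theorem.
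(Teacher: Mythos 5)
Your overall plan of implications matches the paper, and $(iii)\Rightarrow(iv)\Rightarrow(i)$ is fine (the paper simply cites the proof of Lemma~\ref{lemma33}; your extra care with the specializing map on $T\restriction D$ is reasonable, modulo a small catch-all for nodes whose $\rho$-image lies below $\min(D)$). But there are two problems with the interesting implications.

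The $(ii)\Rightarrow(iii)$ step has a genuine gap. You build $T$ level-by-level as $T_\alpha=\{x*y\mid x\in T\restriction\alpha,\,y\in K_\alpha\}$ (which agrees with the paper's tree of finite $*$-products) and then claim that the $K$-component $y_t$ can be canonically read off and that $t\subsetneq t'$ forces $y_t\subsetneq y_{t'}$, so that $f_T(t):=f(y_t)$ is a regressive chain-injective specializer. This monotonicity claim is false. If $t=x*y_t$ and $t'=t*z$ with $z\in K$, then on the interval $(\dom(x),\dom(t))$ one has $t'(\varepsilon)=y_t(\varepsilon)$, but $z(\varepsilon)$ can disagree with $y_t(\varepsilon)$ everywhere there — the coherence clause of $(ii)$ says nothing unless the two $K$-nodes already agree at some $\varepsilon$. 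So $y_t$ need not be an initial segment of $y_{t'}$, and $f_T$ can repeat along a chain. The paper avoids this by also tracking $n(t)$, the minimal number of $*$-factors needed to express $t$, and specializing via $g(t)=\omega\cdot f(x(t))+n(t)$; the chain-injectivity proof is then a case analysis that uses $f(x(s))=f(x(t))\Rightarrow x(s)\nsubseteq x(t)$ together with $n(s)=n(t)$ and the coherence clause to reach a contradiction. Without the $n(t)$ counter, or some replacement for it, your argument does not go through.

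For $(i)\Rightarrow(ii)$ your plan is not really a proof: "use walks à la Todor\v{c}evi\'c/Krueger, build a tree of modified characteristic functions, engineer incompleteness on a club" is the right direction (and is what the paper does), but you correctly identify it as the crux and then don't supply the crucial device. The paper's key idea is to strengthen Krueger's $C$-sequence with the extra requirement $\min(C_\beta)=\otp(C_\beta)+1$ for $\beta$ in the club $C$, and then to take $T:=\{\rho_{0\beta}\restriction\alpha\mid\alpha<\beta<\kappa\}$ (proper initial segments only); this extra condition is exactly what guarantees $\rho_{0\alpha}\notin T$ for $\alpha\in C$, i.e.\ that the branch $\{\rho_{0\alpha}\restriction\varepsilon\mid\varepsilon<\alpha\}$ is vanishing, giving $V^-(T)\supseteq C$. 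That step is the content you would need to supply.
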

\begin{proof} $(i)\implies(ii)$ Assuming that there exists a special $\kappa$-Aronszajn tree,
by \cite[Lemma~1.2 and Theorem~2.5]{MR3078820}, we may fix a $C$-sequence $\vec C=\langle C_\beta\mid\beta<\kappa\rangle$ and a club $C\s\acc(\kappa)$ satisfying the following:
\begin{enumerate}
\item for every $\beta\in C$, $\min(C_\beta)>\otp(C_\beta)$;
\item for every $\beta\in\acc(\kappa)\setminus C$, $\min(C_\beta)>\sup(C\cap\beta)$;
\item for every $\epsilon<\kappa$, $|\{ C_\beta\cap\epsilon\mid \beta<\kappa\}|<\kappa$.
\end{enumerate}
Consider the following additional requirement:
\begin{itemize}
\item[(4)] $\min(C_\beta)=\otp(C_\beta)+1$ for every $\beta\in C$.
\end{itemize}

\begin{claim}  We may moreover assume that Clause~(4) holds.
\end{claim}
\begin{proof} For every $\beta\in C$, let $C_\beta^\bullet:=C_\beta\cup\{\otp(C_\beta)+1\}$,
and for every $\beta\in\kappa\setminus C$, let $C_\beta^\bullet:=C_\beta$.
We just need to verify that $|\{ C^\bullet_\beta\cap\epsilon\mid \beta<\kappa\}|<\kappa$
for every $\epsilon<\kappa$. Towards a contradiction, suppose that $\epsilon$ is a counterexample.
From $(3)$, it follows that we may fix $B\in[C]^\kappa$ on which the map $\beta\mapsto C_\beta^\bullet\cap\epsilon$ is injective.
We may moreover assume that $\beta\mapsto C_\beta\cap\epsilon$ is constant over $B$.
By possibly removing one element of $B$, we may assume that $C_\beta^\bullet\cap\epsilon$ is nonempty for all $\beta\in B$.
So, we may moreover assume the existence of $\tau<\epsilon$ such that $\min(C^\bullet_\beta)=\tau$ for every $\beta\in B$.
But then $C_\beta^\bullet\cap\epsilon=(C_\beta\cap\epsilon)\cup\{\tau\}$
for every $\beta\in B$. This is a contradiction.
\end{proof}

Now, let $\rho_0$ be the characteristic function from
\cite[\S6]{MR2355670} obtained by walking along $\vec C$ satisfying (1)--(4),
and consider the following streamlined $\kappa$-tree $$T(\rho_0):=\{ \rho_{0\beta}\restriction\alpha\mid \alpha\le\beta<\kappa\}.$$
Using (1)--(3), the proof of \cite[Theorem~4.4]{MR3078820} provides a club $D\s C$ and a function $g:T(\rho_0)\restriction D\rightarrow\kappa$ satisfying the following two:
\begin{itemize}
\item $g(t)<\dom(t)$ for all $t\in T(\rho_0)\restriction D$;
\item for every pair $s\subsetneq t$ of nodes from $T(\rho_0)\restriction D$, $g(s)\neq g(t)$.
\end{itemize}

Next, consider the following subfamily of $T(\rho_0)$:
$$T:=\{ \rho_{0\beta}\restriction\alpha\mid \alpha<\beta<\kappa\}.$$

Clearly, $T$ is downward-closed and $\{\dom(y)\mid y\in T\}=\kappa$,
so that $T$ is a streamlined $\kappa$-Aronszajn subtree of $T(\rho_0)$.

\begin{claim} $T\cap\{\rho_{0\alpha}\mid \alpha\in C\}=\emptyset$. In particular, $V^-(T)\supseteq C\supseteq D$.
\end{claim}
\begin{proof} The ``in particular'' part will follow from the fact that $\{ \rho_{0\alpha}\restriction\epsilon\mid \epsilon<\alpha\}$ is an $\alpha$-branch of $T$
for every $\alpha<\kappa$.
Thus, let $\alpha\in C$ and we shall prove that $\rho_{0\alpha}\notin T$.
Suppose not, and pick some $\beta>\alpha$ such that $\rho_{0\alpha}=\rho_{0\beta}\restriction\alpha$.
Recall that for every $\gamma<\kappa$, $$C_\gamma=\{ \xi<\gamma\mid \rho_{0\gamma}(\xi)\text{ is a sequence of length }1\}.$$
In particular, $\min(C_\alpha)=\min(C_\beta)$. As $\sup(C\cap\beta)\ge\alpha>\min(C_\alpha)$, it follows from Clause~(2) that $\beta\in C$.
So, by Clause~(4), $\otp(C_\alpha)=\otp(C_\beta)$. It follows that may fix some $\delta\in C_\alpha\setminus C_\beta$.
But then $\rho_{0\alpha}(\delta)$ is a sequence of length $1$, whereas $\rho_{0\beta}(\delta)$ is a longer sequence. This is a contradiction.
\end{proof}

For every $t\in T\restriction\acc(\kappa)$, define a function $k_t:\dom(t)\rightarrow T$ via
$$k_t(\varepsilon):=t\restriction\varepsilon.$$

Let $K$ be the following downward-closed subfamily of ${}^{<\kappa}H_\kappa$:
$$K:=\{ k_t\restriction\alpha\mid \alpha\le\dom(t), t\in T\restriction\acc(\kappa)\}.$$

Evidently, for all $x,y\in K$ and $\varepsilon\in\dom(x)\cap\dom(y)$, if $x(\varepsilon)=y(\varepsilon)$, then $x\restriction\varepsilon=y\restriction\varepsilon$.
In addition, $t\mapsto k_t$ constitutes an isomorphism between $(T\restriction\acc(\kappa),{\s})$ and $(K\restriction\acc(\kappa),{\s})$,
and hence $K$ is a streamlined $\kappa$-Aronszajn tree with $V^-(K)\supseteq D$.
The fact that the above map is an isomorphism also implies that a function $f:K\restriction D\rightarrow\kappa$ defined via $f(k_t):=g(t)$
satisfies that $f(x)<\dom(x)$ for all $x\in K\restriction D$,
and that  $f(x)\neq f(y)$ for every pair $x\subsetneq y$ of nodes from $K\restriction D$.

$(ii)\implies(iii)$:
Suppose that $K$ and $f:K\restriction D\rightarrow\kappa$ are as in Clause~(ii).
By possibly shrinking $D$, we may assume that for all $\beta\in D$ and $\alpha<\beta$, it is the case that $\omega\cdot\alpha<\beta$.

The operation of Definition~\ref{regdef} is associative,
so we may define a family $T$ to be the collection of all elements of the form $x_0*\cdots*x_n$ where\footnote{To clarify, in the special case that $n=0$, $x_0*\cdots*x_n$ stands for $x_0$.}
\begin{itemize}
\item[(a)] $n<\omega$,
\item[(b)] $x_i\in K$ for all $i\le n$, and
\item[(c)] $\dom(x_i)<\dom(x_{i+1})$ for all $i<n$.
\end{itemize}

It is clear that $t\restriction\alpha\in T$ for all $t\in T$ and $\alpha<\kappa$.
Thus, recalling the proof of Claim~\ref{c2171},
to establish that $T$ is a uniformly homogeneous streamlined $\kappa$-tree, it suffices to prove the following claim.
\begin{claim} $T_0=\{\emptyset\}$ and $T_\alpha=\{ x*y\mid x\in T\restriction\alpha, y\in K_\alpha\}$
for every nonzero $\alpha<\kappa$.
\end{claim}
\begin{proof} Suppose that $\alpha$ is a nonzero ordinal such that $T_\epsilon=\{ x*y\mid x\in T\restriction\alpha, y\in K_\epsilon\}$
for every $\epsilon<\alpha$. Let $t\in T_\alpha$. Pick a sequence $(x_0,\ldots,x_n)$ satisfying (a)--(c)
for which $t=x_0*\cdots*x_n$.

$\br$ If $n=0$, then $t=\emptyset*x_0$ with $\emptyset\in T\restriction\alpha$ and $x_0\in K_\alpha$.

$\br$ If $n=m+1$ for some $m<\omega$, then $t=x*y$ with $x:=x_0*\cdots*x_m$ in $T\restriction\alpha$ and $y:=x_{m+1}$ in $K_\alpha$.
\end{proof}

For each node $t\in T$, we define $n(t)$ and $x(t)$ by first letting $n(t)$ denote the least $n$ for which there exists a sequence $(x_0,\ldots,x_n)$ satisfying (a)--(c)
for which $t=x_0*\cdots*x_n$,
and then letting $x(t)$ be such an $x_n$. Note that $\dom(x(t))=\dom(t)$, and that  $K=\{ t\in T\mid n(t)=0\}$.

Define a function $g:T\restriction D\rightarrow\kappa$ via
$$g(t):=(\omega\cdot f(x(t)))+n(t).$$
\begin{claim} \begin{enumerate}
\item $g(t)<\dom(t)$ for all $t\in T\restriction D$;
\item Let $s\subsetneq t$ be a pair of nodes from $T\restriction D$. Then $g(s)\neq g(t)$.
\end{enumerate}
\end{claim}
\begin{proof} (1) Since $\omega\cdot\alpha<\beta$ for all $\beta\in D$ and $\alpha<\beta$.

(2) Suppose not. Let $\tau<\kappa$ and $n<\omega$ be such that
$f(x(s))=\tau=f(x(t))$ and $n(s)=n=n(t)$. By the choice of $f$ it follows that $x(s)\nsubseteq x(t)$,
so since $s\subsetneq t$, it must be the case that $n=m+1$ for some $m<\omega$.
Fix a sequence $(x_0,\ldots,x_m,x_{m+1})$ of nodes from $K$ such that $s=x_0*\cdots * x_m* x_{m+1}$ and $x_{m+1}=x(s)$.
Likewise, fix a sequence $(y_0,\ldots,y_m,y_{m+1})$ of nodes from $K$ such that $t=y_0*\cdots * y_m* y_{m+1}$ and $y_{m+1}=x(t)$.

$\br$ As $x_{m+1}\nsubseteq y_{m+1}$, we may fix $\delta\in\dom(x_{m+1})$ such that $x_{m+1}(\delta)\neq y_{m+1}(\delta)$.

$\br$ As $s\s t= y_0*\cdots * y_m* y_{m+1}$ and $n(s)>m$, it must be the case that $\dom(y_m)<\dom(s)$.

Altogether, $\varepsilon:=\max\{\delta+1,\dom(x_m),\dom(y_m)\}$ is an ordinal less than $\dom(s)$,
satisfying $x_{m+1}(\varepsilon)=s(\varepsilon)=t(\varepsilon)=y_{m+1}(\varepsilon)$, but then $x_{m+1}\restriction\varepsilon=y_{m+1}\restriction\varepsilon$, contradicting the fact that $\delta<\varepsilon$.
\end{proof}

It is easy to see that the two features of $g$ together imply that $T$ admits no $\kappa$-branch.
The beginning of the proof of \cite[Theorem~4.4]{MR3078820} shows furthermore that $T$ must be a special $\kappa$-Aronszajn tree.

\begin{claim} $V(T)\supseteq D$.
\end{claim}
\begin{proof} Let $\alpha\in D$. As $D\s V^-(K)$, we may fix a function $t:\alpha\rightarrow H_\kappa$ such that $\{ t\restriction\epsilon\mid \epsilon<\alpha\}\s K$,
but $t\notin K$. As $K\s T$, it thus suffices to prove that $t\notin T$.
Towards a contradiction, suppose that $t\in T$. In particular, $n(t)>0$.
Fix $m<\omega$ and a sequence $(x_0,\ldots,x_m,x_{m+1})$ of nodes from $K$ such that $t=x_0*\cdots * x_m*x_{m+1}$.
As $x_{m+1}\neq t$, we may fix some $\delta<\alpha$ such that $t(\delta)\neq x_{m+1}(\delta)$.
Pick $\varepsilon<\alpha$ above $\max\{\delta,\dom(x_m)\}$.
Then $t(\varepsilon)=x_{m+1}(\varepsilon)$. But $t\restriction(\varepsilon+1)$ and $x_{m+1}\restriction(\varepsilon+1)$ are two nodes in $K$
that agree on $\varepsilon$ and hence $t\restriction(\varepsilon+1)=x_{m+1}\restriction(\varepsilon+1)$, contradicting the fact that $\delta<\varepsilon$.
\end{proof}

The implication $(iii)\implies(iv)$ follows from the proof of Lemma~\ref{lemma33} and the implication $(iv)\implies(i)$ is trivial.
\end{proof}

\begin{defn}[Products]\label{def-classical-product-tree}
For a sequence of $\kappa$-trees $\langle \mathbf{T}^i \mid i<\tau \rangle$
with $\mathbf T^i = (T^i, {<_{T^i}})$ for each $i<\tau$,
the product
$\bigotimes_{i<\tau} \mathbf{T}^i$
is defined to be the tree $\mathbf T=({T}, {<_{{T}}})$,
where:
\begin{itemize}
\item $T=\bigcup\{\prod_{i<\tau}T_\alpha^i\mid \alpha<\kappa\}$;
\item $\vec{s} <_{{T}} \vec{t}$ iff $\vec s(i) <_{T^i} \vec t(i)$ for every $i<\tau$.
\end{itemize}
\end{defn}
\begin{prop}\label{products}

For a sequence $\langle \mathbf{T}^i \mid i<\tau \rangle$ of normal $\kappa$-trees,
if $\lambda^\tau<\kappa$ for all $\lambda<\kappa$, then:
\begin{enumerate}
\item $\bigotimes_{i<\tau} \mathbf{T}^i$ is a normal $\kappa$-tree;
\item $V(\bigotimes_{i<\tau} \mathbf{T}^i)=\bigcup\{V(\mathbf T^i)\mid i<\tau\}$;
\item $V^-(\bigotimes_{i<\tau} \mathbf{T}^i)=\bigcup\{V^-(\mathbf T^i)\mid i<\tau\}$.
\end{enumerate}
\end{prop}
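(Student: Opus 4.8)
The plan is to handle the three clauses in order, with clause~(1) being mostly bookkeeping and the heart of the matter lying in the branch analysis of clauses~(2) and~(3).

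First I would verify clause~(1). Given $\alpha<\kappa$, the level $(\bigotimes_{i<\tau}\mathbf T^i)_\alpha$ is exactly $\prod_{i<\tau}T^i_\alpha$, which has size at most $(\sup_{i}|T^i_\alpha|)^\tau\le\lambda^\tau<\kappa$ where $\lambda:=\sup_i|T^i_\alpha|<\kappa$ (using that $\kappa$ is regular and each $|T^i_\alpha|<\kappa$, so the supremum over $\tau<\kappa$-many values is $<\kappa$; strictly one should also note $\tau<\kappa$, which follows from $2^\tau\le\lambda^\tau<\kappa$ applied with $\lambda=2$). Nonemptiness of each level is immediate from nonemptiness of each $T^i_\alpha$, and $T_\kappa=\emptyset$ since each $T^i_\kappa=\emptyset$. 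The tree axiom (that $\vec t_\downarrow$ is well-ordered with order type $\alpha$) follows because the map $\vec t\mapsto(\vec t(i))_{i<\tau}$ is height-preserving and $\vec s<_T\vec t$ iff coordinatewise below. Normality: given $\vec s\in T_\alpha$ and $\beta>\alpha$ with $T_\beta\ne\emptyset$, for each $i$ use normality of $\mathbf T^i$ to pick $\vec t(i)\in T^i_\beta$ above $\vec s(i)$; then $\vec t\in T_\beta$ is above $\vec s$.

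Next, clauses~(2) and~(3). The key observation is that for any limit $\alpha<\kappa$, an $\alpha$-branch $B$ of $\mathbf T$ projects, for each $i<\tau$, to an $\alpha$-branch $B^i:=\{\vec t(i)\mid \vec t\in B\}$ of $\mathbf T^i$; conversely, given $\alpha$-branches $B^i$ of $\mathbf T^i$ for each $i$, the ``product branch'' $\prod_{i<\tau}B^i$ (the set of $\vec t\in T_{<\alpha}$ with $\vec t(i)\in B^i$ for all $i$, which by Hausdorffness at each height is a genuine $\alpha$-branch once we know each $T^i_\beta$ for $\beta<\alpha$ meets $B^i$) is an $\alpha$-branch of $\mathbf T$. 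Moreover $B$ has an upper bound $\vec b\in T_\alpha$ iff each projection $B^i$ has an upper bound $\vec b(i)\in T^i_\alpha$: one direction is projecting the bound, the other is assembling the coordinatewise bounds into an element of $T_\alpha$ using $\lambda^\tau<\kappa$ to know $T_\alpha$ is a full product. Hence $B$ is \emph{vanishing} iff at least one projection $B^i$ is vanishing. For clause~(3), this immediately gives both inclusions of $V^-$: if some $\alpha$-branch of $\mathbf T$ is vanishing then some $B^i$ is vanishing so $\alpha\in V^-(\mathbf T^i)$; conversely if $\alpha\in V^-(\mathbf T^i)$, take a vanishing $\alpha$-branch of $\mathbf T^i$, pad it out to a product $\alpha$-branch of $\mathbf T$ using normality (to get genuine $\alpha$-branches in the other coordinates — normality is exactly what guarantees each $T^j_\beta$ is inhabited along such a branch for $\beta<\alpha$), and the result is vanishing.

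For clause~(2), fix $\alpha\in\acc(\kappa)$. Suppose $\alpha\in V(\mathbf T^{i_0})$ for some $i_0$; given any $\vec x\in T$ with $\h(\vec x)<\alpha$, apply $\alpha\in V(\mathbf T^{i_0})$ to $\vec x(i_0)$ to get a vanishing $\alpha$-branch $B^{i_0}$ through $\vec x(i_0)$ in $\mathbf T^{i_0}$; for $i\ne i_0$ use normality to extend $\vec x(i)$ to \emph{some} $\alpha$-branch $B^i$ of $\mathbf T^i$; then the product branch $\prod_{i<\tau}B^i$ is a vanishing $\alpha$-branch through $\vec x$, so $\alpha\in V(\mathbf T)$. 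Conversely, suppose $\alpha\in V(\mathbf T)$ but $\alpha\notin V(\mathbf T^i)$ for every $i<\tau$; for each $i$ fix a witness $x^i\in T^i$ with $\h(x^i)<\alpha$ such that every $\alpha$-branch of $\mathbf T^i$ through $x^i$ has an upper bound. Using normality we may assume all the $x^i$ lie at a common height $\gamma<\alpha$ (extend each to height $\gamma:=\sup_i\h(x^i)<\alpha$, noting $\tau<\kappa$ regular so this sup is $<\alpha$; extending a witness upward preserves the property that every $\alpha$-branch through it is bounded — wait, this needs care, so see below), and let $\vec x\in T_\gamma$ be the assembled node. Then any $\alpha$-branch of $\mathbf T$ through $\vec x$ has each projection $B^i$ an $\alpha$-branch through $x^i$, hence bounded, hence $B$ is bounded, contradicting $\alpha\in V(\mathbf T)$.

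\textbf{Main obstacle.} The delicate point — and the one I would spell out most carefully — is the claim that a witness $x^i$ to $\alpha\notin V(\mathbf T^i)$ can be replaced by an extension at a prescribed larger height $\gamma<\alpha$ that is still a witness. This needs: every $\alpha$-branch through a given node $y\ge x^i$ of height $<\alpha$ is also an $\alpha$-branch through $x^i$, hence bounded — which is trivially true — so \emph{any} node of height $\gamma$ above $x^i$ works, and normality provides one. So in fact there is no real obstacle here once stated correctly; the only genuinely load-bearing hypothesis is $\lambda^\tau<\kappa$, used in two places: to keep $|T_\alpha|<\kappa$ (clause~1) and, more essentially, to know that $T_\alpha$ is the \emph{full} product $\prod_i T^i_\alpha$ so that coordinatewise upper bounds assemble into a genuine upper bound in $\mathbf T$ (the ``vanishing iff some projection vanishes'' equivalence). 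I would present the branch-projection lemma as a standalone paragraph and then derive (2) and (3) from it in a few lines each.
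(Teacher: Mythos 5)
Clauses (1), (3), and the $\supseteq$ direction of (2) are fine, but the $\subseteq$ direction of clause~(2) has a genuine gap, precisely at the step you flagged and then dismissed as unproblematic. You set $\gamma := \sup_{i<\tau}\h(x^i)$ and claim $\gamma<\alpha$ ``noting $\tau<\kappa$ regular''; regularity of $\kappa$ together with $\tau<\kappa$ gives $\gamma<\kappa$, not $\gamma<\alpha$. When $\tau$ is infinite and $\cf(\alpha)\le|\tau|$, the heights $\h(x^i)$ may be cofinal in $\alpha$, and there is then no common level $\gamma<\alpha$ at which to assemble a bad node $\vec x$. (The point you elaborated on --- that a witness remains a witness when pushed upward below $\alpha$ --- is correct but is not where the difficulty lies.)

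In fact clause~(2) can genuinely fail for infinite $\tau$. Take $\tau=\omega$, $\alpha\in\acc(\kappa)$ with $\cf(\alpha)=\omega$ and $\gamma_n\nearrow\alpha$, a normal splitting streamlined $\kappa$-tree $\mathbf S$ with $\alpha\in V(\mathbf S)$, and a node $z\in S_\alpha$. Put $T^n:=\{s\in S\mid s\not\supseteq z\restriction\gamma_n,\text{ or }s\subseteq z,\text{ or }s\supseteq z\}$, i.e., above $z\restriction\gamma_n$ keep only the branch to $z$ and its extensions. Each $T^n$ is a normal $\kappa$-tree with $\alpha\notin V(T^n)$ (witnessed by $z\restriction\gamma_n$: the only $\alpha$-branch through it is $\{z\restriction\beta\mid\beta<\alpha\}$, bounded by $z$), yet every node of $T^n$ of height $<\gamma_n$ still lies on a vanishing $\alpha$-branch of $T^n$, since below height $\gamma_n$ one can split away from the $z$-branch and there find the tree unchanged from $\mathbf S$. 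Given any $\vec x$ in $\bigotimes_n T^n$ of height $\gamma<\alpha$, pick $n$ with $\gamma_n>\gamma$; your own $\supseteq$ argument applied in coordinate $n$ then yields a vanishing $\alpha$-branch of the product through $\vec x$, so $\alpha\in V(\bigotimes_n\mathbf T^n)\setminus\bigcup_n V(\mathbf T^n)$. Your argument (and the proposition as literally stated) is correct whenever $\tau$ is finite --- the only case the paper uses, e.g.\ in Corollary~\ref{cor61} and for closure of $\vspec(\kappa)$ under finite unions --- and more generally at $\alpha$ with $\cf(\alpha)>|\tau|$; a clean repair of clause~(2) for arbitrary $\tau$ is to restrict the asserted equality to $E^\kappa_{>|\tau|}$.
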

\begin{proof} Left to the reader.
\end{proof}

\begin{defn}[Sums] The \emph{disjoint sum} $\sum\mathcal P$ of a family of posets $\mathcal P$
is the poset $(A,<_A)$ defined as follows:
\begin{itemize}
\item $A:=\{ ((P,<_P),x)\mid (P,<_P)\in\mathcal P, x\in P\}$;
\item $((P,<_P),x)<_A ((Q,<_Q),y)$ iff $(P,<_P)=(Q,<_Q)$ and $x<_Py$.
\end{itemize}
\end{defn}
In the special case of doubleton we write $\mathbf T+\mathbf S$ instead of $\sum\{\mathbf T,\mathbf S\}$.
\begin{prop}\label{sums} Suppose that $\mathcal T$ is a family of less than $\kappa$ many $\kappa$-trees. Then:
\begin{enumerate}
\item $\sum\mathcal T$ is a $\kappa$-tree;
\item $V(\sum\mathcal T)=\bigcap\{V(\mathbf T)\mid \mathbf T\in \mathcal T\}$;
\item $V^-(\sum\mathcal T)=\bigcup \{V^-(\mathbf T)\mid \mathbf T\in \mathcal T\}$.
\end{enumerate}
\end{prop}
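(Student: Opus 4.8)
The plan is to exploit the fact that $\sum\mathcal T$ is, structurally, nothing but the disjoint union of the trees in $\mathcal T$ laid side by side: no node of one component is ever comparable with a node of another. The first thing I would record is that for a node $((P,{<_P}),x)$ of $\sum\mathcal T$ one has $((P,{<_P}),x)_\downarrow=\{((P,{<_P}),y)\mid y<_Px\}$, which is order-isomorphic to $x_\downarrow$ computed in $(P,{<_P})$ via the projection onto the second coordinate; in particular $\h_{\sum\mathcal T}(((P,{<_P}),x))=\h_{(P,{<_P})}(x)$. From this, Clause~(1) is immediate: each $x_\downarrow$ is well-ordered, $(\sum\mathcal T)_\kappa=\emptyset$ because each component has empty $\kappa$-th level, and for $\alpha<\kappa$ the level $(\sum\mathcal T)_\alpha$ is the disjoint union of the $\alpha$-th levels of the members of $\mathcal T$, hence nonempty (here we use that $\mathcal T\neq\emptyset$) and of size at most $|\mathcal T|\cdot\sup\{|T_\alpha|\mid \mathbf T\in\mathcal T\}<\kappa$, using the regularity of $\kappa$ together with $|\mathcal T|<\kappa$.

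The engine behind Clauses~(2) and~(3) is a dictionary between branches. Since two nodes in distinct components are incomparable, every linearly ordered subset of $\sum\mathcal T$—in particular every $\alpha$-branch—lies in a single component $\mathbf T\in\mathcal T$ and has the form $\hat B:=\{(\mathbf T,y)\mid y\in B\}$ for an $\alpha$-branch $B$ of $\mathbf T$, and conversely every such $\hat B$ is an $\alpha$-branch of $\sum\mathcal T$. Moreover $\hat B$ is vanishing in $\sum\mathcal T$ if and only if $B$ is vanishing in $\mathbf T$, since any upper bound of $\hat B$ must lie in the same component $\mathbf T$ (it dominates elements of $\hat B$, all of which sit in $\mathbf T$), hence is of the form $(\mathbf T,z)$ with $z$ an upper bound of $B$, and vice versa. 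The same projection identifies, for a fixed node $(\mathbf T,x)$, the vanishing $\alpha$-branches of $\sum\mathcal T$ through $(\mathbf T,x)$ with the vanishing $\alpha$-branches of $\mathbf T$ through $x$.

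With this dictionary in hand, the last two clauses become bookkeeping inside $\acc(\kappa)$. For Clause~(3): $\alpha\in V^-(\sum\mathcal T)$ iff $\sum\mathcal T$ has a vanishing $\alpha$-branch iff some $\mathbf T\in\mathcal T$ has one iff $\alpha\in\bigcup\{V^-(\mathbf T)\mid \mathbf T\in\mathcal T\}$. For Clause~(2): the nodes of $\sum\mathcal T$ of height $<\alpha$ are precisely the pairs $(\mathbf T,x)$ with $\mathbf T\in\mathcal T$ and $\h_{\mathbf T}(x)<\alpha$, so $\alpha\in V(\sum\mathcal T)$ iff through each such $(\mathbf T,x)$ there passes a vanishing $\alpha$-branch of $\sum\mathcal T$, iff for every $\mathbf T\in\mathcal T$ and every $x\in T$ with $\h_{\mathbf T}(x)<\alpha$ there passes a vanishing $\alpha$-branch of $\mathbf T$ through $x$, iff $\alpha\in\bigcap\{V(\mathbf T)\mid \mathbf T\in\mathcal T\}$. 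I do not expect a genuine obstacle here: the only care needed is that ``vanishing'' is evaluated in the correct tree and that the component hosting a given branch is uniquely determined, and the hypothesis $|\mathcal T|<\kappa$ is used solely to keep the levels of $\sum\mathcal T$ small in Clause~(1).
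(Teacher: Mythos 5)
Your argument is correct. The paper itself leaves the proof of this proposition to the reader, and what you have written is exactly the expected argument: incomparability of nodes across components gives a bijection between ($\alpha$-)branches (resp.\ vanishing $\alpha$-branches) of $\sum\mathcal T$ and those of the individual summands, the regularity of $\kappa$ together with $|\mathcal T|<\kappa$ keeps the levels small, and the universal quantifier over nodes of height $<\alpha$ in the definition of $V$ distributes over the components to produce the intersection, while the existential quantifier in the definition of $V^-$ produces the union. You were also right to flag the implicit requirement $\mathcal T\neq\emptyset$, without which Clause~(1) fails and the intersection in Clause~(2) is ill-defined.
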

\begin{proof} Left to the reader.
\end{proof}

It follows from Propositions \ref{products} and \ref{sums} that
$\vspec(\kappa)$ is closed under finite unions and intersections.

\begin{cor}\label{cor224} Suppose $\chi\in\reg(\kappa)$ is such that $\lambda^{<\chi}<\kappa$ for all $\lambda<\kappa$.
Then there exists a $\kappa$-tree $\mathbf T$ with $V^-(\mathbf T)\supseteq \acc(\kappa)\cap E^\kappa_{\leq \chi }$.
\end{cor}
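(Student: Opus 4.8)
The plan is to reduce Corollary~\ref{cor224} to Lemma~\ref{lemma311} by exhibiting, for each $\chi\in\reg(\kappa)$ satisfying the hypothesis, a thin almost-disjoint $C$-sequence over $\acc(\kappa)\cap E^\kappa_{\le\chi}$ (from which clause~(4) of the lemma immediately yields the desired $\kappa$-tree $\mathbf T$). The set $\acc(\kappa)\cap E^\kappa_{\le\chi}$ decomposes as $\bigcup_{\theta\in\reg(\chi)\cup\{\chi\}} E^\kappa_\theta$, a union of $\le\chi<\kappa$ many pieces, so first I would handle each piece separately and then patch the ladder systems together.

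For a fixed $\theta\le\chi$ with $\theta\in\reg(\kappa)$, note that $\lambda^{<\theta}\le\lambda^{<\chi}<\kappa$ for all $\lambda<\kappa$, so Lemma~\ref{l22} applies and gives an almost-disjoint thin $C$-sequence over $E^\kappa_\theta$; picking one element from its product yields an honest almost-disjoint thin ladder system $\langle A^\theta_\alpha\mid\alpha\in E^\kappa_\theta\rangle$. Now I would simply take the union: for $\alpha\in\acc(\kappa)\cap E^\kappa_{\le\chi}$, let $\theta:=\cf(\alpha)$ and set $A_\alpha:=A^\theta_\alpha$. Almost-disjointness is preserved because distinct $\alpha,\alpha'$ either lie in the same $E^\kappa_\theta$ (handled by that piece) or in different ones (and then, say $\cf(\alpha)=\theta<\theta'=\cf(\alpha')$, so $|A_{\alpha'}\cap A_\alpha|\le|A_\alpha|=\theta<\theta'=\cf(\alpha')=\cf(\sup(A_{\alpha'}))$, forcing $\sup(A_{\alpha'}\cap A_\alpha)<\alpha'$; the symmetric sub-case is analogous with the roles reversed, using instead that $A_\alpha$ is a ladder over $\alpha$ of cofinality $\theta$ while $A_{\alpha'}\cap\alpha$ is bounded in $\alpha$ whenever $\alpha'>\alpha$). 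Thinness of the union follows since for each $\varepsilon<\kappa$ the collection $\{A_\alpha\cap\varepsilon\mid\alpha\in\acc(\kappa)\cap E^\kappa_{\le\chi}\}$ is contained in $\bigcup_{\theta\le\chi}\{A^\theta_\alpha\cap\varepsilon\mid\alpha\in E^\kappa_\theta\}$, a union of $\le\chi$ many sets each of size $<\kappa$, hence of size $<\kappa$ by regularity of $\kappa$.

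With this ladder system in hand, clause~(1)$\implies$(4) of Lemma~\ref{lemma311} (applied with $S:=\acc(\kappa)\cap E^\kappa_{\le\chi}$ and $D:=\kappa$, noting $S$ is stationary since $E^\kappa_\chi\s S$) produces a $\kappa$-tree $\mathbf T$ with $V^-(\mathbf T)\supseteq\acc(\kappa)\cap E^\kappa_{\le\chi}$, as required. Alternatively, and perhaps more cleanly, I could phrase the argument via the closure properties already recorded: build for each $\theta\le\chi$ a $\kappa$-tree $\mathbf T^\theta$ with $V^-(\mathbf T^\theta)\supseteq E^\kappa_\theta$ (via Lemma~\ref{l22} and Lemma~\ref{lemma311}), and then invoke Proposition~\ref{products} to see that $\mathbf T:=\bigotimes_{\theta\le\chi}\mathbf T^\theta$ is a $\kappa$-tree with $V^-(\mathbf T)=\bigcup_{\theta\le\chi}V^-(\mathbf T^\theta)\supseteq\acc(\kappa)\cap E^\kappa_{\le\chi}$ — here the hypothesis $\lambda^{<\chi}<\kappa$ is exactly what Proposition~\ref{products} needs for the index set of size $\le\chi$. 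The main obstacle is a bookkeeping one rather than a conceptual one: making sure that gluing together ladder systems living on cofinality-disjoint pieces genuinely preserves both thinness and almost-disjointness across pieces; the product formulation sidesteps this entirely, so I expect to adopt it.
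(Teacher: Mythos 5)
Your overall strategy — one tree per regular cofinality $\theta\le\chi$, then combine — is the same as the paper's, but your two proposed combining steps each have a gap, and the paper avoids both by using disjoint sums (Proposition~\ref{sums}) rather than ladder-gluing or products. In the gluing route, the assertion that "$A_{\alpha'}\cap\alpha$ is bounded in $\alpha$ whenever $\alpha'>\alpha$" is simply false: the ladders $A^\theta_\alpha$ and $A^{\theta'}_{\alpha'}$ come from independently chosen $C$-sequences on disjoint stationary sets, and a closed cofinal subset of $\alpha'$ of order type $\theta'$ may well have $\alpha$ as an accumulation point (this is perfectly consistent with $\cf(\alpha)=\theta<\theta'$), in which case $A_{\alpha'}\cap\alpha$ is cofinal in $\alpha$. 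Your cardinality argument only bounds $\sup(A_\alpha\cap A_{\alpha'})$ below $\alpha'$, which is the trivial half of what almost-disjointness needs; the nontrivial half, $\sup(A_\alpha\cap A_{\alpha'})<\alpha$ for $\alpha<\alpha'$, is not established.

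In the product route you say the hypothesis $\lambda^{<\chi}<\kappa$ "is exactly what Proposition~\ref{products} needs for the index set of size $\le\chi$," but Proposition~\ref{products} needs $\lambda^{|\Theta|}<\kappa$ with $\Theta:=\reg(\chi+1)$, and $|\Theta|$ can equal $\chi$ (e.g.\ when $\chi$ is inaccessible, the regular cardinals below $\chi$ are unbounded in $\chi$). Then $\lambda^{<\chi}<\kappa$ does not give $\lambda^\chi<\kappa$: take $\chi$ inaccessible, $\kappa=\chi^+$, and $2^\chi\ge\chi^+$. The correct fix — and what the paper does — is to take $\mathbf T:=\sum\{\mathbf T^\theta\mid\theta\in\Theta\}$ and invoke Proposition~\ref{sums}(3), which gives $V^-(\sum\mathcal T)=\bigcup\{V^-(\mathbf T^\theta)\mid\theta\in\Theta\}$ for any family of fewer than $\kappa$ many $\kappa$-trees with no cardinal-arithmetic hypothesis at all. (A minor further point: Lemma~\ref{lemma311}(4) as stated only yields $V^-(\mathbf T^\theta)\supseteq E^\kappa_\theta\cap D$ for some club $D$; to get $V^-(\mathbf T^\theta)\supseteq E^\kappa_\theta$ exactly, one should note — as the paper does — that the construction in the proof of $(2)\implies(3)$ preserves $D=\kappa$ when the input ladder system from Lemma~\ref{l22} is defined on all of $E^\kappa_\theta$.)
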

\begin{proof} Denote $\Theta:=\reg(\chi+1)$.
By Lemmas \ref{l22} and \ref{lemma311},
for every  $\theta\in\Theta$, we may pick a $\kappa$-tree  $\mathbf T^\theta$ such that $V^-(\mathbf T^\theta)$ covers $E^\kappa_{\theta}$
modulo a club. In fact, the proof of $(2)\implies(3)$ of Lemma~\ref{lemma311} shows that we may secure $V^-(\mathbf T^\theta)\supseteq E^\kappa_{\theta}$.
Let $\mathbf T:=\sum\{\mathbf T^\theta\mid \theta\in\Theta\}$ be the disjoint sum of these trees.
By Proposition~\ref{sums}, $V^-(\mathbf T)=\bigcup_{\theta\in\Theta}V^-(\mathbf T^\theta)\supseteq\bigcup_{\theta\in\Theta}E^\kappa_\theta= \acc(\kappa)\cap E^\kappa_{\leq \chi }$.
\end{proof}
\begin{remark} In Section~\ref{sect6}, we provide sufficient conditions for getting an homogeneous $\kappa$-Souslin tree $\mathbf T$ with $V(\mathbf T)=\bigcup_{\chi\in x}E^\kappa_\chi$ for a prescribed finite and nonempty $x\s\reg(\kappa)$.
\end{remark}

\begin{question} Is it consistent that for some regular uncountable cardinal $\kappa$, there are $\kappa$-Souslin trees, but $V(\mathbf T)$ is nonstationary for every $\kappa$-Souslin tree $\mathbf T$?
\end{question}

By Proposition~\ref{prop22},
Corollary~\ref{cor26} and \cite[Lemma~2.4]{paper20},
in such a model there cannot be an homogeneous $\kappa$-Souslin tree.
A model with an $\aleph_1$-Souslin tree but no homogeneous one was constructed by Abraham and Shelah in \cite{Sh:403}.

\section{Consulting another tree}\label{sec4}

The main result of this section is Theorem~\ref{thm41} below.
A sample corollary of it reads as follows.

\begin{cor}\label{cor31} Suppose that $\kappa=\lambda^+$ for an infinite cardinal $\lambda$.
\begin{enumerate}
\item If $\square_\lambda\mathrel{+}\diamondsuit(\kappa)$ holds,
then there exists a $\kappa$-Souslin tree $\mathbf T$ with $V(\mathbf T)=\acc(\kappa)$;
\item If $\square(\kappa)$ holds and $\aleph_0<\lambda^{<\lambda}<\lambda^+=2^\lambda$,
then there exists a $\kappa$-Souslin tree $\mathbf T$ with $V(\mathbf T)=\acc(\kappa)$;
\item If $\p_\lambda(\kappa,\kappa,{\sq},1)$ holds, then there exists a $\kappa$-Souslin tree $\mathbf T$ such that $V(\mathbf T)\supseteq E^\kappa_{>\omega}$.
\end{enumerate}
\end{cor}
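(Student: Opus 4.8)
The plan is to derive all three clauses from Theorem~\ref{thm41}, the main theorem of this section, which should be a strengthening of Theorem~\ref{thmc}: from a suitable instance of the proxy principle $\p(\kappa,\ldots)$ of \cite{paper22} (involving one of the coherence relations listed in Subsection~\ref{nocon}) and an arbitrary $\kappa$-tree $\mathbf K$ to consult, it should produce a $\kappa$-Souslin tree $\mathbf T$ for which $V(\mathbf T)$ covers, modulo a club, the set $V^-(\mathbf K)$ together with a ``free'' block of levels that the proxy parameters force to vanish unaided---a block that I expect to contain $E^\kappa_{>\omega}$ in all three regimes below. Granting such a theorem, what remains in each clause is (a) to extract the correct proxy instance from the stated hypothesis, and (b) to choose the tree $\mathbf K$ to consult.

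For clause~(1), I would first invoke \cite{paper22} to see that $\square_\lambda+\diamondsuit(\kappa)$ yields an instance of $\p_\lambda(\kappa,2,{\sqleft{\lambda}},1)$, of the form Theorem~\ref{thm41} accepts. For the tree to consult, since $\square_\lambda$ furnishes a special $\kappa$-Aronszajn tree, Theorem~\ref{lemma39} hands me an homogeneous special $\kappa$-Aronszajn tree $\mathbf K$, and Proposition~\ref{prop22}(2) then gives $V^-(\mathbf K)=V(\mathbf K)=\acc(\kappa)$. Running Theorem~\ref{thm41} on this $\mathbf K$ produces a $\kappa$-Souslin $\mathbf T$ with $V(\mathbf T)\supseteq\acc(\kappa)$ modulo a club; as $V(\mathbf T)\s\acc(\kappa)$ always holds, Lemma~\ref{lemma33} lets me pass to a subtree with $V(\mathbf T)=\acc(\kappa)$ on the nose, and that subtree stays $\kappa$-Souslin, since passing to $T\restriction D$ can only shrink antichains and branches.

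For clause~(2), I would first note that $\aleph_0<\lambda^{<\lambda}<\lambda^+$ forces $\lambda$ to be regular and uncountable with $\lambda^{<\lambda}=\lambda$, so that $2^\lambda=\lambda^+$ yields $\diamondsuit(\kappa)$ by Shelah's theorem; together with $\square(\kappa)$, \cite{paper22} then provides an instance of $\p(\kappa,2,{\sq^*},1)$. As the tree to consult I would take the $\kappa$-Aronszajn tree obtained above from $\square(\kappa)$, which has $V^-(\mathbf K)\supseteq V(\mathbf K)=E^\kappa_\omega$; consulting it is needed precisely to vanish the $\omega$-cofinal levels, which ${\sq^*}$ is too rigid to handle on its own. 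Theorem~\ref{thm41} then produces a $\kappa$-Souslin $\mathbf T$ with $V(\mathbf T)\supseteq E^\kappa_\omega\cup E^\kappa_{>\omega}=\acc(\kappa)$ modulo a club, and Lemma~\ref{lemma33} upgrades this to $V(\mathbf T)=\acc(\kappa)$ exactly (the subtree staying $\kappa$-Souslin, as before). For clause~(3), $\p_\lambda(\kappa,\kappa,{\sq},1)$ is assumed outright; here the tree $\mathbf K$ plays no role (take, say, a single chain of order type $\kappa$), and since full coherence cannot vanish branches at $\omega$-cofinal levels, Theorem~\ref{thm41} yields a $\kappa$-Souslin $\mathbf T$ with $V(\mathbf T)\supseteq E^\kappa_{>\omega}$.

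The verifications in step~(a) should be routine bookkeeping against \cite{paper22}, together with the cardinal-arithmetic observations and Shelah's $\diamondsuit$ theorem in clause~(2). The real weight lies in Theorem~\ref{thm41} itself. There, the main obstacle I anticipate is to show that, even under the most rigid relation ${\sq}$ and with only $\mu$-many options in the $\mathcal C$-sequence (in clause~(3), $\mu=\kappa$), one can still arrange a \emph{vanishing} $\alpha$-branch through every node for every $\alpha\in E^\kappa_{>\omega}$, while simultaneously sealing all maximal antichains below $\kappa$ so as to keep $\mathbf T$ Souslin; and to do so in a way that is compatible with the separate bookkeeping needed to consult $\mathbf K$ (which carries the $\omega$-cofinal levels in clause~(2)), so that the two families of requirements do not clash.
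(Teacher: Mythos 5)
Your proposal for clause~(1) essentially matches the paper's proof, but clauses~(2) and~(3) rest on a misreading of what Theorem~\ref{thm41} delivers, and that misreading causes both clauses to break.

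You posit that Theorem~\ref{thm41} produces a $\kappa$-Souslin tree whose vanishing levels cover $V^-(\mathbf K)$ \emph{together with a ``free'' block containing $E^\kappa_{>\omega}$ unaided}. That is not what the theorem gives. Its output is $V(T)\supseteq V^-(K)$ under $\p(\kappa,2,{\sq^*},1)$, and only the smaller set $V(T)\supseteq V^-(K)\cap E^\kappa_{>\omega}$ under $\p(\kappa,\kappa,{\sq},1)$; the cofinality restriction in part~(2) \emph{cuts down} the guarantee, it does not add a free block. Indeed, in the construction a level $\alpha$ is forced to vanish only when $\alpha\in V^-(K)$ \emph{and} $\sup(C\cap D)=\alpha$ for all $C,D\in\mathcal C_\alpha$ (Claim~\ref{c372}); without $\alpha\in V^-(K)$, nothing in the proxy data makes a branch vanish. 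Thus every clause needs a tree $K$ already carrying the vanishing levels you want.

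Concretely: in your clause~(3), consulting a single chain gives $V^-(K)=\emptyset$, so Theorem~\ref{thm41}(2) returns $V(T)\supseteq\emptyset$, which is vacuous. The paper instead extracts from $\p_\lambda(\kappa,\kappa,{\sq},1)$ (via its $\lambda$-boundedness) a special $\kappa$-Aronszajn tree, pumps it up via Theorem~\ref{lemma39} to one with $V^-(K)=\acc(\kappa)$, and only then applies Theorem~\ref{thm41}(2) to land $V(T)\supseteq E^\kappa_{>\omega}$. In your clause~(2), the coherent tree from $\square(\kappa)$ has $V^-(K)=E^\kappa_\omega$, so Theorem~\ref{thm41}(1) would only yield $V(T)\supseteq E^\kappa_\omega$, not $\acc(\kappa)$. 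The paper instead uses Specker's theorem (from $\lambda=\lambda^{<\lambda}$) to get a special $\kappa$-Aronszajn tree and again pumps it up via Theorem~\ref{lemma39} to $V^-(K)=\acc(\kappa)$; Shelah's theorem supplies $\diamondsuit(\kappa)$ from $2^\lambda=\lambda^+$, and \cite[Corollary~4.4]{paper24} supplies the $\p^-(\kappa,2,{\sq},1)$-sequence. (A smaller slip: in clause~(1) you invoke $\p_\lambda(\kappa,2,{\sqleft{\lambda}},1)$, but $\sqleft{\lambda}$-coherence does not imply $\sq^*$-coherence; the paper uses $\p_\lambda(\kappa,2,{\sq},1)$, which does.)
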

\begin{proof} (1) $\diamondsuit(\aleph_1)$ implies the existence of a normal and splitting $\aleph_1$-Souslin tree $\mathbf T$,
and by Corollary~\ref{cor27}, $V(\mathbf T)=\acc(\aleph_1)$.
For $\lambda\ge\aleph_1$, by \cite[Corollary~3.9]{paper22}, $\square_\lambda+\ch_\lambda$ is equivalent to $\p_\lambda(\kappa,2,{\sq},1)$.
In addition, by a theorem of Jensen, $\square_\lambda$ gives rise to a special $\lambda^+$-Aronszajn tree. Thus, we infer from Proposition~\ref{lemma39}
the existence of a $\kappa$-tree $\mathbf K$ for which $V^-(\mathbf K)=\acc(\kappa)$.
It thus follows from Theorem~\ref{thm41}(1) below that there exists a $\kappa$-Souslin tree $\mathbf T$ for which $V(\mathbf T)$ is a club in $\kappa$.
Finally, appeal to Lemma~\ref{lemma33}.

(2) By \cite[Corollary~4.4]{paper24}, the hypothesis implies that $\p^-(\kappa,2,{\sq},1)$ holds.
In addition, by a theorem of Specker, $\lambda=\lambda^{<\lambda}$ implies the existence of a special $\lambda^+$-Aronszajn tree.
Now, continue as in the proof of Clause~(1).

(3) Similar to the proof of Clause~(1),
using Theorem~\ref{thm41}(2), instead.
\end{proof}
\begin{remark} Sufficient conditions for $\p_\lambda(\kappa,\kappa,{\sq},1)$ to hold are given by Corollaries 3.15 and 3.24 of \cite{paper32}.
\end{remark}

Before turning to the proofs of the main results of this section, we provide a few preliminaries.

\begin{defn}[Proxy principle, \cite{paper22,paper23}]\label{proxydef}
Suppose that $\mu,\theta\le\kappa$ are cardinals, $\xi\le\kappa$ is an ordinal,
$\mathcal R$ is a binary relation over $[\kappa]^{<\kappa}$ and $\mathcal S$ is a collection of stationary subsets of $\kappa$.
The principle $\p_\xi^-(\kappa,\mu,\mathcal{R},\theta,\mathcal{S})$ asserts the existence of a $\xi$-bounded $\mathcal C$-sequence $\langle \mathcal{C}_\alpha\mid \alpha<\kappa\rangle$ such that:
\begin{itemize}
\item for every $\alpha<\kappa$, $|\mathcal C_\alpha|<\mu$;
\item for all $\alpha<\kappa$, $C\in \mathcal{C}_\alpha$, and $\bar{\alpha}\in\acc(C)$, there exists some $D\in \mathcal{C}_{\bar{\alpha}}$ such that $D\mathrel{\mathcal{R}}C$;
\item for every sequence $\langle B_i\mid i<\theta\rangle$ of cofinal subsets of $\kappa$, and every $S\in\mathcal{S}$, there are stationarily many $\alpha\in S$
such that for all $C\in\mathcal C_\alpha$ and $i<\min\{\alpha,\theta\}$, $\sup(\nacc(C)\cap B_i)=\alpha$.
\end{itemize}
\end{defn}
\begin{conv}\label{proxydef2} We write $\p_\xi(\kappa, \mu, \mathcal R, \theta, \mathcal S)$
to assert that $\p^-_\xi(\kappa, \mu, \mathcal R, \theta, \mathcal S)$
and $\diamondsuit(\kappa)$ both hold.
\end{conv}
\begin{conv}\label{conv35} If we omit $\xi$, then we mean $\xi:=\kappa$.
If we omit $\mathcal S$, then we mean $\mathcal S:=\{\kappa\}$.
In the case $\mu=2$, we identify $\langle \mathcal{C}_\alpha\mid \alpha<\kappa\rangle$ with the unique element $\langle C_\alpha \mid \alpha < \kappa\rangle$
of $\prod_{\alpha<\kappa}\mathcal{C}_\alpha$.
\end{conv}

\begin{fact}[{\cite[Lemma~2.2]{paper22}}]\label{Diamond_H_kappa} The following are equivalent:
\begin{enumerate}
\item $\diamondsuit(\kappa)$, i.e., there is a sequence $\langle f_\beta\mid \beta<\kappa\rangle$ such that
for every function $f:\kappa\rightarrow\kappa$, the set $\{ \beta<\kappa\mid f\restriction\beta=f_\beta\}$ is stationary in $\kappa$.
\item $\diamondsuit^-(H_\kappa)$, i.e., there is a sequence $\langle \Omega_\beta \mid \beta < \kappa \rangle$ such that for all $p\in H_{\kappa^{+}}$  and $\Omega \subseteq H_\kappa$,
there exists an elementary submodel $\mathcal M\prec H_{\kappa^{+}}$ such that:
\begin{itemize}
\item $p\in\mathcal M$;
\item $\mathcal M\cap\kappa\in\kappa$;
\item $\mathcal M\cap \Omega=\Omega_{\mathcal M\cap\kappa}$.
\end{itemize}
\item $\diamondsuit(H_\kappa)$, i.e., there are a partition $\langle R_i \mid  i < \kappa \rangle$ of $\kappa$
and a sequence $\langle \Omega_\beta \mid \beta < \kappa \rangle$ such that for all $p\in H_{\kappa^{+}}$, $\Omega \subseteq H_\kappa$,
and $i<\kappa$,
there exists an elementary submodel $\mathcal M\prec H_{\kappa^{+}}$ such that:
\begin{itemize}
\item $p\in\mathcal M$;
\item $\mathcal M\cap\kappa\in  R_i$;
\item $\mathcal M\cap \Omega=\Omega_{\mathcal M\cap\kappa}$.
\end{itemize}
\end{enumerate}
\end{fact}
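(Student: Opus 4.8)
The plan is to establish the cycle of implications $(3)\implies(2)\implies(1)\implies(3)$; the first two are bookkeeping, and all the work is in $(1)\implies(3)$. Before anything else I would record that $\diamondsuit(\kappa)$ forces $2^{<\kappa}=\kappa$, and hence $|H_\kappa|=\kappa$: starting from a $\diamondsuit(\kappa)$-sequence in the form of clause~(1) and composing with a pairing bijection on $\kappa$, one gets a sequence $\langle A_\beta\mid\beta<\kappa\rangle$ with $A_\beta\s\beta$ that guesses every subset of $\kappa$ on a stationary set; then for each $\gamma<\kappa$ and each $A\s\gamma$, extending $A$ to a subset of $\kappa$ and invoking the guessing produces $\beta>\gamma$ with $A=A_\beta\cap\gamma$, so $2^{|\gamma|}\le\kappa$. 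Having $|H_\kappa|=\kappa$, fix once and for all bijections $b\colon\kappa\to H_\kappa$ and $\pi\colon\kappa\times\kappa\to\kappa$, and for $X\s\kappa$ abbreviate $X^{[0]}:=\{\xi<\kappa\mid\pi(0,\xi)\in X\}$ and $X^{[1]}:=\{\xi<\kappa\mid\pi(1,\xi)\in X\}$. I will freely use the standard fact that for every finite $F\s H_{\kappa^+}$ there is a club of $\delta<\kappa$ admitting an elementary submodel $\mathcal M\prec H_{\kappa^+}$ of size $<\kappa$ with $F\s\mathcal M$ and $\mathcal M\cap\kappa=\delta$; for such $\mathcal M$, having $b\in\mathcal M$ gives $\mathcal M\cap H_\kappa=b[\delta]$, and having $\pi\in\mathcal M$ gives $\pi[\delta\times\delta]=\delta$.

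For $(1)\implies(3)$ I would keep the sequence $\langle A_\beta\rangle$ above and set, for each $\beta<\kappa$: $i_\beta:=A_\beta^{[0]}$ if that set happens to be an ordinal and $i_\beta:=0$ otherwise; $\Omega_\beta:=b[A_\beta^{[1]}]$; and $R_i:=\{\beta<\kappa\mid i_\beta=i\}$ for $i<\kappa$. Since $A_\beta\s\beta$ is bounded, $i_\beta<\kappa$ always, so $\langle R_i\mid i<\kappa\rangle$ partitions $\kappa$. Now, given $p\in H_{\kappa^+}$, $\Omega\s H_\kappa$ and $i<\kappa$, I would set $A:=\{\pi(0,\xi)\mid\xi\in i\}\cup\{\pi(1,\xi)\mid\xi\in b^{-1}[\Omega]\}$, so that $A^{[0]}=i$ and $A^{[1]}=b^{-1}[\Omega]$, and let $S:=\{\beta<\kappa\mid A\cap\beta=A_\beta\}$, which is stationary. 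I then pick $\delta\in S$ with $\delta>\sup_{\xi<i}\pi(0,\xi)$ admitting an $\mathcal M\prec H_{\kappa^+}$ of size $<\kappa$ with $\{p,b,\pi,A,\Omega,i\}\s\mathcal M$ and $\mathcal M\cap\kappa=\delta$ --- legitimate, since this amounts to intersecting the stationary $S$ with a club and discarding a bounded set. Using $\pi\in\mathcal M$ one computes $A_\delta^{[1]}=(A\cap\delta)^{[1]}=A^{[1]}\cap\delta=b^{-1}[\Omega]\cap\delta$ and $A_\delta^{[0]}=(A\cap\delta)^{[0]}=\{\xi<i\mid\pi(0,\xi)<\delta\}=i$ (the last step by the choice of $\delta$), so $i_\delta=i$, i.e.\ $\delta\in R_i$. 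Using $b\in\mathcal M$, $\mathcal M\cap\Omega=b[\delta]\cap\Omega=b[b^{-1}[\Omega]\cap\delta]=b[A_\delta^{[1]}]=\Omega_\delta$. Thus $\mathcal M$ witnesses clause~(3), and taking $p=\Omega=\emptyset$ along the way shows each $R_i\neq\emptyset$.

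The remaining two implications are short. For $(3)\implies(2)$: apply~(3) with $i:=0$; as $R_0\s\kappa$ the witnessing $\mathcal M$ has $\mathcal M\cap\kappa\in\kappa$, and the very same $\langle\Omega_\beta\rangle$ works. For $(2)\implies(1)$: given $f\colon\kappa\to\kappa$ and an arbitrary club $C\s\kappa$, apply~(2) to $p:=\{f,C\}$ and $\Omega:=f$ (reading $f\s\kappa\times\kappa\s H_\kappa$); the resulting $\mathcal M$ has $\beta:=\mathcal M\cap\kappa\in\kappa$, and since $f\in\mathcal M$ we get $f[\beta]\s\beta$, hence $\mathcal M\cap\Omega=f\restriction\beta$ and so $f\restriction\beta=\Omega_\beta$, while $C\in\mathcal M$ together with $\beta=\sup(\mathcal M\cap\kappa)$ (note $\beta$ is a limit ordinal) yields $\beta\in C$. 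Setting $f_\beta:=\Omega_\beta$ whenever $\Omega_\beta$ is a function with domain $\beta$, and $f_\beta:=\emptyset$ otherwise, we conclude that $\{\beta<\kappa\mid f\restriction\beta=f_\beta\}$ meets every club and is therefore stationary.

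The step I expect to be the main obstacle is the bookkeeping inside $(1)\implies(3)$: to force the decoded index $i_\delta$ to come out equal to the prescribed $i$, the reflecting model $\mathcal M$ must be closed under $\pi$, so that the truncation $A_\delta=A\cap\delta$ still decodes to the pair $(i,b^{-1}[\Omega])$ rather than to a further truncation of its second coordinate; and one has to check that the bounded threshold $\sup_{\xi<i}\pi(0,\xi)$ can be surpassed without losing stationarity. Once the combinatorics of the pairing function is handled, the rest is the familiar $\diamondsuit$-reflection argument transported from $\kappa$ to $H_\kappa$ along $b$.
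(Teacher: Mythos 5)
The paper itself does not prove this statement --- it is recorded as a \emph{Fact} citing Lemma~2.2 of Brodsky--Rinot's ``A microscopic approach to Souslin-tree constructions, Part~I'' --- so there is no in-paper argument to compare against. Your blind proof is correct: the cycle $(3)\Rightarrow(2)\Rightarrow(1)\Rightarrow(3)$ is the natural one, and your implementation of $(1)\Rightarrow(3)$ --- first extracting $|H_\kappa|=\kappa$ from $\diamondsuit(\kappa)$, then packaging the prescribed index $i$ and the trace $b^{-1}[\Omega]$ into the two $\pi$-coordinates of a single $A\subseteq\kappa$, and finally reflecting with an elementary $\mathcal M\prec H_{\kappa^{+}}$ that contains $b,\pi$ and whose trace $\delta=\mathcal M\cap\kappa$ lies in the guessing set for $A$ and above $\sup_{\xi<i}\pi(0,\xi)$ --- is precisely the standard transfer of $\diamondsuit(\kappa)$ to $H_\kappa$, and matches the approach of the cited source. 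The two subtle points you flagged (closure of $\mathcal M$ under $\pi$ so that truncation at $\delta$ decodes correctly, and the bounded threshold being absorbable while preserving stationarity) are handled correctly, as are the elementarity computations $\mathcal M\cap H_\kappa=b[\delta]$ and $\mathcal M\cap f=f\restriction\beta$.
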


\begin{thm}\label{thm41}
Suppose that $K$ is some streamlined $\kappa$-tree.

\begin{enumerate}
\item 	If $\p(\kappa,2,{\sq^*},1)$ holds, then there exists a normal and splitting streamlined $\kappa$-Souslin tree $T$ such that $V(T)\supseteq V^-(K)$;
\item 	If $\p(\kappa,\kappa,{\sq},1)$ holds, then there exists a normal and splitting streamlined $\kappa$-Souslin tree $T$ such that $V(T)\supseteq V^-(K)\cap E^\kappa_{>\omega}$.
\end{enumerate}
\end{thm}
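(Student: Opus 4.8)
The plan is to perform a standard microscopic construction of a streamlined $\kappa$-Souslin tree along a $\square$-like $C$-sequence, where the extra ingredient is that we weave the given input tree $K$ into the construction so as to keep all vanishing $\alpha$-branches of $K$ vanishing in the output tree $T$. Concretely, let $\langle C_\alpha \mid \alpha < \kappa\rangle$ be a $C$-sequence witnessing $\p^-(\kappa,2,{\sq^*},1)$ (resp.\ $\p^-(\kappa,2,{\sq},1)$ in part (2)), and fix a $\diamondsuit(H_\kappa)$-sequence $\langle \Omega_\beta \mid \beta < \kappa\rangle$ together with a partition $\langle R_i \mid i < \kappa\rangle$ of $\kappa$ as in Fact~\ref{Diamond_H_kappa}(3). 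I would build $T = \bigcup_{\alpha<\kappa} T_\alpha$ by recursion on $\alpha$, maintaining that $T\restriction\alpha$ is a normal, splitting streamlined tree of size $<\kappa$, and — crucially — that there is a canonical, coherent way to read off from each node $t \in T_\alpha$ a node of $K_\alpha$ (think of $T$ as carrying a copy of $K$ in one coordinate, as in the proof of Lemma~\ref{lemma311}$(2)\Rightarrow(3)$, so that the natural projection $T \to K$ is level-preserving and surjective).

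The construction at limit levels is where everything happens. At $\alpha \in \acc(\kappa)$, for each node $x \in T\restriction\alpha$ we must decide which $\alpha$-branches of $T\restriction\alpha$ through $x$ get a bound in $T_\alpha$. The device of walks along $C_\alpha$ lets us, given $x$, build a distinguished $\alpha$-branch $b^\alpha_x$ by recursing along $\acc(C_\alpha)$ and using the coherence $D \mathrel{\sq^*} C$ (resp.\ $D \mathrel{\sq} C$) to glue the pieces; we put a bound for $b^\alpha_x$ into $T_\alpha$, and for normality we must do this for \emph{every} $x$. To kill antichains we use the $\diamondsuit$-sequence in the usual way: when $\Omega_\alpha$ codes a maximal antichain $A$ of $T\restriction\alpha$, we arrange (using the proxy clause $\sup(\nacc(C_\alpha)\cap B_i) = \alpha$ with $B_i$ enumerating the nodes below elements of $A$) that every node of $T_\alpha$ sits above an element of $A$, sealing $A$. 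The point that makes $V(T) \supseteq V^-(K)$ is the following: suppose $\alpha \in V^-(K)$ and $c$ is a vanishing $\alpha$-branch of $K$. Given any $x \in T\restriction\alpha$, I want a vanishing $\alpha$-branch of $T$ through $x$. I take $b^\alpha_x$ (or a modification of it) and arrange that its projection to $K$ is exactly $c$ (this is where we use freedom in the recursion along $\acc(C_\alpha)$, steering the walk so the $K$-coordinate converges to $c$); then any bound of $b^\alpha_x$ in $T$ would project to a bound of $c$ in $K$, contradicting that $c$ is vanishing. So $b^\alpha_x$ is a vanishing $\alpha$-branch of $T$ through $x$, whence $\alpha \in V(T)$. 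In part (2), with only $\sq$-coherence available, the walk-based gluing of $b^\alpha_x$ requires $\cf(\alpha) > \omega$ to push the construction through limits of the walk, which is exactly the restriction to $E^\kappa_{>\omega}$.

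The main obstacle, and the part demanding real care, is the simultaneous bookkeeping at limit levels: we must (a) bound \emph{some} branch through every node (normality), (b) for the specific $\alpha \in V^-(K)$ leave a vanishing branch through every node with prescribed $K$-projection, and (c) when a maximal antichain is guessed, bound \emph{only} branches that pass through it (Souslinity) — and these three demands must be met by a single choice of what $T_\alpha$ is. The resolution is that (b) is a ``leave a branch unbounded'' requirement while (c) is a ``bound only cofinally-often-through-$A$ branches'' requirement, and the branch $b^\alpha_x$ produced in (b) automatically threads through $A$ because $A$ is maximal in $T\restriction\alpha$; so bounding $b^\alpha_x$ is consistent with sealing $A$, and the contradiction in (b) instead comes from the $K$-projection being vanishing, not from $b^\alpha_x$ being unbounded. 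Making this compatible — i.e.\ checking that the $K$-projection of the to-be-bounded branch genuinely equals $c$ and genuinely has no bound in $K$, uniformly over all $x$ and all guessed antichains — is the technical heart, and it is handled by organizing the level-$\alpha$ construction as a sub-recursion along $\otp(C_\alpha)$ that interleaves the walk-coherence, the $\diamondsuit$-sealing, and the $K$-steering. The verification that $T$ is splitting, normal, Hausdorff, and $\kappa$-Souslin then follows the template of the $\p(\kappa,2,\ldots)$-constructions of \cite{paper22,paper23}.
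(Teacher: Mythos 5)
Your high-level plan matches the paper's: build $T$ by recursion along a proxy $\mathcal C$-sequence, seal antichains via a $\diamondsuit(H_\kappa)$-sequence, and weave $K$-data into the construction so that a vanishing $\alpha$-branch of $K$ forces a vanishing $\alpha$-branch of $T$ through any prescribed node. But the device is not, as you describe it, a level-preserving surjective tree homomorphism $\pi\colon T\to K$. What the paper actually does is put $t{}^\smallfrown\langle 0\rangle$, $t{}^\smallfrown\langle 1\rangle$ and $t{}^\smallfrown\langle\eta\rangle$ for $\eta\in K_\alpha$ into $T_{\alpha+1}$, and at limit $\alpha$ it builds, for each $C\in\mathcal C_\alpha$, $x\in T\restriction C$ and each branch $\eta\in\mathcal B(K\restriction\alpha)$, a branch $b_x^{C,\eta}$ that stamps $\eta\restriction\beta^-$ into coordinate $\beta^-=\sup(C\cap\beta)$ at every $\beta\in\nacc(C)$; then only the $\mathbf b_x^{C,\eta}$ with $\eta\in K_\alpha$ are sealed. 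There is no globally well-defined $K$-projection on $T$: the $K$-label of a node is recoverable only by reading coordinates lying in $C$, and two constructions $\mathbf b_x^{C,\eta}$, $\mathbf b_y^{D,\xi}$ can be compared only on $C\cap D$. The vanishing argument (Claim 3.7.2 in the paper) is a local discrepancy argument, not a projection argument, and it genuinely needs $\sup(C\cap D)=\alpha$ for all $C,D\in\mathcal C_\alpha$.

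This is where your explanation of part (2) goes wrong. You write that with only $\sq$-coherence, ``the walk-based gluing of $b^\alpha_x$ requires $\cf(\alpha)>\omega$ to push the construction through limits of the walk.'' That is not the issue: the gluing at $\beta\in\acc(C)$ uses $\sq$-coherence to get $C\cap\beta\in\mathcal C_\beta$ and works at every cofinality. The restriction to $E^\kappa_{>\omega}$ comes entirely from the vanishing argument: one needs $\sup(C\cap D)=\alpha$, which is automatic for $\cf(\alpha)>\omega$ (two clubs in an ordinal of uncountable cofinality meet cofinally) but can fail badly for $\cf(\alpha)=\omega$, where two clubs can even be disjoint, making the $K$-fingerprint unrecoverable. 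In part (1) the paper converts the $\p(\kappa,2,\sq^*,1)$ hypothesis, via \cite[Theorem~4.39]{paper23}, into a $\p^-(\kappa,\kappa,\sq,1)$-sequence with the extra feature $\sup(C\symdiff D)<\alpha$ for all $C,D\in\mathcal C_\alpha$, which forces $\sup(C\cap D)=\alpha$ at all limit $\alpha$ and hence removes the cofinality restriction. Your sketch neither registers this as the content of the $\sq^*$ hypothesis nor gives a correct account of why $E^\kappa_{>\omega}$ appears in (2); without pinning down the precise stamping/recovery mechanism, the ``steering'' step and the final contradiction do not go through as stated.
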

\begin{proof} Fix a well-ordering $\vartriangleleft$ of $H_\kappa$,
and a sequence $\langle \Omega_\beta\mid \beta<\kappa\rangle$ witnessing $\diamondsuit^-(H_\kappa)$.
If $\p^-(\kappa,\kappa,{\sq},1)$ holds,
then let $\vec {\mathcal C}=\langle \mathcal C_\alpha\mid \alpha<\kappa\rangle$ be any $\p^-(\kappa,\kappa,{\sq},1)$-sequence.
If $\p^-(\kappa,2,{\sq^*},1)$ holds,
then, by \cite[Theorem~4.39]{paper23}, we may let $\vec {\mathcal C}=\langle \mathcal C_\alpha\mid \alpha<\kappa\rangle$ be a $\p^-(\kappa,\kappa,{\sq},1)$-sequence
with the added feature that for every $\alpha\in\acc(\kappa)$ for all $C,D\in\mathcal C_\alpha$, $\sup(C\symdiff D)<\alpha$.

Following the proof of \cite[Proposition~2.2]{paper26}, we shall recursively construct a sequence $\langle T_\alpha\mid \alpha<\kappa\rangle$ such that
$T:=\bigcup_{\alpha<\kappa}T_\alpha$ will constitute the tree of interest whose $\alpha^{\text{th}}$-level is $T_\alpha$.

We start by letting $T_0:=\{\emptyset\}$,
and once $T_\alpha$ has already been defined, we let $$T_{\alpha+1}:=\{t{}^\smallfrown\langle 0\rangle,t{}^\smallfrown\langle 1\rangle,t{}^\smallfrown\langle \eta\rangle\mid t\in T_\alpha, \eta\in K_\alpha\}.$$

Next, suppose that $\alpha\in \acc(\kappa)$ is such that $T\restriction\alpha$ has already been defined.
For all $C\in\mathcal C_\alpha$ and $x\in T\restriction C$,
we shall identify a set of potential nodes $\{\mathbf b_x^{C,\eta}\mid \eta\in\mathcal B(K\restriction\alpha)\}$
and then let
\begin{equation}\tag{$\star$}\label{promise0}T_\alpha:=\{\mathbf b_x^{C,\eta}\mid C\in\mathcal C_\alpha,\eta\in K_\alpha, x\in T\restriction C\}.\end{equation}

To this end, fix $C\in\mathcal C_\alpha$, $x\in T\restriction C$ and $\eta\in\mathcal B(K\restriction\alpha)$.
The node $\mathbf b_x^{C,\eta}$ will be obtained as the limit $\bigcup\im(b_x^{C,\eta})$ of a sequence $b_x^{C,\eta}\in\prod_{\beta\in C\setminus\dom(x)}T_\beta$, as follows:
\begin{itemize}
\item	Let $b_x^{C,\eta}(\dom(x)):=x$.

\item For every $\beta\in \nacc(C)$ above $\dom(x)$ such that $b_x^{C,\eta}(\beta^-)$ has already been defined
for $\beta^-:=\sup(C\cap \beta)$, let
$$Q^{C, \eta}_x(\beta) := \{ t\in T_\beta\mid \exists s\in \Omega_{\beta}[ (s\cup(b_x^{C,\eta}(\beta^-){}^\smallfrown\langle \eta\restriction\beta^-\rangle))\s t]\}.$$
Now, consider the two possibilities:
\begin{itemize}
\item If $Q^{C,\eta}_x(\beta) \neq \emptyset$, then let $b^{C,\eta}_x(\beta)$ be its $\lhd$-least element;
\item Otherwise, let $b^{C,\eta}_x(\beta)$ be the $\lhd$-least element of $T_\beta$ that extends $b_x^{C,\eta}(\beta^-){}^\smallfrown\langle \eta\restriction\beta^-\rangle$.
Such an element must exist, as the level $T_\beta$ was constructed so as to preserve normality.
\end{itemize}

\item 	For every $\beta\in \acc(C\setminus \dom(x))$ such that $b_x^{C,\eta}\restriction\beta$ has already been defined,
let $b_x^{C,\eta}(\beta):=\bigcup \im(b_x^{C,\eta}\restriction\beta)$.
\end{itemize}
For the last case, we need to argue that $b_x^{C,\eta}(\beta)$ is indeed an element of $T_\beta$.
As $\vec{\mathcal C}$ is $\sq$-coherent, the set $\bar C:=C\cap \beta$ is in $\mathcal C_\beta$. Also, $K$ is a tree and hence $\bar\eta:=\eta\restriction\beta$ is in $K_\beta$.
So, since $\mathbf b_x^{\bar C,\eta\restriction\beta}\in T_\beta$, to show that $b_x^{C,\eta}(\beta)\in T_\beta$, it suffices to prove the following.

\begin{claim}\label{c371} $b_x^{C,\eta}(\beta)=\mathbf b_x^{{\bar C},\bar\eta}$.
\end{claim}
\begin{proof}
Clearly, $\dom(b_x^{C,\eta}(\beta))=C\cap \beta\setminus \dom(x)={\bar C}\setminus\dom(x)=\dom(b_x^{{\bar C},\bar\eta})$.
So, we are left with showing that $b_x^{C,\eta}(\delta)=b_x^{{\bar C},\bar\eta}(\delta)$ for all $\delta\in {\bar C}\setminus\dom(x)$.
The proof is by induction on $\delta\in {\bar C}\setminus\dom(x)$:
\begin{itemize}
\item For $\delta=\dom(x)$, we have that $b_x^{\eta,C}(\delta)=x=b_x^{{\bar C},\bar\eta}(\delta)$.
\item Given $\delta\in \nacc({\bar C})$ above $\dom(x)$ such that $b_x^{C,\eta}(\delta^-)=b_x^{{\bar C},\bar\eta}(\delta^-)$ for $\delta^-:=\sup({\bar C}\cap\delta)$, we argue as follows.
Since
$$b_x^{C,\eta}(\delta^-){}^\smallfrown\langle \eta\restriction\delta^-\rangle=b_x^{{\bar C},\bar \eta}(\delta^-){}^\smallfrown\langle \bar\eta\restriction\delta^-\rangle,$$
the definitions of $b_x^{C,\eta}(\delta)$ and $b_x^{{\bar C},\bar\eta}(\delta)$ coincide.
\item If $\delta\in\acc({\bar C}\setminus\dom(x))$, then we take the limit of two identical sequences, and the unique limit is identical. 		\qedhere
\end{itemize}
\end{proof}

This completes the definition of $b_x^{C,\eta}$. For all $\eta\in\mathcal B(K\restriction\alpha)$, let $\mathbf b_x^{C,\eta}:=\bigcup\im(b_x^{C,\eta})$, and then
we define $T_\alpha$ as promised in \eqref{promise0}.

Clearly, $T:=\bigcup_{\alpha<\kappa}T_\alpha$ is a normal and splitting $\kappa$-tree.
The verification of Souslin-ness is standard (see \cite[Claims 2.2.2 and 2.2.3]{paper26}).
\begin{claim}\label{c372} Suppose that $\alpha\in V^-(K)$ is such that $\sup(C\cap D)=\alpha$
for all $C,D\in\mathcal C_\alpha$. Then $\alpha\in V(T)$.
\end{claim}
\begin{proof}
As $\alpha\in V^-(K)$, we may fix $\eta\in\mathcal B(K\restriction\alpha)\setminus K_\alpha$.
Let $x\in T\restriction\alpha$, and we shall find a vanishing $\alpha$-branch through $x$ in $T$.
First fix $C\in\mathcal C_\alpha$.
Using normality and by possibly extending $x$, we may assume that $x\in T\restriction C$.
We have already established that $\{ \mathbf b_x^{C,\eta}\restriction\epsilon\mid \epsilon<\alpha\}$  is an $\alpha$-branch through $x$.
Towards a contradiction, suppose that it is not vanishing, so that $\bigcup\im(b_x^{C,\eta})$ is in $T_\alpha$.
It follows from \eqref{promise0} that we may pick $D\in\mathcal C_\alpha$, $y\in T\restriction D$ and $\xi\in K_\alpha$ such that $\bigcup\im(b_x^{C,\eta})=\mathbf b_y^{D,\xi}$.
Fix $\beta\in C\cap D$ large enough such that $\beta>\max\{\dom(x),\dom(y)\}$
and $\eta\restriction\beta\neq\xi\restriction\beta$.
In particular, $\beta\in \dom(b_x^{C,\eta})\cap \dom(b_y^{D,\xi})$.
Consider $\beta^C:=\min(C\setminus \beta+1)$, the successor of $\beta$ in $C$
and $\beta^D:=\min(D\setminus \beta+1)$, the successor of $\beta$ in $D$.
Then the definition of the successor stage of $b_x^{C,\eta}$ ensures that $b_x^{C,\eta}(\beta^C)$ extends
$b_x^{C,\eta}(\beta){}^\smallfrown\langle \eta\restriction\beta\rangle$,
so that 	$b_x^{C,\eta}(\beta^C)(\beta)=\eta\restriction\beta$.
Likewise, 	$b_y^{D,\xi}(\beta^D)(\beta)=\xi\restriction\beta$.
From 	$\mathbf b_x^{C,\eta}=\mathbf b_y^{D,\xi}$,
we infer that $b_x^{C,\eta}(\beta^C)(\beta)=\mathbf b_x^{C,\eta}(\beta)=\mathbf b_y^{D,\xi}(\beta)=b_y^{D,\xi}(\beta^D)(\beta)$,
contradicting the fact that $\eta\restriction\beta\neq\xi\restriction\beta$.
\end{proof}
This completes the proof.
\end{proof}

We now arrive at Theorem~\ref{thmc}:
\begin{cor}\label{cor310} Suppose that $\p(\kappa,2,{\sq^*},1)$ holds.
Then:
\begin{enumerate}
\item For every $\chi\in\reg(\kappa)$ such that $\lambda^{<\chi}<\kappa$ for all $\lambda<\kappa$,
and every $\kappa$-tree $\mathbf K$,
there exists a $\kappa$-Sousin tree $\mathbf T$ such that $(E^\kappa_{\le\chi}\cup V^-(\mathbf K))\setminus V(\mathbf T)$ is nonstationary;
\item There exists a $\kappa$-Sousin tree $\mathbf T$ such that $V(\mathbf T)$ is stationary.
\end{enumerate}
\end{cor}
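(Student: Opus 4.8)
The plan is to deduce both clauses from Theorem~\ref{thm41}(1). The key obstruction is that Theorem~\ref{thm41} requires a \emph{streamlined} input tree, whereas in clause~(1) we are handed an arbitrary $\kappa$-tree $\mathbf K$ (and, as used below, a disjoint sum of streamlined trees need not be streamlined). So for clause~(1), fixing $\chi\in\reg(\kappa)$ with $\lambda^{<\chi}<\kappa$ for all $\lambda<\kappa$ and a $\kappa$-tree $\mathbf K$, I would first invoke Corollary~\ref{cor224} to obtain a $\kappa$-tree $\mathbf T^\chi$ with $V^-(\mathbf T^\chi)\supseteq\acc(\kappa)\cap E^\kappa_{\le\chi}=E^\kappa_{\le\chi}$ (the equality holding since every ordinal of infinite cofinality is a limit ordinal), and then pass to the disjoint sum $\mathbf T^0:=\mathbf K+\mathbf T^\chi$. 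By Proposition~\ref{sums}, $\mathbf T^0$ is a $\kappa$-tree with $V^-(\mathbf T^0)=V^-(\mathbf K)\cup V^-(\mathbf T^\chi)\supseteq V^-(\mathbf K)\cup E^\kappa_{\le\chi}=:S$, and $S$ is stationary because $E^\kappa_\omega\s S$.

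Next I would feed $S$ into Lemma~\ref{lemma311}: the pair $(\kappa,\mathbf T^0)$ witnesses clause~(4) of that lemma for $S$, so by the implication $(4)\implies(3)$ there are a club $D\s\kappa$ and a uniformly homogeneous streamlined $\kappa$-tree $K$ with $V(K)\supseteq S\cap D$, hence also $V^-(K)\supseteq S\cap D$. Since $\p(\kappa,2,{\sq^*},1)$ holds, Theorem~\ref{thm41}(1) applies to the streamlined tree $K$ and delivers a normal and splitting streamlined $\kappa$-Souslin tree $\mathbf T$ with $V(\mathbf T)\supseteq V^-(K)\supseteq S\cap D$; as $D$ is a club and $S=E^\kappa_{\le\chi}\cup V^-(\mathbf K)$, this is exactly the assertion that $(E^\kappa_{\le\chi}\cup V^-(\mathbf K))\setminus V(\mathbf T)$ is nonstationary. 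Clause~(2) then follows by applying clause~(1) with $\chi:=\omega$ (valid, since $\lambda^{<\omega}=\lambda<\kappa$ for infinite $\lambda<\kappa$ and $\lambda^{<\omega}\le\aleph_0<\kappa$ for finite $\lambda$) and with $\mathbf K$ any $\kappa$-tree, say $(\kappa,{\in})$: the output tree $\mathbf T$ has $E^\kappa_\omega\setminus V(\mathbf T)$ nonstationary, and $E^\kappa_\omega$ is stationary in $\kappa$, so $V(\mathbf T)$ is stationary.

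The only step with genuine content is the appeal to Lemma~\ref{lemma311}: it is what converts the combinatorial data carried by the arbitrary $\kappa$-tree $\mathbf T^0$ — a thin, almost-disjoint ladder system — into an honestly streamlined (indeed uniformly homogeneous) $\kappa$-tree with the same vanishing levels modulo a club, which is the form needed to invoke Theorem~\ref{thm41}. Everything else is routine: the product/sum calculus for $V$ and $V^-$ from Propositions~\ref{products} and~\ref{sums}, the elementary cardinal-arithmetic bookkeeping, and the relevant instance of Corollary~\ref{cor224}.
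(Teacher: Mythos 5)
Your proof is correct, and you diagnose the right obstruction: Theorem~\ref{thm41} demands a streamlined input tree, while $\mathbf K+\mathbf T^\chi$ (your $\mathbf T^0$) is just an abstract $\kappa$-tree. Where you differ from the paper is in how you convert. The paper simply cites an external representation result (Lemma~2.5 of~\cite{paper23}), which says every $\kappa$-tree is club-isomorphic to a streamlined one; it replaces $\mathbf K+\mathbf H$ by a club-isomorphic streamlined $K$, and since club-isomorphism preserves $V^-$ modulo a club, Theorem~\ref{thm41}(1) applied to $K$ immediately gives the conclusion. You instead route through the implication $(4)\implies(3)$ of Lemma~\ref{lemma311}: you extract a thin almost-disjoint ladder system from $\mathbf T^0$ and then build a fresh uniformly homogeneous streamlined tree $K$ with $V(K)\supseteq S\cap D$. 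This is valid and has the virtue of being self-contained within the present paper, but it is heavier machinery than strictly needed — it discards $\mathbf T^0$ and synthesizes a new tree from scratch, whereas the paper's route merely re-presents $\mathbf T^0$ up to club-isomorphism. Both land on the same conclusion, and clause~(2) follows as you say.

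One small notational caution: you assert $\acc(\kappa)\cap E^\kappa_{\le\chi}=E^\kappa_{\le\chi}$. Under a literal reading of the paper's definition (``defined analogously'' with $\cf(\alpha+1)=1$), $E^\kappa_{\le\chi}$ would include successor ordinals, so the equality fails; this is presumably why the paper writes $\acc(\kappa)\cap E^\kappa_{\le\chi}$ explicitly in Corollary~\ref{cor224}. It is harmless here — the successors form a nonstationary set and every conclusion is modulo nonstationary — but it does make the definition of your $S$ and the inclusion $V^-(\mathbf T^0)\supseteq S$ mildly inconsistent as written (since $V^-(\mathbf T^0)\s\acc(\kappa)$). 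The fix is just to define $S:=V^-(\mathbf K)\cup(\acc(\kappa)\cap E^\kappa_{\le\chi})$ throughout.
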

\begin{proof} (1) Suppose $\chi$ and $\mathbf K$ are as above.
By Corollary~\ref{cor224}, we may fix a $\kappa$-tree $\mathbf H$ with $V^-(\mathbf H)\supseteq \acc(\kappa)\cap E^\kappa_{\leq \chi }$.
By Proposition~\ref{sums}, $\mathbf K+\mathbf H$ is a $\kappa$-tree with $V^-(\mathbf K+\mathbf H)=V^-(\mathbf K)\cup V^-(\mathbf H)$.
By \cite[Lemma~2.5]{paper23}, we may fix a streamlined $\kappa$-tree that $K$ that is club-isomorphic to $\mathbf K+\mathbf H$.
Now, appeal to Theorem~\ref{thm41}(1) with $K$.

(2) Appeal to Clause~(1) with $\chi=\omega$.
\end{proof}

\begin{defn}[Jensen-Kunen, \cite{jensen1969some}] A cardinal $\kappa$ is \emph{subtle} iff for every list $\langle A_\alpha\mid\alpha\in D\rangle$ over a club $D\s\kappa$,
there is a pair $(\alpha,\beta)\in[D]^2$ such that $A_\alpha=A_\beta\cap\alpha$.
\end{defn}

We now arrive at Theorem~\ref{thmb}:

\begin{cor}
We have $(1)\implies(2)\implies(3)\implies(4)$:
\begin{enumerate}
\item there exists a $\kappa$-Souslin tree $\mathbf T$ such that $V(\mathbf T)=\acc(\kappa)$;
\item there exists a $\kappa$-tree $\mathbf T$ such that $V(\mathbf T)=\acc(\kappa)$;
\item there exists a $\kappa$-tree $\mathbf T$ such that $V^-(\mathbf T)$ contains a club in $\kappa$;
\item $\kappa$ is not subtle.
\end{enumerate}

In addition, in $\mathsf{L}$, for $\kappa$ not weakly compact, $(4)\implies(1)$.
\end{cor}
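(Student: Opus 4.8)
The plan is to dispatch $(1)\implies(2)\implies(3)\implies(4)$ quickly in $\zfc$ and then concentrate on the converse in $\mathsf L$. For $(1)\implies(2)$ there is nothing to prove, since a $\kappa$-Souslin tree is in particular a $\kappa$-tree. For $(2)\implies(3)$ I would use that $V(\mathbf T)\s V^-(\mathbf T)$ for every $\kappa$-tree $\mathbf T$: given $\alpha\in V(\mathbf T)$, applying the definition of $V(\mathbf T)$ to any $x\in T_0$ (which is nonempty) yields a vanishing $\alpha$-branch, so $\alpha\in V^-(\mathbf T)$. Thus $(2)$ furnishes a $\kappa$-tree with $V^-(\mathbf T)\supseteq\acc(\kappa)$, and $\acc(\kappa)$ is a club. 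For $(3)\implies(4)$ I would feed clause~$(3)$ into Lemma~\ref{lemma311} taken with $S:=\kappa$: clause~$(3)$ here is precisely clause~$(4)$ there, so clause~$(2)$ of that lemma holds, producing a club $D$ and a thin ladder system $\langle A_\alpha\mid\alpha\in D\rangle$ with $A_\alpha\ne A_\beta\cap\alpha$ for all $(\alpha,\beta)\in[D]^2$; as a thin ladder system is a thin list over the club $D$, this witnesses that $\kappa$ is not subtle. (Equivalently, by Lemma~\ref{lemma33} and Corollary~\ref{stp}, clause~$(3)$ amounts to the failure of $\kappa$-$\stp$, and subtlety trivially yields $\kappa$-$\stp$.)

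For the converse I would work in $\mathsf L$ with $\kappa$ neither weakly compact nor subtle, aiming to build a $\kappa$-Souslin tree $\mathbf T$ with $V(\mathbf T)=\acc(\kappa)$. The first step is to deduce that $\kappa$-$\stp$ fails. Here I would invoke that, in $\mathsf L$, a non-weakly-compact cardinal $\kappa$ satisfies $\kappa$-$\stp$ if and only if $\kappa$ is subtle (Wei\ss{}); since our $\kappa$ is not subtle, $\kappa$-$\stp$ fails. (One may also see this via $\square(\kappa)$: in $\mathsf L$ our $\kappa$ carries a $\square(\kappa)$-sequence --- for $\kappa=\lambda^+$ because $\square_\lambda$ yields $\square(\lambda^+)$, and for inaccessible $\kappa$ by the standard analysis of $\square$ in $\mathsf L$ --- and $\square(\kappa)$ refutes $\kappa$-$\stp$.) With $\kappa$-$\stp$ failing, Corollary~\ref{stp} hands me a uniformly homogeneous streamlined $\kappa$-tree $K$ for which $V(K)$, hence a fortiori $V^-(K)$, covers a club in $\kappa$.

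The second step pumps $K$ up to a Souslin tree via the third pump-up theorem. I would use that $\mathsf L$ satisfies $\p(\kappa,2,{\sq^*},1)$ for every $\kappa$ that is neither weakly compact nor subtle: for $\kappa=\lambda^+$ this follows from $\square_\lambda+\ch_\lambda$, which holds in $\mathsf L$ and gives $\p_\lambda(\kappa,2,{\sq},1)$ and hence $\p(\kappa,2,{\sq^*},1)$; for inaccessible $\kappa$ it follows from $\square(\kappa)+\gch$ by the Brodsky--Rinot proxy machinery. Applying Theorem~\ref{thm41}(1) to the streamlined tree $K$ then produces a normal, splitting, streamlined $\kappa$-Souslin tree $T$ with $V(T)\supseteq V^-(K)$, so $V(T)$ covers a club in $\kappa$. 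Finally I would invoke Lemma~\ref{lemma33} to pass to the subtree $T':=T\restriction D$ for a suitable club $D$; since $T\restriction D$ is again $\kappa$-Souslin, $T'$ is a $\kappa$-Souslin tree with $V(T')=\acc(\kappa)$, as required.

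The step I expect to be the main obstacle is the first one: the passage from ``$\kappa$ is not subtle'' --- a statement about arbitrary lists over clubs --- to the failure of $\kappa$-$\stp$ --- a statement about thin lists --- genuinely exploits the fine structure of $\mathsf L$ (one massages the thin $\square(\kappa)$-sequence that $\mathsf L$ provides so as to destroy all subtle pairs). Note that for successor $\kappa$ this detour is unnecessary, since Corollary~\ref{cor31}(1) already delivers $(1)$ directly from $\square_\lambda+\diamondsuit(\kappa)$, both of which hold in $\mathsf L$; so the genuinely new content is the case of an inaccessible, non-weakly-compact, non-subtle $\kappa$.
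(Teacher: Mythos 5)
Your proposal is correct and follows essentially the same route as the paper: non-subtlety is converted into a tree with $V^-$ covering a club via Lemma~\ref{lemma311}/Corollary~\ref{stp}, pumped up to a Souslin tree using $\p(\kappa,2,{\sq},1)$ (which holds in $\mathsf L$ for non-weakly-compact $\kappa$) together with Theorem~\ref{thm41}(1)/Corollary~\ref{cor310}(1), and finished by restricting to a club as in Lemma~\ref{lemma33}. The step you flag as the main obstacle is actually painless: in $\mathsf L$ the interesting case is $\kappa$ strongly inaccessible (successors being handled by Corollary~\ref{cor31}(1)), and there every list is automatically thin, so ``not subtle'' yields the failure of $\kappa$-$\stp$ with no fine-structural input beyond $\gch$.
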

\begin{proof} $(1)\implies(2)\implies(3)$: This is immediate.

$(3)\implies(4)$: By Lemma~\ref{lemma311}.

Next, work in $\mathsf L$ and suppose that $\kappa$ is a regular uncountable cardinal that is not subtle and not weakly compact.
If $\kappa$ is a successor cardinal, then by Corollary~\ref{cor31}(1), Clause~(1) holds,
so assume that $\kappa$ is inaccessible. By $\gch$, $\kappa$ is moreover strongly inaccessible,
and then Lemma~\ref{lemma311} yields that Clause~(3) holds.
Since we work in $\mathsf{L}$ and $\kappa$ is not weakly compact, by \cite[Theorem~3.12]{paper22},
$\p(\kappa,2,{\sq},1)$ holds.
So by Corollary~\ref{cor310}(1), Clause~(3) yields a $\kappa$-Souslin tree $\mathbf T$ such that $V(\mathbf T)$ covers a club in $\kappa$.
Now, appeal to Lemma~\ref{lemma33}.
\end{proof}

\begin{cor} In $\mathsf{L}$, if $\kappa$ is not weakly compact,
then for every stationary $S\s\kappa$, there exists a $\kappa$-Souslin tree $\mathbf T$ for which $V(\mathbf T)\cap S$ is stationary.
\end{cor}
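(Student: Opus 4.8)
The plan is to split into two cases according to whether $\kappa$ is a successor cardinal or an inaccessible one, and to reduce each to results already established above.

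Assume first that $\kappa=\lambda^+$ for an infinite cardinal $\lambda$. Then $\square_\lambda$ holds in $\mathsf L$ by Jensen's theorem, and $\diamondsuit(\kappa)$ holds since $\gch$ does; hence Corollary~\ref{cor31}(1) furnishes a $\kappa$-Souslin tree $\mathbf T$ with $V(\mathbf T)=\acc(\kappa)$. Since the set of limit ordinals below $\kappa$ forms a club and each of them lies in $\acc(\kappa)$, the set $S\cap\acc(\kappa)$ is stationary, and therefore $V(\mathbf T)\cap S=\acc(\kappa)\cap S$ is stationary, as desired.

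Assume now that $\kappa$ is inaccessible; by $\gch$ it is strongly inaccessible. By Corollary~\ref{cor213} we may fix a $\kappa$-tree $\mathbf K$ such that $S^*:=V(\mathbf K)\cap S$ is stationary; as $V(\mathbf K)\s V^-(\mathbf K)$, the stationary set $S^*\s S$ is contained in $V^-(\mathbf K)$. Since we work in $\mathsf L$ and $\kappa$ is not weakly compact, \cite[Theorem~3.12]{paper22} provides $\p(\kappa,2,{\sq},1)$, and hence also $\p(\kappa,2,{\sq^*},1)$, as ${\sq}$ refines ${\sq^*}$. Now apply Corollary~\ref{cor310}(1) with $\chi:=\omega$ (the cardinal-arithmetic requirement being trivially met) and with the tree $\mathbf K$: this produces a $\kappa$-Souslin tree $\mathbf T$ for which $N:=V^-(\mathbf K)\setminus V(\mathbf T)$ is nonstationary. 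Then $S^*\setminus N$ is stationary, being a stationary set minus a nonstationary one, and $S^*\setminus N\s V^-(\mathbf K)\setminus N\s V(\mathbf T)$; consequently $V(\mathbf T)\cap S\supseteq S^*\setminus N$ is stationary.

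The argument involves no genuine obstacle: all the substantive work is carried by Corollaries \ref{cor31}, \ref{cor213}, and \ref{cor310}, and what remains is only the bookkeeping of checking that a stationary set meets $\acc(\kappa)$ stationarily, and that covering a stationary subset of $S$ modulo a nonstationary set still yields a stationary intersection with $S$.
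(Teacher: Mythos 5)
Your argument is correct and follows the same route as the paper's proof: reduce to the inaccessible case via Corollary~\ref{cor31}(1), obtain $\mathbf K$ from Corollary~\ref{cor213}, obtain the proxy principle from \cite[Theorem~3.12]{paper22}, and conclude with Corollary~\ref{cor310}(1). You merely spell out the details the paper compresses, such as the passage from ${\sq}$ to ${\sq^*}$ and the final bookkeeping that a stationary subset of $V^-(\mathbf K)\cap S$ survives removal of the nonstationary set $V^-(\mathbf K)\setminus V(\mathbf T)$.
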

\begin{proof} By Corollary~\ref{cor31}(1), we may assume that $\kappa$ is (strongly) inaccessible.
By Corollary~\ref{cor213}, we may fix a $\kappa$-tree $\mathbf K$ such that $V^-(\mathbf K)\cap S$ is stationary.
By \cite[Theorem~3.12]{paper22}, $\p(\kappa,2,{\sq},1)$ holds.
Finally, appeal to Corollary~\ref{cor310}(1).
\end{proof}

\section{Realizing a nonreflecting stationary set}\label{sect5}
In this section, we provide conditions concerning a set $S\s\kappa$
sufficient to ensure the existence of a $\kappa$-Souslin tree $\mathbf T$ with $V(\mathbf T)\supseteq S$ and possibly $V(\mathbf T)=S$.
As a corollary, we obtain Theorem~\ref{thmd}:
\begin{cor}  If $\diamondsuit(S)$ holds for some nonreflecting stationary subset $S$ of a strongly inaccessible cardinal $\kappa$,
then there is an almost disjoint family $\mathcal S$ of $2^\kappa$ many stationary subsets of $S$ such that,
for every $S'\in\mathcal S$, there is a $\kappa$-Souslin tree $\mathbf T$ with $V^-(\mathbf T)=V(\mathbf T)=S'$.
\end{cor}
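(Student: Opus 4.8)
The goal is to build, from $\diamondsuit(S)$ over a nonreflecting stationary $S$ in a strongly inaccessible $\kappa$, a large almost-disjoint family $\mathcal S$ of stationary subsets of $S$, each realizable as $V(\mathbf T)=V^-(\mathbf T)$ for a $\kappa$-Souslin tree $\mathbf T$. I would split $S$ almost-disjointly first, and then realize each piece separately. The standard trick for getting $2^\kappa$ almost-disjoint stationary subsets of $S$ out of $\diamondsuit(S)$ is to partition $S$ into $\kappa$-many stationary pieces $\langle S_i\mid i<\kappa\rangle$ (a $\diamondsuit(S)$-sequence easily yields such a partition, or one uses Solovay-style splitting, which works since $\kappa$ is inaccessible hence $S$ carries many disjoint stationary sets), and then for each $A\in{}^{\kappa}2$ — or better, for each member of an almost-disjoint family $\mathcal A\subseteq[\kappa]^\kappa$ of size $2^\kappa$ — set $S_A:=\bigcup_{i\in A}S_i$. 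Any two distinct $S_A,S_{A'}$ then intersect in $\bigcup_{i\in A\cap A'}S_i$; to make the family genuinely \emph{almost disjoint} in the sense of nonstationary intersections one instead thins out using the $\diamondsuit$-sequence so that the overlap is nonstationary, or takes $\mathcal A$ to be an almost-disjoint family on $\kappa$ and argues that a union of $<\kappa$-many $S_i$'s is nonstationary — which fails in general, so the cleaner route is: use $\diamondsuit(S)$ to guess, along $S$, names for a tree, and simultaneously shrink. In any case this bookkeeping step is routine and I would not dwell on it.

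\textbf{Realizing a single $S'$.} The heart of the matter is: given a nonreflecting stationary $S'\subseteq\kappa$ with $\kappa$ strongly inaccessible and $\diamondsuit(S')$ (which follows from $\diamondsuit(S)$ once $S'\subseteq S$ is stationary, by restriction), construct a $\kappa$-Souslin tree $\mathbf T$ with $V(\mathbf T)=V^-(\mathbf T)=S'$. I would use the streamlined-tree microscopic construction, exactly as in the proof of Theorem~\ref{thm41}: recursively build $T=\bigcup_{\alpha<\kappa}T_\alpha\subseteq{}^{<\kappa}\kappa$, handling successors by full binary (or $K$-indexed) splitting to keep the tree normal and splitting, and at limit $\alpha$ deciding which $\alpha$-branches to ``seal'' (give an upper bound) versus which to leave vanishing. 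Since $S$ is nonreflecting we may fix a $C$-sequence $\langle C_\alpha\mid\alpha<\kappa\rangle$ with $\acc(C_\alpha)\cap S'=\emptyset$ for every $\alpha$; equivalently, by Lemma~\ref{lemma311}'s line of reasoning, $S'$ carries a thin almost-disjoint $C$-sequence, and in fact the nonreflection gives us the stronger $\square$-like coherence needed to run the construction. At a limit $\alpha\notin S'$, for \emph{every} node $x\in T\restriction\alpha$ and every cofinal branch of the appropriate $C_\alpha$-guided type we put the limit into $T_\alpha$, thereby sealing all such branches, so $\alpha\notin V^-(\mathbf T)$. At a limit $\alpha\in S'$, we use $\diamondsuit(S')$ to diagonalize against potential antichains (standard Souslin bookkeeping) and — crucially — at every node $x$ of height $<\alpha$ we construct the branch $b_x^{C_\alpha}$ but we do \emph{not} add $\bigcup\im(b_x^{C_\alpha})$ to $T_\alpha$; instead we add other limits that disagree with it early, so $b_x^{C_\alpha}$ is vanishing. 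This makes $\alpha\in V(\mathbf T)$ and, because the construction is uniform over all $x$, also gives the homogeneity-flavored conclusion $V^-(\mathbf T)=V(\mathbf T)$.

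\textbf{Verification and the main obstacle.} After the construction I would verify: (a) $T$ is a normal splitting $\kappa$-tree (levels have size $<\kappa$ because $\kappa$ is inaccessible and the number of relevant branch-types at stage $\alpha$ is bounded by $|T\restriction\alpha|^{|\alpha|}<\kappa$); (b) $T$ has no $\kappa$-branch and no $\kappa$-antichain — the antichain-sealing via $\diamondsuit(S')$ is the usual argument, using that a maximal antichain below a generic guess gets absorbed at a guessed level $\alpha\in S'$; (c) $V^-(\mathbf T)\subseteq S'$: any $\alpha\notin S'$ is either a successor or, if limit, had all its branches sealed — here one must check that \emph{every} $\alpha$-branch (not just the constructed ones) is sealed, which is where the coherence of the $C$-sequence together with normality of lower levels is used, much as in Claim~\ref{c371}; (d) $S'\subseteq V(\mathbf T)$: for $\alpha\in S'$ and any $x$, the designated branch $b_x^{C_\alpha}$ is vanishing, by an early-disagreement argument parallel to Claim~\ref{c372}. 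The main obstacle, as usual in these constructions, is (c): ensuring that at a non-member limit level $\alpha$ we have genuinely added upper bounds for \emph{all} $\alpha$-branches of $\mathbf T$, including branches not of the form $\{b_x^{C}\restriction\epsilon\mid\epsilon<\alpha\}$. This is handled by the nonreflection of $S'$ (so $C_\alpha$ avoids $S'$ on its accumulation points, hence every lower limit level was itself a sealing level) combined with a careful inductive invariant saying that below $\alpha$ the tree is ``$\alpha$-closed along $C_\alpha$'', so that an arbitrary $\alpha$-branch is eventually captured by one of the $b_x^{C_\alpha}$'s; I would isolate this as a claim and prove it by induction on the order type of $C_\alpha$, exactly mirroring the coherence argument already carried out in the proof of Theorem~\ref{thm41}.
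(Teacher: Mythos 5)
There are two genuine gaps here, one in each half of your plan.

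\textbf{The decomposition step.} Your parenthetical claim that ``$\diamondsuit(S')$ follows from $\diamondsuit(S)$ once $S'\s S$ is stationary, by restriction'' is false: restricting a $\diamondsuit(S)$-sequence to an arbitrary stationary $S'\s S$ gives a sequence whose guessing set, while stationary in $S$, need not meet $S'$ stationarily. This is exactly why the bookkeeping step you brush aside is not routine. The paper handles it by \emph{choosing} the pieces so that $\diamondsuit$ is inherited: from $\diamondsuit(S)$ one fixes a sequence $\langle (A_\beta,B_\beta)\mid\beta\in S\rangle$ that simultaneously guesses pairs of subsets of $\kappa$, and for each $A\s\kappa$ sets $S_A:=\{\beta\in S\mid A\cap\beta=A_\beta\}$. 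The family $\{ S_A\mid A\in\mathcal P(\kappa)\}$ consists of $2^\kappa$ many stationary subsets of $S$ with pairwise \emph{bounded} intersection, and on each $S_A$ the sequence $\langle B_\beta\mid\beta\in S_A\rangle$ witnesses $\diamondsuit(S_A)$. Nothing like that falls out of a generic partition-and-union scheme, and your proposal never actually produces the $\diamondsuit(S')$'s it later needs.

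\textbf{Sealing at non-$S'$ levels.} At a limit $\alpha\notin S'$ you propose to place only the deterministically built $C_\alpha$-guided limits $\mathbf b_x^{C_\alpha}$ into $T_\alpha$, and to argue via an inductive ``$\alpha$-closed along $C_\alpha$'' invariant that every $\alpha$-branch is one of them. This cannot work. The tree is splitting, so $|\mathcal B(T\restriction\alpha)|$ typically dwarfs $|T\restriction\alpha|$, which already bounds the number of $\mathbf b_x^{C_\alpha}$'s; and each $\mathbf b_x^{C_\alpha}$ follows a fixed recipe past $\dom(x)$, so an arbitrary $\alpha$-branch through $x$ will diverge from $\mathbf b_x^{C_\alpha}$ at some point of $\nacc(C_\alpha)$ no matter how well behaved the accumulation points of $C_\alpha$ are. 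You would thus be left with uncontrolled vanishing branches at non-$S'$ levels, contaminating $V^-(\mathbf T)$. The correct and much simpler move — and what the paper's Theorem~\ref{thm54} does — is to set $T_\alpha:=\mathcal B(T\restriction\alpha)$ at every limit $\alpha\notin S'$, i.e., seal \emph{every} $\alpha$-branch outright; strong inaccessibility keeps $|T_\alpha|<\kappa$. With both fixes your outline collapses onto the paper's route: build the $\diamondsuit$-inheriting almost disjoint family as above, and then apply Corollary~\ref{cor55} (via Theorem~\ref{thm54}, using the proxy principle extracted from $\diamondsuit(S')$ together with nonreflection) to each piece.
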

\begin{proof} By Corollary~\ref{cor55} below, it suffices to prove that
there exists a family $\mathcal S$ of $2^\kappa$ many stationary subsets of $S$ such that:
\begin{itemize}
\item for every $S'\in\mathcal S$, $\diamondsuit(S')$ holds.
\item $|S'\cap S''|<\kappa$ for all $S'\neq S''$ from $\mathcal S$.
\end{itemize}

Now, as $\diamondsuit(S)$ holds, we may easily fix a sequence $\langle (A_\beta,B_\beta)\mid \beta\in S\rangle$
such that, for all $A,B\in\mathcal P(\kappa)$, the following set is stationary
$$G_A(B):=\{\beta\in S\mid A\cap\beta=A_\beta\ \&\ B\cap\beta=B_\beta\}.$$
Set $\mathcal S:=\{ S_A\mid A\in\mathcal P(\kappa)\}$,
where $S_A:=\{\beta\in S\mid A\cap\beta=A_\beta\}$.
Then $\mathcal S$ is an almost disjoint family of $2^\kappa$ many stationary subsets of $S$,
and for every $S'\in\mathcal S$, $\diamondsuit(S')$ holds,
as witnessed by $\langle B_\beta\mid\beta\in S'\rangle$.
\end{proof}

\begin{defn}[\cite{paper22}]
A streamlined tree $T\s{}^{<\kappa}H_\kappa$ is \emph{prolific} iff for all $\alpha<\kappa$ and $t\in T_\alpha$,
$\{ t{}^\smallfrown\langle i\rangle\mid i<\max\{\omega,\alpha\}\}\s T$.
\end{defn}
A prolific tree is clearly splitting.

\begin{thm}\label{thm52}
Suppose that $\p(\kappa,\kappa,\sqleftup{S},1)$ holds for a given $S\s\acc(\kappa)$.
Then there exists a normal, prolific, streamlined $\kappa$-Souslin tree $T$ such that $ V(T)\supseteq S$.
\end{thm}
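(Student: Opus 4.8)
The plan is to run a microscopic, level-by-level recursion producing a streamlined tree $T\s{}^{<\kappa}H_\kappa$, along the lines of the construction underlying Theorem~\ref{thm41} and of \cite[Proposition~2.2]{paper26}, but feeding it a proxy sequence whose coherence relation forces the levels indexed by $S$ to become vanishing. First I would fix a $\sqleftup{S}$-coherent $\cvec C=\langle\mathcal C_\alpha\mid\alpha<\kappa\rangle$ witnessing $\p^-(\kappa,\kappa,\sqleftup{S},1)$, a sequence $\langle\Omega_\beta\mid\beta<\kappa\rangle$ witnessing $\diamondsuit^-(H_\kappa)$, and a well-ordering $\vartriangleleft$ of $H_\kappa$. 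The decisive observation is what $\sqleftup{S}$-coherence amounts to: since $D\sqleftup{S}C$ holds iff $D\sq C$ and $\sup(D)\notin S$, the coherence clause forces that for all $\alpha<\kappa$, $C\in\mathcal C_\alpha$ and $\bar\alpha\in\acc(C)$, both $C\cap\bar\alpha\in\mathcal C_{\bar\alpha}$ (so $\cvec C$ is genuinely $\sq$-coherent) and $\bar\alpha\notin S$ (so no member of $S$ is an accumulation point of any ladder of $\cvec C$; in particular $S$ is nonreflecting). Since the coherence demand therefore never refers to $\mathcal C_{\bar\alpha}$ for $\bar\alpha\in S$, I am free to shrink $\mathcal C_\alpha$ for $\alpha\in S$ as convenient — in particular to a singleton $\{C_\alpha\}$ with $\otp(C_\alpha)=\omega$ whenever $\cf(\alpha)=\omega$ — without harming the coherence or the hitting feature of $\cvec C$.

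Next I would carry out the recursion. Put $T_0:=\{\emptyset\}$; at successors set $T_{\alpha+1}:=\{t{}^\smallfrown\langle i\rangle\mid t\in T_\alpha,\ i<\max\{\omega,\alpha\}\}$, which keeps the tree prolific (hence splitting) and normal. For a limit $\alpha$ with $T\restriction\alpha$ already defined, for each $C\in\mathcal C_\alpha$ and each $x\in T\restriction C$ build a sequence $b_x^C\in\prod_{\beta\in C\setminus\dom(x)}T_\beta$ by recursion along $C$: $b_x^C(\dom(x)):=x$; at $\beta\in\nacc(C)$ above $\dom(x)$ let $b_x^C(\beta)$ be the $\vartriangleleft$-least node of $T_\beta$ extending $b_x^C(\beta^-)$ that meets the set sealed by $\Omega_\beta$ if there is one, and the $\vartriangleleft$-least extension of $b_x^C(\beta^-)$ in $T_\beta$ otherwise; and at $\beta\in\acc(C\setminus\dom(x))$ take the union, which by genuine $\sq$-coherence equals the already-defined node $\mathbf b_x^{C\cap\beta}\in T_\beta$ (the exact analogue of Claim~\ref{c371}). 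Then set $T_\alpha:=\{\mathbf b_x^C\mid C\in\mathcal C_\alpha,\ x\in T\restriction C\}$, where $\mathbf b_x^C:=\bigcup\im(b_x^C)$. The resulting $T=\bigcup_{\alpha<\kappa}T_\alpha$ is a normal, prolific, streamlined $\kappa$-tree, and the verification that it is $\kappa$-Souslin is the standard one, driven by the hitting feature of $\cvec C$ together with $\diamondsuit^-(H_\kappa)$ (cf.\ \cite[Claims 2.2.2 and 2.2.3]{paper26}).

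What remains — and this is the step I expect to carry the weight — is to show that $S\s V(T)$. Fix $\alpha\in S$ and a node $x$ with $\h(x)<\alpha$; I must produce a vanishing $\alpha$-branch through $x$. Fix $C\in\mathcal C_\alpha$ and, using normality, extend $x$ to some $x'\in T\restriction C$; then run the recursion defining $\mathbf b_{x'}^C$ — unions at $\acc(C)$, $\vartriangleleft$-least-or-$\Omega$-sealing choices at $\nacc(C)$ — except that at cofinally many $\gamma\in\nacc(C)$ one replaces the canonical value of the chain by \emph{another} node of $T_\gamma$ extending the value at $\gamma^-$ (one exists, since $T$ is splitting). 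The point is that the resulting chain still determines an $\alpha$-branch through $x$: when $\cf(\alpha)=\omega$ this is immediate, since we arranged $\otp(C_\alpha)=\omega$, so $\acc(C)=\emptyset$ and there are no intermediate limit levels to respect; in general one must place the deviation points so that each union taken at $\bar\alpha\in\acc(C)$ still lands inside the already-built level $T_{\bar\alpha}$, and this is exactly where $C\cap\bar\alpha\in\mathcal C_{\bar\alpha}$ and $\acc(C)\cap S=\emptyset$ are used. Granting this, suppose the limit $\mathbf b:=\bigcup\im(b)$ lay in $T_\alpha=\{\mathbf b_y^C\mid y\in T\restriction C\}$, say $\mathbf b=\mathbf b_y^C$ with $y$ of height $\delta\in C$; picking a deviation point $\gamma\in\nacc(C)$ with $\gamma>\delta$, the canonical recursion for $\mathbf b_y^C$ forces $\mathbf b_y^C\restriction\gamma$ to be the $\vartriangleleft$-least-or-$\Omega$-sealing extension of $\mathbf b_y^C\restriction\gamma^-=\mathbf b\restriction\gamma^-$, whereas our node at level $\gamma$ was chosen to differ from that — a contradiction. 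Hence the branch is vanishing and $\alpha\in V(T)$. The main obstacle is precisely the parenthetical step for $\cf(\alpha)>\omega$: since two closed cofinal subsets of $\alpha$ must meet, no cofinal set of deviation points can avoid accumulating at some $\bar\alpha\in\acc(C)$, so one has to organize the deviations with care — say by absorbing each deviation back into a canonical pattern before the next accumulation point, or by hopping among several ladders of $\mathcal C_\alpha$ prior to any shrinking — so that the intervening unions remain inside $T$; it is here that the strength of $\sqleftup{S}$-coherence, i.e.\ that $S$ meets no accumulation point of a ladder, is genuinely exploited.
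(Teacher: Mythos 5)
Your setup is right: you correctly read off that $\sqleftup{S}$-coherence forces $\acc(C)\cap S=\emptyset$ for every ladder $C$ in the sequence, so that the coherence clause never constrains $\mathcal C_{\bar\alpha}$ for $\bar\alpha\in S$ and one may pass to singletons there; and your level-by-level recursion, normality, prolificacy and the Souslin-ness verification all match the intended construction. But the final step — producing the vanishing branches — has a genuine gap, and you have put your finger on exactly where it is without closing it. A branch obtained by deviating from the canonical recursion at cofinally many points of $\nacc(C)$ must, at every $\bar\alpha\in\acc(C)$, have its union land in $T_{\bar\alpha}$; since in your construction $T_{\bar\alpha}$ consists \emph{only} of canonical limits $\mathbf b_y^{D}$, and (as you observe) any cofinal set of deviation points accumulates at some $\bar\alpha\in\acc(C)$ when $\cf(\alpha)>\omega$, the deviated chain breaks there. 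Neither of your two suggested repairs works: ``absorbing each deviation back into a canonical pattern'' makes the branch canonical on a tail below each fixed $\bar\alpha$ only if the deviations below $\bar\alpha$ are bounded in $\bar\alpha$, which is impossible at the accumulation points of the deviation set; and hopping between ladders of $\mathcal C_\alpha$ does not change the fact that $T_{\bar\alpha}$ contains no non-canonical limits. Even your treatment of the $\cf(\alpha)=\omega$ case leans on replacing $\mathcal C_\alpha$ by a singleton of order type $\omega$, which is \emph{not} a harmless shrinking: it can destroy the hitting feature at $\alpha$ (passing to a cofinal $\omega$-subsequence does not preserve $\sup(\nacc(C)\cap B)=\alpha$), and since all the hitting points guaranteed by $\p^-(\kappa,\kappa,\sqleftup{S},1)$ could a priori lie in $S$, this endangers the Souslin-ness argument itself.

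The paper's proof avoids the problem by never deviating at all. At each limit level $\alpha$, for each $C\in\mathcal C_\alpha$ and $x\in T\restriction C$ it runs \emph{two} fully canonical recursions along $C$: one, $\mathbf b_x^C$, that appends $\langle 0\rangle$ before consulting $\Omega_\beta$ at each successor step of $C$, and another, $\mathbf d_x^C$, that appends $\langle 1\rangle$ (and dodges $b_x^C(\beta)$ in the default case). The $0/1$ markers at successor steps of $C$ force $\{\mathbf b_x^C\mid x\}\cap\{\mathbf d_x^C\mid x\}=\emptyset$. One then sets $T_\alpha$ to be both families when $\alpha\notin S$, but only the $\mathbf b$-family when $\alpha\in S$ (where $\mathcal C_\alpha$ is a singleton, so the disjointness claim for that single $C$ applies). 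Because the $\mathbf d$-branches are canonical and because every $\bar\alpha\in\acc(C)$ lies outside $S$ — precisely the consequence of $\sqleftup{S}$-coherence you isolated — their partial limits $\mathbf d_x^{C\cap\bar\alpha}$ do belong to $T_{\bar\alpha}$, so the recursion never breaks; yet at $\alpha\in S$ each $\mathbf d_x^C$ is an $\alpha$-branch through $x$ that was deliberately left out of $T_\alpha$ and cannot equal any $\mathbf b_y^C$, hence is vanishing. This is the missing idea: manufacture the vanishing branches as a second canonical system omitted exactly at the levels in $S$, rather than by perturbing the first system.
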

\begin{proof}
Fix a well-ordering $\vartriangleleft$ of $H_\kappa$,
a sequence $\langle \Omega_\beta\mid \beta<\kappa\rangle$ witnessing $\diamondsuit^-(H_\kappa)$,
and a sequence $\cvec C=\langle \mathcal C_\alpha\mid \alpha<\kappa\rangle$ witnessing $\p^-(\kappa,\kappa,{\sqleftup S},2)$.
By $\sqleftup S$-coherence, we may assume that for every $\alpha\in S$,
$\mathcal C_\alpha$ is a singleton.

Following the proof of \cite[Proposition~2.2]{paper26}, we shall recursively construct a sequence $\langle T_\alpha\mid \alpha<\kappa\rangle$ such that
$T:=\bigcup_{\alpha<\kappa}T_\alpha$ will constitute a normal prolific full streamlined $\kappa$-Souslin tree whose $\alpha^{\text{th}}$-level is $T_\alpha$.

Let $T_0:=\{\emptyset\}$, and for all $\alpha<\kappa$ let $$T_{\alpha+1}:=\{t{}^\smallfrown\langle i\rangle\mid t\in T_\alpha, i<\max\{\omega,\alpha\}\}.$$
Next, suppose that $\alpha\in\acc(\kappa)$ is such that $T\restriction\alpha$ has already been defined.
Constructing the level $T_\alpha$ involves deciding which branches through $T\restriction\alpha$ will have its limit placed into our tree.
For all $C\in\mathcal C_\alpha$ and $x\in T\restriction C$, we first define two $\alpha$-branches $\mathbf b_x^C$ and $\mathbf d_x^C$ such that $\{\mathbf b_x^C\mid x\in T\restriction C\}\cap\{\mathbf d_x^C\mid x\in T\restriction C\}=\emptyset$,
and then we  shall let:
\begin{equation}\tag{$\star$}\label{promise2}T_\alpha:=\begin{cases}\{\mathbf{b}_x^{C}\phantom{,\mathbf{d}_x^C}\mid C\in\mathcal C_\alpha, x\in T\restriction C\},&\text{if }\alpha\in S;\\
\{\mathbf{b}_x^C,\mathbf{d}_x^C\mid C\in\mathcal C_\alpha, x\in T\restriction C\},&\text{otherwise}.
\end{cases}\end{equation}
For every $\alpha\in S$, since $|\mathcal C|=1$, this ensures that $\alpha\in V(T)$.

Let $C\in\mathcal C$ and $x\in T\restriction C$.
We start by defining $\mathbf b_x^C$. It will be the limit $\bigcup\im(b_x^C)$ of a sequence $b_x^C\in\prod_{\beta\in C\setminus\dom(x)}T_\beta$ obtained by recursion, as follows.
Set $b_x^C(\dom(x)):=x$. At successor step, for every $\beta\in C\setminus(\dom(x)+1)$ such that $b_x^C(\beta^-)$ has already been defined with $\beta^-:=\sup(C\cap\beta)$,
we consult the following set:
$$Q^{C, \beta}_{x,0} := \{ t\in T_\beta\mid \exists s\in \Omega_{\beta}[ (s\cup (b^{C}_x(\beta^-){}^\smallfrown\langle0\rangle))\s t]\}.$$
Now, consider the two possibilities:
\begin{itemize}
\item If $Q^{C,\beta}_{x,0} \neq \emptyset$, then let $b^C_x(\beta)$ be its $\lhd$-least element;
\item Otherwise, let $b^C_x(\beta)$ be the $\lhd$-least element of $T_\beta$ that extends $b^C_x(\beta^-){}^\smallfrown\langle0\rangle$.
Such an element must exist, as the tree constructed so far is prolific and normal.
\end{itemize}
Finally, for every $\beta\in\acc(C\setminus\dom(x))$ such that $b_x^C\restriction\beta$ has already been defined, we let $b_x^C(\beta)=\bigcup\im(b_x^C\restriction\beta)$.
By \eqref{promise2}, $\sqleftup{S}$-coherence and the exact same proof of \cite[Claim~2.2.1]{paper26}, $b_x^C(\beta)$ is indeed in $T_\beta$.

Next, we define $\mathbf d_x^C$ as the limit of a sequence $d_x^C\in\prod_{\beta\in C\setminus\dom(x)}T_\beta$ obtained by recursion, as follows.
Set $d_x^C(\dom(x)):=x$. At successor step, for every $\beta\in C\setminus(\dom(x)+1)$ such that $d_x^C(\beta^-)$ has already been defined with $\beta^-:=\sup(C\cap\beta)$,
we consult the following set:
$$Q^{C, \beta}_{x,1} := \{ t\in T_\beta\mid \exists s\in \Omega_{\beta}[ (s\cup (d^{C}_x(\beta^-){}^\smallfrown\langle1\rangle))\s t]\}.$$
Now, consider the two possibilities:
\begin{itemize}
\item If $Q^{C,\beta}_{x,1} \neq \emptyset$, then let $d^C_x(\beta)$ be its $\lhd$-least element;
\item Otherwise, let $d^C_x(\beta)$ be the $\lhd$-least element of $T_\beta\setminus\{b_x^C(\beta)\}$ that extends $d^C_x(\beta^-){}^\smallfrown\langle1\rangle$.
Such an element must exist, as the tree constructed so far is prolific and normal.
\end{itemize}
Finally, for every $\beta\in\acc(C\setminus\dom(x))$ such that $d_x^C\restriction\beta$ has already been defined, we let $d_x^C(\beta)=\bigcup\im(d_x^C\restriction\beta)$.
By \eqref{promise2}, $\sqleftup{S}$-coherence and the exact same proof of \cite[Claim~2.2.1]{paper26}, $d_x^C(\beta)$ is indeed in $T_\beta$.

\begin{claim} For every $C\in\mathcal C_\alpha$,
$\{\mathbf b_x^C\mid x\in T\restriction C_\alpha\}\cap \{\mathbf d_x^C\mid x\in T\restriction C_\alpha\}=\emptyset$.
\end{claim}
\begin{proof} Let $C\in\mathcal C_\alpha$ and $x,y\in T\restriction C$.
Fix a large enough $\beta\in\nacc(C)$ for which $\beta^-:=\sup(C\cap\beta)$ is bigger than $\max\{\dom(x),\dom(y)\}$.
By the definitions of $b_x^C$ and $d_y^C$,
\begin{itemize}
\item $b_x^C(\beta)(\beta^-)=0$, and
\item $d_y^C(\beta)(\beta^-)=1$.
\end{itemize}
In particular, $\mathbf b_x^C\neq \mathbf d_y^C$.
\end{proof}

This finishes the construction of $T_\alpha$.
Finally, by \cite[Claims 2.2.2 and 2.2.3]{paper26},
$T:=\bigcup_{\alpha<\kappa}T_\alpha$ is a $\kappa$-Souslin tree.
\end{proof}

\begin{thm}\label{thm53}
Suppose that $\chi$ is a cardinal such that $\lambda^\chi<\kappa$ for all $\lambda<\kappa$,
and that $\p(\kappa,\kappa,\sqleftup S,1,\{S\cup E^\kappa_{>\chi}\})$ holds for a given $S\s \acc(\kappa)\cap E^\kappa_{\le\chi}$.
Then there exists a normal, prolific, streamlined $\kappa$-Souslin tree $T$ such that $V^-(T)\cap E^\kappa_{\le\chi}=V(T)\cap E^\kappa_{\le\chi}=S$.
\end{thm}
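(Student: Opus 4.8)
\emph{Approach.} The plan is to re-run the construction from the proof of Theorem~\ref{thm52}, but to switch off the $\diamondsuit^-(H_\kappa)$-sealing at limit levels of cofinality $\le\chi$ lying outside $S$, and to seal \emph{every} branch at those levels instead; the cardinal arithmetic hypothesis is exactly what makes this affordable. Fix a well-ordering $\vartriangleleft$ of $H_\kappa$, a sequence $\langle\Omega_\beta\mid\beta<\kappa\rangle$ witnessing $\diamondsuit^-(H_\kappa)$ (available since $\p$ includes $\diamondsuit(\kappa)$), and a sequence $\cvec C=\langle\mathcal C_\alpha\mid\alpha<\kappa\rangle$ witnessing $\p^-(\kappa,\kappa,{\sqleftup S},1,\{S\cup E^\kappa_{>\chi}\})$; as in Theorem~\ref{thm52} we may assume $\mathcal C_\alpha$ is a singleton for each $\alpha\in S$, and we record the basic fact that $\sqleftup S$-coherence forces $\acc(C)\cap S=\emptyset$ for every $C$ occurring in $\cvec C$ (any $D\in\mathcal C_{\bar\alpha}$ with $D\sqleftup S C$ has $\sup(D)=\bar\alpha\notin S$). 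Build $\langle T_\alpha\mid\alpha<\kappa\rangle$ recursively with $T_0:=\{\emptyset\}$ and $T_{\alpha+1}:=\{t{}^\smallfrown\langle i\rangle\mid t\in T_\alpha,\ i<\max\{\omega,\alpha\}\}$. For $\alpha\in\acc(\kappa)=S\cup(E^\kappa_{\le\chi}\setminus S)\cup E^\kappa_{>\chi}$: when $\alpha\in S$ or $\alpha\in E^\kappa_{>\chi}$, define the branches $\mathbf b_x^C$ and $\mathbf d_x^C$ for $C\in\mathcal C_\alpha$ and $x\in T\restriction C$ exactly as in Theorem~\ref{thm52} via the sets $Q^{C,\beta}_{x,0}$ and $Q^{C,\beta}_{x,1}$, and let $T_\alpha:=\{\mathbf b_x^C\mid C\in\mathcal C_\alpha,\ x\in T\restriction C\}$ if $\alpha\in S$ and $T_\alpha:=\{\mathbf b_x^C,\mathbf d_x^C\mid C\in\mathcal C_\alpha,\ x\in T\restriction C\}$ if $\alpha\in E^\kappa_{>\chi}$; and when $\alpha\in E^\kappa_{\le\chi}\setminus S$, simply set $T_\alpha:=\mathcal B(T\restriction\alpha)$.

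\emph{Tree properties.} Prolificity, downward-closure and Hausdorffness are immediate, and normality below a limit level is preserved as in Theorem~\ref{thm52}; at a new level $\alpha\in E^\kappa_{\le\chi}\setminus S$ it is preserved because, by normality of $T\restriction\alpha$, every node of $T\restriction\alpha$ extends to an $\alpha$-branch whose limit lies in $T_\alpha=\mathcal B(T\restriction\alpha)$. For level sizes, the only new case is again $\alpha\in E^\kappa_{\le\chi}\setminus S$: since $\kappa$ is regular, $\lambda:=|T\restriction\alpha|<\kappa$, whence $|T_\alpha|\le\lambda^{\cf(\alpha)}\le\lambda^\chi<\kappa$ by hypothesis. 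Finally one must re-check, as in \cite[Claim~2.2.1]{paper26}, that at a limit stage $\beta\in\acc(C\setminus\dom(x))$ the value $b_x^C(\beta)=\bigcup\im(b_x^C\restriction\beta)$ lands in $T_\beta$; but $\acc(C)\cap S=\emptyset$, so such a $\beta$ lies either in $E^\kappa_{>\chi}$, where $\sqleftup S$-coherence gives $C\cap\beta\in\mathcal C_\beta$ and the usual argument identifies $b_x^C(\beta)$ with $\mathbf b_x^{C\cap\beta}\in T_\beta$, or in $E^\kappa_{\le\chi}\setminus S$, where $b_x^C(\beta)\in\mathcal B(T\restriction\beta)=T_\beta$ with nothing to prove.

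\emph{Vanishing levels and Souslinity.} For $\alpha\in S$ we have $\mathcal C_\alpha=\{C\}$ and $T_\alpha=\{\mathbf b_x^C\mid x\in T\restriction C\}$ contains no node of the form $\mathbf d_{x'}^C$ (by the disjointness claim of Theorem~\ref{thm52}), so given any $x\in T\restriction\alpha$, extending it to some $x'\in T\restriction C$ makes the branch through $\mathbf d_{x'}^C$ a vanishing $\alpha$-branch containing $x$; hence $\alpha\in V(T)$. For $\alpha\in E^\kappa_{\le\chi}\setminus S$, every $\alpha$-branch has its limit in $T_\alpha=\mathcal B(T\restriction\alpha)$, so $\alpha\notin V^-(T)$. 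As $S\subseteq E^\kappa_{\le\chi}$ this yields $S\subseteq V(T)\cap E^\kappa_{\le\chi}\subseteq V^-(T)\cap E^\kappa_{\le\chi}\subseteq S$, as required. Since $T$ is normal and splitting, to see that $T$ is $\kappa$-Souslin it suffices to rule out antichains of size $\kappa$ (a $\kappa$-branch in such a tree would otherwise induce a $\kappa$-sized antichain); here one runs the standard argument of \cite[Claims~2.2.2 and 2.2.3]{paper26}: code a would-be maximal antichain of size $\kappa$ by a cofinal $B_0\subseteq\kappa$, use $\diamondsuit^-(H_\kappa)$ together with the hitting clause of the proxy principle to find a limit $\alpha$ at which $\Omega_\alpha$ codes the antichain below $\alpha$ correctly and $\sup(\nacc(C)\cap B_0)=\alpha$ for every $C\in\mathcal C_\alpha$, and observe that the $Q$-construction at stage $\alpha$ then forces every node of $T$ above $\alpha$ to extend a member of the antichain. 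The point of restricting $\mathcal S$ to $\{S\cup E^\kappa_{>\chi}\}$, rather than taking $\mathcal S=\{\kappa\}$, is precisely that the hitting clause now delivers such an $\alpha$ inside $S\cup E^\kappa_{>\chi}$, which is the set of levels at which the $Q$-construction was actually performed.

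\emph{Main obstacle.} The substantive work is to confirm that the two sealing regimes do not interfere. Switching off the $Q$-construction at the $E^\kappa_{\le\chi}\setminus S$ levels could a priori destroy the limit-coherence of the auxiliary branches $b_x^C$ or break the Souslinity argument; as sketched above, the first danger is averted because $\sqleftup S$-coherence keeps every club $C$ in $\cvec C$ away from $S$, so the only levels of $\acc(C)$ not handled by the $\mathbf b_x^{C\cap\beta}$ mechanism are ones where ``seal everything'' makes coherence automatic, and the second is averted because the proxy hitting has been localized, via $\mathcal S$, to exactly the set of levels where sealing is done. Beyond this, the one genuinely new ingredient relative to Theorem~\ref{thm52} and \cite{paper26} is the elementary remark that $\lambda^\chi<\kappa$ for all $\lambda<\kappa$ is precisely what permits sealing every branch at a level of cofinality $\le\chi$ while keeping that level of size $<\kappa$.
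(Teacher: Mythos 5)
Your proposal is correct and follows exactly the same approach as the paper's own (extremely terse) proof, which simply modifies the level construction of Theorem~\ref{thm52} to the three-way case split on $S$, $E^\kappa_{>\chi}$, and $E^\kappa_{\le\chi}\setminus S$ and leaves all details to the reader. You have filled in precisely the expected verifications --- the cardinal-arithmetic bound on $|\mathcal B(T\restriction\alpha)|$, the observation that $\sqleftup{S}$-coherence keeps $\acc(C)$ off $S$ so the limit-coherence check still goes through, and the role of localizing the hitting clause to $S\cup E^\kappa_{>\chi}$ in the Souslinity argument --- and they are all sound.
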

\begin{proof} The proof is almost identical to that of Theorem~\ref{thm52}, where the only change is in
that now, the definition of $T_\alpha$ for a limit $\alpha$ splits into three:
$$T_\alpha:=\begin{cases}\{\mathbf{b}_x^{C}\phantom{,\mathbf{d}_x^C}\mid C\in\mathcal C_\alpha, x\in T\restriction C\},&\text{if }\alpha\in S;\\
\{\mathbf{b}_x^C,\mathbf{d}_x^C\mid C\in\mathcal C_\alpha, x\in T\restriction C\},&\text{if }\alpha\in E^\kappa_{>\chi};\\
\mathcal B(T\restriction\alpha),&\text{otherwise}.\\
\end{cases}$$
The details are left to the reader.
\end{proof}
\begin{remark} Sufficient conditions for the existence of $S\s\kappa$
for which $\p(\kappa,\kappa,\sqleftup{S},1,\{S\})$ holds are given by \cite[Corollary~4.22]{paper23} and \cite[Theorem~4.28]{paper23}.
In particular, for every (nonreflecting) stationary $E\s\kappa$, if $\square(E)$ and $\diamondsuit(E)$ both hold,
then there exists a stationary $S\s E$ such that $\p(\kappa,\kappa,\sqleftup{S},1,\{S\})$ holds.
\end{remark}
\begin{cor}\label{cor46} Suppose that $2^{2^{\aleph_0}}=\aleph_2$, and that $S$ is a nonreflecting stationary subset of $E^{\aleph_2}_{\aleph_0}$.
Then there exists a normal prolific streamlined $\aleph_2$-Souslin tree $T$ such that $ V(T)=S\cup E^{\aleph_2}_{\aleph_1}$.
\end{cor}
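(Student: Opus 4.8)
The plan is to apply Theorem~\ref{thm53} with $\chi:=\aleph_0$ and then to show that the only cofinality of accumulation points not yet accounted for, namely $\aleph_1$, is entirely absorbed into $V(T)$. First, extract the cardinal arithmetic: by Cantor, $\aleph_1\le 2^{\aleph_0}<2^{2^{\aleph_0}}=\aleph_2$, so $2^{\aleph_0}=\aleph_1$, and hence $\lambda^{\aleph_0}<\aleph_2$ for every $\lambda<\aleph_2$; moreover $2^{\aleph_1}=2^{2^{\aleph_0}}=\aleph_2$, so $\aleph_0^{\aleph_1}=\aleph_2$ and, by a theorem of Shelah, $\diamondsuit(\aleph_2)$ holds as well as $\diamondsuit(S')$ for every stationary $S'\s E^{\aleph_2}_{\aleph_0}$; in particular $\diamondsuit(S)$ holds.

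Second, I would establish the instance of the proxy principle required by Theorem~\ref{thm53}. Since $\acc(\aleph_2)\cap E^{\aleph_2}_{\le\aleph_0}=E^{\aleph_2}_{\aleph_0}\supseteq S$ and $E^{\aleph_2}_{>\aleph_0}=E^{\aleph_2}_{\aleph_1}$, what is needed is $\p(\aleph_2,\aleph_2,\sqleftup{S},1,\{S\cup E^{\aleph_2}_{\aleph_1}\})$. As $S$ is a nonreflecting stationary subset of $E^{\aleph_2}_{\aleph_0}$ and $\diamondsuit(S)$ (hence $\diamondsuit(\aleph_2)$) holds, the results of \cite{paper23} quoted in the remark preceding this corollary --- \cite[Corollary~4.22]{paper23} and \cite[Theorem~4.28]{paper23} --- yield $\p(\aleph_2,\aleph_2,\sqleftup{S},1,\{S\})$, and enlarging the sole member of $\mathcal S$ from $S$ to $S\cup E^{\aleph_2}_{\aleph_1}$ only weakens the club-guessing clause (while the coherence and cardinality clauses, as well as $\diamondsuit(\aleph_2)$, do not mention $\mathcal S$), so $\p(\aleph_2,\aleph_2,\sqleftup{S},1,\{S\cup E^{\aleph_2}_{\aleph_1}\})$ follows. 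This is the step I expect to be the main obstacle: should the cited theorems deliver the principle only for \emph{some} nonreflecting stationary subset of a prescribed set rather than for $S$ itself, one would instead construct the witnessing $\sqleftup{S}$-coherent $\mathcal C$-sequence $\langle\mathcal C_\alpha\mid\alpha<\aleph_2\rangle$ directly by recursion --- putting $\mathcal C_\alpha:=\{e_\alpha\}$ for $\alpha\in S$ with $e_\alpha$ a fixed cofinal $\omega$-sequence, exploiting nonreflection (so that $S\cap\alpha$ is nonstationary in $\alpha$ whenever $\cf(\alpha)=\aleph_1$) to select coherently at most $\aleph_1$ clubs of $\alpha$ disjoint from $S$ for $\alpha\notin S$, and using $\diamondsuit(\aleph_2)$ both for the club-guessing at points of $E^{\aleph_2}_{\aleph_1}$ and for canonically keeping $|\mathcal C_\alpha|<\aleph_2$.

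Granting the proxy principle, Theorem~\ref{thm53} applied with $\chi:=\aleph_0$ (its arithmetic hypothesis holding by the first paragraph) produces a normal, prolific, streamlined $\aleph_2$-Souslin tree $T$ with $V^-(T)\cap E^{\aleph_2}_{\aleph_0}=V(T)\cap E^{\aleph_2}_{\aleph_0}=S$. It remains to verify $E^{\aleph_2}_{\aleph_1}\s V(T)$. As $T$ is prolific, every node has at least $\aleph_0$ immediate successors, so $T$ is $\aleph_0$-splitting; since $\aleph_0^{\aleph_1}=2^{\aleph_1}\ge\aleph_2$, Lemma~\ref{cor53}(1) --- applied with $\varsigma:=\aleph_0$, $\chi:=\aleph_1$, and $E^{\aleph_2}_{\aleph_1}$ playing the role of the stationary set --- gives, for every $\alpha\in E^{\aleph_2}_{\aleph_1}$, that either $\alpha\in V(T)$ or $V^-(T)\cap\alpha$ is stationary in $\alpha$. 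Fix such an $\alpha$. Since $V^-(T)\s\acc(\aleph_2)$ and $V^-(T)\cap E^{\aleph_2}_{\aleph_0}=S$, we have $V^-(T)\s S\cup E^{\aleph_2}_{\aleph_1}$, whence $V^-(T)\cap\alpha\s(S\cap\alpha)\cup(E^{\aleph_2}_{\aleph_1}\cap\alpha)$. Now $S\cap\alpha$ is nonstationary in $\alpha$ because $S$ is nonreflecting and $\cf(\alpha)=\aleph_1>\omega$, and $E^{\aleph_2}_{\aleph_1}\cap\alpha$ is nonstationary in $\alpha$ because the proper limit points of a continuous cofinal enumeration of $\alpha$ of length $\omega_1$ form a club in $\alpha$ consisting of ordinals of cofinality $\omega$. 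Hence $V^-(T)\cap\alpha$ is nonstationary in $\alpha$, forcing $\alpha\in V(T)$. Finally, $V(T)\s\acc(\aleph_2)=E^{\aleph_2}_{\aleph_0}\cup E^{\aleph_2}_{\aleph_1}$, so combining this with $V(T)\cap E^{\aleph_2}_{\aleph_0}=S$ yields $V(T)=S\cup E^{\aleph_2}_{\aleph_1}$, as desired.
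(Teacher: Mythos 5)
Your proposal follows exactly the paper's route: obtain an instance of $\p(\aleph_2,\aleph_2,\sqleftup{S},1,\cdot)$, feed it to Theorem~\ref{thm53} with $\chi=\aleph_0$ to get $V^-(T)\cap E^{\aleph_2}_{\aleph_0}=V(T)\cap E^{\aleph_2}_{\aleph_0}=S$, and then use Lemma~\ref{cor53}(1) together with the nonreflection of $S$ (and the nonstationarity of $E^{\aleph_2}_{\aleph_1}\cap\alpha$ in $\alpha$) to absorb all of $E^{\aleph_2}_{\aleph_1}$ into $V(T)$. Your cardinal arithmetic, the observation that enlarging the sole member of $\mathcal S$ only weakens the guessing clause, and the final computation of $V(T)$ are all correct, and your choice of $\varsigma=\aleph_0$ in Lemma~\ref{cor53} is immaterially different from the paper's $\varsigma=2$.

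The one real weakness is precisely the step you flagged. The results quoted in the remark preceding the corollary do, as you feared, only produce \emph{some} stationary $R\s S$ for which $\p(\aleph_2,\aleph_2,\sqleftup{R},1,\{R\})$ holds; running your argument with that would yield $V(T)=R\cup E^{\aleph_2}_{\aleph_1}$, which is the content of Corollary~\ref{cor47}, not of the present statement. The paper closes this gap by citing \cite[Lemma~3.2]{paper32}, which derives $\p(\aleph_2,\aleph_2,\sqleftup{S},1,\{S\})$ for the \emph{given} nonreflecting stationary $S\s E^{\aleph_2}_{\aleph_0}$ directly from $2^{2^{\aleph_0}}=\aleph_2$. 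Your fallback sketch (singleton $\mathcal C_\alpha$ on $S$, clubs avoiding $S$ elsewhere via nonreflection, $\diamondsuit$ for the guessing) points at the right construction, but as written it is only a sketch — in particular the $\sq$-coherence at points of $E^{\aleph_2}_{\aleph_1}$ while keeping $|\mathcal C_\alpha|\le\aleph_1$ is exactly the nontrivial content of the cited lemma and would need to be carried out. With that reference (or a completed construction) in place, your proof is the paper's proof.
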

\begin{proof} By \cite[Lemma~3.2]{paper32}, the hypotheses implies that $\p(\aleph_2,\aleph_2,{\sqleftup S},\allowbreak1,\{S\})$ holds.
Appealing to Theorem~\ref{thm53} with $(\kappa,\chi):=(\aleph_2,\aleph_0)$
provides us with a normal, prolific, streamlined $\aleph_2$-Souslin tree $T$ such that $V^-(T)\cap E^{\aleph_2}_{\aleph_0}=V(T)\cap E^{\aleph_2}_{\aleph_0}=S$.
As $V^-(T)\cap E^{\aleph_2}_{\aleph_0}$ is a nonreflecting stationary set,
Lemma~\ref{cor53}(1) (using $(\varsigma,\chi,\kappa):=(2,\aleph_1,\aleph_2)$) implies that $V(T)\cap E^{\aleph_2}_{\aleph_1}=E^{\aleph_2}_{\aleph_1}$.
\end{proof}
\begin{cor}\label{cor47} Suppose $\ch$ and $\sd_{\aleph_1}$ both hold.
For every stationary $S\s E^{\aleph_2}_{\aleph_0}$,
there exists an $\aleph_2$-Souslin tree $\mathbf T$ such that $V(\mathbf T)$ is a stationary subset of $S$.
\end{cor}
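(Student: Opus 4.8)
The plan is to realize $\mathbf T$ as a disjoint sum $\mathbf T:=\mathbf T_1+\mathbf T_2$ of two $\aleph_2$-Souslin trees and to compute $V(\mathbf T)$ by Proposition~\ref{sums}(2), which gives $V(\mathbf T)=V(\mathbf T_1)\cap V(\mathbf T_2)$; here one also uses the easy (unstated) fact that a disjoint sum of finitely many $\kappa$-Souslin trees is again $\kappa$-Souslin, since any $\kappa$-branch, and any $\kappa$-sized antichain, of the sum must be concentrated on a single summand. I would arrange $\mathbf T_2$ so that $V(\mathbf T_2)=S^*\cup E^{\aleph_2}_{\aleph_1}$ for a suitable nonreflecting stationary $S^*\s S$, and $\mathbf T_1$ to be a uniformly coherent $\aleph_2$-Souslin tree, for which $V(\mathbf T_1)=E^{\aleph_2}_{\aleph_0}$. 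Since $S^*\s E^{\aleph_2}_{\aleph_0}$, this gives $V(\mathbf T)=E^{\aleph_2}_{\aleph_0}\cap(S^*\cup E^{\aleph_2}_{\aleph_1})=S^*$, a stationary subset of $S$, as desired.

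For $\mathbf T_2$: $\ch$ supplies the cardinal-arithmetic hypothesis of Theorem~\ref{thm53} with $(\kappa,\chi)=(\aleph_2,\aleph_0)$, as $\lambda^{\aleph_0}=2^{\aleph_0}=\aleph_1<\aleph_2$ for all $\lambda<\aleph_2$. Recall that $\sd_{\aleph_1}$ entails $\square_{\aleph_1}$, so the given stationary $S\s E^{\aleph_2}_{\aleph_0}$ contains a nonreflecting stationary subset $S^*$; and $\sd_{\aleph_1}$ provides a $\diamondsuit$-sequence concentrating on $E^{\aleph_2}_{\aleph_0}$, which together with $\ch$ forces $2^{\aleph_1}=\aleph_2$, i.e. $2^{2^{\aleph_0}}=\aleph_2$. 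Hence the hypotheses of Corollary~\ref{cor46} are met with $S^*$ in place of $S$, yielding a normal, prolific, streamlined $\aleph_2$-Souslin tree $\mathbf T_2$ with $V(\mathbf T_2)=S^*\cup E^{\aleph_2}_{\aleph_1}$.

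For $\mathbf T_1$: from $\square_{\aleph_1}$ and $\diamondsuit_{\aleph_2}$ (both consequences of $\ch+\sd_{\aleph_1}$) I would run the coherent-tree construction behind the earlier proposition that $\square(\aleph_2)$ yields an $\aleph_2$-Aronszajn tree with $V=E^{\aleph_2}_{\aleph_0}$, while additionally sealing off the maximal antichains guessed by the $\diamondsuit_{\aleph_2}$-sequence so as to make the tree $\aleph_2$-Souslin — this is the usual coherent $\aleph_2$-Souslin tree, which is uniformly coherent (in particular, uniformly homogeneous), hence homogeneous by Proposition~\ref{prop53}, so that $V^-(\mathbf T_1)=V(\mathbf T_1)$ by Proposition~\ref{prop22}(2). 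At a level $\alpha$ with $\cf(\alpha)=\aleph_1$, coherence forces every branch of $\mathbf T_1\restriction\alpha$ to differ from the canonical branch on a countable — hence, as $\cf(\alpha)=\aleph_1$, bounded — subset of $\alpha$; thus there are fewer than $\aleph_2$ of them, and the construction places all of them into $T_\alpha$, so no $\alpha$-branch vanishes. At a level $\alpha$ with $\cf(\alpha)=\omega$, by contrast, a branch of $\mathbf T_1\restriction\alpha$ differing from the canonical branch on a cofinal subset of $\alpha$ coheres with nothing and so vanishes, putting $\alpha$ into $V^-(\mathbf T_1)$. Passing if necessary to a club of levels (Lemma~\ref{lemma33}), we may assume $V(\mathbf T_1)=E^{\aleph_2}_{\aleph_0}$; then $\mathbf T:=\mathbf T_1+\mathbf T_2$ works.

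The main obstacle is the precise control of $V(\mathbf T_1)$ at points of cofinality $\aleph_1$: one must check that the $\diamondsuit_{\aleph_2}$-driven diagonalization never leaves an $\alpha$-branch unsealed at a cofinality-$\aleph_1$ level (a single surviving such branch would throw $\alpha$ into $V(\mathbf T_1)$ and hence into $V(\mathbf T)$), and, dually, that vanishing branches genuinely persist at \emph{every} point of $E^{\aleph_2}_{\aleph_0}$ rather than merely on a club — so the delicate part is the interplay between the tightness of the coherence and the microscopic sealing procedure, and the passage from "modulo a club" to "on the nose". A lesser, routine task is to extract cleanly from $\ch+\sd_{\aleph_1}$ the nonreflecting $S^*\s S$ (via $\square_{\aleph_1}$) together with the instance of the proxy principle needed to drive Corollary~\ref{cor46}/Theorem~\ref{thm53} for $\mathbf T_2$.
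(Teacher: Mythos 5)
Your proposal is correct and follows essentially the same route as the paper: take the disjoint sum of a tree from Corollary~\ref{cor46} (applied to a nonreflecting stationary $S^*\subseteq S$ obtained via $\square_{\aleph_1}$) with a uniformly coherent $\aleph_2$-Souslin tree, and compute the vanishing levels via Proposition~\ref{sums}(2). The only point you flag as delicate --- pinning down $V(\mathbf T_1)$ as exactly $E^{\aleph_2}_{\aleph_0}$ --- is dispatched in the paper by citing the known uniformly coherent $\aleph_2$-Souslin tree from $\sd_{\aleph_1}$ together with the fact (Remark~2.20 of the cited guessing-principle paper, in the spirit of Lemma~\ref{lemma64} here) that uniform coherence forces $V^-=E^{\aleph_2}_{\aleph_0}$ on the nose, so no passage to a club is needed.
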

\begin{proof} $\sd_{\aleph_1}$ implies $\square_{\aleph_1}$ which implies that for every stationary $S\s E^{\aleph_2}_{\aleph_0}$ there exists a stationary $R\s S$ that is nonreflecting.
It thus follows from Corollary~\ref{cor46} that for every stationary $S\s E^{\aleph_2}_{\aleph_0}$ there exist a stationary $R\s S$
and an $\aleph_2$-Souslin tree $\mathbf T$ such that $ V(\mathbf T)=R\cup E^{\aleph_2}_{\aleph_1}$.
In addition, $\sd_{\aleph_1}$ yields a uniformly coherent $\aleph_2$-Souslin tree $\mathbf S$ (see \cite[Theorem~7]{MR830071} or \cite[Proposition~2.5 and Theorem~3.6]{paper22}).
By \cite[Remark~2.20]{paper48}, then, $V(\mathbf S)=E^{\aleph_2}_{\aleph_0}$.
Clearly, $\mathbf T+\mathbf S$ is an $\aleph_2$-Souslin tree, and, by Proposition~\ref{sums}(2),
$V(\mathbf T+\mathbf S)=R$.
\end{proof}

\begin{thm}\label{thm54}
Suppose that $\kappa$ is a strongly inaccessible cardinal,
and that $\p(\kappa,\kappa,\sqleftup{S},1,\{S\})$ holds for a given $S\s\acc(\kappa)$.
Then there exists a normal, prolific, streamlined $\kappa$-Souslin tree $T$ such that $V^-(T)=V(T)=S$.
\end{thm}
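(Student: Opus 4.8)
The plan is to repeat the construction from the proof of Theorem~\ref{thm52} verbatim, changing only the recipe for the limit levels so as to additionally destroy every vanishing branch at heights outside $S$. Fix a well-ordering $\vartriangleleft$ of $H_\kappa$, a sequence $\langle\Omega_\beta\mid\beta<\kappa\rangle$ witnessing $\diamondsuit^-(H_\kappa)$, and a $\p^-(\kappa,\kappa,\sqleftup S,1,\{S\})$-sequence $\cvec C=\langle\mathcal C_\alpha\mid\alpha<\kappa\rangle$; by $\sqleftup S$-coherence we may assume, exactly as in Theorem~\ref{thm52}, that $\mathcal C_\alpha$ is a singleton for every $\alpha\in S$. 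We build $\langle T_\alpha\mid\alpha<\kappa\rangle$ by recursion, with $T_0:=\{\emptyset\}$ and $T_{\alpha+1}:=\{t{}^\smallfrown\langle i\rangle\mid t\in T_\alpha,\ i<\max\{\omega,\alpha\}\}$, and, for $\alpha\in\acc(\kappa)$, first form the branches $\mathbf b_x^C$ and $\mathbf d_x^C$ (for $C\in\mathcal C_\alpha$, $x\in T\restriction C$) exactly as in Theorem~\ref{thm52}, and then set
\[
T_\alpha:=\begin{cases}\{\mathbf b_x^C\mid C\in\mathcal C_\alpha,\ x\in T\restriction C\},&\text{if }\alpha\in S;\\ \mathcal B(T\restriction\alpha),&\text{if }\alpha\notin S.\end{cases}
\]
The only new point needed for $T$ to be a $\kappa$-tree is that $|T_\alpha|<\kappa$ when $\alpha\notin S$; but $\mathcal B(T\restriction\alpha)$ injects into ${}^{|\alpha|}(T\restriction\alpha)$, and $|T\restriction\alpha|^{|\alpha|}<\kappa$ since $\kappa$ is strongly inaccessible and $|T\restriction\alpha|<\kappa$ (an easy induction using that each node has $<\kappa$ successors). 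Normality and prolificity are then clear: at $\alpha\notin S$ all branches are present, while at $\alpha\in S$, writing $\mathcal C_\alpha=\{C\}$, any $x\in T\restriction\alpha$ may be extended into $T\restriction C$ and then $\mathbf b_x^C\in T_\alpha$ lies above it, just as in Theorem~\ref{thm52}.

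Next I would read off the conclusions. For $S\subseteq V(T)$: given $\alpha\in S$, $\mathcal C_\alpha=\{C\}$, and $x\in T$ with $\h(x)<\alpha$, extend $x$ to some $x'\in T\restriction C$ and consider the $\alpha$-branch through $x$ determined by $\mathbf d_{x'}^C$; as in Theorem~\ref{thm52} the construction guarantees $\mathbf d_{x'}^C\neq\mathbf b_y^C$ for all $y$ (they disagree on the value at $\sup(C\cap\beta)$ for all large $\beta\in\nacc(C)$), so $\bigcup\im(d_{x'}^C)\notin T_\alpha$ and the branch is vanishing. For $V^-(T)\subseteq S$: if $\alpha\in\acc(\kappa)\setminus S$ then $T_\alpha=\mathcal B(T\restriction\alpha)$ contains the limit of every $\alpha$-branch, so no vanishing $\alpha$-branch exists, i.e. $\alpha\notin V^-(T)$. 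Since always $S\subseteq V(T)\subseteq V^-(T)$, this yields $V^-(T)=V(T)=S$.

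It remains to check that $T$ is $\kappa$-Souslin, and this is the step I expect to demand the most attention. The no-$\kappa$-antichain and no-$\kappa$-branch arguments are the standard ones from the proof of Theorem~\ref{thm52}, i.e. \cite[Claims 2.2.2 and 2.2.3]{paper26}: for a purported maximal antichain $A$ of size $\kappa$, the set $B$ of $\beta<\kappa$ such that $\Omega_\beta$ codes $A\cap(T\restriction\beta)$ and $A\cap(T\restriction\beta)$ is maximal in $T\restriction\beta$ is stationary, hence cofinal; feeding this $B$ to the hitting clause of $\p^-(\kappa,\kappa,\sqleftup S,1,\{S\})$ produces stationarily many $\alpha\in S$ for which, with $\mathcal C_\alpha=\{C\}$, the set $\nacc(C)\cap B$ is cofinal in $\alpha$; for any such $\alpha$ the $\Omega_\beta$-consulting step in the definition of $\mathbf b_x^C$ forces every node of $T_\alpha=\{\mathbf b_x^C\mid x\in T\restriction C\}$ to extend an element of $A$, and then, extending one such node to the height of an element of $A$ of height $\ge\alpha$, one produces a node incomparable to all of $A$, contradicting maximality. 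The one thing to verify beyond Theorem~\ref{thm52} is that the abundance of extra nodes placed on the levels $\alpha\notin S$ does not interfere; it does not, because the contradiction is always extracted at a level belonging to $S$, and the statement ``$A\cap(T\restriction\alpha)$ is maximal in $T\restriction\alpha$ for club-many $\alpha$'' depends only on $A$ and not on how the lower levels were populated. (Equivalently, one can argue as in the proof of Theorem~\ref{thm53}: whenever $A\cap(T\restriction\alpha)$ is maximal in $T\restriction\alpha$, maximality of $A$ in $T$ forces every member of $\mathcal B(T\restriction\alpha)$ to already extend an element of $A\cap(T\restriction\alpha)$.) This completes the plan.
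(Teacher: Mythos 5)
Your proposal is correct and is essentially the paper's own proof: the paper likewise takes the construction of Theorem~\ref{thm52} and changes only the limit levels to $\{\mathbf b_x^C\mid C\in\mathcal C_\alpha,\,x\in T\restriction C\}$ for $\alpha\in S$ and $\mathcal B(T\restriction\alpha)$ otherwise, leaving the remaining details to the reader. The details you supply (strong inaccessibility bounding $|\mathcal B(T\restriction\alpha)|$, the $\mathbf d_{x'}^C$'s witnessing $S\s V(T)$, full levels outside $S$ giving $V^-(T)\s S$, and sealing of antichains occurring only at levels in $S$) are exactly the right ones and are filled in correctly.
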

\begin{proof} The proof is almost identical to that of Theorem~\ref{thm52}, where the only change is
that now, the definition of $T_\alpha$ for a limit $\alpha$ does not explicitly mention the $\mathbf{d}_x^C$'s. Instead, it is:
$$T_\alpha:=\begin{cases}\{\mathbf{b}_x^{C}\mid C\in\mathcal C_\alpha, x\in T\restriction C\},&\text{if }\alpha\in S;\\
\mathcal B(T\restriction\alpha),&\text{otherwise}.\\
\end{cases}$$
The details are left to the reader.
\end{proof}

\begin{cor}\label{cor55}
Suppose that $\kappa$ is a strongly inaccessible cardinal, and $S$ is a nonreflecting stationary subset of $\acc(\kappa)$ on which $\diamondsuit$ holds.
Then there exists a normal prolific streamlined $\kappa$-Souslin tree $T$ such that $V^-(T)=V(T)=S$.
\end{cor}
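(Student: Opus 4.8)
The plan is to reduce the statement to Theorem~\ref{thm54}: it suffices to check that $\p(\kappa,\kappa,\sqleftup{S},1,\{S\})$ holds, which by Convention~\ref{proxydef2} means that both $\diamondsuit(\kappa)$ and $\p^-(\kappa,\kappa,\sqleftup{S},1,\{S\})$ hold. The diamond part is immediate: if $\langle Z_\alpha\mid\alpha\in S\rangle$ is a $\diamondsuit(S)$-sequence and we extend it by setting $Z_\alpha:=\emptyset$ for $\alpha\in\kappa\setminus S$, then for every $A\subseteq\kappa$ the set $\{\alpha<\kappa\mid A\cap\alpha=Z_\alpha\}$ contains the stationary set $\{\alpha\in S\mid A\cap\alpha=Z_\alpha\}$, so it is stationary; hence $\langle Z_\alpha\mid\alpha<\kappa\rangle$ witnesses $\diamondsuit(\kappa)$, and thus also $\diamondsuit^-(H_\kappa)$ by Fact~\ref{Diamond_H_kappa}. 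So all the work is in producing a suitable $\mathcal C$-sequence.

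The key point I would exploit is that $\sqleftup{S}$-coherence demands only that the accumulation points of the ladders avoid $S$, and that $\mu=\kappa$ gives us an enormous amount of room: for a limit $\alpha\in\acc(\kappa)\setminus S$ I would simply let $\mathcal C_\alpha$ be the family of \emph{all} clubs $C\subseteq\alpha$ with $\acc(C)\cap S=\emptyset$. Since $\kappa$ is strongly inaccessible, $|\mathcal C_\alpha|\le 2^{|\alpha|}<\kappa=\mu$, so this is permitted, and $\mathcal C_\alpha$ is nonempty because $S$ is nonreflecting (for $\cf(\alpha)>\omega$ pick a club in $\alpha$ disjoint from $S$, noting $\acc$ of a closed set is contained in the set; for $\cf(\alpha)=\omega$ any cofinal $\omega$-sequence has empty $\acc$). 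For $\alpha\in S$ I would instead take $\mathcal C_\alpha:=\{C_\alpha\}$ to be a \emph{singleton}, where $C_\alpha$ is a club in $\alpha$ chosen with $\acc(C_\alpha)\cap S=\emptyset$ and with the extra property that $\nacc(C_\alpha)\cap Z_\alpha$ is cofinal in $\alpha$ whenever $Z_\alpha$ is cofinal in $\alpha$. For non-limit $\alpha$ one uses the trivial $\mathcal C_\alpha:=\{\{\beta\}\}$ when $\alpha=\beta+1$, and $\mathcal C_\alpha:=\{\emptyset\}$ when $\alpha=0$.

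With these definitions the three clauses of $\p^-(\kappa,\kappa,\sqleftup{S},1,\{S\})$ should fall out routinely. Boundedness is clear since $\otp(C)\le\alpha<\kappa$ for every $C$ occurring, and the size bound $|\mathcal C_\alpha|<\kappa$ was noted above. For $\sqleftup{S}$-coherence, suppose $C\in\mathcal C_\alpha$ and $\bar\alpha\in\acc(C)$: by construction every ladder in the sequence has all of its accumulation points outside $S$, so $\bar\alpha\notin S$, hence $\bar\alpha\in\acc(\kappa)\setminus S$ and $\mathcal C_{\bar\alpha}$ is the ``all clubs off $S$'' family; setting $D:=C\cap\bar\alpha$, this is a club in $\bar\alpha$ with $\acc(D)=\acc(C)\cap\bar\alpha\subseteq\acc(C)$ disjoint from $S$, so $D\in\mathcal C_{\bar\alpha}$, and $D=C\cap\bar\alpha\sqsubseteq C$ with $\sup(D)=\bar\alpha\notin S$, i.e., $D\sqleftup{S}C$. (Incidentally, an $\alpha\in S$ is never itself an accumulation point of any ladder in the sequence, so no constraint is ever forced on $\mathcal C_\alpha$ from above — this is exactly what licenses the singleton choice there.) For the hitting clause, fix one cofinal $B\subseteq\kappa$; the club $\{\alpha<\kappa\mid\sup(B\cap\alpha)=\alpha\}$ meets the stationary guessing set $\{\alpha\in S\mid B\cap\alpha=Z_\alpha\}$ in a stationary set $G$, and for $\alpha\in G$ we have $Z_\alpha=B\cap\alpha$ cofinal in $\alpha$, so $\nacc(C_\alpha)\cap B$ is cofinal in $\alpha$; since $\mathcal C_\alpha=\{C_\alpha\}$, this gives $\sup(\nacc(C)\cap B)=\alpha$ for every $C\in\mathcal C_\alpha$, so $G$ witnesses the requirement. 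Assembling $\p^-(\kappa,\kappa,\sqleftup{S},1,\{S\})$ with $\diamondsuit(\kappa)$ and invoking Theorem~\ref{thm54} then yields the desired normal, prolific, streamlined $\kappa$-Souslin tree $T$ with $V^-(T)=V(T)=S$.

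The one place I expect genuine care to be needed is the choice of the single ladder $C_\alpha$ for $\alpha\in S$ with $Z_\alpha$ cofinal in $\alpha$: it must simultaneously keep $S$ out of its accumulation points and thread cofinally many members of $Z_\alpha$ through its non-accumulation points. For $\cf(\alpha)=\omega$ this is trivial (take a cofinal $\omega$-subsequence of $Z_\alpha$, whose $\acc$ is empty). For $\cf(\alpha)>\omega$ it requires an interleaving recursion: start from a club $E\subseteq\alpha$ disjoint from $S$ (available by nonreflection), and build an increasing sequence that alternately adjoins the least element of $E$ above everything chosen so far and the least element of $Z_\alpha$ above that, taking sups at limit stages; one then checks that the limit points of the resulting club lie in $\acc(E)\subseteq E$, hence off $S$, while the $Z_\alpha$-entries, being successor elements of the club, lie in $\nacc(C_\alpha)$ and remain cofinal in $\alpha$. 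Everything else is bookkeeping.
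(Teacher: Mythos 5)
Your proof is correct, and its skeleton matches the paper's: everything funnels into Theorem~\ref{thm54}. The difference is that where the paper simply cites \cite[Theorem~4.26]{paper23} for the required instance $\p(\kappa,\kappa,\sqleftup{S},1,\{S\})$, you reprove that input from scratch by exhibiting a witnessing $\mathcal C$-sequence. Your construction is sound: for limit $\alpha\notin S$, taking $\mathcal C_\alpha$ to be \emph{all} clubs in $\alpha$ whose accumulation points avoid $S$ (nonempty by nonreflection, of size $2^{|\alpha|}<\kappa$ by strong inaccessibility) is exactly the right shape, since $\sqleftup{S}$-coherence forces $D=C\cap\bar\alpha$ with $\bar\alpha\notin S$ for $\bar\alpha\in\acc(C)$; so keeping $\acc(C)\cap S=\emptyset$ for every ladder and making the off-$S$ families closed under initial segments at accumulation points discharges coherence, while — as you correctly note — the singleton $\mathcal C_\alpha$ at $\alpha\in S$ is never constrained from above. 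The interleaving of a club $E\subseteq\alpha$ disjoint from $S$ with the guessed set $Z_\alpha$ correctly places the $Z_\alpha$-points in $\nacc(C_\alpha)$ and the limit points in $\acc(E)\subseteq E\subseteq\kappa\setminus S$, and the $\diamondsuit(S)$-guessing then yields the hitting clause for $\theta=1$ on stationarily many $\alpha\in S$; the passage from $\diamondsuit(S)$ to $\diamondsuit(\kappa)$, hence to $\diamondsuit^-(H_\kappa)$ via Fact~\ref{Diamond_H_kappa}, is as routine as you say. What the citation buys the paper is brevity and a general-purpose result proved once elsewhere; what your version buys is a self-contained argument that makes transparent exactly which features of the $\mathcal C$-sequence the construction behind Theorem~\ref{thm54} exploits (a singleton at each $\alpha\in S$, and all accumulation points of all ladders lying off $S$).
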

\begin{proof}  By Theorem~\ref{thm54} together with \cite[Theorem~4.26]{paper23}.
\end{proof}

\section{Realizing all points of some fixed cofinality}\label{sect6}
The main result of this section is Theorem~\ref{thm58} below.
A sample corollary of it reads as follows.

\begin{cor}\label{cor61} In $\mathsf{L}$, for every regular uncountable cardinal $\kappa$ that is not weakly compact,
for every finite nonempty $x\s\reg(\kappa)$ with $\max(x)\le\cf(\sup(\reg(\kappa)))$,
there exists a uniformly homogeneous $\kappa$-Souslin tree $\mathbf T$ such that $V^-(\mathbf T)=\bigcup_{\chi\in x}E^\kappa_\chi$.
\end{cor}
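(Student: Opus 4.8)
The plan is to reduce the corollary to a single ``microscopic'' construction of a streamlined $\kappa$-Souslin tree, powered by the tools of Sections~\ref{sect2} and~\ref{sect5}. Write $\theta:=\max(x)$ and $S:=\bigcup_{\chi\in x}E^\kappa_\chi$; since every $\alpha\in\acc(\kappa)$ satisfies $\cf(\alpha)\in\reg(\kappa)$ and $S\s\acc(\kappa)$, the goal is that $V^-(\mathbf T)$ pick out exactly the cofinality classes in $x$. First I would note that the cofinality hypothesis $\theta\le\cf(\sup(\reg(\kappa)))$, together with $\gch$ (which holds in $\mathsf L$), forces $\kappa$ to be $<\theta$-inaccessible, i.e.\ $\lambda^{<\theta}<\kappa$ for all $\lambda<\kappa$; this is a short case analysis on whether $\kappa$ is a successor or a strongly inaccessible cardinal, where in the successor case $\kappa=\lambda^+$ the hypothesis is in fact equivalent, under $\gch$, to $\lambda^{<\theta}=\lambda$. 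Hence, for each $\chi\in x$ we have $\lambda^{<\chi}<\kappa$ for all $\lambda<\kappa$, so Lemma~\ref{l22} supplies an almost-disjoint thin $\chi$-bounded $C$-sequence over $E^\kappa_\chi$; since $x$ is finite, amalgamating these over $\chi\in x$ (and arranging their minima so that almost-disjointness survives across the finitely many cofinality blocks, including the cardinal points $\chi\in E^\kappa_\chi$) yields an almost-disjoint thin $C$-sequence $\vec C=\langle C_\alpha\mid\alpha\in S\rangle$ over $S$ with $C_\alpha$ of order type $\cf(\alpha)$. Finally, since $\kappa$ is not weakly compact in $\mathsf L$, by \cite[Theorem~3.12]{paper22} the principle $\p(\kappa,2,{\sq},1)$ holds, and the refinements of \cite{paper23} provide a coherent $\p$-sequence $\vec{\mathcal C}$ (with $\sq^*$-coherence, or a $\sqleftup{S'}$-variant for a suitable nonreflecting $S'$) together with a $\diamondsuit^-(H_\kappa)$-sequence $\langle\Omega_\beta\mid\beta<\kappa\rangle$.

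The construction then follows the template of the proofs of Theorems~\ref{thm41},~\ref{thm52} and~\ref{thm53}: recursively build a streamlined $\kappa$-tree $T=\bigcup_{\alpha<\kappa}T_\alpha$. At successor steps I would close the level both under a prolific-splitting rule and under the operation $s*t$ of Definition~\ref{regdef} fed by the auxiliary streamlined tree $K$ obtained from $\vec C$ exactly as in the proof of $(2)\implies(3)$ of Lemma~\ref{lemma311} (so that $V^-(K)\supseteq S$); as in Claim~\ref{c2171} this makes $T$ uniformly homogeneous, hence homogeneous by Proposition~\ref{prop53}, hence $V^-(T)=V(T)$ by Proposition~\ref{prop22}(2). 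At a limit level $\alpha$, the definition of $T_\alpha$ splits according to $\cf(\alpha)$: if $\cf(\alpha)=\chi\in x$, use the $\chi$-bounded ladder $C_\alpha$ to designate an $\alpha$-branch of $K$ that is deliberately left unrealized and let $T_\alpha$ consist only of the ``$\mathbf b$-type'' limits attached to realized branches --- just as in \eqref{promise0} --- so that, as in Claim~\ref{c372}, $\alpha\in V(T)$; if $\cf(\alpha)\notin x$, let $T_\alpha:=\mathcal B(T\restriction\alpha)$, as in the last clauses of the displays in Theorems~\ref{thm53} and~\ref{thm54}, so that $\alpha\notin V^-(T)$. The $\diamondsuit^-(H_\kappa)$-sequence is consulted at the non-accumulation points of the ladders exactly as in Theorem~\ref{thm52}, so that along the club on which $\langle\Omega_\beta\rangle$ reflects every would-be maximal antichain of size $\kappa$ is sealed; one checks in the usual way that $T$ is splitting and has no $\kappa$-branch, so that $T$ is a uniformly homogeneous $\kappa$-Souslin tree, and reading off $V^-(T)=V(T)=\bigcup_{\chi\in x}E^\kappa_\chi$ from the level-by-level description is then immediate.

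The main obstacle, as I see it, lives entirely at the limit levels $\alpha$ with $\cf(\alpha)\notin x$ and has two faces. First, when $\cf(\alpha)>\theta$ one must verify that $|\mathcal B(T\restriction\alpha)|<\kappa$, so that declaring $T_\alpha:=\mathcal B(T\restriction\alpha)$ is legitimate; this is where the $\theta$-boundedness of the ladder data and the coherence of $\vec{\mathcal C}$ must be leveraged --- arguing as in $(4)\implies(1)$ of Lemma~\ref{lemma311}, almost-disjointness forces every $\alpha$-branch of $T\restriction\alpha$ to agree on a tail with a single bounded ``component'', of which there are fewer than $\kappa$, so there are $<\kappa$ branches and each of them extends, which is simultaneously why the level stays small and why $\alpha\notin V^-(T)$. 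Second, because $S=\bigcup_{\chi\in x}E^\kappa_\chi$ reflects whenever $|x|>1$, one cannot drive the recursion off a single $\sqleftup{S}$-coherent $\p$-sequence in the style of Theorems~\ref{thm52}--\ref{thm54}; instead the sealing of antichains and the omitting of branches must be organized cofinality-by-cofinality, the ladder $C_\alpha$ at $\alpha\in E^\kappa_\chi$ being used solely to manufacture the omitted branch for that particular $\chi$, while the capturing of antichains is run against an auxiliary nonreflecting club. Settling these two points is exactly the work packaged in Theorem~\ref{thm58}, of which the present corollary is the advertised instance in $\mathsf L$.
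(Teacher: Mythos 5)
Your proposal diverges from the paper's argument in its architecture, and the divergence opens a genuine gap. The paper does not build a single tree whose limit levels are case-split according to whether $\cf(\alpha)\in x$; it builds one Souslin tree $T^i$ per cofinality $\chi_i\in x$ --- each streamlined, uniformly homogeneous, $\chi_i$-complete, $\chi_i$-coherent and $E^\kappa_{\ge\chi_i}$-regressive, with $V^-(T^i)=E^\kappa_{\chi_i}$ exactly --- and then combines them via the product of Proposition~\ref{products}, using the ``respecting'' machinery of Theorem~\ref{thm58} and Lemma~\ref{lemma58} to keep each successive product Souslin. This architecture is forced on one by two facts that your single-tree construction runs into. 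First, the inclusion $E^\kappa_{\chi}\s V^-(T)$ is obtained in the paper from Lemma~\ref{cor53}(2) (regressivity plus $2^{<\chi}<2^{\chi}$), and that lemma only concludes $\alpha\in V(\mathbf T)$ when $V^-(\mathbf T)\cap\alpha$ is nonstationary in $\alpha$ --- which is guaranteed there by $\chi$-completeness. In your single tree with $|x|\ge2$, say $x=\{\omega,\omega_2\}$, the set $V^-(T)\cap\alpha\supseteq E^\alpha_\omega$ is already stationary in every $\alpha\in E^\kappa_{\omega_2}$, so Lemma~\ref{cor53} gives no information at the higher cofinality and you are left with no argument that branches actually vanish there. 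Second, the exclusion $V^-(T)\cap E^\kappa_{>\chi}=\emptyset$, together with the bound $|\mathcal B(T\restriction\alpha)|<\kappa$ needed to legitimately realize all branches at cofinalities outside $x$, is obtained from $\chi$-coherence via Lemma~\ref{lemma64}; but a single tree cannot be $\min(x)$-coherent and still admit vanishing branches at $E^\kappa_{\max(x)}$ when $|x|\ge2$, and your substitute --- ``almost-disjointness forces every $\alpha$-branch of $T\restriction\alpha$ to agree on a tail with a single bounded component'' --- is a statement about the auxiliary ladder system and the tree $K$ it generates, not about the branches of the Souslin tree $T$ being built, and it does not bound $|\mathcal B(T\restriction\alpha)|$.

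Your closing claim that settling these points is ``exactly the work packaged in Theorem~\ref{thm58}'' is therefore inaccurate: Theorem~\ref{thm58} is a single-cofinality theorem (its limit levels split only into $E^\kappa_{<\chi}$ versus $E^\kappa_{\ge\chi}$), and the corollary is obtained by applying it $|x|$ times, feeding the product of the previously constructed trees in as the parameter $S$ so that the final product $\bigotimes_{j\le n}T^j$ remains Souslin, and then reading off $V(\bigotimes_{j}T^j)=\bigcup_{j}V(T^j)$ from Proposition~\ref{products}(2). Relatedly, the proxy hypothesis you invoke, $\p(\kappa,2,{\sq},1)$ from \cite[Theorem~3.12]{paper22}, has $\theta=1$ and $\mathcal S=\{\kappa\}$, whereas the respecting machinery requires $\p(\kappa,2,{\sq},\kappa,\{E^\kappa_{\ge\max(x)}\})$, which the paper extracts from \cite[Theorem~3.6]{paper22} together with \cite[Corollary~4.12]{paper29}. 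Your opening reduction of the hypothesis $\max(x)\le\cf(\sup(\reg(\kappa)))$ to $\lambda^{<\max(x)}<\kappa$ under $\gch$ is fine and matches the paper.
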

\begin{proof} Work in $\mathsf{L}$.
Let $\kappa$ be regular uncountable cardinal that is not weakly compact,
and let $\langle \chi_i\mid i\le n\rangle$ be a strictly increasing finite sequence of regular cardinals with $\chi_n\le\cf(\sup(\reg(\kappa)))$.

By \cite[Theorem~3.6]{paper22} and \cite[Corollary~4.12]{paper29},
$\p(\kappa,2,{\sq},\kappa,\{E^\kappa_{\ge\chi_n}\})$ holds.
By $\gch$, $\lambda^{<\chi_n}<\kappa$ for all $\lambda<\kappa$.
So, by Theorem~\ref{thm58} below, using $S:={}^{<\kappa}1$,
we may pick a streamlined, normal, $2$-splitting,  uniformly homogeneous,
$\chi_0$-complete, $\chi_0$-coherent, $E^\kappa_{\ge\chi_0}$-regressive $\kappa$-Souslin tree $T^0$.
Furthermore, $T^0$ is $\p^-(\kappa,2,\sq,\kappa,\{E^\kappa_{\geq\chi_n}\})$-respecting.
\begin{claim}\label{claim511} $V^-(T^0)=E^\kappa_{\chi_0}$.
\end{claim}
\begin{proof} Since $T^0$ is $\chi_0$-complete, $V^-(T^0)\cap E^\kappa_{<{\chi_0}}=\emptyset$,
so that $\tr(\kappa\setminus V^-(T^0))$ covers $E^\kappa_{\ge{\chi_0}}$.
By $\gch$, $2^{<\chi_0}<2^{\chi_0}$.
Together with the fact that $T$ is $E^\kappa_{\chi_0}$-regressive,
it follows from Lemma~\ref{cor53}(2) that $E^\kappa_{\chi_0}\s V^-(T^0)$.
Finally, since $T^0$ is ${\chi_0}$-coherent and uniformly homogeneous,
we get from Lemma~\ref{lemma64} below that  $V^-(T^0)\cap E^\kappa_{>{\chi_0}}=\emptyset$.
\end{proof}

If $n=0$, then our proof is complete.  Otherwise, one can continue by recursion,
where the successive step is as follows:
Suppose that $i<n$ is such that $\bigotimes_{j\le i}T^j$
is a streamlined uniformly homogeneous normal $\kappa$-Souslin tree that is
$\p^-(\kappa,2,{\sq},\allowbreak\kappa,\{E^\kappa_{\geq\chi_n}\})$-respecting,
and that $V(\bigotimes_{j\le i}T^j)=\bigcup_{j\le i}E^\kappa_{\chi_j}$.
By Theorem~\ref{thm58} below, using $S:=\bigotimes_{j\le i}T^j$,
we may pick a streamlined, normal, $2$-splitting,  uniformly homogeneous,
$\chi_{i+1}$-complete, $\chi_{i+1}$-coherent, $E^\kappa_{\ge\chi_{i+1}}$-regressive $\kappa$-Souslin tree $T^{i+1}$.
Furthermore, $S\otimes T^{i+1}$ is a normal $\p^-(\kappa,2,\sq,\kappa,\allowbreak\{E^\kappa_{\geq\chi_n}\})$-respecting $\kappa$-Souslin tree.
By an analysis similar to that of Claim~\ref{claim511}, $V^-(T^{i+1})=E^\kappa_{\chi_{i+1}}$.
Therefore, $\bigotimes_{j\le i+1}T^j$
is a uniformly homogeneous normal $\kappa$-Souslin tree that is
$\p^-(\kappa,2,{\sq},\allowbreak\kappa,\{E^\kappa_{\geq\chi_n}\})$-respecting.
In addition, by Proposition~\ref{products}(2), $V(\bigotimes_{j\le i+1}T^j)=\bigcup_{j\le i+1}E^\kappa_{\chi_j}$.
\end{proof}

We start by giving a definition.

\begin{defn}\label{uniformly}A streamlined $\kappa$-tree $T$  is
\emph{$\chi$-coherent} iff for all $s,t\in T$,
$\{ \xi\in\dom(s)\cap\dom(t)\mid s(\xi)\neq t(\xi)\}$ has size $<\chi$.
\end{defn}

\begin{lemma}\label{lemma64} Suppose that
$\chi<\kappa$ is a cardinal,
and that $T$ is a streamlined, $\chi$-coherent uniformly homogeneous $\kappa$-tree.
Then $V^-(T)\s E^\kappa_{\le\chi}$.
\end{lemma}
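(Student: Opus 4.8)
The plan is to prove the statement pointwise and contrapositively: fix $\alpha\in\acc(\kappa)$ with $\cf(\alpha)>\chi$, and show that no $\alpha$-branch of $T$ is vanishing, so that $\alpha\notin V^-(T)$. Since $T$ is streamlined — hence downward-closed and Hausdorff — every $\alpha$-branch is of the form $\{f\restriction\delta\mid\delta<\alpha\}$ for a unique $f\in\mathcal B(T\restriction\alpha)$, and this branch has an upper bound in $T$ precisely when $f\in T_\alpha$. So it suffices to prove $\mathcal B(T\restriction\alpha)\s T_\alpha$.

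The key idea is to graft initial segments of a given $f\in\mathcal B(T\restriction\alpha)$ onto a fixed node of level $\alpha$ and let $\chi$-coherence pin down where $f$ can differ from that node. First I would fix any $t\in T_\alpha$ (nonempty, as $T$ is a $\kappa$-tree) and consider the ``difference set'' $E:=\{\xi<\alpha\mid f(\xi)\neq t(\xi)\}$. For each $\delta<\alpha$ we have $f\restriction\delta\in T_\delta$ and $t\in T_\alpha$, so uniform homogeneity yields $(f\restriction\delta)*t\in T$; as this node agrees with $f$ below $\delta$ and with $t$ from $\delta$ onwards, the set on which it disagrees with $t\in T$ is exactly $E\cap\delta$, whence $\chi$-coherence forces $|E\cap\delta|<\chi$ for every $\delta<\alpha$. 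The next step is to deduce from $\cf(\alpha)>\chi$ that $E$ is bounded below $\alpha$: were $E$ cofinal in $\alpha$, it would have order type $\ge\cf(\alpha)>\chi$, and truncating its increasing enumeration at its $\chi$-th element would produce a $\delta<\alpha$ (using that $\alpha$ is a limit) with $|E\cap\delta|\ge\chi$, contradicting the previous sentence. Having fixed $\gamma<\alpha$ with $E\s\gamma$, we get $f=(f\restriction\gamma)*t$, and a final application of uniform homogeneity (to $f\restriction\gamma\in T_\gamma$ and $t\in T_\alpha$) gives $f\in T$, hence $f\in T_\alpha$, as required.

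I do not expect a serious obstacle; the argument is a couple of paragraphs long. The only points needing care are the degenerate case $E=\emptyset$ (where simply $f=t\in T$), and the verification that the set on which $(f\restriction\delta)*t$ and $t$ disagree is exactly $E\cap\delta$ and not something larger, so that the hypothesis of $\chi$-coherence is genuinely applicable.
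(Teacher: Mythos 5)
Your proof is correct, and it shares the paper's basic skeleton --- fix a node $t\in T_\alpha$, show that the union $f$ of the branch disagrees with $t$ only on a set $E$ bounded below $\alpha$, and conclude that $f=(f\restriction\gamma)*t$ lies in $T$ by uniform homogeneity --- but the mechanism for bounding $E$ is genuinely different. The paper applies $\chi$-coherence directly to the branch nodes $t_\beta=f\restriction\beta$ for $\beta\in E^\alpha_\chi$, so that the $<\chi$-sized disagreement set with $t$ is bounded below $\beta$ by regularity of $\cf(\beta)=\chi$; it then uses the stationarity of $E^\alpha_\chi$ in $\alpha$ (here $\cf(\alpha)>\chi$ is used) and a pressing-down step to extract a single $\epsilon<\alpha$ such that $t_\epsilon*t$ lies above cofinally many branch nodes. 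You instead prove $|E\cap\delta|<\chi$ for every $\delta<\alpha$ and deduce that $E$ is bounded from $\cf(\alpha)>\chi$ by a direct order-type count; this avoids Fodor's lemma and stationary sets altogether, which is arguably more elementary. One small simplification: the grafting step $(f\restriction\delta)*t$ is not needed in order to invoke $\chi$-coherence, since Definition~\ref{uniformly} applies to any pair of nodes of $T$ with possibly distinct domains; applying it directly to $f\restriction\delta\in T_\delta$ and $t\in T_\alpha$ gives that their disagreement set on $\dom(f\restriction\delta)\cap\dom(t)=\delta$, which is exactly $E\cap\delta$, has size $<\chi$. With that shortcut your argument uses uniform homogeneity only once, at the very end, just as the paper's proof does.
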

\begin{proof} Let $\alpha\in E^\kappa_{>\chi}$. Suppose that $B\s T$ is an $\alpha$-branch, and we shall show it is not vanishing.

For every $\beta<\alpha$, let $t_\beta$ denote the unique element of $T_\beta\cap B$.
Fix a node $t\in T_\alpha$.
For every $\beta\in E^\alpha_\chi$, by $\chi$-coherence, the following ordinal is smaller than $\beta$:
$$\epsilon_\beta:=\sup\{ \xi<\beta\mid t_\beta(\xi)\neq t(\xi)\}.$$
As $\cf(\alpha)>\chi$, $E^\alpha_\chi$ is a stationary subset of $\alpha$,
so we may fix a large enough $\epsilon<\alpha$ for which $R:=\{\beta\in E^\alpha_\chi\mid\epsilon_\beta<\epsilon\}$ is stationary.
As $T$ is uniformly homogeneous, $t_\epsilon*t$ is in $T_\alpha$.
For every $\beta\in R$, $t_\beta=(t_\epsilon*t)\restriction\beta$.
But since $R$ is cofinal in $\alpha$, it is the case that $t_\epsilon*t$ constitutes a limit for $B$.
Therefore, $B$ is not vanishing.
\end{proof}

In the context of streamlined $\kappa$-trees, there is a neater way of presenting the operation of product (compare with Definition~\ref{def-classical-product-tree}):
\begin{defn}[{\cite[\S6.7]{paper23}}]\label{def54}
For every function $x:\alpha\rightarrow{}^\tau H_\kappa$ and every $i<\tau$,
we let $(x)_i:\alpha\rightarrow H_\kappa$ be $\langle x(\beta)(i)\mid \beta<\alpha\rangle$.
Using this notation, for every sequence $\langle T^i\mid i<\tau\rangle$ of streamlined $\kappa$-trees,
one may identify $\bigotimes_{i<\tau}T^i$ with the streamlined tree $T:=\{ x\in{}^{<\kappa}({}^\tau H_\kappa)\mid \forall i<\tau\,[(x)_i\in T^i]\}$.
\end{defn}
\begin{remark} The product of two uniformly homogeneous $\kappa$-trees is uniformly homogeneous.
\end{remark}

Before we can state the main result of this section, we need one more definition.
\begin{defn}[\cite{paper20}]\label{respecting}
A streamlined $\kappa$-tree $X$
is \emph{$\p_\xi^-(\kappa, \mu,\mathcal R, \theta, \mathcal S)$-respecting}
if there exists a subset $\S\s\kappa$ and a sequence of mappings
$\langle d ^C:(X\restriction C)\rightarrow {}^\alpha H_\kappa\cup\{\emptyset\}\mid \alpha<\kappa, C\in\mathcal C_\alpha\rangle$ such that:
\begin{enumerate}
\item\label{respectingonto} for all $\alpha\in\S$ and $C\in\mathcal C_\alpha$, $X_\alpha\s \im(d^C)$;
\item $\vec{\mathcal C}=\langle \mathcal C_\alpha\mid\alpha<\kappa\rangle$ witnesses $\p_\xi^-(\kappa, \mu,\mathcal R, \theta, \{S\cap\S\mid S\in \mathcal S\})$;
\item \label{respectcohere} for all sets $D\sq C$ from $\vec{\mathcal C}$ and $x\in X\restriction D$, $d^D(x)=d^C(x)\restriction\sup(D)$.
\end{enumerate}
\end{defn}
\begin{remark}\label{rmk58} \begin{enumerate}
\item If $\p_\xi^-(\kappa, \mu,\mathcal R, \theta, \mathcal S)$ holds,
then the normal streamlined $\kappa$-tree $X:={}^{<\kappa}1$ is $\p_\xi^-(\kappa, \mu,\mathcal R, \theta, \mathcal S)$-respecting;
\item If $\kappa=\lambda^+$ for an infinite regular cardinal $\lambda$,
and $\p_\lambda^-(\kappa, \mu,\allowbreak\sqleft{\lambda}, \theta, \{E^\kappa_\lambda\})$ holds,
then every $\kappa$-tree is $\p_\lambda^-(\kappa, \mu,\allowbreak\sqleft{\lambda}, \theta, \{E^\kappa_\lambda\})$-respecting.
\end{enumerate}
\end{remark}

\begin{lemma}\label{lemma58} Suppose that:
\begin{itemize}
\item $X$ is a streamlined $\kappa$-tree that is $\p_\xi^-(\kappa, \mu,\mathcal R, \kappa, \mathcal S)$-respecting,
as witnessed by some $\vec{\mathcal C}$ and $\S$;
\item $Y$ is a streamlined $\kappa$-tree that is $\p_\xi^-(\kappa, \mu,\mathcal R, \kappa, \{S\cap\S\mid S\in \mathcal S\})$-respecting,
as witnessed by the same $\vec{\mathcal C}$.
\end{itemize}

Then the product $X\otimes Y$ is $\p_\xi^-(\kappa, \mu,\mathcal R, \kappa, \mathcal S)$-respecting.
\end{lemma}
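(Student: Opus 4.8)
The plan is to combine the two respecting structures on $X$ and $Y$ via the identification of $X\otimes Y$ with a streamlined tree of functions into ${}^2H_\kappa$, using the shared $\mathcal C$-sequence $\vec{\mathcal C}$. The witness for $X\otimes Y$ being $\p_\xi^-(\kappa,\mu,\mathcal R,\kappa,\mathcal S)$-respecting will reuse the very same $\vec{\mathcal C}$, with the distinguished subset $\S$ (the one coming from $X$'s respecting structure), and with the diagonal maps defined coordinatewise.

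First I would set up notation: let $\langle d_X^C:(X\restriction C)\to{}^\alpha H_\kappa\cup\{\emptyset\}\mid \alpha<\kappa, C\in\mathcal C_\alpha\rangle$ witness that $X$ is respecting (with subset $\S$), and let $\langle d_Y^C:(Y\restriction C)\to{}^\alpha H_\kappa\cup\{\emptyset\}\rangle$ witness that $Y$ is $\p_\xi^-(\kappa,\mu,\mathcal R,\kappa,\{S\cap\S\mid S\in\mathcal S\})$-respecting, over the same $\vec{\mathcal C}$. Now, recalling Definition~\ref{def54}, a node $z\in(X\otimes Y)\restriction C$ is a function whose coordinate projections satisfy $(z)_0\in X\restriction C$ and $(z)_1\in Y\restriction C$. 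Define
$$
d^C(z):=\langle (d_X^C((z)_0)(\beta),\, d_Y^C((z)_1)(\beta))\mid \beta<\sup(C)\rangle,
$$
understood to be $\emptyset$ whenever either $d_X^C((z)_0)$ or $d_Y^C((z)_1)$ is $\emptyset$. I would then verify the three clauses of Definition~\ref{respecting} for the data $(\vec{\mathcal C},\S,\langle d^C\rangle)$. Clause~(2) is immediate: $\vec{\mathcal C}$ already witnesses $\p_\xi^-(\kappa,\mu,\mathcal R,\kappa,\{S\cap\S\mid S\in\mathcal S\})$ by hypothesis on $X$, which is exactly what is required (note the $\S$ in $\mathcal S$'s restriction is the same $\S$). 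Clause~(3), the coherence $d^D(z)=d^C(z)\restriction\sup(D)$ for $D\sqsubseteq C$, follows coordinatewise from the corresponding coherence of $d_X$ and $d_Y$, since $(z)_0$ and $(z)_1$ are just restrictions and $\sup(D)$ is a common truncation point.

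The one clause requiring actual argument is Clause~(\ref{respectingonto}): for all $\alpha\in\S$ and $C\in\mathcal C_\alpha$, we must show $(X\otimes Y)_\alpha\subseteq\im(d^C)$. Fix $w\in(X\otimes Y)_\alpha$, so $(w)_0\in X_\alpha$ and $(w)_1\in Y_\alpha$. Since $\alpha\in\S$ and $X$ is respecting, there is $x\in X\restriction C$ with $d_X^C(x)=(w)_0$; and since $\alpha\in\S$ (hence $\alpha\in S\cap\S$ whenever $\alpha\in S$ for the relevant $S\in\mathcal S$ — but actually we need onto-ness at \emph{every} $\alpha\in\S$, and that is precisely what $Y$'s respecting hypothesis over $\{S\cap\S\mid S\in\mathcal S\}$ delivers at the level of its own distinguished subset), there is $y\in Y\restriction C$ with $d_Y^C(y)=(w)_1$. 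The subtlety here — which I expect to be the main obstacle — is that the two respecting structures a priori come with different distinguished subsets, say $\S_X$ and $\S_Y$, and one must check that the definitions can be arranged so that a single $\S$ works for both simultaneously; the hypothesis is stated so that $Y$ respects relative to $\{S\cap\S\mid S\in\mathcal S\}$, which forces $\S_Y\supseteq\S$ in the appropriate sense, so one can take $\S_X=\S_Y=\S$. Granting that, let $z\in{}^{<\kappa}({}^2H_\kappa)$ be the node with $(z)_0=x$ and $(z)_1=y$; one checks $z\in(X\otimes Y)\restriction C$ and, unwinding the definition, $d^C(z)=w$. The remaining routine point — that $\{\xi<\kappa\mid (z)_0\restriction\xi\in X\ \&\ (z)_1\restriction\xi\in Y\}$ behaves correctly so $z$ genuinely lies in the product tree restricted to $C$ — follows since $x\in X\restriction C$ and $y\in Y\restriction C$ already witness membership level by level. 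This completes the verification.
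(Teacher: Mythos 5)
There is a genuine gap, and it is the central difficulty of the lemma. Your map $d^C(z)$ is computed coordinatewise from $(z)_0$ and $(z)_1$, which both live at the \emph{same} level $\dom(z)$. But when you verify surjectivity at $\alpha\in\S$, the witnesses you extract — $x\in X\restriction C$ with $d_X^C(x)=(w)_0$ and $y\in Y\restriction C$ with $d_Y^C(y)=(w)_1$ — may well have $\dom(x)\neq\dom(y)$. In that case there is no node $z$ of $X\otimes Y$ with $(z)_0=x$ and $(z)_1=y$: by Definition~\ref{def-classical-product-tree}, a node of the product at level $\beta$ must project to $X_\beta$ and $Y_\beta$ for the \emph{same} $\beta$. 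Truncating $y$ to $\dom(x)$ does not help, since nothing in Definition~\ref{respecting} relates $d_Y^C(y\restriction\dom(x))$ to $d_Y^C(y)$ (the coherence clause is about $D\sq C$, not about restriction of the input node within a fixed $C$). The paper's proof gets around exactly this obstruction by decoupling the output of the map from the input node: it fixes a partition $\langle B_{x,y}\mid (x,y)\in X\times Y\rangle$ of $\kappa$ into cofinal sets and declares $b^C(p):=\myceil{(d^C(x),e^C(y))}$ for \emph{every} $p$ at a level $\beta\in C\cap B_{x,y}$, ignoring $p$ itself. Surjectivity at $\alpha$ then only requires that $C$ meet $B_{x,y}$ above $\max\{\dom(x),\dom(y)\}$, which is arranged by shrinking the distinguished set to
$$\S'':=\{\alpha\in\S'\mid \forall C\in\mathcal C_\alpha\,\forall x\in X\restriction\alpha\,\forall y\in Y\restriction\alpha\,[\sup(C\cap B_{x,y})=\alpha]\},$$
and then re-verifying the hitting clause of $\p_\xi^-$ relative to $\S''$ via a surjection $\kappa\leftrightarrow\kappa\uplus(X\times Y)$. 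None of this appears in your argument, and without some such device the surjectivity clause cannot be established.

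A secondary error: you assert that the hypothesis on $Y$ ``forces $\S_Y\supseteq\S$,'' so that one may take a single $\S$ for both trees. It does not; $Y$'s distinguished set $\S'$ is just some stationary set (which one may WLOG take inside $\S$), and $Y$'s surjectivity is only guaranteed on $\S'$, not on all of $\S$. The correct resolution is the one the paper takes: the product's distinguished set must be shrunk to a subset of $\S\cap\S'$ (indeed to $\S''$ above), and the whole point of phrasing $Y$'s hypothesis relative to $\{S\cap\S\mid S\in\mathcal S\}$ is that the hitting clause survives this shrinking, since Definition~\ref{respecting} only asks that $\vec{\mathcal C}$ witness $\p_\xi^-$ for $\{S\cap\S''\mid S\in\mathcal S\}$. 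Your Clause~(3) (coherence) does go through coordinatewise, but the other two clauses need the reorganization described above.
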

\begin{proof}  In view of Definition~\ref{def54}, for every two functions $x,y$ from an ordinal $\alpha<\kappa$ to $H_\kappa$,
we denote by $\myceil{(x,y)}$ the unique function $p:\alpha\rightarrow{}^2H_\kappa$ such that $(p)_0=x$ and $(p)_1=y$.
Note that $X\otimes Y=\bigcup_{\alpha<\kappa}\{ \myceil{(x,y)}\mid (x,y)\in X_\alpha\times Y_\alpha\}$.

Write $\vec{\mathcal C}$ as $\langle \mathcal C_\alpha\mid\alpha<\kappa\rangle$.
Fix a sequence of mappings
$\langle d^C:(X\restriction C)\rightarrow {}^\alpha H_\kappa\cup\{\emptyset\}\mid \alpha<\kappa, C\in\mathcal C_\alpha\rangle$ such that:
\begin{enumerate}
\item for all $\alpha\in\S$ and $C\in\mathcal C_\alpha$, $X_\alpha\s \im(d^C)$;
\item $\vec{\mathcal C}=\langle \mathcal C_\alpha\mid\alpha<\kappa\rangle$ witnesses $\p_\xi^-(\kappa, \mu,\mathcal R, \kappa, \{S\cap\S\mid S\in \mathcal S\})$;
\item for all sets $D\sq C$ from $\vec{\mathcal C}$ and $x\in X\restriction D$, $d^D(x)=d^C(x)\restriction\sup(D)$.
\end{enumerate}

Fix a stationary $\S'\s\S$ and a sequence of mappings
$\langle e^C:(Y\restriction C)\rightarrow {}^\alpha H_\kappa\cup\{\emptyset\}\mid \alpha<\kappa, C\in\mathcal C_\alpha\rangle$ such that:
\begin{enumerate}
\item[(4)] for all $\alpha\in\S'$ and $C\in\mathcal C_\alpha$, $Y_\alpha\s \im(e^C)$;
\item[(5)] $\vec{\mathcal C}=\langle \mathcal C_\alpha\mid\alpha<\kappa\rangle$ witnesses $\p_\xi^-(\kappa, \mu,\mathcal R, \kappa, \{S\cap\S'\mid S\in \mathcal S\})$;
\item[(6)] for all sets $D\sq C$ from $\vec{\mathcal C}$ and $y\in Y\restriction D$, $e^D(y)=e^C(y)\restriction\sup(D)$.
\end{enumerate}

Let $\vec B=\langle B_{x,y}\mid (x,y)\in X\times Y\rangle$ be a partition of $\kappa$ into cofinal subsets of $\kappa$.
Define a sequence of mappings
$\langle b^C:(X\otimes Y)\restriction C\rightarrow {}^\alpha H_\kappa\cup\{\emptyset\}\mid \alpha<\kappa, C\in\mathcal C_\alpha\rangle$,
as follows. Let $\alpha<\kappa$ and $C\in\mathcal C_\alpha$.

$\br$  For every $\beta\in C$, if there are $x\in X\restriction(C\cap\beta)$ and $y\in Y\restriction(C\cap\beta)$ such that $\beta\in B_{x,y}$,
then since $\vec B$ is a sequence of pairwise disjoint sets, this pair $(x,y)$ is unique,
and we let $b^C(p):=\myceil{(d^C(x),e^C(y))}$ for every $p\in(X\otimes Y)_\beta$.

$\br$ For every $\beta\in C$ for which there is no such pair $(x,y)$, we let $b^C(p):=\emptyset$ for every $p\in(X\otimes Y)_\beta$.

\begin{claim} Suppose $D\sq C$ are sets from $\vec{\mathcal C}$.
For every $p\in (X\otimes Y)\restriction D$, $b^D(p)=b^C(p)\restriction\sup(D)$.
\end{claim}
\begin{proof} Given $p\in (X\otimes Y)\restriction D$. Denote $\beta:=\dom(p)$.
Note that $D\cap\beta=C\cap\beta$. Now, there are two options:

$\br$  There are $x\in X\restriction(C\cap\beta)$ and $y\in Y\restriction(C\cap\beta)$ such that $\beta\in B_{x,y}$.
Then $b^D(p)=\myceil{(d^D(x),e^D(y))}$ and $b^C(p)=\myceil{(d^C(x),e^C(y))}$.
Since $D\sq C$, we know that $d^D(x)=d^C(x)\restriction\sup(D)$ and $e^D(y)=e^C(y)\restriction\sup(D)$.
Therefore, $b^D(p)=d^C(p)\restriction\sup(D)$.

$\br$ There are no such $x$ and $y$. Then $b^D(p)=\emptyset=d^C(p)$.
\end{proof}

Consider the following set:
$$\S'':=\{\alpha\in\S'\mid \forall C\in\mathcal C_\alpha\forall x\in(X\restriction\alpha)\forall y\in(Y\restriction\alpha)\,[\sup( C_\alpha\cap B_{x,y})=\alpha]\}.$$

\begin{claim}  $\vec{\mathcal C}=\langle \mathcal C_\alpha\mid\alpha<\kappa\rangle$ witnesses $\p_\xi^-(\kappa, \mu,\mathcal R, \kappa, \{S\cap\S''\mid S\in \mathcal S\})$.
\end{claim}
\begin{proof}  Let $\langle B_i\mid i<\kappa\rangle$ be a given sequence of cofinal subsets of $\kappa$.
Let $\pi:\kappa\leftrightarrow\kappa\uplus(X\times Y)$ be a surjection.
As $X$ and $Y$ are $\kappa$-tree, the set $D:=\{ \alpha<\kappa\mid \pi[\alpha]=\alpha\uplus((X\restriction\alpha)\times(Y\restriction\alpha))\}$ is a club in $\kappa$.
By Clause~(5), then,
for every $S\in\mathcal{S}$, there are stationarily many $\alpha\in S\cap\S'\cap D$
such that for all $C\in\mathcal C_\alpha$ and $i<\alpha$, $\sup(\nacc(C)\cap B_{\pi(i)})=\alpha$.
In particular, for every $S\in\mathcal{S}$, there are stationarily many $\alpha\in S\cap\S''$
such that for all $C\in\mathcal C_\alpha$ and $i<\alpha$, $\sup(\nacc(C)\cap B_i)=\alpha$.
\end{proof}

\begin{claim} Let $\alpha\in\S''$ and $C\in\mathcal C_\alpha$. Then $(X\otimes Y)_\alpha\s \im(b^C)$.
\end{claim}
\begin{proof}  Let $(s,t)\in X_\alpha \times Y_\alpha$. As $\S''\s\S'\s S$, using Clauses (1) and (4), we may fix $x\in X\restriction C$ and $y\in Y\restriction C$
such that $d^C(x)=s$ and $e^c(y)=t.$ As $\alpha\in\S''$, we may pick $\beta\in C_\alpha\cap B_{x,y}$ above $\max\{\dom(x),\dom(y)\}$.
Let $p$ be an arbitrary element of $(X\otimes Y)\restriction C$. Then $b^C(p):=\myceil{(d^C(x),e^C(y))}=\myceil{(s,t)}$.
\end{proof}

This completes the proof.
\end{proof}
\begin{thm}\label{thm58}
Suppose that:
\begin{itemize}
\item $\varsigma<\kappa$ is a cardinal;
\item $\nu\le\chi<\kappa$ are cardinals such that $\lambda^{<\chi}<\kappa$ for all $\lambda<\kappa$;
\item $S$ is a $\p^-(\kappa,2,{\sqleft\nu},\kappa,\{E^\kappa_{\geq\chi}\})$-respecting streamlined normal $\kappa$-tree with no $\kappa$-sized antichains;
\item $\diamondsuit(\kappa)$ holds.
\end{itemize}
Then there exists a streamlined, normal, $\varsigma$-splitting, prolific, uniformly homogeneous,
$\chi$-complete, $\chi$-coherent, $E^\kappa_{\ge\chi}$-regressive $\kappa$-Souslin tree $T$ such that
$S\otimes T$ is a normal $\p^-(\kappa,2,{\sqleft\nu},\kappa,\{E^\kappa_{\geq\chi}\})$-respecting $\kappa$-Souslin tree.
\end{thm}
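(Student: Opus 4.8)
The plan is to build $T$ by a microscopic recursion on its levels, following the template of \cite[Proposition~2.2]{paper26} and the respecting constructions of \cite{paper20}, and then to read off the last assertion from Lemma~\ref{lemma58}. First I would fix a well-ordering $\lhd$ of $H_\kappa$ and, using Fact~\ref{Diamond_H_kappa}, a partition $\langle R_i\mid i<\kappa\rangle$ of $\kappa$ together with a $\diamondsuit(H_\kappa)$-sequence $\langle\Omega_\beta\mid\beta<\kappa\rangle$. Crucially, I would \emph{reuse} the data witnessing that $S$ is $\p^-(\kappa,2,{\sqleft\nu},\kappa,\{E^\kappa_{\ge\chi}\})$-respecting: a $\kappa$-bounded $C$-sequence $\vec{\mathcal C}=\langle C_\alpha\mid\alpha<\kappa\rangle$ (recall $\mu=2$, so each $\mathcal C_\alpha$ is a singleton), a stationary $\Sigma\s\kappa$, and the maps $d^C$; by $\sqleft\nu$-coherence, $C_{\bar\alpha}=C_\alpha\cap\bar\alpha$ whenever $\bar\alpha\in\acc(C_\alpha)$ has $\cf(\bar\alpha)\ge\nu$. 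The recursion will maintain as invariants that $T\restriction\alpha$ is streamlined, normal, prolific, $\varsigma$-splitting, uniformly homogeneous, $\chi$-complete and $\chi$-coherent, that every node is $\chi$-coherent with the leftmost node $\mathbf 0$, and that a coherent sequence of ``respecting maps'' $e^C$ has been defined.

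The recursion has three kinds of step. At a \emph{successor}, put $T_{\alpha+1}:=\{t{}^\smallfrown\langle i\rangle\mid t\in T_\alpha,\ i<\max\{\varsigma,\omega,\alpha\}\}$; this yields prolificity and $\varsigma$-splitting and adds at most one coordinate to any discrepancy set, so the other invariants persist. At a \emph{limit $\alpha$ with $\cf(\alpha)<\chi$}, put $T_\alpha:=\mathcal B(T\restriction\alpha)$; since $|T\restriction\alpha|^{<\chi}<\kappa$ this level has size $<\kappa$, it forces $\chi$-completeness, and by the inductive uniform homogeneity of $T\restriction\alpha$ it is closed under the operation $(w,b)\mapsto w*b$ (as in Claim~\ref{c2171}), so uniform homogeneity, $\chi$-coherence and normality survive. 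At a \emph{limit $\alpha\in E^\kappa_{\ge\chi}$} --- the crux --- I would first build a single canonical branch $\mathbf b^{C_\alpha}\in\mathcal B(T\restriction\alpha)$ by recursion along $C_\alpha$: at $\beta\in\nacc(C_\alpha)$ pick the $\lhd$-least $\chi$-coherent $t\in T_\beta$ extending some $s\in\Omega_\beta$ lying above $\mathbf b^{C_\alpha}(\beta^-){}^\smallfrown\langle 0\rangle$ if such $t$ exists, and otherwise the $\lhd$-least left-extension of $\mathbf b^{C_\alpha}(\beta^-){}^\smallfrown\langle 0\rangle$ (which exists since the tree so far is prolific and normal); at $\beta\in\acc(C_\alpha)$ take unions, which land in $T_\beta$ by the inductive hypothesis together with $\sqleft\nu$-coherence (when $\cf(\beta)\ge\nu$ the partial walk restricted to $\beta$ is literally the branch built at stage $\beta$, and when $\cf(\beta)<\nu\le\chi$ stage $\beta$ was an ``all branches'' stage). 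Restricting the $\Omega$-consultation to $\chi$-coherent witnesses keeps $\mathbf b^{C_\alpha}$ $\chi$-coherent with $\mathbf 0$. Then set
$$T_\alpha:=\{b\in\mathcal B(T\restriction\alpha)\mid |\{\xi<\alpha\mid b(\xi)\neq\mathbf b^{C_\alpha}(\xi)\}|<\chi\}.$$
A member of $T_\alpha$ is determined by a $<\chi$-sized patch together with the tail $\mathbf b^{C_\alpha}$, so $|T_\alpha|<\kappa$; the level is $\chi$-coherent, $*$-closed (hence uniform homogeneity persists) and normal, and $\mathbf b^{C_\alpha}\in T_\alpha$. I would extend the regressive witness by setting $\rho(b):=b\restriction\mu_b$ for $b\in T_\alpha$, where $\mu_b$ is the least element of $C_\alpha$ above the (bounded) patch of $b$: since each $b\in T_\alpha$ is recovered from $b\restriction\mu_b$ by grafting on the tail $\mathbf b^{C_\alpha}$, if $\rho(b)<_T b'$ and $\rho(b')<_T b$ then $b$ and $b'$ agree below $\max\{\mu_b,\mu_{b'}\}$ and then share the same tail, whence $b=b'$; so $T$ is $E^\kappa_{\ge\chi}$-regressive. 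Finally I would let $e^C(w):=w*\mathbf b^C$ for $w\in T\restriction C$, where $\mathbf b^C$ denotes the walk along $C$ run to the top; for $\alpha\in E^\kappa_{\ge\chi}$ this gives $\im(e^{C_\alpha})=T_\alpha$ (every $b\in T_\alpha$ equals $(b\restriction\gamma)*\mathbf b^{C_\alpha}$ for any $\gamma\in C_\alpha$ past its patch), and the $\sq$-coherence clause of Definition~\ref{respecting} holds because the walk is $\sq$-coherent (as in Claim~\ref{c371}). This makes $T$ into a $\p^-(\kappa,2,{\sqleft\nu},\kappa,\{E^\kappa_{\ge\chi}\cap\Sigma\})$-respecting tree witnessed by the same $\vec{\mathcal C}$, the stationary set $E^\kappa_{\ge\chi}\cap\Sigma$, and the $e^C$'s.

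It then remains to verify the listed properties. Streamlinedness, normality, prolificity, $\varsigma$-splitting, uniform homogeneity, $\chi$-completeness, $\chi$-coherence and $E^\kappa_{\ge\chi}$-regressivity are recorded among the invariants, and the respecting property of $T$ was just established. $\kappa$-Souslinness of $T$ is the standard microscopic verification (cf.~\cite[Claims~2.2.2 and~2.2.3]{paper26}): a would-be maximal antichain $A$ of size $\kappa$ consists of $\chi$-coherent nodes (as $T$ is $\chi$-coherent), $A\cap(T\restriction\alpha)$ is guessed by $\langle\Omega_\beta\rangle$ on a club, and the hitting feature of $\vec{\mathcal C}$ makes $\nacc(C_\alpha)$ meet the associated cofinal set cofinally for some guessed $\alpha\in E^\kappa_{\ge\chi}$, so the consultation drives $\mathbf b^{C_\alpha}$ --- and hence every member of $T_\alpha$, each agreeing with $\mathbf b^{C_\alpha}$ on a tail --- above an element of $A$, a contradiction. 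Finally, since $S$ is $\p^-(\kappa,2,{\sqleft\nu},\kappa,\{E^\kappa_{\ge\chi}\})$-respecting and $T$ is $\p^-(\kappa,2,{\sqleft\nu},\kappa,\{E^\kappa_{\ge\chi}\cap\Sigma\})$-respecting with respect to the common $\vec{\mathcal C}$, Lemma~\ref{lemma58} yields that $S\otimes T$ is $\p^-(\kappa,2,{\sqleft\nu},\kappa,\{E^\kappa_{\ge\chi}\})$-respecting; it is a normal $\kappa$-tree as a product of normal $\kappa$-trees (using $\lambda^2<\kappa$), and it is $\kappa$-Souslin by the same microscopic verification applied to the product (here one uses that $S$ has no $\kappa$-sized antichain).

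The main obstacle, I expect, is reconciling \emph{uniform homogeneity}, which pushes towards levels closed under $*$, with being \emph{$\p^-$-respecting}, which demands that each level be the image of a canonical map $e^C$ --- all while still sealing antichains and keeping levels of size $<\kappa$. The device that makes this work is the ``$<\chi$-coherent variant'' level $T_\alpha=\{b:|\{\xi:b(\xi)\neq\mathbf b^{C_\alpha}(\xi)\}|<\chi\}$: it is automatically $*$-closed; it equals $\{w*\mathbf b^{C_\alpha}:w\in T\restriction C_\alpha\}$, so it is the image of the single map $w\mapsto w*\mathbf b^{C_\alpha}$; its size is controlled exactly by the hypothesis $\lambda^{<\chi}<\kappa$; it delivers $E^\kappa_{\ge\chi}$-regressivity almost for free; and its members remain sealed because each is eventually equal to $\mathbf b^{C_\alpha}$. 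The complementary point is that restricting the diamond-consultation to $\chi$-coherent guesses preserves $\chi$-coherence along the walk without costing any sealing power, since a genuine maximal antichain is built from $\chi$-coherent nodes anyway; the remaining bookkeeping (keeping the patch below $\chi$ at limit points of $C_\alpha$) is handled by the $\sqleft\nu$-coherence of $\vec{\mathcal C}$ relating those points to earlier stages of the recursion.
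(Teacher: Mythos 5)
Your overall architecture matches the paper's: a single spine $\mathbf b^\alpha$ per limit level built by walking along $C_\alpha$, levels $T_\alpha=\{x*\mathbf b^\alpha\mid x\in T\restriction\alpha\}$ at $\alpha\in E^\kappa_{\ge\chi}$ and $T_\alpha=\mathcal B(T\restriction\alpha)$ otherwise (your description of the high-cofinality levels via ${<}\chi$-sized patches of the spine is equivalent, given the $\chi$-coherence and uniform homogeneity of $T\restriction\alpha$ and $\cf(\alpha)\ge\chi$), respecting maps $e^C(w):=w*\mathbf b^C$, the same regressivity argument, and Lemma~\ref{lemma58} for the product. The gap is in the sealing mechanism, and it is genuine. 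Even for $T$ alone: your consultation only arranges that the spine $\mathbf b^{C_\alpha}$ itself extends some guessed $s\in\Omega_\beta$, and you then claim every member of $T_\alpha$ is sealed because it ``agrees with $\mathbf b^{C_\alpha}$ on a tail''. But extending an element $\bar t$ of an antichain is a property of the initial segment below $\dom(\bar t)$, not of a tail: if $\dom(\bar t)<\dom(x)$, then $(x*\mathbf b^{C_\alpha})\restriction\dom(\bar t)=x\restriction\dom(\bar t)$, which need not equal $\bar t$. Each graft $x$ must receive its own cofinal set of consultation points along $\nacc(C_\alpha)$ at which the walk is told about $x$ and arranges that $x*b^\alpha(\beta)$ (not $b^\alpha(\beta)$) extends a member of the guessed antichain; this is exactly why the paper threads a surjection $\psi=\phi\circ\pi$ through the partition $\langle R_i\mid i<\kappa\rangle$ and lets the consultation at step $\beta$ depend on $\psi(\beta)=(y,x)$.

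More seriously, your walk never consults $S$ at all, so nothing in the construction can seal a maximal antichain of $S\otimes T$; ``the same microscopic verification applied to the product'' is not available, since the product of Souslin trees is not Souslin in general --- arranging that this particular product is Souslin is the entire point of the respecting machinery. In the paper's proof the guessed object $\Omega_\beta$ is an antichain of the product, the consultation set is $Q^{\alpha,\beta}=\{t\in T_\beta\mid \exists(\bar s,\bar t)\in \Omega_\beta\,[\bar s\s d^\alpha(y)\restriction \beta\ \&\ (\bar t\cup(x*b^\alpha(\beta^-)))\s t]\}$, and the update is $b^\alpha(\beta):=b^\alpha(\beta^-)*t$, so that the grafted node $x*b^\alpha(\beta)$ ends up above $\bar t$ while the $S$-coordinate $d^\alpha(y)$ that the level $\alpha$ will realize is already above $\bar s$. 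Moreover, showing $Q^{\alpha,\beta}\neq\emptyset$ is precisely where the hypothesis that $S$ has no $\kappa$-sized antichains enters, through a nontrivial argument (the upward-closed cofinal sets $D_z$, an elementary submodel containing them, and a bounded maximal antichain $\bar D\s D_z$); your proposal invokes this hypothesis but supplies no argument. Without building the pair $(y,x)$ into the walk, the construction you describe yields at best a Souslin $T$ whose product with $S$ has no reason to be Souslin.
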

\begin{proof} Fix a stationary $\S\s\kappa$ and a sequence $\langle d^\alpha:S\restriction C_\alpha\rightarrow {}^\alpha H_\kappa\cup\{\emptyset\}\mid\alpha<\kappa\rangle$ such that:
\begin{enumerate}
\item\label{respectingonto} for all $\alpha\in\S$, $S_\alpha\s \im(d^\alpha)$;
\item $\vec C:=\langle C_\alpha\mid\alpha<\kappa\rangle$ witnesses $\p^-(\kappa, 2,{\sqleft\nu}, \kappa, \{\S\})$;
\item for all $\alpha<\beta<\kappa$, if $C_\alpha\sq C_\beta$, then $d^\alpha(x)=d^\beta(x)\restriction\alpha$ for every $x\in S\restriction C_\alpha$.
\end{enumerate}

Without loss of generality, we may assume that $0\in C_\alpha$ for all nonzero $\alpha<\kappa$.

The upcoming construction follows the proof of \cite[Proposition~2.5]{paper22}.
Let $\langle R_i \mid i<\kappa\rangle$ and $\langle \Omega_\beta\mid\beta<\kappa\rangle$ together witness $\diamondsuit(H_\kappa)$.
Let $\pi:\kappa\rightarrow\kappa$ be such that $\alpha\in R_{\pi(\alpha)}$ for all $\alpha<\kappa$.
From $\diamondsuit(\kappa)$, we have $\left|H_\kappa\right| =\kappa$,
thus let $\lhd$ be some well-ordering of $H_\kappa$ of order-type $\kappa$,
and let $\phi:\kappa\leftrightarrow H_\kappa$ witness the isomorphism $(\kappa,\in)\cong(H_\kappa,\lhd)$.
Put $\psi:=\phi\circ\pi$.

We now recursively construct a sequence $\langle T_\alpha\mid \alpha<\kappa\rangle$ of levels
whose union will ultimately be the desired tree $T$.
Let $T_0:=\{\emptyset\}$, and for all $\alpha<\kappa$, let $$T_{\alpha+1}:=\{ t{}^\smallfrown\langle i\rangle\mid t\in T_\alpha, i<\max\{\varsigma,\omega,\alpha\}\}.$$
Next, suppose that $\alpha\in\acc(\kappa)$, and that $\langle T_\beta\mid \beta<\alpha\rangle$ has already been defined.
We shall identify some $\mathbf b^\alpha\in\mathcal B(T\restriction\alpha)$, and then define the $\alpha^{\text{th}}$-level, as follows:
\begin{equation}\tag{$\star$}\label{promise3}
T_\alpha:=\begin{cases}
\mathcal B(T\restriction\alpha),&\text{if }\alpha\in E^\kappa_{<\chi};\\
\{x*\mathbf b^\alpha\mid x\in T\restriction\alpha\},&\text{if }\alpha\in E^\kappa_{\ge\chi}.
\end{cases}
\end{equation}

We shall obtain $\mathbf b^\alpha$ as a limit $\bigcup\im(b^\alpha)$ of a sequence $b^\alpha\in\prod_{\beta\in C_\alpha}T_\beta$ that we define recursively, as follows.
Let $b^\alpha(0):=\emptyset$.
Next, suppose $\beta^-<\beta$ are two successive points of $C_\alpha$, and that $b^\alpha(\beta^-)$ has already been defined.
There are two possible options:

$\br$ If $\psi(\beta)$ happens to be a pair $(y,x)$ lying in $(S\restriction \beta^-)\times (T\restriction\beta^-)$,
and the following set happens to be nonempty:
$$Q^{\alpha,\beta}:=\{t\in T_\beta\mid \exists(\bar s,\bar t)\in \Omega_\beta\,[\bar s\s d^\alpha(y)\restriction \beta\ \&\   (\bar t\cup(x*b^\alpha(\beta^-)))\s t] \},$$
then let $t$ denote its $\lhd$-least element, and put $b^\alpha(\beta):=b^\alpha(\beta^-)* t$.

$\br$ Otherwise, let $b^\alpha(\beta)$ be the $\lhd$-least element of $T_\beta$ that extends $b^\alpha(\beta^-)$.

\medskip

As always, for all $\beta \in \acc(C_\alpha)$ such that $b^\alpha\restriction\beta$ has already been defined,
we let $b^\alpha(\beta):=\bigcup\im(b^\alpha\restriction\beta)$ and infer that it belongs to $T_\beta$.
Indeed, either $\cf(\beta)<\chi$, and then $b^\alpha(\beta)\in\mathcal B(T\restriction\beta)=T_\beta$,
or $\cf(\beta)\ge\chi\ge\nu$, and then $C_\beta=C_\alpha\cap\beta$ from which it follows that $b^\alpha(\beta)=\mathbf b^\beta\in T_\beta$.
This completes the definition of $b^\alpha$, hence also that of $\mathbf b^\alpha$.
Finally, let $T_\alpha$ be defined as promised in \eqref{promise3}.

It is clear that  $T := \bigcup_{\alpha < \kappa} T_\alpha$
is a streamlined, normal, $\varsigma$-splitting, prolific, uniformly homogeneous,
$\chi$-complete $\kappa$-tree.

\begin{claim} $T$ is $\chi$-coherent.
\end{claim}
\begin{proof} Suppose not, and let $\alpha$ be the least ordinal to accommodate
$s,t\in T_\alpha$ such that $s$ differs from $t$ on a set of size $\ge\chi$.
Clearly, $\alpha\in E^\kappa_{\ge\chi}$.
So $s=x*\mathbf b^\alpha$ and $t=y*\mathbf b^\alpha$
for nodes $x,y\in T\restriction\alpha$,
and hence $x$ and $y$ differ on a set of size $\ge\chi$, contradicting the minimality of $\alpha$.
\end{proof}

\begin{claim} $T$ is $E^\kappa_{\ge\chi}$-regressive.
\end{claim}
\begin{proof} To define $\rho:T\restriction E^\kappa_{\ge\chi}\rightarrow T$,
let $\alpha\in E^\kappa_{\ge\chi}$.
By the definition of $T_\alpha$, for every $t\in T$, there exists some $x\in T\restriction\alpha$ such that $t=x*\mathbf b^\alpha$,
so we let $\rho(t)$ be an element of $T\restriction\alpha$ such that $t=\rho(t)*\mathbf b^\alpha$.
Now, if $s,t\in T_\alpha$ are such that $\rho(t)\s s$ and $\rho(s)\s t$,
then $\rho(t)\s \rho(s)*\mathbf b^\alpha$ and $\rho(s)\s \rho(t)*\mathbf b^\alpha$. In particular, $\rho(s)$ is compatible with $\rho(t)$.
Without loss of generality, $\rho(s)\s \rho(t)$. Then $t=\rho(s)*\mathbf b^\alpha=s$.
\end{proof}
\begin{claim} $T$ is $\p^-(\kappa,2,{\sqleft\nu},\kappa,\{\S\})$-respecting, as witnessed by $\vec C$.
\end{claim}
\begin{proof} Define $\langle e^\alpha:T\restriction C_\alpha\rightarrow T_\alpha\mid\alpha<\kappa\rangle$ via:
$$e^\alpha(x):=x*\mathbf b^\alpha.$$
The second part of \eqref{promise3} implies that $S_\alpha=\im(d^\alpha)$ for all $\alpha\in E^\kappa_{\ge\chi}\supseteq\S$.
In addition, it is clear that for all $\alpha<\beta<\kappa$, if $C_\alpha\sq C_\beta$, then ${\mathbf b}^\alpha={\mathbf b}^\beta\restriction\alpha$,
and hence $e^\alpha(x)=e^\beta(x)\restriction\alpha$  for every $x\in S\restriction C_\alpha$.
\end{proof}

It thus follows from Lemma~\ref{lemma58} that $S\otimes T$ is $\p^-(\kappa,2,{\sqleft\nu},\kappa,\{E^\kappa_{\geq\chi}\})$-respecting.
It is clear that $S\otimes T$ is normal, thus we are left with verifying that it is Souslin.
To this end, let $A$ be a maximal antichain in $S\otimes T$.
As both $S$ and $T$ are normal, it follows that for every $z\in T$, the following (upward-closed) set is cofinal in $S$:
$$D_z:=\{s\in S\mid \exists (\bar s,\bar t)\in A \exists t\in T\cap z^\uparrow\,[\dom(s)=\dom(t), \bar s\s s, \bar t\s t]\}.$$

As an application of $\diamondsuit(H_\kappa)$,
using the parameter $p:=\{\phi,S\otimes T, A, \langle D_z\mid z\in T\rangle\}$,
we get that for every $i<\kappa$, the following set is cofinal (in fact, stationary) in $\kappa$:
$$B_i:=\{\beta\in R_i\mid \exists \mathcal M\prec H_{\kappa^+}\,(p\in \mathcal M, \mathcal M\cap\kappa=\beta, \Omega_\beta=A\cap \mathcal M)\}.$$

Note that  	$(S\restriction\beta)\otimes (T\restriction\beta)\s \phi[\beta]$ for every $\beta\in \bigcup_{i<\kappa}B_i$.
Now, as $\vec C$ witnesses $\p^-(\kappa, 2,{\sqleft\nu}, \kappa, \{\S\})$,
we may fix some $\alpha\in \S$ such that, for all $i<\alpha$, $$\sup (\nacc(C_\alpha)\cap B_i)=\alpha.$$
In particular, $(S\restriction\alpha)\otimes (T\restriction\alpha)\s \phi[\alpha]$.
As $\alpha\in\S$, we also know that $S_\alpha\s \im(d^\alpha)$ and that $\cf(\alpha)\ge\chi$.

\begin{claim} $A\s(S\otimes T)\restriction\alpha$. In particular, $|A|<\kappa$.
\end{claim}
\begin{proof} As $A$ is an antichain, it suffices to prove that every element of $(S\otimes T)_\alpha$ extends some element of $A$.
To this end, fix $(s',t')\in (S\otimes T)_\alpha$.
Since $S_\alpha\s \im(d^\alpha)$,
we may fix a $y\in S\restriction C_\alpha$ such that $d^\alpha(y)=s'$.
Recalling \eqref{promise3}, we may also fix some $x\in T\restriction C_\alpha$ such that $t'=x*\mathbf b^\alpha$.

As the pair $(y,x)$ is an element of $(S\restriction\alpha)\times (T\restriction\alpha)$, we may find an $i<\alpha$ such that $\phi(i)=(y,x)$,
and then find a $\beta\in \nacc(C_\alpha)\cap B_i$ such that $\beta^-:=\sup(C_\alpha\cap\beta)$ is greater than $\max\{\dom(y),\dom(x)\}$.
Note that $\psi(\beta)=\phi(\pi(\beta))=\phi(i)=(y,x)$.
\begin{subclaim} $\Omega_\beta=A\cap ((S\otimes T)\restriction\beta)$,
and  $Q^{\alpha,\beta}\neq\emptyset$.
\end{subclaim}
\begin{proof}
As $\beta\in B_i$, we may fix $\mathcal M\prec H_{\kappa^+}$ such that all of the following hold:
\begin{itemize}\item $\{\phi,S\otimes T, A, \langle D_x\mid x\in T\rangle\}\in \mathcal M$;
\item $\mathcal M\cap\kappa=\beta$;
\item $\Omega_\beta=A\cap \mathcal M$
\end{itemize}

By elementarity, $(T\otimes S)\cap \mathcal M=(S\otimes T)\restriction\beta$,
and
$\Omega_\beta=A\cap \mathcal M=A\cap ((S\otimes T)\restriction\beta)$.
Then $z:=t'\restriction\beta^-$ is in $\mathcal M$,
and hence, so is $D_{z}$.
Pick in $\mathcal M$ a maximal antichain $\bar D$ in $D_z$.
Since $D_z$ is cofinal in $S$, $\bar D$ is a maximal antichain in $S$.
Since $S$ has no $\kappa$-sized antichains, we may find a large enough $\gamma\in\mathcal M\cap\kappa$
such that $\bar D\s S\restriction\gamma$. It thus follows that $s'\restriction \gamma$ extends an element of $\bar D$,
but since $D_z$ is upward-closed, $s:=s'\restriction\gamma$ is in $D_z$.
It follows that we may fix $(\bar s,\bar t)\in A$ and $t\in T_\gamma\cap z^\uparrow$ such that $\bar s\s s$ and $\bar t\s t$.
As $\Omega_\beta=A\cap ((S\otimes T)\restriction\beta)$, $(d^\alpha(y)\restriction \beta)\restriction\gamma=s$
and $x*b^\alpha(\beta^-)=z\s t$, we infer that $t\in Q^{\alpha,\beta}$.
\end{proof}

It follows that $b^\alpha(\beta)=b^\alpha(\beta^-)* t$
for some $t\in Q^{\alpha,\beta}$. This means that we may pick $(\bar s,\bar t)\in \Omega_\beta\s A$
such that $\bar s\s s'\restriction \beta$ and $\bar t\cup(x*b^\alpha(\beta^-))\s t$.
Therefore, $\bar t\s x*b^\alpha(\beta)$. Altogether, $(\bar s,\bar t)\in A$, $\bar s\s s'$ and $\bar t\s t'$.
\end{proof}
This completes the proof.
\end{proof}

We now arrive at the proof of Theorem~\ref{thma}:
\begin{thm}\label{thm511} We have $(1)\implies(2)\implies(3)$:
\begin{enumerate}
\item there exists a $\kappa$-Souslin tree $\mathbf T$ such that $V(\mathbf T)=\emptyset$;
\item there exists a normal and splitting $\kappa$-tree $\mathbf T$ such that $V(\mathbf T)$ is nonstationary;
\item $\kappa$ is not the successor of a cardinal of countable cofinality.
\end{enumerate}

In addition, in $\mathsf{L}$, for $\kappa$ not weakly compact, $(3)\implies(1)$.
\end{thm}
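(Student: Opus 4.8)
The plan is to prove $(1)\Rightarrow(2)\Rightarrow(3)$ in $\zfc$ and $(3)\Rightarrow(1)$ in $\mathsf{L}$ (for $\kappa$ not weakly compact), with essentially all of the real work already packaged in Corollaries~\ref{cor27} and~\ref{cor61}. For $(1)\Rightarrow(2)$, starting from a $\kappa$-Souslin tree $\mathbf T$ with $V(\mathbf T)=\emptyset$, I would first pass to a tail end on which $\mathbf T$ is normal, and then restrict to the club $D$ of levels $\alpha$ such that every node of height $<\alpha$ already splits below $\alpha$ (a routine normalization; cf.\ the proof of \cite[Proposition~2.16]{paper48}). The reindexed tree $(T\restriction D,{<_T})$ is then normal and splitting, and, arguing exactly as in the proof of Lemma~\ref{lemma33}, restriction to a club of levels alters $V$ only by composition with the increasing enumeration of the club, so the new tree still has empty — hence nonstationary — set of vanishing levels. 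For $(2)\Rightarrow(3)$, I argue the contrapositive: if $\kappa=\lambda^+$ with $\cf(\lambda)=\omega$, then $\lambda^{\aleph_0}\ge\lambda^{\cf(\lambda)}>\lambda$ by K\"onig's theorem, so $\lambda^{\aleph_0}\ge\kappa$, and Corollary~\ref{cor27} yields that $E^\kappa_\omega\setminus V(\mathbf T)$ is nonstationary for every normal and splitting $\kappa$-tree $\mathbf T$; since $E^\kappa_\omega$ is stationary in $\kappa$, this makes $V(\mathbf T)$ stationary, so no normal and splitting $\kappa$-tree can have nonstationary $V(\mathbf T)$.

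For $(3)\Rightarrow(1)$, work in $\mathsf{L}$ with $\kappa$ regular uncountable, not weakly compact, and not the successor of a cardinal of countable cofinality. The idea is to realize $\emptyset$ as the intersection of two sets of the form $E^\kappa_\chi$ and then invoke the disjoint-sum construction. First I would fix two distinct regular cardinals $\chi_0<\chi_1<\kappa$ with $\chi_1\le\cf(\sup(\reg(\kappa)))$: if $\kappa$ is a limit cardinal then $\sup(\reg(\kappa))=\kappa$ is regular and this is immediate, while if $\kappa=\lambda^+$ then $\sup(\reg(\kappa))=\lambda$, whose cofinality is $\ge\aleph_1$ by hypothesis $(3)$; so $(\chi_0,\chi_1):=(\aleph_0,\aleph_1)$ works in either case (note $\aleph_1<\kappa$, as otherwise $\kappa=\aleph_0^+$, against $(3)$). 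Applying Corollary~\ref{cor61} to $x:=\{\chi_0\}$ and to $x:=\{\chi_1\}$ produces uniformly homogeneous $\kappa$-Souslin trees $\mathbf T_0,\mathbf T_1$ with $V^-(\mathbf T_i)=E^\kappa_{\chi_i}$; by Propositions~\ref{prop53} and~\ref{prop22}(2), each $\mathbf T_i$ is homogeneous, so $V(\mathbf T_i)=V^-(\mathbf T_i)=E^\kappa_{\chi_i}$. Finally I would take $\mathbf T:=\mathbf T_0+\mathbf T_1$: it is a $\kappa$-tree by Proposition~\ref{sums}(1), has no $\kappa$-branch (a chain lies in a single summand) and no antichain of size $\kappa$ (an antichain meets each summand in an antichain of size $<\kappa$), so it is $\kappa$-Souslin, and $V(\mathbf T)=V(\mathbf T_0)\cap V(\mathbf T_1)=E^\kappa_{\chi_0}\cap E^\kappa_{\chi_1}=\emptyset$ by Proposition~\ref{sums}(2).

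The one delicate point — and exactly where hypothesis $(3)$ is used — is the availability of two distinct cofinalities $\chi_0\ne\chi_1$ below $\cf(\sup(\reg(\kappa)))$: the sole obstruction is $\kappa=\lambda^+$ with $\cf(\lambda)=\omega$, in which case $\omega$ is the unique infinite regular cardinal $\le\cf(\sup(\reg(\kappa)))$ and the disjoint-sum trick has nothing to work with. I do not anticipate any further difficulty, since the heavy lifting — the actual constructions of Souslin trees realizing $V^-(\mathbf T)=E^\kappa_\chi$ — is entirely absorbed into Corollary~\ref{cor61}.
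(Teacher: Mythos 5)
Your proposal is correct and follows essentially the same route as the paper: $(1)\Rightarrow(2)$ by restricting to a club of splitting levels, $(2)\Rightarrow(3)$ via Corollary~\ref{cor27} (with the same K\"onig-theorem computation), and $(3)\Rightarrow(1)$ in $\mathsf L$ by taking the disjoint sum of the two homogeneous Souslin trees with $V=E^\kappa_{\aleph_0}$ and $V=E^\kappa_{\aleph_1}$ supplied by Corollary~\ref{cor61}, Propositions~\ref{prop53} and~\ref{prop22}(2), and Proposition~\ref{sums}(2). Your explicit verification that $\aleph_1\le\cf(\sup(\reg(\kappa)))$ under hypothesis $(3)$ is a detail the paper leaves implicit, but it is exactly the right check.
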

\begin{proof} $(1)\implies(2)$: If $\mathbf T=(T,<_T)$ is a $\kappa$-Souslin tree, then a standard argument (see \cite[Lemma~2.4]{paper20}) shows that for some club $D\s\kappa$,
$\mathbf T'=(T\restriction D,{<_T})$ is normal and splitting.  Clearly, if $V(\mathbf T)=\emptyset$, then $V(\mathbf T')=\emptyset$, as well.

$(2)\implies(3)$: Suppose that $\mathbf T$  is a normal and splitting $\kappa$-tree.
If $\kappa$ is the successor of a cardinal of countable cofinality then by Corollary~\ref{cor27}, $V(\mathbf T)$ covers the stationary set $E^\kappa_\omega$.

Hereafter, work in $\mathsf L$, and suppose that $\kappa$ is a regular uncountable cardinal that is not weakly compact and not the successor of a cardinal of countable cofinality.
Then by Corollary~\ref{cor61} together with Proposition~\ref{prop22}(2) there are $\kappa$-Souslin trees $\mathbf T^0,\mathbf T^1$ such that $V(\mathbf T^0)=E^\kappa_\omega$ and $V(\mathbf T^1)=E^\kappa_{\omega_1}$.
The disjoint sum of the two $\mathbf T:=\sum\{\mathbf T^0,\mathbf T^1\}$ is clearly $\kappa$-Souslin.
In addition, by Proposition~\ref{sums}(2), $V(\mathbf T)=V(\mathbf T^0)\cap V(\mathbf T^1)=\emptyset$.
\end{proof}
\begin{remark} The $\kappa$-Souslin tree $\mathbf T$ constructed in the preceding proof satisfies $V(\mathbf T)=\emptyset$,
yet it has a $\kappa$-Souslin subtree $\mathbf T'$ for which $V(\mathbf T')$ is stationary.
A $\kappa$-tree $\mathbf T$ is said to be \emph{full} iff for every $\alpha\in\acc(\kappa)$,
there is no more than one vanishing $\alpha$-branch in $\mathbf T$.
It is clear that if $\mathbf T$ is a full $\kappa$-tree that is splitting (resp.~Aronszajn), then $V(\mathbf T)$ is empty (resp.~nonstationary).
In \cite{paper62}, we construct full $\kappa$-Souslin trees,
thus giving an example of a $\kappa$-Souslin tree $\mathbf T$ such that $V(\mathbf T')$ is nonstationary for all of its $\kappa$-subtrees $\mathbf T'$.
\end{remark}

We conclude this section by pointing out that by
using \cite[Theorem~3.6]{paper22} and a proof similar to that of Theorem~\ref{thm511}, we get more information on the model studied in Corollary~\ref{cor47}.

\begin{cor} Suppose that $\ch$ and $\sd_{\aleph_1}$ both hold.
Then there are $\aleph_2$-Souslin trees $\mathbf T^0,\mathbf T^1,\mathbf T^2,\mathbf T^3$ such that:
\begin{itemize}
\item $V(\mathbf T^0)=\emptyset$;
\item $V(\mathbf T^1)=E^{\aleph_2}_{\aleph_0}$;
\item $V(\mathbf T^2)=E^{\aleph_2}_{\aleph_1}$;
\item $V(\mathbf T^3)=\acc(\aleph_2)$. \qed
\end{itemize}
\end{cor}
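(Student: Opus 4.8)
The plan is to replay the proof of Theorem~\ref{thm511} at $\kappa=\aleph_2$, with the appeal to $\mathsf L$ (used there to extract an instance of the proxy principle) replaced by an appeal to \cite[Theorem~3.6]{paper22}. Recall that $\sd_{\aleph_1}$ entails $\square_{\aleph_1}$ and $\diamondsuit(\aleph_2)$ (so $2^{\aleph_1}=\aleph_2$), and, via \cite[Theorem~3.6]{paper22}, it moreover furnishes a single $\sq$-coherent $\mathcal C$-sequence witnessing $\p(\aleph_2,2,{\sq},\aleph_2,\mathcal S)$ for a family $\mathcal S\supseteq\{\acc(\aleph_2),E^{\aleph_2}_{\aleph_1}\}$ — the analogue of the instance supplied by $\mathsf L$ inside the proof of Corollary~\ref{cor61}. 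Together with $\ch$ — which yields $\lambda^{<\aleph_1}=\lambda^{\aleph_0}<\aleph_2$ for every $\lambda<\aleph_2$, since $\aleph_1^{\aleph_0}=2^{\aleph_0}=\aleph_1$ — this is exactly the data needed to invoke Theorem~\ref{thm58}.

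I would then run the recursion from the proof of Corollary~\ref{cor61} with $\kappa=\aleph_2$ and $\langle\chi_0,\chi_1\rangle=\langle\aleph_0,\aleph_1\rangle$. A first application of Theorem~\ref{thm58} (input $S:={}^{<\aleph_2}1$, parameters $\varsigma=2$ and $\nu=\chi=\aleph_0$) delivers a streamlined, normal, splitting, uniformly homogeneous, $\aleph_0$-coherent, $\acc(\aleph_2)$-regressive $\aleph_2$-Souslin tree $T^0$; a second application (input $S:=T^0$, parameters $\varsigma=2$ and $\nu=\chi=\aleph_1$) delivers a streamlined, normal, splitting, uniformly homogeneous, $\aleph_1$-complete, $\aleph_1$-coherent, $E^{\aleph_2}_{\aleph_1}$-regressive $\aleph_2$-Souslin tree $T^1$ for which, crucially, $T^0\otimes T^1$ is again an $\aleph_2$-Souslin tree. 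Arguing as in the proof of Claim~\ref{claim511}: since $T^0$ is $\acc(\aleph_2)$-regressive with $2^{<\aleph_0}<2^{\aleph_0}$, Lemma~\ref{cor53}(2) applied to $E^{\aleph_2}_{\aleph_0}$ forces $\alpha\in V(T^0)$ for every $\alpha\in E^{\aleph_2}_{\aleph_0}$ (the alternative there is ruled out because $\cf(\alpha)=\omega$), while $\aleph_0$-coherence, uniform homogeneity and Lemma~\ref{lemma64} give $V^-(T^0)\s E^{\aleph_2}_{\aleph_0}$; and since $T^1$ is $E^{\aleph_2}_{\aleph_1}$-regressive with $2^{<\aleph_1}<2^{\aleph_1}$, Lemma~\ref{cor53}(2) applied to $E^{\aleph_2}_{\aleph_1}$ forces $\alpha\in V(T^1)$ for every $\alpha\in E^{\aleph_2}_{\aleph_1}$ (the alternative being ruled out because, by $\aleph_1$-completeness, $V^-(T^1)\s E^{\aleph_2}_{\aleph_1}$, so $V^-(T^1)\cap\alpha$ is nonstationary in any $\alpha$ of cofinality $\aleph_1$). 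Hence, invoking Proposition~\ref{prop22}(2), $V(T^0)=V^-(T^0)=E^{\aleph_2}_{\aleph_0}$ and $V(T^1)=V^-(T^1)=E^{\aleph_2}_{\aleph_1}$.

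Finally, I would assemble the four trees: set $\mathbf T^1:=T^0$ and $\mathbf T^2:=T^1$; set $\mathbf T^3:=T^0\otimes T^1$, which is $\aleph_2$-Souslin by the above and satisfies $V(\mathbf T^3)=V(T^0)\cup V(T^1)=\acc(\aleph_2)$ by Proposition~\ref{products}(2) (its hypothesis $\lambda^2<\aleph_2$ being trivial); and set $\mathbf T^0:=T^0+T^1$, which is $\aleph_2$-Souslin as a disjoint sum of two $\aleph_2$-Souslin trees (a branch lives in a single summand, and an antichain splits into two antichains of size $<\aleph_2$) and satisfies $V(\mathbf T^0)=V(T^0)\cap V(T^1)=\emptyset$ by Proposition~\ref{sums}(2).

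The only delicate point is the bookkeeping guaranteeing that the single $\mathcal C$-sequence from \cite[Theorem~3.6]{paper22} serves as a legitimate witness to respecting-ness in both invocations of Theorem~\ref{thm58} — i.e., that $\sq$-coherence yields both $\sqleft{\aleph_0}$- and $\sqleft{\aleph_1}$-coherence, and that having both $\acc(\aleph_2)$ and $E^{\aleph_2}_{\aleph_1}$ in $\mathcal S$ supplies the stationary guessing needed for each of the two parameters $\chi\in\{\aleph_0,\aleph_1\}$ — so that $T^0\otimes T^1$ really does come out Souslin. This is precisely the compatibility already carried out inside the proof of Corollary~\ref{cor61}, and it transfers verbatim; one only has to check that $\ch$ suffices where $\gch$ was used there, which it does since the sole cardinal-arithmetic inputs are $\aleph_1^{\aleph_0}=\aleph_1$ and $2^{\aleph_0}<2^{\aleph_1}$.
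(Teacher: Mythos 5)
Your proposal is correct and follows essentially the same route the paper intends: the corollary's (omitted) proof is exactly "use \cite[Theorem~3.6]{paper22} in place of the $\mathsf L$-specific input and replay the proof of Theorem~\ref{thm511}", i.e.\ run the recursion of Corollary~\ref{cor61} at $\kappa=\aleph_2$ with $x=\{\aleph_0,\aleph_1\}$ (with $\ch$ supplying the cardinal-arithmetic hypotheses $\aleph_1^{\aleph_0}=\aleph_1$ and $2^{<\aleph_1}<2^{\aleph_1}$ where $\gch$ was used), obtaining $\mathbf T^1$ and $\mathbf T^2$, and then take the disjoint sum for $\mathbf T^0$ and the product (which Theorem~\ref{thm58} keeps Souslin via respecting-ness) for $\mathbf T^3$. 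Your verification of the two applications of Lemma~\ref{cor53}(2) and of Lemmas \ref{lemma64} and \ref{sums}/\ref{products} matches the paper's Claim~\ref{claim511} analysis.
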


\section{Souslin trees with an ascent path}\label{sect7}

The subject matter of this section is the following definition.
\begin{defn}[Laver]  Suppose that $\mathbf{T} = (T, <_T)$ is a tree of some height $\kappa$.
A \emph{$\mu$-ascent path} through $\mathbf{T}$ is a sequence
$\vec f=\langle f_\alpha \mid \alpha < \kappa \rangle$ such that:
\begin{itemize}
\item for every $\alpha < \kappa$, $f_\alpha:\mu \rightarrow T_\alpha$ is a function;
\item for all $\alpha < \beta < \kappa$, there is an $i < \mu$ such that $f_\alpha(j) <_T f_\beta(j)$ whenever $i\le j<\mu$.
\end{itemize}
\end{defn}

We will show that Souslin trees having a large set of vanishing levels
are compatible with carrying an ascent path.
For this, we shall make use of the following strengthening of $\p_\xi^-(\kappa,\mu^+,{\sq},\theta,\mathcal{S})$:

\begin{defn}[{\cite[\S4.6]{paper23}}]\label{indexedP}
The principle $\p_\xi^-(\kappa, \mu^{\ind},\allowbreak{\sq},\theta,\mathcal{S})$ asserts the existence of a $\xi$-bounded $\mathcal C$-sequence $\langle \mathcal{C}_\alpha\mid \alpha<\kappa\rangle$
together with a sequence $\langle i(\alpha)\mid \alpha<\kappa\rangle$ of ordinals in $\mu$, such that:
\begin{itemize}
\item for every $\alpha<\kappa$, there exists a canonical enumeration $\langle C_{\alpha,i}\mid i(\alpha)\le i<\mu\rangle$ of $\mathcal C_\alpha$
satisfying that the sequence $\langle \acc({C}_{\alpha,i})\mid i(\alpha)\le i<\mu\rangle$
is $\s$-increasing with $\bigcup_{i\in[i(\alpha),\mu)}\acc(C_{\alpha,i})=\acc(\alpha)$;
\item for all $\alpha<\kappa$, $i\in[i(\alpha),\mu)$ and $\bar\alpha\in\acc({C}_{\alpha,i})$, it is the case that $i\ge i(\bar\alpha)$ and $C_{\bar\alpha,i}\sq C_{\alpha,i}$;
\item for every sequence $\langle B_\tau\mid \tau<\theta\rangle$ of cofinal subsets of $\kappa$, and every $S\in\mathcal{S}$, there are stationarily many $\alpha\in S$
such that for all $C\in\mathcal C_\alpha$ and $\tau<\min\{\alpha,\theta\}$, $\sup(\nacc(C)\cap B_\tau)=\alpha$.
\end{itemize}
\end{defn}

Conventions \ref{proxydef2} and \ref{conv35} apply to the preceding, as well.

\begin{lemma}\label{l63}
Suppose that:
\begin{itemize}
\item $\mu<\kappa$ is an infinite cardinal;
\item $K$ is a streamlined $\kappa$-tree;
\item $\p(\kappa, \mu^{\ind},\allowbreak{\sq},1)$ holds.
\end{itemize}

Then there exists a normal and splitting streamlined $\kappa$-Souslin tree $T$ with $V(T)\supseteq V^-(K)$
such that $T$ admits a $\mu$-ascent path.
\end{lemma}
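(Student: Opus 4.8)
The plan is to run the microscopic construction of Theorem~\ref{thm41}(1) using a $\p^-(\kappa,\mu^{\ind},{\sq},1)$-sequence in place of a plain $\p^-(\kappa,\kappa,{\sq},1)$-sequence, and to read off the ascent path directly from its indexed coherence. Fix a $\xi$-bounded $\mathcal C$-sequence $\langle\mathcal C_\alpha\mid\alpha<\kappa\rangle$ together with $\langle i(\alpha)\mid\alpha<\kappa\rangle$ and canonical enumerations $\langle C_{\alpha,i}\mid i(\alpha)\le i<\mu\rangle$ witnessing $\p^-(\kappa,\mu^{\ind},{\sq},1)$ (Definition~\ref{indexedP}), a well-ordering $\vartriangleleft$ of $H_\kappa$, and a sequence $\langle\Omega_\beta\mid\beta<\kappa\rangle$ witnessing $\diamondsuit^-(H_\kappa)$ (available since $\p(\kappa,\ldots)$ includes $\diamondsuit(\kappa)$). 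As $|\mathcal C_\alpha|\le\mu<\kappa$ and, for $\bar\alpha\in\acc(C_{\alpha,i})$, $C_{\bar\alpha,i}=C_{\alpha,i}\cap\bar\alpha\in\mathcal C_{\bar\alpha}$, this is in particular a $\p^-(\kappa,\kappa,{\sq},1)$-sequence, so the recursion of Theorem~\ref{thm41} applies verbatim.

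Carry out that recursion: $T_0:=\{\emptyset\}$, $T_{\alpha+1}:=\{t{}^\smallfrown\langle0\rangle,t{}^\smallfrown\langle1\rangle,t{}^\smallfrown\langle\eta\rangle\mid t\in T_\alpha,\eta\in K_\alpha\}$, and at a limit $\alpha$ let $T_\alpha$ consist of the nodes $\mathbf b_x^{C,\eta}$ ($C\in\mathcal C_\alpha$, $\eta\in K_\alpha$, $x\in T\restriction C$) defined exactly as there, together with $\mu$ further nodes: for each $i\in[i(\alpha),\mu)$ let $\mathbf a^\alpha_i$ be the limit of the branch walked along $C_{\alpha,i}$ from the root in the style of the $\mathbf b^C_\emptyset$ of Theorem~\ref{thm52} (appending $\langle0\rangle$ and sealing against $\Omega_\beta$ at non-accumulation steps, taking limits at accumulation steps, which land in the tree by fiber-wise $\sq$-coherence and the argument of \cite[Claim~2.2.1]{paper26}). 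Put $T:=\bigcup_{\alpha<\kappa}T_\alpha$; since only $\mu<\kappa$ nodes are added to each limit level, $T$ is a normal, splitting, streamlined $\kappa$-tree. Finally set $f_\alpha(i):=\mathbf a^\alpha_i$ for $i\in[i(\alpha),\mu)$, $f_\alpha(i):=\mathbf a^\alpha_{i(\alpha)}$ for $i<i(\alpha)$, $f_{\beta+1}(i):=f_\beta(i){}^\smallfrown\langle0\rangle$, and $f_0(i):=\emptyset$.

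Three verifications are then routine. \emph{Souslin-ness} is the standard argument of \cite[Claims 2.2.2 and 2.2.3]{paper26}: a putative maximal antichain $A$ is met below every node of $T_\alpha$ for any $\alpha$ with $\sup(\nacc(C)\cap B_A)=\alpha$ for all $C\in\mathcal C_\alpha$, and such $\alpha$ exists by the $\p^-$-clause with $\theta=1$; the extra nodes $\mathbf a^\alpha_i$ are handled identically, being walks along members of $\mathcal C_\alpha$ with the same sealing. That \emph{$\vec f$ is a $\mu$-ascent path} follows from the indexed structure: given limits $\bar\alpha<\alpha$, since $\langle\acc(C_{\alpha,i})\mid i(\alpha)\le i<\mu\rangle$ is $\s$-increasing with union $\acc(\alpha)\ni\bar\alpha$, there is $i_0$ with $\bar\alpha\in\acc(C_{\alpha,i})$ for all $i\in[i_0,\mu)$; for such $i$ we also have $i\ge i(\bar\alpha)$ and $C_{\bar\alpha,i}=C_{\alpha,i}\cap\bar\alpha$, whence $\mathbf a^\alpha_i\restriction\bar\alpha=\mathbf a^{\bar\alpha}_i=f_{\bar\alpha}(i)$, so $f_{\bar\alpha}(i)<_T f_\alpha(i)$ whenever $i\ge\max\{i_0,i(\bar\alpha)\}$, and successor levels and transitivity of $<_T$ propagate this to all pairs $\bar\alpha<\alpha$. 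Finally, for \emph{$V(T)\supseteq V^-(K)$}, given $\alpha\in V^-(K)$ and $x\in T\restriction\alpha$, fix $\eta\in\mathcal B(K\restriction\alpha)\setminus K_\alpha$, extend $x$ into $T\restriction C$ for a suitable $C\in\mathcal C_\alpha$, and argue as in Claim~\ref{c372} that $\{\mathbf b_x^{C,\eta}\restriction\epsilon\mid\epsilon<\alpha\}$ is a vanishing $\alpha$-branch through $x$.

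The main obstacle is this last point when $\cf(\alpha)=\omega$. For $\cf(\alpha)>\omega$ any two members of $\mathcal C_\alpha$, being clubs in $\alpha$, intersect cofinally in $\alpha$, so the argument of Claim~\ref{c372} goes through exactly as in Theorem~\ref{thm41}(2); but for $\cf(\alpha)=\omega$ two members of $\mathcal C_\alpha$ may be almost disjoint, and then $\mathbf b_x^{C,\eta}$ could a priori coincide with some $\mathbf b_y^{D,\xi}\in T_\alpha$ built from a far-away $D$, blocking the contradiction. Here the indexed set-up pays off beyond plain $\p(\kappa,\kappa,{\sq},1)$: since the Souslin-sealing at limit levels is already carried by the $\mathbf a^\alpha_i$'s — which walk along \emph{every} member of $\mathcal C_\alpha$ — one may route the $\eta$-carrying branches through a single, $\sq$-coherently chosen member of $\mathcal C_\alpha$ rather than through all of them, keeping the recursion (and normality) consistent. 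Then $\mathbf b_x^{C,\eta}$ is only ever compared against nodes built from that one club or against the $0$-walks $\mathbf a^\alpha_j$, and the read-off $\mathbf b_x^{C,\eta}(\gamma)=\eta\restriction\gamma$ for $\gamma\in C$ forces $\eta\in K_\alpha$, a contradiction. Making this decoupling precise — in particular that the distinguished choice of club remains compatible with the $\sq$-coherence required for the limit sub-steps of $\mathbf b_x^{C,\eta}$ to land in the tree — is the part of the proof that demands the most care; the rest is a routine merger of the constructions underlying Theorems \ref{thm41} and \ref{thm52}.
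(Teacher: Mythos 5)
Your high-level plan---run the recursion of Theorem~\ref{thm41} on a $\p^-(\kappa,\mu^{\ind},{\sq},1)$-sequence and read the ascent path off the indexed coherence---is the paper's plan, but your final paragraph, which you yourself flag as the crux, is where the proof breaks down. The obstruction you describe at $\cf(\alpha)=\omega$ does not arise: the canonical enumeration in Definition~\ref{indexedP} is nested, and the paper records at the outset that a $\p^-(\kappa,\mu^{\ind},{\sq},1)$-sequence is in particular a $\p^-(\kappa,\kappa,{\sq},1)$-sequence satisfying $\sup(C\cap D)=\alpha$ for every $\alpha\in\acc(\kappa)$ and all $C,D\in\mathcal C_\alpha$. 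That is exactly the hypothesis of Claim~\ref{c372}, so the \emph{unmodified} construction of Theorem~\ref{thm41} already yields $V(T)\supseteq V^-(K)$ at every limit level, including those of countable cofinality. Consequently the ``decoupling'' you propose is unnecessary; worse, it is left unexecuted, and as described it is unworkable: the limit sub-step of $b_x^{C,\eta}$ at $\beta\in\acc(C)$ lands in $T_\beta$ only because $\mathbf b_x^{C\cap\beta,\eta\restriction\beta}$ was placed there, so a single distinguished member of $\mathcal C_\beta$ carrying the $\eta$-branches would have to equal $C\cap\beta$ simultaneously for every $\alpha>\beta$ and every $C\in\mathcal C_\alpha$ with $\beta\in\acc(C)$, which fails in general. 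Since your verification of $V(T)\supseteq V^-(K)$ rests on this unproved modification, the argument is incomplete.

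Your treatment of the ascent path also diverges from the paper's, in a way that creates further unverified obligations. The paper adds no nodes: it first arranges $\mathcal B(K)\neq\emptyset$ by replacing $K$ with $K':=\{\eta'\mid\eta\in K\}\uplus{}^{<\kappa}1$, where $\eta'(\alpha):=(\eta(\alpha),0)$ (this preserves $V^-$), fixes $\zeta\in\mathcal B(K)$, and sets $f_\alpha(j):=\mathbf b_\emptyset^{C_{\alpha,\max\{j,i(\alpha)\}},\zeta\restriction\alpha}$; coherence then follows from Claim~\ref{c371} together with the indexed clauses. Your adjoined walks $\mathbf a^\alpha_i$ enlarge every limit level, so besides sealing them for Souslinness you must also check that no $\mathbf a^\alpha_i$ serves as an upper bound for a branch $\{\mathbf b_x^{C,\eta}\restriction\epsilon\mid\epsilon<\alpha\}$ in the vanishing-levels argument; that check is absent from your write-up.
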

\begin{proof} As a preparatory step, we shall need the following simple claim.
\begin{claim} We may assume that $\mathcal B(K)\neq\emptyset$.
\end{claim}
\begin{proof} For every $\eta\in K$, define a function $\eta':\dom(\eta)\rightarrow H_\kappa$ via $\eta'(\alpha):=(\eta(\alpha),0)$.
Then $K':=\{ \eta'\mid \eta\in K\}\uplus{}^{<\kappa}1$ is a streamlined $\kappa$-tree with $V^-(K')=V^-(K)$ and, in addition, $\mathcal B(K')\neq\emptyset$.
\end{proof}

Let $\cvec C=\langle \mathcal{C}_\alpha\mid \alpha<\kappa\rangle$
and $\langle i(\alpha)\mid \alpha<\kappa\rangle$ witness together that $\p^-(\kappa, \mu^{\ind},\allowbreak{\sq},1)$ holds.
In particular, $\cvec C$ is a $\p^-(\kappa,\kappa,{\sq},1)$-sequence
satisfying that, for all $\alpha\in\acc(\kappa)$ and $C,D\in\mathcal C_\alpha$,  $\sup(C\cap D)=\alpha$.
As always, we may also assume that $0\in \bigcap_{0<\alpha<\kappa}\bigcap\mathcal C_\alpha$.

Using $\cvec C$ and $K$,
construct the sequence of levels $\langle T_\alpha\mid \alpha<\kappa\rangle$ exactly as in the proof of Theorem~\ref{thm41},
so that $T:=\bigcup_{\alpha<\kappa}T_\alpha$ is a normal and splitting streamlined $\kappa$-Souslin tree.
From Claim~\ref{c372}, we infer that $V(T)\supseteq V^-(K)$.

In addition, the construction of Theorem~\ref{thm41} ensures that for every $\alpha\in \acc(\kappa)$, it is the case that
$$T_\alpha=\{\mathbf b_x^{C,\eta}\mid C\in\mathcal C_\alpha,\eta\in K_\alpha, x\in T\restriction C\}.$$

Fix $\zeta\in\mathcal B(K)$. For every $\alpha\in\acc(\kappa)$,
using the canonical enumeration $\langle C_{\alpha,i}\mid i(\alpha)\le i<\mu\rangle$ of $\mathcal C_\alpha$,
we define a function $f_\alpha:\mu\rightarrow T_\alpha$ via
$$f_\alpha(j):=\mathbf{b}_\emptyset^{C_{\alpha,\max\{j,i(\alpha)\}},\zeta\restriction\alpha}.$$

\begin{claim} Let $\beta<\alpha$ be a pair of ordinals in $\acc(\kappa)$.
Then there exists an $i<\mu$ such that $f_\beta(j)\s f_\alpha(j)$ whenever $i\le j<\mu$.
\end{claim}
\begin{proof} Note that by Claim~\ref{c371}, for all $C\in\mathcal C_\alpha$, $\eta\in K_\alpha$, and $x\in T\restriction(C\cap\beta)$,
if $\beta\in\acc(C)$, then $\mathbf b_x^{C,\eta}\restriction\beta=\mathbf b_x^{C\cap\beta,\eta\restriction\beta}$.

Now, by Definition~\ref{indexedP}, we may fix a large enough $i\in[i(\alpha),\mu)$ such that $\beta\in\acc(C_{\alpha,j})$ whenever $i\le j<\mu$.
Let $j$ be such an ordinal. Then $j\ge i(\beta)$ and $C_{\alpha,j}\cap\beta=C_{\beta,j}$, so that
$$f_\beta(j)=\mathbf{b}_\emptyset^{C_{\beta,j},\zeta\restriction\beta}=\mathbf{b}_\emptyset^{C_{\alpha,j},\zeta\restriction\alpha}\restriction\beta=f_\alpha(j)\restriction\beta,$$
as sought.
\end{proof}

It now easily follows that $T$ admits a $\mu$-ascent path.
\end{proof}

\begin{cor} Suppose that:
\begin{itemize}
\item $\lambda$ is an uncountable cardinal satisfying $\square_\lambda$ and $2^\lambda=\lambda^+$;
\item $\mu<\lambda$ is an infinite regular cardinal satisfying $\lambda^\mu=\lambda$.
\end{itemize}

Then there exists a streamlined $\lambda^+$-Souslin tree $T$ with $V(T)=\acc(\lambda^+)$
such that $T$ admits a $\mu$-ascent path.
\end{cor}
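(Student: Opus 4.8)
Write $\kappa:=\lambda^+$. The plan is to construct $T$ by applying Lemma~\ref{l63} to a well-chosen input tree and then trimming the resulting Souslin tree to a club of its levels, thereby pushing $V(T)$ all the way up to $\acc(\kappa)$. First I would check that the hypotheses of Lemma~\ref{l63} are in force: $\mu$ is an infinite cardinal below $\kappa$, and $\p(\kappa,\mu^{\ind},{\sq},1)$ holds. For the latter, the theory of the indexed proxy principle developed in \cite[\S4.6]{paper23} yields $\p_\lambda^-(\kappa,\mu^{\ind},{\sq},1)$ from $\square_\lambda$ together with $\lambda^\mu=\lambda$; since a $\lambda$-bounded $\mathcal C$-sequence is in particular $\kappa$-bounded, this gives $\p^-(\kappa,\mu^{\ind},{\sq},1)$, and $\ch_\lambda$ (which follows from $2^\lambda=\lambda^+$) together with $\square_\lambda$ supplies the missing instance of $\diamondsuit(\kappa)$ (for instance via the equivalence of \cite[Corollary~3.9]{paper22}), so that $\p(\kappa,\mu^{\ind},{\sq},1)$ holds.

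Next I would produce the input tree. By a theorem of Jensen, $\square_\lambda$ gives rise to a special $\kappa$-Aronszajn tree, so the implication $(i)\Rightarrow(ii)$ of Theorem~\ref{lemma39} furnishes a streamlined $\kappa$-Aronszajn tree $K$ and a club $D\s\acc(\kappa)$ with $V^-(K)\supseteq D$. (There is no need to shrink $K$ here: $K$ is already streamlined, and $V^-(K)$ covering a club is all that Lemma~\ref{l63} will consume.) Applying Lemma~\ref{l63} to $\mu$, $K$, and $\p(\kappa,\mu^{\ind},{\sq},1)$ then delivers a normal, splitting, streamlined $\kappa$-Souslin tree $T_0$ with $V(T_0)\supseteq V^-(K)\supseteq D$ that admits a $\mu$-ascent path $\vec f=\langle f_\alpha\mid\alpha<\kappa\rangle$.

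It then remains to upgrade $V(T_0)\supseteq D$ to $V(T)=\acc(\kappa)$ while retaining both streamlinedness and the ascent path. As in the proof of Lemma~\ref{lemma33}, the restricted tree $T_0\restriction D$, relabeled via the increasing enumeration $\langle\delta_\gamma\mid\gamma<\kappa\rangle$ of $D$ so that its $\gamma^{\text{th}}$ level becomes $(T_0)_{\delta_\gamma}$, is order-isomorphic to a streamlined $\kappa$-tree $T$; since $V(\cdot)$ is an isomorphism invariant and $D$ is a club contained in $V(T_0)$, one gets $V(T)=\acc(\kappa)$. Finally, because $D\s\acc(\kappa)$, each $\delta_\gamma$ is a limit ordinal, so $\langle f_{\delta_\gamma}\mid\gamma<\kappa\rangle$ is defined and, read through the relabeling, constitutes a $\mu$-ascent path through $T$ (the ascent inequality being an order-theoretic condition insensitive to the reindexing of levels). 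Thus $T$ is as required.

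The main obstacle, to the extent there is one, lies in that first step: securing $\p(\kappa,\mu^{\ind},{\sq},1)$ from $\square_\lambda$, $\lambda^\mu=\lambda$, and $2^\lambda=\lambda^+$. This is precisely where all three cardinal-arithmetic hypotheses are spent, and it rests on the cited theory of the indexed proxy principle in \cite{paper23}. Once that is in hand, the remainder is an assembly of Jensen's theorem, Theorem~\ref{lemma39}, Lemma~\ref{l63}, and Lemma~\ref{lemma33}, together with the routine observation that restricting a $\kappa$-Souslin tree to a club of its levels preserves both $V(\cdot)$ and the existence of a $\mu$-ascent path.
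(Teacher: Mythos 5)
Your proposal is correct and follows essentially the same route as the paper: obtain $\p(\kappa,\mu^{\ind},{\sq},1)$ from $\square_\lambda$, $\lambda^\mu=\lambda$ and $2^\lambda=\lambda^+$ via the indexed-square/proxy machinery plus $\diamondsuit(\kappa)$, feed a streamlined tree coming from Jensen's special $\kappa$-Aronszajn tree through Theorem~\ref{lemma39} into Lemma~\ref{l63}, and finish by restricting to a club as in Lemma~\ref{lemma33}. The only (harmless) divergences are cosmetic: the paper routes through $\square^{\ind}(\lambda^+,\mu)$ via \cite[Theorem~3.4]{lh_lucke} and \cite[Theorem~4.44]{paper23} and gets $\diamondsuit(\lambda^+)$ from Shelah's theorem rather than from \cite[Corollary~3.9]{paper22}, and it invokes clause (iii)/(iv) of Theorem~\ref{lemma39} rather than clause (ii), while you spell out the (correct) observation that the ascent path survives the club restriction.
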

\begin{proof}  By \cite[Theorem~3.4]{lh_lucke}, in particular, $\square^{\ind}(\lambda^+,\mu)$ holds.
Then, by \cite[Theorem~4.44]{paper23}, $\p^-(\lambda^+, \mu^{\ind},\allowbreak{\sq},1)$ holds.
By Shelah's theorem, $2^\lambda=\lambda^+$ implies $\diamondsuit(\lambda^+)$, so that, altogether $\p(\lambda^+, \mu^{\ind},\allowbreak{\sq},1)$ holds.
In addition, it is a classical theorem of Jensen that $\square_\lambda$ gives a special $\lambda^+$-Aronszajn tree,
so by Lemma~\ref{lemma39}, $\acc(\lambda^+)\in\vspec(\lambda^+)$.
It now follows from Lemma~\ref{l63} that there exists a normal and splitting streamlined $\lambda^+$-Souslin tree $T$ such that $V(T)$ covers a club in $\lambda^+$
and such that $T$ admits a $\mu$-ascent path.
Finally, the proof of Lemma~\ref{lemma33} completes this proof.
\end{proof}
\begin{remark} The conclusion of the preceding remains valid once relaxing $\square_\lambda$ to $\square_\lambda(\sq_\mu)$.
In particular, the conclusion of the preceding is compatible with $\mu$ being supercompact.
\end{remark}

We now turn to combine the preceding construction with the study of large cardinals.
The following cardinal characteristic $\chi(\kappa)$ provides a measure of how far $\kappa$ is from being weakly compact.

\begin{defn}[The $C$-sequence number of $\kappa$, \cite{paper35}]\label{defcnm}
If $\kappa$ is weakly compact, then let $\chi(\kappa):=0$. Otherwise, let
$\chi(\kappa)$ denote the least cardinal $\chi\le\kappa$
such that, for every $C$-sequence $\langle C_\beta\mid\beta<\kappa\rangle$,
there exist $\Delta\in[\kappa]^\kappa$ and $b:\kappa\rightarrow[\kappa]^{\chi}$
with $\Delta\cap\alpha\s\bigcup_{\beta\in b(\alpha)}C_\beta$
for every $\alpha<\kappa$.
\end{defn}

By \cite[Lemma~2.12(1)]{paper35}, if $\kappa$ is an inaccessible cardinal satisfying $\chi(\kappa)<\kappa$, then $\kappa$ is $\omega$-Mahlo.
The following is an expanded form of Theorem~\ref{thme}.
\begin{thm} Assuming the consistency of a weakly compact cardinal, it is consistent that for some strongly inaccessible cardinal $\kappa$
satisfying $\chi(\kappa)=\omega$, the following two hold:
\begin{itemize}
\item Every $\kappa$-Aronszajn tree admits an $\omega$-ascent path;
\item There is a $\kappa$-Souslin tree $\mathbf T$ such that $V(\mathbf T)=\acc(\kappa)$.
\end{itemize}
\end{thm}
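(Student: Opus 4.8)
The plan is to carry out the whole construction inside a carefully chosen forcing extension of $\mathsf L$ and then invoke Lemma~\ref{l63}. First I would fix the model: starting from a weakly compact cardinal and following the consistency results of \cite{paper35} and \cite{lh_lucke}, force so that in the extension $\kappa$ is strongly inaccessible, $\chi(\kappa)=\omega$, $\diamondsuit(\kappa)$ holds, $\square^{\ind}(\kappa,\omega)$ holds, and every $\kappa$-Aronszajn tree admits an $\omega$-ascent path. The relevant forcing adds a $\square^{\ind}(\kappa,\omega)$-sequence, its conditions being the proper initial segments of such a sequence ordered by end-extension; being ${<}\kappa$-strategically closed it adds no new bounded subsets of $\kappa$, hence preserves strong inaccessibility and $\diamondsuit(\kappa)$, while the weak compactness of $\kappa$ in the ground model is what is used to pin $\chi(\kappa)$ down to $\omega$ and to force the universal $\omega$-ascent-path statement. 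This already establishes the first bullet of the theorem, and the rest of the argument builds the Souslin tree of the second bullet inside this model.

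Working in the model, I would first produce the proxy instance that Lemma~\ref{l63} consumes: by the inaccessible-cardinal analogue of \cite[Theorem~4.44]{paper23}, $\square^{\ind}(\kappa,\omega)$ yields $\p^-(\kappa,\omega^{\ind},{\sq},1)$, and together with $\diamondsuit(\kappa)$ and Convention~\ref{proxydef2} this gives $\p(\kappa,\omega^{\ind},{\sq},1)$. Next I need a streamlined $\kappa$-tree $K$ with $V^-(K)$ covering a club in $\kappa$. Since $\chi(\kappa)=\omega\neq 0$, the inaccessible $\kappa$ is not weakly compact, and in the present model $\kappa$-$\stp$ fails (this is a feature of the square-type sequence available here; in any event, $\acc(\kappa)\in\vspec(\kappa)$ is arranged in the model), so the last clause of Corollary~\ref{stp} supplies a uniformly homogeneous streamlined $\kappa$-tree $K$ with $V^-(K)=V(K)$ covering a club.

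Now I would apply Lemma~\ref{l63} with $\mu:=\omega$ and this $K$: it delivers a normal, splitting, streamlined $\kappa$-Souslin tree $T$ with $V(T)\supseteq V^-(K)$ and an $\omega$-ascent path; in particular $V(T)$ covers a club $D\s\kappa$. Passing to the subtree $\mathbf T:=T\restriction D$, as in the proof of Lemma~\ref{lemma33}, keeps the tree $\kappa$-Souslin and arranges $V(\mathbf T)=\acc(\kappa)$; moreover the $\omega$-ascent path of $T$ restricts, along the order-preserving relabelling of $D$ as $\kappa$, to an $\omega$-ascent path of $\mathbf T$ (indeed, since $\mathbf T$ is in particular $\kappa$-Aronszajn, the first bullet already guarantees that it carries such a path). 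This $\mathbf T$ witnesses the second bullet.

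The main obstacle lies entirely in the first paragraph, namely the existence of a ground model with all of the listed properties simultaneously. Preserving strong inaccessibility and $\diamondsuit(\kappa)$ under the $\square^{\ind}(\kappa,\omega)$-forcing, and seeing that $\chi(\kappa)\geq 1$, are routine; the delicate points are that $\chi(\kappa)$ neither drops below $\omega$ (which uses the internal structure of the added indexed-square sequence) nor exceeds $\omega$ (which uses a reflection argument on $C$-sequences powered by the ground-model weak compactness of $\kappa$), and, hardest of all, that the forcing leaves every $\kappa$-Aronszajn tree with an $\omega$-ascent path, which depends on the chain-condition and distributivity analysis of \cite{paper35,lh_lucke}. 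Once that model is secured, the remainder is a bookkeeping exercise with Lemma~\ref{l63} and Lemma~\ref{lemma33}.
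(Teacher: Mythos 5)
Your overall architecture matches the paper's: force an indexed square sequence over a weakly compact, quote \cite{paper35} for $\chi(\kappa)=\omega$ and the universal ascent-path statement, extract $\p(\kappa,\omega^{\ind},{\sq},1)$ and a tree $K$ with $V^-(K)$ covering a club, and finish with Lemma~\ref{l63} and Lemma~\ref{lemma33}. However, two of the steps you dismiss as bookkeeping are exactly where the work lies, and as written they are gaps.

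The serious one is your source for $K$. You write that ``since $\chi(\kappa)=\omega\neq 0$, the inaccessible $\kappa$ is not weakly compact, and in the present model $\kappa$-$\stp$ fails (this is a feature of the square-type sequence available here; in any event, $\acc(\kappa)\in\vspec(\kappa)$ is arranged in the model).'' Non-weak-compactness gives you nothing here: at a strongly inaccessible cardinal every list is thin, so the failure of $\kappa$-$\stp$ is equivalent to $\kappa$ not being subtle (Corollary~\ref{stp} together with Lemma~\ref{lemma311}), and subtle cardinals are typically \emph{not} weakly compact. Nor is it clear that the generic $\square^{\ind}(\kappa,\omega)$-sequence refutes subtlety --- its own coherence clause produces pairs $\alpha<\beta$ with $C_{\alpha,i}=C_{\beta,i}\cap\alpha$, so it is not a witness. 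The parenthetical ``$\acc(\kappa)\in\vspec(\kappa)$ is arranged in the model'' assumes the conclusion. The paper closes this by starting from a \emph{non-subtle} weakly compact cardinal and using a preparatory forcing to make non-subtle weak compactness indestructible under $\mathrm{Add}(\kappa,1)$, so that non-subtlety survives into the final model; some such device is indispensable in your argument as well.

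The second gap concerns $\diamondsuit(\kappa)$ and the proxy principle. A ${<}\kappa$-strategically closed forcing adds no bounded subsets of $\kappa$, but it does add unbounded ones, so a ground-model $\diamondsuit(\kappa)$-sequence need not guess them; ``preserved by distributivity'' is not a valid inference. Likewise, $\square^{\ind}(\kappa,\omega)$ as an abstract combinatorial principle does not hand you the hitting clause (the last bullet of Definition~\ref{indexedP}); the citation of \cite[Theorem~4.44]{paper23} in Section~\ref{sect7} is for successors $\lambda^+$ with extra cardinal arithmetic, and an ``inaccessible-cardinal analogue'' is something you would have to prove, not invoke. The paper obtains both facts at once from a single density argument about the square-adding forcing: for every cofinal $B\s\kappa$ there is a limit $\alpha$ with $i(\alpha)=0$ and a pseudo-ladder $B_\alpha\s B$ (mod bounded) meeting $\nacc(C_{\alpha,i})$ cofinally for every $i<\omega$; this genericity claim yields the third bullet of the proxy principle and, combined with $\kappa^{<\kappa}=\kappa$, a direct verification of $\diamondsuit(E^\kappa_\omega)$. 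Your proposal needs this (or an equivalent) argument spelled out; once these two points are repaired, the concluding application of Lemma~\ref{l63} and Lemma~\ref{lemma33} is correct as you describe it (and you are right that the restricted tree need not itself carry an ascent path for the statement to follow).
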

\begin{proof} Suppose that $\kappa$ is a non-subtle weakly compact cardinal.
By possibly using a preparatory forcing, we may assume that the non-subtle weak
compactness of $\kappa$ is indestructible under forcing with $\mathrm{Add}(\kappa,1)$.
Following the proof of \cite[Theorem 3.4]{paper35},
let $\mathbb{P}$ be the standard forcing to add $\square^{\ind}(\kappa,\omega)$-sequence by closed initial segments,
let $G$ be $\mathbb{P}$-generic, and let
$\vec{\mathcal{C}}=\langle C_{\alpha,i} \mid \alpha<\kappa,~i(\alpha) \leq i<\omega \rangle$
denote the generically-added $\square^{\ind}(\kappa,\omega)$-sequence.
Work in $V[G]$.
By Clauses (1),(2) and (4) of \cite[Theorem 3.4]{paper35}, $\kappa$ is strongly inaccessible, $\chi(\kappa)=\omega$,
and every $\kappa$-Aronszajn tree admits an $\omega$-ascent path.

For every $\alpha\in \acc(\kappa)$, let
$$B_\alpha:=\{ \beta\in C_{\alpha,i(\alpha)}\mid \forall l<\omega\,[\min(C_{\alpha,i(\alpha)}\setminus\beta+1)+l\in C_{\alpha,i(\alpha)}]\}.$$

\begin{claim}\label{671} For every cofinal $B\s\kappa$, there exist $\alpha\in E^\kappa_\omega$ and $\epsilon<\alpha$
such that $(B_\alpha\setminus\epsilon)\s B$, $i(\alpha)=0$ and $\sup(\nacc(C_{\alpha,i})\cap B_\alpha)=\alpha$ for all $i<\omega$.
\end{claim}
\begin{proof} We follow the proof of \cite[Lemma~3.9]{Chris17}. Work in $V$.
For every $\alpha\in\acc(\kappa)$, let $\dot{B_\alpha}$ be the canonical $\mathbb P$-name for $B_\alpha$.
Next, let $\dot{B}$ be a $\mathbb P$-name for a cofinal subset of $\kappa$,
and let $p_0$ be an arbitrary condition in $\mathbb{P}$.
By possibly extending $p_0$, we may assume that $i(\gamma^{p_0})^{p_0}=0$.
We shall recursively define a decreasing sequence of conditions $\langle p_n \mid n<\omega \rangle$, and an increasing sequence of ordinals $\langle \beta_n \mid n<\omega \rangle$
such that for every $n<\omega$, all of the following hold:
\begin{enumerate}
\item\label{dec1} $p_{n+1}\leq p_n$;
\item $i(\gamma^{p_{n+1}})^{p_{n+1}}=0$;
\item $p_{n+1} \Vdash ``\beta_n \in \dot{B}\text{ and }\dot B_{\gamma^{p_{n+1}}}\setminus(\gamma^{p_n}+1)=\{\beta_n \}"$;
\item For every $i\le n$, $\beta_n\in\nacc(C_{\gamma^{p_{n+1}},i}^{p_{n+1}})$;
\item\label{dec4} For every $i<\omega$, $C_{\gamma^{p_{n+1}},i}^{p_{n+1}} \cap (\gamma^{p_n}+1)=C_{\gamma^{p_n},i}^{p_{n}}\cup \{\gamma^{p_n}\}$.
\end{enumerate}

Suppose $n<\omega$ is such that $\langle p_m \mid m\leq n \rangle$ and $\langle \beta_m \mid m< n \rangle$ have already been successfully defined.
Find a $p^*_n \leq p_n$ and a $\beta_n>\gamma^{p_n}$
such that $p^*_n \Vdash ``\beta_n \in \dot{B}"$.
Without loss of generality, $\gamma^{p^*_n}>\beta_n$. Now, let $\gamma:=\gamma^{p^*_n}+\omega$, so that
$$\gamma^{p_n}<\beta_n<\gamma^{p_n^*}<\gamma^{p^*_n}+\omega=\gamma.$$
Let $m <\omega$ be the least such that $m\geq \max\{n, i(\gamma^{p^*_n})^{p^*_n}\}$ and $\gamma^{p_n} \in \acc(C_{\gamma^*,m}^{p_n^*})$.
Then let $p_{n+1}$ be the unique extension of $p_n^*$ with $\gamma^{p_{n+1}}=\gamma$ and $i(\gamma)^{p_{n+1}}=0$ to satisfy the following for all $i<\omega$:
$$C_{\gamma,i}^{p_{n+1}}:=\begin{cases}
C_{\gamma^{p_n},i}^{p_{n}}\cup \{ \gamma^{p_n},\beta_n \}\cup \{ \gamma^{p_n^*}+l \mid l<\omega \},&\text{if }i\leq m;\\
C_{\gamma^{p_n^*},i}^{p^*_{n}}\cup \{ \gamma^{p_n^*}+l \mid l<\omega \},&\text{otherwise}.
\end{cases}$$

Thus, we have maintained requirements \eqref{dec1}--\eqref{dec4}.

Once completing the above recursion, we obtain a decreasing sequence of conditions $\langle p_n \mid n<\omega \rangle$.
Let $\alpha:=\sup\{ \gamma^{p_n} \mid n<\omega\}$,
and let $p$ be the unique lower bound of $\langle p_n \mid n<\omega \rangle$
to satisfy $\gamma^{p}=\alpha$, $i(\alpha)^{p}=0$, and
$C^{p}_{\alpha,i}=\bigcup_{n<\omega}C^{p_n}_{\gamma^{p_n},i}$ for every $i<\omega$.
Then $p$ is a legitimate condition satisfying $p\Vdash``\dot B_\alpha\setminus(\gamma^{p_0}+1)=\{\beta_n \mid n<\omega \} \s \dot{B}"$.
In addition, for all $i<\omega$, $\{ \beta_n \mid i \leq n<\omega\} \s \nacc(C_{\alpha,i}^{p})$.
So we are done.
\end{proof}

We claim that $\vec{\mathcal{C}}$ is a $\p^-(\kappa, \omega^{\ind},\allowbreak{\sq},1)$-sequence.
As we already know that $\vec{\mathcal{C}}$ is an $\square^{\ind}(\kappa,\omega)$-sequence,
we just need to verify that it satisfies the last bullet of Definition~\ref{indexedP} with $\theta:=1$ and $\mathcal S:=\{\kappa\}$.
But, by the same argument from the proof of \cite[Corollary~3.4]{paper23},
this boils down to showing that for every cofinal $B\s\kappa$, there exists at least one $\alpha\in\acc(\kappa)$ such that
$\sup(\nacc(C_{\alpha,i})\cap B)=\alpha$ for all $i\in[i(\alpha),\omega)$. This is covered by Claim~\ref{671}.
\begin{claim} $\diamondsuit(E^\kappa_\omega)$ holds.
\end{claim}
\begin{proof} This is a standard consequence of Claim~\ref{671} together with the fact that $\kappa^{<\kappa}=\kappa$, but we give the details.
Let $\vec X=\langle X_\beta\mid\beta<\kappa\rangle$ be a repetitive enumeration of $[\kappa]^{<\kappa}$ such that each set appears cofinally often.
Let us say that an ordinal $\alpha\in E^\kappa_\omega$ is \emph{informative} if $\sup(B_\alpha)=\alpha$ and there are $\epsilon<\kappa$ and a subset $A_\alpha\s \alpha$ such that
$A_\alpha\cap\gamma=X_\beta\cap\gamma$ for every pair $\gamma<\beta$ of ordinals from $B_\alpha\setminus\epsilon$.
Note that if $\alpha$ is informative, then the set $A_\alpha$ is uniquely determined.
For a noninformative $\alpha\in E^\kappa_\omega$, we let $A_\alpha:=\emptyset$.

To verify that $\langle A_\alpha\mid \alpha\in E^\kappa_\omega\rangle$ witnesses $\diamondsuit(E^\kappa_\omega)$,
let $A$ be a subset of $\kappa$ and let $C$ be a club in $\kappa$, and we shall find an $\alpha\in C\cap E^\kappa_\omega$ such that $A\cap\alpha=A_\alpha$.

By the choice of $\vec X$, we may fix a strictly increasing function $f:\kappa\rightarrow\kappa$
satisfying that $A\cap\xi=X_{f(\xi)}$ for every $\xi<\kappa$.
Consider the club $D:=\{\delta\in C\mid f[\delta]\s\delta\}$.
Let $B$ be some cofinal subset of $\im(f)$ sparse enough to satisfy that for every pair $\gamma<\beta$ of ordinals from $B$,
there exists a $\delta\in D$ with $\gamma<\delta<\beta$.
Using Claim~\ref{671}, fix $\alpha\in E^\kappa_\omega$ and $\epsilon<\alpha$
such that $(B_\alpha\setminus\epsilon)\s B$ and $\sup(B_\alpha)=\alpha$.
Now, let $\gamma<\beta$ be a pair of ordinals in $B_\alpha\setminus\epsilon$.
As $\gamma,\beta\in B$, we may pick a $\delta\in D$ with $\gamma<\delta<\beta$.
As $\beta\in B\s\im(f)$, we may also pick a $\xi<\kappa$ such that $\beta=f(\xi)$. Since $f[\delta]\s\delta\s \beta$, it must be the case that $\xi\ge\delta>\gamma$.
So $A\cap\gamma=(A\cap\xi)\cap\gamma=X_\beta\cap\gamma$.
Thus, we showed that $A\cap\gamma=X_\beta\cap\gamma$ for every pair $\gamma<\beta$ of ordinals in $B_\alpha\setminus\epsilon$, and hence $\alpha$ is informative and $A_\alpha=A\cap\alpha$.
In addition, for every pair $\gamma<\beta$ of ordinals in $B_\alpha\setminus\epsilon$,
there exists $\delta\in D$ with $\gamma<\delta<\beta$, and hence $\alpha\in\acc^+(D)\s C$.
\end{proof}

Altogether, $\p(\kappa, \omega^{\ind},\allowbreak{\sq},1)$ holds.
Since $\kappa$ is a strongly inaccessible cardinal that is non-subtle, Corollary~\ref{stp} implies that there exists a streamlined $\kappa$-tree $K$ such that $V^-(K)$ covers a club in $\kappa$.
So by appealing to Lemma~\ref{l63} and then to Lemma~\ref{lemma33},
we infer that there exists a $\kappa$-Souslin tree $\mathbf T$ with $V(\mathbf T)=\acc(\kappa)$.
\end{proof}

By \cite[Theorem~2.30]{paper48}, $\chi(\kappa)=0$ refutes $\clubsuit_{\ad}(\reg(\kappa))$.
An easy variant of that proof yields that $\chi(\kappa)=0$ furthermore refutes $\clubsuit_{\ad}(\reg(\kappa)\cap D)$ for every club $D\s\kappa$.
It follows from the preceding theorem together with the proof of \cite[Theorem~2.23]{paper48} that $\chi(\kappa)=\omega$ is
compatible with $\clubsuit_{\ad}(D)$ holding for some club $D\s\kappa$.
Whether this can be improved to $\chi(\kappa)=1$ remains an open problem.

\renewcommand{\thesection}{A}
\section{A new sufficient condition for a Dowker space}\label{secA}
\begin{defn}[\cite{paper48}]\label{clubad}  Let $\mathcal S$ be a collection of stationary subsets of a regular uncountable cardinal $\kappa$,
and $\mu,\theta$ be nonzero cardinals below $\kappa$.
The principle
$\clubsuit_{\ad}(\mathcal S,\mu,\theta)$ asserts the existence of a sequence $\langle \mathcal A_\alpha\mid \alpha\in\bigcup\mathcal S\rangle$ such that:
\begin{enumerate}
\item For every $\alpha\in\acc(\kappa)\cap\bigcup\mathcal S$, $\mathcal A_\alpha$ is a pairwise disjoint family of $\mu$ many cofinal subsets of $\alpha$;
\item For every $\mathcal B\s[\kappa]^\kappa$ of size $\theta$, for every $S\in\mathcal S$, there are stationarily many $\alpha\in S$ such that $\sup(A\cap B)=\alpha$ for all $A\in\mathcal A_\alpha$ and $B\in\mathcal B$;\footnote{Note that the existence
of stationarily many such $\alpha\in S$ is no stronger than the existence of just one $\alpha\in S$. See \cite[Corollary~3.4]{paper23} for the prototype argument.}
\item For all $A\neq A'$ from $\bigcup_{S\in\mathcal S}\bigcup_{\alpha\in S}\mathcal A_\alpha$, $\sup(A\cap A')<\sup(A)$.
\end{enumerate}
\end{defn}
\begin{remark} The variation $\clubsuit_{\ad}(\mathcal S,\mu,{<}\theta)$ asserts the existence of a sequence simultaneously
witnessing $\clubsuit_{\ad}(\mathcal S,\mu,\vartheta)$ for all $\vartheta<\theta$.
\end{remark}

By \cite[Lemma~2.10]{paper48},
for a pair $\chi<\kappa$ of infinite regular cardinals,
for a stationary subset $S$ of $E^\kappa_\chi$,
Ostaszewski's principle $\clubsuit(S)$ implies $\clubsuit_{\ad}(\mathcal S,\chi,{<}\omega)$ for some partition $\mathcal S$ of $S$ into $\kappa$ many stationary sets.
The next theorem reduces the hypothesis ``$S\s E^\kappa_\chi$'' down to ``$S\cap\tr(S)=\emptyset$''.

\begin{lemma} \label{lem7.1}
Suppose:
\begin{itemize}
\item $\mu,\theta<\kappa=\kappa^{<\theta}$ are infinite cardinals;
\item $S\s E^\kappa_{\ge\max\{\mu,\theta\}}$ is stationary and $\tr(S)\cap S=\emptyset$;
\item $\clubsuit(S)$ holds.
\end{itemize}

Then $\clubsuit_{\ad}(\mathcal S,\mu,{<}\theta)$ holds
for some partition $\mathcal S$ of $S$ into $\kappa$ many stationary sets. More generally,
for every $Z\s\kappa$ such that $S\s\acc^+(Z)$,
there exists a matrix $\langle A_{\delta,i}\mid\delta\in S, i<\mu\rangle$
and a partition $\mathcal S$ of $S$ into $\kappa$ many pairwise disjoint stationary sets such that:
\begin{enumerate}
\item For all $\delta\in S$, $\langle A_{\delta,i}\mid i<\mu\rangle$ is a sequence of pairwise disjoint subsets of $Z\cap\delta$, and $\sup(A_{\delta,i})=\delta$;
\item For every $(\gamma,\delta)\in[S]^2$, for all $i,j<\mu$, $\sup(A_{\gamma,i}\cap A_{\delta,j})<\gamma$;
\item For every $\vartheta<\theta$, every sequence $\langle B_\tau\mid \tau<\vartheta\rangle$ of cofinal subsets of $Z$ and every $S'\in\mathcal S$, there exists $\delta\in S'$ such that $\sup(A_{\delta,i}\cap B_\tau)=\delta$
for all $i<\mu$ and $\tau<\vartheta$.
\end{enumerate}
\end{lemma}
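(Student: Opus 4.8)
The plan is to derive everything from a single $\clubsuit(S)$-sequence $\langle C_\delta\mid\delta\in S\rangle$ (taken with $\otp(C_\delta)=\cf(\delta)$) by one recursive construction of a ladder system indexed by $S$, from which all the stated properties are read off. The three hypotheses enter at three separate places: $S\s\acc^+(Z)$ lets us relocate the ladders into $Z$ (so condition~(1)'s ``$\s Z$'' and the targets $B_\tau\s Z$ make sense); $\tr(S)\cap S=\emptyset$ is exactly what makes the ladders an almost-disjoint family (condition~(2)); and $\kappa=\kappa^{<\theta}$ is what upgrades the guessing of a single cofinal set to the simultaneous guessing of $<\theta$ many while also selecting a partition class. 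Concretely I would fix, once and for all: a pairing bijection $\kappa\leftrightarrow\kappa\times\kappa$; for each $\chi\in\reg(\kappa)$ with $\chi\ge\max\{\mu,\theta\}$ a partition $\chi=\bigsqcup_{(i,\tau)\in\mu\times\theta}P^\chi_{i,\tau}$ into $\max\{\mu,\theta\}<\kappa$ many cofinal pieces; and a bijection $\kappa\leftrightarrow{}^{<\theta}\kappa$ of a sufficiently continuous flavour. Passing to a stationary subset of $S$ (which $\clubsuit$ tolerates, since one may always intersect the guessing set with a club), I may assume every $\delta\in S$ is closed under all these gadgets, and by $\tr(S)\cap S=\emptyset$ fix, for each $\delta\in S$ of uncountable cofinality, a club $c_\delta\s\delta$ with $c_\delta\cap S=\emptyset$.

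For the construction I would, for each $\delta\in S$, recursively build an increasing, limit-continuous enumeration $\langle a^\delta_\xi\mid\xi<\cf(\delta)\rangle$ of a cofinal subset $A_\delta$ of $Z\cap\delta$, steered so that: (a) its accumulation points below $\delta$ all lie in $c_\delta$, hence off $S$; (b) the ``positional trace'' $\{\langle\xi,a^\delta_\xi\rangle\mid\xi<\cf(\delta)\}$ is driven by $C_\delta$, so that a prescription on the values $a^\delta_\xi$ that depends on the position $\xi$ can be realized by feeding $\clubsuit(S)$ a single cofinal set — the point being that the trace, as a subset of $\kappa\cong\kappa\times\kappa$, turns a position-dependent constraint into an ordinary ``$\s B^*$''. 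The matrix is then defined by splitting by position: $A_{\delta,i}:=\{a^\delta_\xi\mid\xi\in\bigcup_{\tau<\theta}P^{\cf(\delta)}_{i,\tau}\}$, which is automatically a cofinal subset of $Z\cap\delta$, with the $A_{\delta,i}$ pairwise disjoint; this is~(1). The partition $\mathcal S=\langle S_\rho\mid\rho<\kappa\rangle$ is obtained by reading a colour $\rho(\delta)<\kappa$ off the second coordinate of the minimum of the positional trace, and setting $S_\rho:=\{\delta\in S\mid\rho(\delta)=\rho\}$.

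The verification then proceeds as follows. For~(2): given $(\gamma,\delta)\in[S]^2$, we have $A_{\gamma,i}\s A_\gamma\s\gamma$ and $A_{\delta,j}\s A_\delta$; since $\gamma\in S$ and, by~(a), $\acc^+(A_\delta)\cap S\cap\delta=\emptyset$, we get $\gamma\notin\acc^+(A_\delta)$, so $\sup(A_\delta\cap\gamma)<\gamma$, whence $\sup(A_{\gamma,i}\cap A_{\delta,j})\le\sup(A_\delta\cap\gamma)<\gamma$. For~(3): given $\vartheta<\theta$, cofinal subsets $\langle B_\tau\mid\tau<\vartheta\rangle$ of $Z$, and $\rho<\kappa$, I would write down the set $B^*\s\kappa$ of those pair-codes $\langle\xi,\beta\rangle$ whose colour is $\rho$ and for which $\beta\in(\text{the }Z\text{-image of})\,B_\tau$ whenever $\xi\in\bigcup_{i<\mu}P^{\chi}_{i,\tau}$ for some $\tau<\vartheta$; cofinality of $B^*$ is where $\kappa=\kappa^{<\theta}$ is spent (there are $\kappa$ admissible codes precisely because $\vartheta<\theta$ and the $B_\tau$ are cofinal). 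Applying $\clubsuit(S)$ to $B^*$ yields stationarily many $\delta\in S$ whose positional trace is $\s B^*$; any such $\delta$ satisfies $\rho(\delta)=\rho$, so $\delta\in S_\rho$, and for every $i<\mu$, $\tau<\vartheta$ the set $\{a^\delta_\xi\mid\xi\in P^{\cf(\delta)}_{i,\tau}\}$ is a cofinal-in-$\delta$ subset of $A_{\delta,i}\cap B_\tau$, giving $\sup(A_{\delta,i}\cap B_\tau)=\delta$. Running this with $\vartheta=1$ and varying $\rho$ shows each $S_\rho$ is stationary, so $\mathcal S$ is indeed a partition of $S$ into $\kappa$ many stationary sets; the first displayed assertion is the case $Z=\kappa$ (where $S\s E^\kappa_{\ge\max\{\mu,\theta\}}\s\acc^+(\kappa)$ holds automatically).

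The step I expect to be the main obstacle is item~(b) of the construction: making a \emph{plain} $\clubsuit(S)$-sequence drive a ladder system on which position-dependent requirements (``the $\xi$-th point lies in slot $B_{\tau(\xi)}$'') can be imposed, and doing so \emph{simultaneously} with keeping the ladder inside $Z$ and with pushing its accumulation points into $c_\delta$. Absent the positional reparametrization, no choice of an element-based set $B^*$ can force an arbitrary cofinal subset of $B^*\cap\delta$ to spread across all $\mu\cdot\vartheta$ ``colours'', so the reparametrization (or, equivalently, working with a function-valued reformulation of $\clubsuit$) seems unavoidable; the bookkeeping lies in checking that the reparametrized sequence is still a $\clubsuit$-sequence and still admits the $Z$- and $c_\delta$-adjustments. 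Everything else — (1) from the position-split, (2) from non-reflection, (3) from the cofinality of $B^*$ and $\kappa=\kappa^{<\theta}$ — is then routine.
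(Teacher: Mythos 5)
There is a genuine gap at exactly the point you flag as ``the main obstacle'': the positional-trace mechanism is not established, and as described it cannot work. A plain $\clubsuit(S)$-sequence $\langle C_\delta\mid\delta\in S\rangle$ applied to a target set $B^*$ of pair-codes only guarantees that $C_\delta$ is \emph{some} cofinal subset of $B^*\cap\delta$; nothing forces the decoded set to be a function on $\cf(\delta)$, to have domain meeting every position-class $P^{\cf(\delta)}_{i,\tau}$ cofinally, or to be increasing and continuous --- and without cofinality of each $\{a^\delta_\xi\mid\xi\in P^{\cf(\delta)}_{i,\tau}\}$ in $\delta$, clause~(3) fails. Worse, the ladder $\langle a^\delta_\xi\rangle$ is built \emph{in advance} from $C_\delta$, so its trace is a single fixed subset of $\delta$; for that trace to land inside an arbitrary, later-specified $B^*$, the trace itself would have to be the object being guessed, which is circular. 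What is actually needed here is a genuine strengthening of $\clubsuit(S)$ to \emph{simultaneous} guessing of $\vartheta$ many cofinal sets. The paper imports this as a black box: by \cite[Theorem~3.7]{paper23}, $\clubsuit(S)$ splits $S$ into $\theta\times\kappa$ many stationary pieces $S_{\vartheta,\iota}$ each carrying $\clubsuit$, and by \cite[Lemma~3.5]{paper48} (this is where $\kappa^{\vartheta}=\kappa$ is spent), each $\clubsuit(S_{\vartheta,\iota})$ upgrades to a matrix $\langle X^\tau_\delta\mid\delta\in S_{\vartheta,\iota},\tau<\vartheta\rangle$ guessing any $\vartheta$-tuple of cofinal sets at once; the partition $\mathcal S$ is then $\{\bigcup_{\vartheta<\theta}S_{\vartheta,\iota}\mid\iota<\kappa\}$ rather than a colour read off a trace. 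If you want to keep your self-contained route, you must actually prove such an upgrading lemma; it does not follow from one application of $\clubsuit(S)$ to a cleverly chosen $B^*$.

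You also over-engineer the $\mu$ coordinate. You claim that ``no choice of an element-based set $B^*$ can force an arbitrary cofinal subset of $B^*\cap\delta$ to spread across all $\mu\cdot\vartheta$ colours,'' concluding that position-dependent constraints are unavoidable. But the $\mu$-fold splitting needs no guessing at all: once you have, for each $\tau<\vartheta$, a cofinal $Z^\tau_\delta\s B_\tau\cap\delta$ (pairwise disjoint using $\cf(\delta)\ge\theta>\vartheta$, and with $\acc^+(Z^\tau_\delta)\cap S=\emptyset$ via the club $C_\delta$ avoiding $S$), you simply partition each $Z^\tau_\delta$ into $\mu$ cofinal pieces $Z^{\tau,i}_\delta$ --- possible since $\cf(\delta)\ge\mu$ --- and set $A_{\delta,i}:=\bigcup_{\tau<\vartheta}Z^{\tau,i}_\delta$. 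Every piece is automatically still inside $B_\tau$, so $\sup(A_{\delta,i}\cap B_\tau)=\delta$ for all $i,\tau$. Only the $\vartheta$ coordinate requires simultaneous guessing. Your treatment of the remaining clauses (relocation into $Z$ via $S\s\acc^+(Z)$, and almost-disjointness~(2) from $\acc^+(A_\delta)\cap S=\emptyset$ together with $\gamma\in S$) matches the paper and is fine.
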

\begin{proof}  By \cite[Theorem~3.7]{paper23}, since $\clubsuit(S)$ holds,
we may find a partition $\langle S_{\vartheta,\iota}\mid \vartheta<\theta, \iota<\kappa\rangle$ of $S$
into stationary sets
such that $\clubsuit(S_{\vartheta,\iota})$ holds for all $\vartheta<\theta$ and $\iota<\kappa$.
For all $\vartheta<\theta$ and $\iota<\kappa$, since $\clubsuit(S_{\vartheta,\iota})$ holds and $\kappa^{\vartheta}=\kappa$,
by \cite[Lemma~3.5]{paper48}, we may fix a matrix $\langle X_\delta^\tau\mid \delta\in S_{\vartheta,\iota}, \tau<\vartheta\rangle$ such that,
for every sequence $\langle X^\tau\mid \tau<\vartheta\rangle$ of cofinal subsets of $\kappa$,
there are stationarily many $\delta\in S_{\vartheta,\iota}$, such that, for all $\tau<\vartheta$,
$X^\tau_\delta\s X^\tau\cap\delta$ and $\sup(X^\tau_\delta)=\delta$.

Now, let $Z\s\kappa$ with $S\s\acc^+(Z)$ be given.
For all $\vartheta<\theta$, $\iota<\kappa$, $\delta\in S_{\vartheta,\iota}$ and $\tau<\vartheta$, we do the following:
\begin{itemize}
\item if $X^\tau_\delta\cap Z$ is a cofinal subset of $\delta$,
then let $Y^\tau_\delta:=X^\tau_\delta\cap Z$.
Otherwise, let $Y^\tau_\delta$ be an arbitrary cofinal subset of $Z\cap\delta$;
\item since $\delta\in S\s\kappa\setminus\tr(S)$, we may fix a club $C_\delta\s\delta$ disjoint from $S$,
and then, by \cite[Lemma~3.3]{paper23}, we may find a cofinal subset $Z^\tau_\delta$ of $Y^\tau_\delta$
such that in-between any two points of $Z^\tau_\delta$ there exists a point of $C_\delta$,
so that $\acc^+(Z^\tau_\delta)\cap S=\emptyset$.
\end{itemize}

As $\cf(\delta)\ge\theta>\vartheta$ and by possibly thinning out, we may assume that $\langle Z^\tau_\delta\mid \tau<\vartheta\rangle$ consists of pairwise disjoint cofinal subsets of $Z\cap\delta$.
As $\cf(\delta)\ge\mu$, for every $\tau<\vartheta$, we may fix a partition $\langle Z_\delta^{\tau,i}\mid i<\mu\rangle$ of $Z_\delta^{\tau}$ into cofinal subsets of $\delta$.
For every $i<\mu$, let $$A_{\delta,i}:=\bigcup_{\tau<\vartheta}Z_\delta^{\tau,i}.$$

For every $i<\mu$,
since $\acc^+(Z_\delta^{\tau,i})\cap S\s\acc^+(Z^\tau_\delta)\cap S=\emptyset$ ,
and since $\delta\in S\s E^\kappa_{>\vartheta}$, we get that $\acc^+(A_{\delta,i})\cap S=\emptyset$.
So $\langle A_{\delta,i}\mid i<\mu\rangle$ is a sequence of pairwise disjoint cofinal subsets of $\delta$,
and for every $\gamma\in S\cap\delta$ and every cofinal subset $A\s\gamma$, $\sup(A\cap A_{\delta,i})<\gamma$.
Thus, we have already taken care of Clauses (1) and (2).

Next, consider $\mathcal S:=\{ \bigcup_{\vartheta<\theta}S_{\vartheta,\iota}\mid \iota<\kappa\}$ which is a partition of $S$ into $\kappa$ many stationary sets.
Now, given $\vartheta<\theta$, a sequence $\langle B_\tau\mid \tau<\vartheta\rangle$ of cofinal subsets of $Z$, and some $S'\in\mathcal S$,
we may find $\iota<\kappa$ such that $S'\supseteq S_{\vartheta,\iota}$,
and find $\delta\in S_{\vartheta,\iota}$ such that, for all $\tau<\vartheta$,
$X^\tau_\delta\s B_\tau\cap\delta$ and $\sup(X^\tau_\delta)=\delta$.
In particular, for all $\tau<\vartheta$ and $i<\mu$, $Z_\delta^{\tau,i}\s Z^\tau_\delta\s Y^\tau_\delta=X^\tau_\delta\cap Z\s B_\tau$.
Therefore, for all $\tau<\vartheta$ and $i<\mu$, $\sup(A_{\delta,i}\cap B_\tau)=\delta$.
\end{proof}

\begin{cor}\label{cor84}
Suppose that $\clubsuit(S)$ holds for some nonreflecting stationary subset $S$ of $\kappa$.
Then $\clubsuit_{\ad}(\mathcal S,\omega,{<}\omega)$ holds
for some partition $\mathcal S$ of $S$ into $\kappa$ many stationary sets.\qed
\end{cor}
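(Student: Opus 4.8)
The plan is to read off Corollary~\ref{cor84} as the special case $\mu:=\omega$, $\theta:=\omega$ of Lemma~\ref{lem7.1}. Virtually all of the content is already contained in that lemma, so the proof reduces to checking that, with these choices of parameters, its three hypotheses are met, and then quoting its conclusion.

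First I would verify the arithmetic hypothesis $\theta<\kappa=\kappa^{<\theta}$, which for $\theta=\omega$ reads $\omega<\kappa=\kappa^{<\omega}$: since $\kappa$ is an uncountable cardinal we have $\kappa^n=\kappa$ for every $n<\omega$, hence $\kappa^{<\omega}=\sup_{n<\omega}\kappa^n=\kappa$, and $\omega<\kappa$ by hypothesis. Next, the requirement $S\s E^\kappa_{\ge\max\{\mu,\theta\}}$ becomes $S\s E^\kappa_{\ge\omega}=\acc(\kappa)$; here I would observe that $\clubsuit(S)$ only constrains the guessing sequence at limit points of $S$, so we may replace $S$ by $S\cap\acc(\kappa)$, which remains stationary, remains nonreflecting, and still carries a $\clubsuit$-sequence (restrict the given one to $\acc(\kappa)$). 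Finally, for the hypothesis $\tr(S)\cap S=\emptyset$ I would simply unwind the definition $\tr(S)=\{\beta<\ssup(S)\mid \cf(\beta)>\omega\ \&\ S\cap\beta\text{ is stationary in }\beta\}$: since $S$ is \emph{nonreflecting}, $S\cap\beta$ is nonstationary in $\beta$ for every $\beta<\kappa$ with $\cf(\beta)>\omega$, so $\tr(S)=\emptyset$, and in particular $\tr(S)\cap S=\emptyset$.

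Having checked all three hypotheses of Lemma~\ref{lem7.1} for $\mu=\theta=\omega$, I would invoke its conclusion: there is a partition $\mathcal S$ of $S$ into $\kappa$ many stationary sets for which $\clubsuit_{\ad}(\mathcal S,\omega,{<}\omega)$ holds, which is exactly the assertion of the corollary; the finer matrix conclusion of Lemma~\ref{lem7.1} is not needed. I do not anticipate any genuine obstacle in this argument — it is a routine specialization — the only points requiring even slight care being the cardinal-arithmetic identity $\kappa^{<\omega}=\kappa$ and the elementary remark that a nonreflecting stationary set has empty trace.
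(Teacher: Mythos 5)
Your proposal is correct and is exactly the argument the paper intends: the corollary is stated with a \qed precisely because it is the specialization $\mu=\theta=\omega$ of Lemma~\ref{lem7.1}, and your verification of the three hypotheses (trivial cardinal arithmetic $\kappa^{<\omega}=\kappa$, passing to $S\cap\acc(\kappa)$, and $\tr(S)=\emptyset$ for nonreflecting $S$) is the routine check being suppressed.
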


The preceding yields the proof of Theorem~\ref{thmf}
which in turn extends an old result of Good \cite{MR1216813} who got a Dowker space of size $\lambda^+$
from $\clubsuit(S)$ holding over a nonreflecting stationary $S\s E^{\lambda^+}_\omega$.\footnote{Strictly speaking, the hypothesis in \cite{MR1216813} is $\clubsuit_{\lambda^+}(S,2)$, but \cite[Lemma~3.5]{paper23} shows that this is no stronger than the vanilla $\clubsuit(S)$.}

\begin{cor}  If $\clubsuit(S)$ holds over a nonreflecting stationary $S\s\kappa$, then
there are $2^\kappa$ many pairwise nonhomeomorphic Dowker spaces of size $\kappa$.
\end{cor}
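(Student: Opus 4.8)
The plan is to bootstrap from Theorem~\ref{thmf} (i.e.\ the existence of \emph{one} Dowker space of size $\kappa$ from $\clubsuit(S)$ over a nonreflecting stationary $S\s\kappa$), upgrading the single space to a family of $2^\kappa$ many pairwise nonhomeomorphic ones. The key observation is that $\clubsuit(S)$, being a $\clubsuit$-type principle, can be split: by \cite[Theorem~3.7]{paper23} (invoked already in the proof of Lemma~\ref{lem7.1}), since $\clubsuit(S)$ holds we may fix a partition $\langle S_\iota\mid\iota<\kappa\rangle$ of $S$ into stationary sets with $\clubsuit(S_\iota)$ holding for every $\iota<\kappa$. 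Each $S_\iota$ is itself a nonreflecting stationary subset of $\kappa$ (a subset of a nonreflecting set is nonreflecting), so Theorem~\ref{thmf} applies to each $S_\iota$ individually, yielding a Dowker space $X_\iota$ of size $\kappa$ whose topology is "read off" from a $\clubsuit_{\ad}$-sequence supported on $S_\iota$.

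First I would set up an indexing by subsets of $\kappa$: for each $A\s\kappa$ put $S_A:=\bigcup_{\iota\in A}S_\iota$. When $A$ is nonempty, $S_A$ is a nonreflecting stationary subset of $\kappa$ and $\clubsuit(S_A)$ holds (the $\clubsuit$-sequences on the pieces paste together, or one simply notes $\clubsuit(S_\iota)$ for a single $\iota\in A$ already gives $\clubsuit(S_A)$ by enlarging the support trivially); hence by Theorem~\ref{thmf} there is a Dowker space $X_A$ of size $\kappa$. Since $\kappa$ is regular uncountable, $|\mathcal P(\kappa)/[\kappa]^{<\kappa}|=2^\kappa$ (two subsets of $\kappa$ differing on $<\kappa$ indices are identified), so $\{S_A\mid A\s\kappa\}$ contains $2^\kappa$ many sets that are pairwise of symmetric difference exactly $\bigcup_{\iota\in A\symdiff B}S_\iota$, which is stationary whenever $A\symdiff B\neq\emptyset$. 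The task is then to arrange that the $X_A$'s are pairwise nonhomeomorphic, at least along a subfamily of size $2^\kappa$.

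The main obstacle—and the heart of the argument—is to encode enough of the set parameter $S_A$ into a \emph{topological invariant} of $X_A$ so that distinct parameters give nonhomeomorphic spaces. The natural route is to inspect the standard construction underlying Theorem~\ref{thmf} (which goes through Good's space \cite{MR1216813}, built on $\clubsuit_{\ad}(\mathcal S,\omega,{<}\omega)$ via Corollary~\ref{cor84}): the points of that Dowker space are indexed by ordinals in (a partition of) $S_A$ together with auxiliary data, and a careful bookkeeping shows that the set of ordinals $\delta$ for which a certain local separation property fails is, modulo a club, exactly $S_A$. Concretely, I would identify within $X_A$ a definable closed discrete subspace $D_A$ whose set of "heights" (in the natural $\kappa$-scale on the point set) recovers $S_A$ up to a nonstationary error; homeomorphisms preserve such a subspace up to the same error, and since the ground model satisfies $\diamondsuit$-like consequences of $\clubsuit(S)$ we have $2^\kappa=\kappa^+$ available for counting. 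Then, passing if necessary to an almost disjoint refinement (as in the proof of Theorem~\ref{thmd}, splitting $S$ via a $\diamondsuit(S)$-style sequence derived from $\clubsuit(S)$ plus the fact that $S$ is nonreflecting, cf.\ Corollary~\ref{cor55} and the remark after Theorem~\ref{thm53}), one extracts a subfamily $\mathcal A\s\mathcal P(\kappa)$ of size $2^\kappa$ with $|S_A\symdiff S_B|=\kappa$ pairwise, whence the corresponding $X_A$ are genuinely pairwise nonhomeomorphic. The delicate point to get right is precisely the verification that the "singular-level" invariant of the constructed Dowker space is stable under homeomorphism and faithfully reflects the nonstationary-equivalence class of $S_A$; everything else is routine cardinal arithmetic ($\kappa^{<\kappa}=\kappa$ fails in general, but only $\kappa$ regular and $2^\kappa$ counting are needed) and the already-cited machinery.
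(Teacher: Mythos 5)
There is a genuine gap. The entire nontrivial content of the corollary is the step you yourself flag as ``the delicate point to get right'': that the Dowker space $X_A$ built from the parameter $S_A$ carries a homeomorphism-invariant which recovers $S_A$ modulo the nonstationary ideal. You assert that ``a careful bookkeeping shows'' a certain closed discrete subspace has set of heights equal to $S_A$ modulo a club and that homeomorphisms preserve this, but you never exhibit the invariant, never verify it is preserved by arbitrary homeomorphisms (which need not respect the ordinal indexing of the underlying set at all), and never verify that it faithfully reflects the nonstationary-equivalence class of $S_A$. Without that verification the proposal produces $2^\kappa$ many Dowker spaces with no argument that any two of them are nonhomeomorphic. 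A secondary but real error: you invoke ``$\diamondsuit$-like consequences of $\clubsuit(S)$'' to get $2^\kappa=\kappa^+$; this is false, since $\clubsuit$ does not imply $\ch$-type statements (that is one of the main points of separating $\clubsuit$ from $\diamondsuit$), and similarly one cannot derive a $\diamondsuit(S)$-style sequence from $\clubsuit(S)$ alone as your ``almost disjoint refinement'' step requires.

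For comparison, the paper does not attempt to distinguish the spaces itself. It first applies Corollary~\ref{cor84} (a consequence of Lemma~\ref{lem7.1}) to turn $\clubsuit(S)$ over the nonreflecting stationary $S$ into $\clubsuit_{\ad}(\mathcal S,\omega,{<}\omega)$ for a partition $\mathcal S$ of $S$ into $\kappa$ many stationary sets; this in particular gives $\clubsuit_{\ad}(\mathcal S,1,2)$. It then cites \cite[Theorem~A.1]{paper54}, which takes exactly that hypothesis and directly outputs $2^\kappa$ many pairwise nonhomeomorphic Dowker spaces of size $\kappa$ --- i.e., the topological distinguishing argument you are missing is carried out there, with the $\kappa$-fold partition (rather than a family of $\clubsuit$-sets indexed by $\mathcal P(\kappa)$) as the combinatorial input. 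Your splitting of $S$ into $\kappa$ many $\clubsuit$-pieces is the right first move, but the route through Theorem~\ref{thmf} applied separately to each union $S_A$ leaves the hard part undone.
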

\begin{proof} By \cite[Theorem~A.1]{paper54}, if $\clubsuit_{\ad}(\mathcal S,1,2)$ holds
for a partition $\mathcal S$ of a nonreflecting stationary subset of $\kappa$
into $\kappa$ many stationary sets,
then there are $2^\kappa$ many pairwise nonhomeomorphic Dowker spaces of size $\kappa$.
\end{proof}

Our last corollary deals with the problem of having $\clubsuit_{\ad}$ hold over a club subset of a successor cardinal.
\begin{cor}
Suppose that $\kappa=\lambda^+$ for some infinite cardinal $\lambda$,
and that $\clubsuit(E^\kappa_\theta)$ holds for every $\theta\in\reg(\kappa)$.
Then there exists a partition $\mathcal S$ of some club subset $D\s\acc(\kappa)$
into $\kappa$ many sets such that $\clubsuit_{\ad}(\mathcal S,\omega,1)$ holds.
Furthermore,  there is a matrix $\langle A_{\delta,i}\mid \delta\in D, i<\cf(\delta)\rangle$ such that:
\begin{enumerate}
\item For every $\delta\in D$, $\langle A_{\delta,i}\mid i<\cf(\delta)\rangle$ is sequence of pairwise disjoint cofinal subsets of $\delta$;
\item For all $A\neq A'$ from $\{ A_{\delta,i}\mid \delta\in D, i<\cf(\delta)\}$ , $\sup(A\cap A')<\sup(A)$;
\item For every cofinal $B\s\kappa$, for every $S\in\mathcal S$,
there are stationarily many $\delta\in S$ such that $\sup(A_{\delta,i}\cap B)=\delta$ for all $i<\cf(\delta)$.
\end{enumerate}
\end{cor}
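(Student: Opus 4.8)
The plan is to run Lemma~\ref{lem7.1} once per cofinality $\theta\in\reg(\kappa)$ and amalgamate the outputs; the whole difficulty is to pre-arrange that matrices produced by distinct applications are automatically almost-disjoint from one another, which I would do by confining each application to a private slice $Z_\theta\subseteq\kappa$. Since $\kappa=\lambda^+$ we have $|\reg(\kappa)|\le\lambda<\kappa$; set $\mu_0:=\max\{|\reg(\kappa)|,\aleph_0\}<\kappa$, fix a surjection $e:\mu_0\to\reg(\kappa)$, and for $\theta\in\reg(\kappa)$ put $Z_\theta:=\{\mu_0\cdot\gamma+r\mid \gamma<\kappa,\ r<\mu_0,\ e(r)=\theta\}$. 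Then $\langle Z_\theta\mid\theta\in\reg(\kappa)\rangle$ is a partition of $\kappa$ (the residue of an ordinal modulo $\mu_0$ decides its class), and $D:=\{\mu_0\cdot\gamma\mid\gamma\in\acc(\kappa)\}$ — a club contained in $\acc(\kappa)$, being the image of $\acc(\kappa)$ under the continuous strictly increasing map $\gamma\mapsto\mu_0\cdot\gamma$ — satisfies $D\subseteq\acc^+(Z_\theta)$ for every $\theta$, since for $\delta=\mu_0\cdot\gamma\in D$ the set $\{\mu_0\cdot\gamma'+r_\theta\mid\gamma'<\gamma\}\subseteq Z_\theta\cap\delta$ is cofinal in $\delta$, where $r_\theta$ is any point of $e^{-1}\{\theta\}$.

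Now I would fix $\theta\in\reg(\kappa)$ and set $S_\theta:=E^\kappa_\theta\cap D$. This is a stationary subset of $E^\kappa_{\ge\theta}$ with $\tr(S_\theta)\cap S_\theta=\emptyset$ (as $\tr(E^\kappa_\theta)\subseteq E^\kappa_{>\theta}$), and $\clubsuit(S_\theta)$ holds ($\clubsuit(E^\kappa_\theta)$ does and $\clubsuit$ is inherited by intersections with clubs). Since moreover $S_\theta\subseteq D\subseteq\acc^+(Z_\theta)$ and $\kappa=\kappa^{<\omega}$, I would apply Lemma~\ref{lem7.1} with $\mu:=\theta$, with the lemma's parameter $\theta:=\omega$, with $S:=S_\theta$, and with $Z:=Z_\theta$, obtaining a matrix $\langle A^\theta_{\delta,i}\mid\delta\in S_\theta,\ i<\theta\rangle$ and a partition $\langle S^\theta_\iota\mid\iota<\kappa\rangle$ of $S_\theta$ into stationary sets such that: (i) each $\langle A^\theta_{\delta,i}\mid i<\theta\rangle$ is a pairwise disjoint family of subsets of $Z_\theta\cap\delta$, each with supremum $\delta$; (ii) $\sup(A^\theta_{\gamma,i}\cap A^\theta_{\delta,j})<\gamma$ for every $(\gamma,\delta)\in[S_\theta]^2$ and $i,j<\theta$; (iii) for every cofinal $B'\subseteq Z_\theta$ and every $\iota<\kappa$ there are stationarily many $\delta\in S^\theta_\iota$ with $\sup(A^\theta_{\delta,i}\cap B')=\delta$ for all $i<\theta$ — this is the $\vartheta=1$ instance of clause~(3) of Lemma~\ref{lem7.1}, whose proof in fact produces stationarily many such $\delta$.

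Then I would amalgamate. As $D=\bigsqcup_{\theta\in\reg(\kappa)}S_\theta$, put $A_{\delta,i}:=A^{\cf(\delta)}_{\delta,i}$ for $\delta\in D$ and $i<\cf(\delta)$, and $\mathcal S:=\{\bigsqcup_{\theta\in\reg(\kappa)}S^\theta_\iota\mid\iota<\kappa\}$, a partition of $D$ into $\kappa$ stationary sets. Clause~(1) is (i). For clause~(2), take distinct $A=A_{\gamma,i}$, $A'=A_{\delta,j}$ with $\gamma\le\delta$: if $\gamma=\delta$ then $i\ne j$ and $A\cap A'=\emptyset$ by (i); if $\gamma<\delta$ and $\cf(\gamma)\ne\cf(\delta)$ then $A\subseteq Z_{\cf(\gamma)}$ and $A'\subseteq Z_{\cf(\delta)}$ are disjoint, so $A\cap A'=\emptyset$; and if $\gamma<\delta$ with $\cf(\gamma)=\cf(\delta)$ then $\sup(A\cap A')<\gamma$ by (ii); in every case $\sup(A\cap A')<\gamma=\sup(A)$. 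For clause~(3), let $B\subseteq\kappa$ be cofinal and $S=\bigsqcup_\theta S^\theta_\iota\in\mathcal S$; since $\kappa$ is regular and $B=\bigsqcup_\theta(B\cap Z_\theta)$ is cofinal, some $B\cap Z_{\theta^*}$ is cofinal in $\kappa$, and applying (iii) for $\theta^*$ with $B':=B\cap Z_{\theta^*}$ gives stationarily many $\delta\in S^{\theta^*}_\iota\subseteq S$ with $\sup(A_{\delta,i}\cap B)=\sup(A^{\theta^*}_{\delta,i}\cap B')=\delta$ for all $i<\theta^*=\cf(\delta)$. Setting $\mathcal A_\alpha:=\{A_{\alpha,i}\mid i<\omega\}$ for $\alpha\in D$ — a pairwise disjoint family of exactly $\omega$ cofinal subsets of $\alpha$, as $\cf(\alpha)\ge\omega$ — then reads off $\clubsuit_{\ad}(\mathcal S,\omega,1)$: clause~(1) of Definition~\ref{clubad} is the definition of $\mathcal A_\alpha$, clause~(2) for a single cofinal set is the $i<\omega$ case of clause~(3) just proved, and clause~(3) is the $i,j<\omega$ case of clause~(2) just proved.

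The step I expect to be the crux is precisely the cross-cofinality almost-disjointness demanded by clause~(2) (equivalently clause~(3) of Definition~\ref{clubad}): Lemma~\ref{lem7.1} controls only the single matrix it emits, so the slices $Z_\theta$ must at once be pairwise disjoint (to kill intersections across cofinalities), have union $\kappa$ (otherwise a cofinal $B$ missing all of them would defeat clause~(3)), and be cofinal below one fixed club $D$ (so that Lemma~\ref{lem7.1} applies with $S=E^\kappa_\theta\cap D$); the residue construction above delivers all three simultaneously. Comparatively minor points are the descent of $\clubsuit$ from $E^\kappa_\theta$ to $E^\kappa_\theta\cap D$, and the replacement of ``there is a $\delta$'' by ``there are stationarily many $\delta$'' in clause~(3) of Lemma~\ref{lem7.1}.
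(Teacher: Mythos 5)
Your proposal is correct and follows essentially the same route as the paper: partition $\kappa$ into cofinal slices $Z_\theta$ indexed by $\reg(\kappa)$, apply Lemma~\ref{lem7.1} once per cofinality with $Z:=Z_\theta$, amalgamate the matrices (disjointness of the slices handling cross-cofinality almost-disjointness), and use the pigeonhole principle for the guessing clause. If anything, you are slightly more careful than the paper in pinning down $D$ and in verifying that $S_\theta=E^\kappa_\theta\cap D$ satisfies the hypotheses of Lemma~\ref{lem7.1} (in particular $S_\theta\s\acc^+(Z_\theta)$), which the paper glosses over by applying the lemma to $E^\kappa_\mu$ directly.
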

\begin{proof} Let $\langle Z_\mu\mid \mu\in\reg(\kappa)\rangle$ be a partition of $\kappa$ into cofinal sets.
Let $D:=\bigcap_{\mu\in\reg(\kappa)}\acc^+(Z_\mu)$.
For every $\mu\in\reg(\kappa)$, by Lemma~\ref{lem7.1},
we may fix a matrix $\langle A_{\delta,i}\mid\delta\in E^\kappa_\mu, i<\mu\rangle$
and a partition $\langle S_{\mu,\iota}\mid \iota<\kappa\rangle$ of $E^\kappa_\mu$ into $\kappa$ many pairwise disjoint stationary sets such that:
\begin{itemize}
\item For all $\delta\in E^\kappa_\mu$, $\langle A_{\delta,i}\mid i<\mu\rangle$ is a sequence of pairwise disjoint subsets of $Z_\mu\cap\delta$, and $\sup(A_{\delta,i})=\delta$;
\item For every $(\gamma,\delta)\in[E^\kappa_\mu]^2$, for all $i,j<\mu$, $\sup(A_{\gamma,i}\cap A_{\delta,j})<\gamma$;
\item For every cofinal $B\s Z_\mu$, for every $\iota<\kappa$, there exists $\delta\in S_{\mu,\iota}$ such that $\sup(A_{\delta,i}\cap B)=\delta$ for all $i<\mu$.
\end{itemize}

Putting these matricies together, we get a matrix $\langle A_{\delta,i}\mid\delta\in D, i<\cf(\delta)\rangle$ satisfying Clause~(1).
In addition, since $Z_\mu\cap Z_{\mu'}=\emptyset$ for $\mu\neq\mu'$, Clause~(2) is satisfied.
Now, $\mathcal S:=\{ \bigcup_{\mu\in\reg(\kappa)}S_{\mu,\iota}\mid \iota<\kappa\}$
is a partition of $D$ into $\kappa$ many stationary sets.
By the pigeonhole principle, for every cofinal $B\s\kappa$, there exists some $\mu\in\reg(\kappa)$
such that $B\cap Z_\mu$ is cofinal in $\kappa$.
So, for every $S\in\mathcal S$,
there exist $\iota<\kappa$ and $\delta\in S_{\mu,\iota}\s S$ such that $\sup(A_{\delta,i}\cap B)=\delta$ for all $i<\cf(\delta)$.
\end{proof}

\section*{Acknowledgments}

The first and second author were supported by the Israel Science Foundation (grant agreement 203/22).
The first and third author were supported by the European Research Council (grant agreement ERC-2018-StG 802756).

Some of the results of this paper were presented by the second author in a poster session at the \emph{Young Set Theory Workshop} in Novi Sad, August 2022.
Additional results were presented by the first author as part of a
graduate course on \emph{Set Theory, Algebra and Analysis} at the Fields Institute for Research in Mathematical Sciences during the spring semester of 2023.
We thank the corresponding organizers for the opportunity to present this work and the participants for their
feedback.

\end{document}